\providecommand{\keywords}[1]{\textbf{\textit{Keywords.}} #1}
\providecommand{\AMSclass}[1]{\textbf{\textit{AMS classification.}} #1}
\newcommand{\rond}[1]{*++[o][F-]{#1}}
\newcommand{\bun}{\begin{tikzpicture}[line cap=round,line join=round,>=triangle 45,x=.2cm,y=.2cm]
\clip(-0.2,0.) rectangle (0.2,1.);
\draw [line width=.5pt] (0.,0.)-- (0.,1.);
\end{tikzpicture}}
\newcommand{\bdeux}{
\begin{tikzpicture}[line cap=round,line join=round,>=triangle 45,x=0.2cm,y=0.2cm]
\clip(-1.2,0.) rectangle (1.2,2.);
\draw [line width=.5pt] (0.,0.)-- (0.,1.);
\draw [line width=.5pt] (0.,1.)-- (-1.,2.);
\draw [line width=.5pt] (0.,1.)-- (1.,2.);
\end{tikzpicture}}
\newcommand{\btroisun}{
\begin{tikzpicture}[line cap=round,line join=round,>=triangle 45,x=0.2cm,y=0.2cm]
\clip(-2.2,0.) rectangle (1.2,3.);
\draw [line width=.5pt] (0.,0.)-- (0.,1.);
\draw [line width=.5pt] (0.,1.)-- (-1.,2.);
\draw [line width=.5pt] (0.,1.)-- (1.,2.);
\draw [line width=.5pt] (-1.,2.)-- (0.,3.);
\draw [line width=.5pt] (-1.,2.)-- (-2.,3.);
\end{tikzpicture}}
\newcommand{\btroisdeux}{
\begin{tikzpicture}[line cap=round,line join=round,>=triangle 45,x=0.2cm,y=0.2cm]
\clip(-1.2,0.) rectangle (2.2,3.);
\draw [line width=.5pt] (0.,0.)-- (0.,1.);
\draw [line width=.5pt] (0.,1.)-- (-1.,2.);
\draw [line width=.5pt] (0.,1.)-- (1.,2.);
\draw [line width=.5pt] (1.,2.)-- (2.,3.);
\draw [line width=.5pt] (1.,2.)-- (0.,3.);
\end{tikzpicture}}
\newcommand{\bquatreun}{
\begin{tikzpicture}[line cap=round,line join=round,>=triangle 45,x=0.2cm,y=0.2cm]
\clip(-3.2,0.) rectangle (1.2,4.);
\draw [line width=.5pt] (0.,0.)-- (0.,1.);
\draw [line width=.5pt] (0.,1.)-- (-1.,2.);
\draw [line width=.5pt] (0.,1.)-- (1.,2.);
\draw [line width=.5pt] (-1.,2.)-- (0.,3.);
\draw [line width=.5pt] (-1.,2.)-- (-2.,3.);
\draw [line width=.5pt] (-3.,4.)-- (-2.,3.);
\draw [line width=.5pt] (-2.,3.)-- (-1.,4.);
\end{tikzpicture}}
\newcommand{\bquatredeux}{
\begin{tikzpicture}[line cap=round,line join=round,>=triangle 45,x=0.2cm,y=0.2cm]
\clip(-2.2,0.) rectangle (1.2,4.);
\draw [line width=.5pt] (0.,0.)-- (0.,1.);
\draw [line width=.5pt] (0.,1.)-- (-1.,2.);
\draw [line width=.5pt] (0.,1.)-- (1.,2.);
\draw [line width=.5pt] (-1.,2.)-- (-2.,3.);
\draw [line width=.5pt] (-1.,2.)-- (0.,3.);
\draw [line width=.5pt] (-1.,4.)-- (0.,3.);
\draw [line width=.5pt] (0.,3.)-- (1.,4.);
\end{tikzpicture}}
\newcommand{\bquatretrois}{
\begin{tikzpicture}[line cap=round,line join=round,>=triangle 45,x=0.2cm,y=0.2cm]
\clip(-1.2,0.) rectangle (2.2,4.);
\draw [line width=0.4pt] (0.,0.)-- (0.,1.);
\draw [line width=0.4pt] (0.,1.)-- (-1.,2.);
\draw [line width=0.4pt] (0.,1.)-- (1.,2.);
\draw [line width=0.4pt] (1.,2.)-- (2.,3.);
\draw [line width=0.4pt] (1.,2.)-- (0.,3.);
\draw [line width=0.4pt] (-1.,4.)-- (0.,3.);
\draw [line width=0.4pt] (0.,3.)-- (1.,4.);
\end{tikzpicture}}
\newcommand{\bquatrequatre}{
\begin{tikzpicture}[line cap=round,line join=round,>=triangle 45,x=0.2cm,y=0.2cm]
\clip(-1.2,0.) rectangle (3.2,4.);
\draw [line width=.5pt] (0.,0.)-- (0.,1.);
\draw [line width=.5pt] (0.,1.)-- (-1.,2.);
\draw [line width=.5pt] (0.,1.)-- (1.,2.);
\draw [line width=.5pt] (1.,2.)-- (0.,3.);
\draw [line width=.5pt] (1.,2.)-- (2.,3.);
\draw [line width=.5pt] (1.,4.)-- (2.,3.);
\draw [line width=.5pt] (2.,3.)-- (3.,4.);
\end{tikzpicture}}
\newcommand{\bquatrecinq}{
\begin{tikzpicture}[line cap=round,line join=round,>=triangle 45,x=0.2cm,y=0.2cm]
\clip(-2.2,0.) rectangle (2.2,3.);
\draw [line width=.5pt] (0.,0.)-- (0.,1.);
\draw [line width=.5pt] (0.,1.)-- (-2.,3.);
\draw [line width=.5pt] (0.,1.)-- (1.,2.);
\draw [line width=.5pt] (1.,2.)-- (2.,3.);
\draw [line width=.5pt] (1.,2.)-- (0.5,3.);
\draw [line width=.5pt] (-0.5,3.)-- (-1.,2.);
\end{tikzpicture}}
\newcommand{\bdtroisun}{
\begin{tikzpicture}[line cap=round,line join=round,>=triangle 45,x=0.2cm,y=0.2cm]
\clip(-3,0.) rectangle (1.2,4.);
\draw [line width=.5pt] (0.,0.)-- (0.,1.);
\draw [line width=.5pt] (0.,1.)-- (-1.,2.);
\draw [line width=.5pt] (0.,1.)-- (1.,2.);
\draw [line width=.5pt] (-1.,2.)-- (0.,3.);
\draw [line width=.5pt] (-1.,2.)-- (-2.,3.);
\draw(-1.,1.2) node {\tiny 1};
\end{tikzpicture}}
\newcommand{\bdtroisdeux}{
\begin{tikzpicture}[line cap=round,line join=round,>=triangle 45,x=0.2cm,y=0.2cm]
\clip(-2.4,0.) rectangle (2.2,4.);
\draw [line width=.5pt] (0.,0.)-- (0.,1.);
\draw [line width=.5pt] (0.,1.)-- (-1.,2.);
\draw [line width=.5pt] (0.,1.)-- (1.,2.);
\draw [line width=.5pt] (1.,2.)-- (2.,3.);
\draw [line width=.5pt] (1.,2.)-- (0.,3.);
\draw(1.,1.2) node {\tiny 1};
\end{tikzpicture}}
\newcommand{\bdquatreun}{
\begin{tikzpicture}[line cap=round,line join=round,>=triangle 45,x=0.2cm,y=0.2cm]
\clip(-3.2,0.) rectangle (1.2,4.);
\draw [line width=.5pt] (0.,0.)-- (0.,1.);
\draw [line width=.5pt] (0.,1.)-- (-1.,2.);
\draw [line width=.5pt] (0.,1.)-- (1.,2.);
\draw [line width=.5pt] (-1.,2.)-- (0.,3.);
\draw [line width=.5pt] (-1.,2.)-- (-2.,3.);
\draw [line width=.5pt] (-3.,4.)-- (-2.,3.);
\draw [line width=.5pt] (-2.,3.)-- (-1.,4.);
\draw(-1.,1.2) node {\tiny 1};
\draw(-2.2,2.2) node {\tiny 2};
\end{tikzpicture}}
\newcommand{\bdquatredeux}{
\begin{tikzpicture}[line cap=round,line join=round,>=triangle 45,x=0.2cm,y=0.2cm]
\clip(-2.2,0.) rectangle (1.2,4.);
\draw [line width=.5pt] (0.,0.)-- (0.,1.);
\draw [line width=.5pt] (0.,1.)-- (-1.,2.);
\draw [line width=.5pt] (0.,1.)-- (1.,2.);
\draw [line width=.5pt] (-1.,2.)-- (-2.,3.);
\draw [line width=.5pt] (-1.,2.)-- (0.,3.);
\draw [line width=.5pt] (-1.,4.)-- (0.,3.);
\draw [line width=.5pt] (0.,3.)-- (1.,4.);
\draw(-1.,1.2) node {\tiny 1};
\draw(-0.1,2.2) node {\tiny 2};
\end{tikzpicture}}
\newcommand{\bdquatretrois}{
\begin{tikzpicture}[line cap=round,line join=round,>=triangle 45,x=0.2cm,y=0.2cm]
\clip(-1.2,0.) rectangle (2.2,4.);
\draw [line width=0.4pt] (0.,0.)-- (0.,1.);
\draw [line width=0.4pt] (0.,1.)-- (-1.,2.);
\draw [line width=0.4pt] (0.,1.)-- (1.,2.);
\draw [line width=0.4pt] (1.,2.)-- (2.,3.);
\draw [line width=0.4pt] (1.,2.)-- (0.,3.);
\draw [line width=0.4pt] (-1.,4.)-- (0.,3.);
\draw [line width=0.4pt] (0.,3.)-- (1.,4.);
\draw(1.,1.2) node {\tiny 1};
\draw(0.1,2.2) node {\tiny 2};
\end{tikzpicture}}
\newcommand{\bdquatrequatre}{
\begin{tikzpicture}[line cap=round,line join=round,>=triangle 45,x=0.2cm,y=0.2cm]
\clip(-1.2,0.) rectangle (3.2,4.);
\draw [line width=.5pt] (0.,0.)-- (0.,1.);
\draw [line width=.5pt] (0.,1.)-- (-1.,2.);
\draw [line width=.5pt] (0.,1.)-- (1.,2.);
\draw [line width=.5pt] (1.,2.)-- (0.,3.);
\draw [line width=.5pt] (1.,2.)-- (2.,3.);
\draw [line width=.5pt] (1.,4.)-- (2.,3.);
\draw [line width=.5pt] (2.,3.)-- (3.,4.);
\draw(1.,1.2) node {\tiny 1};
\draw(2.2,2.2) node {\tiny 2};
\end{tikzpicture}}
\newcommand{\bdquatrecinq}{
\begin{tikzpicture}[line cap=round,line join=round,>=triangle 45,x=0.2cm,y=0.2cm]
\clip(-2.2,0.) rectangle (2.2,3.);
\draw [line width=.5pt] (0.,0.)-- (0.,1.);
\draw [line width=.5pt] (0.,1.)-- (-2.,3.);
\draw [line width=.5pt] (0.,1.)-- (1.,2.);
\draw [line width=.5pt] (1.,2.)-- (2.,3.);
\draw [line width=.5pt] (1.,2.)-- (0.5,3.);
\draw [line width=.5pt] (-0.5,3.)-- (-1.,2.);
\draw(-1.,1.2) node {\tiny 1};
\draw(1.,1.2) node {\tiny 2};
\end{tikzpicture}}
\newcommand{\bdcinqun}{
\begin{tikzpicture}[line cap=round,line join=round,>=triangle 45,x=0.2cm,y=0.2cm]
\clip(-3.7,0.) rectangle (1.2,5.);
\draw [line width=.5pt] (0.,0.)-- (0.,1.);
\draw [line width=.5pt] (0.,1.)-- (-1.,2.);
\draw [line width=.5pt] (0.,1.)-- (1.,2.);
\draw [line width=.5pt] (-1.,2.)-- (0.,3.);
\draw [line width=.5pt] (-1.,2.)-- (-2.,3.);
\draw [line width=.5pt] (-3.,4.)-- (-2.,3.);
\draw [line width=.5pt] (-2.,3.)-- (-1.,4.);
\draw [line width=.5pt] (-4.,5.)-- (-3.,4.);
\draw [line width=.5pt] (-3.,4.)-- (-2.,5.);
\draw(-1.,1.2) node {\tiny 1};
\draw(-2.2,2.2) node {\tiny 2};
\draw(-3.2,3.3) node {\tiny 3};
\end{tikzpicture}}
\newcommand{\bdcinqdeux}{
\begin{tikzpicture}[line cap=round,line join=round,>=triangle 45,x=0.2cm,y=0.2cm]
\clip(-3.,0.) rectangle (1.2,5.);
\draw [line width=.5pt] (0.,0.)-- (0.,1.);
\draw [line width=.5pt] (0.,1.)-- (-1.,2.);
\draw [line width=.5pt] (0.,1.)-- (1.,2.);
\draw [line width=.5pt] (-1.,2.)-- (0.,3.);
\draw [line width=.5pt] (-1.,2.)-- (-2.,3.);
\draw [line width=.5pt] (-3.,4.)-- (-2.,3.);
\draw [line width=.5pt] (-2.,3.)-- (-1.,4.);
\draw [line width=.5pt] (-2.,5.)-- (-1.,4.);
\draw [line width=.5pt] (-1.,4.)-- (0.,5.);
\draw(-1.,1.2) node {\tiny 1};
\draw(-2.2,2.2) node {\tiny 2};
\draw(-0.7,3.3) node {\tiny 3};
\end{tikzpicture}}
\newcommand{\bdcinqtrois}{
\begin{tikzpicture}[line cap=round,line join=round,>=triangle 45,x=0.2cm,y=0.2cm]
\clip(-2.2,0.) rectangle (1.2,5.);
\draw [line width=.5pt] (0.,0.)-- (0.,1.);
\draw [line width=.5pt] (0.,1.)-- (-1.,2.);
\draw [line width=.5pt] (0.,1.)-- (1.,2.);
\draw [line width=.5pt] (-1.,2.)-- (-2.,3.);
\draw [line width=.5pt] (-1.,2.)-- (0.,3.);
\draw [line width=.5pt] (-1.,4.)-- (0.,3.);
\draw [line width=.5pt] (0.,3.)-- (1.,4.);
\draw [line width=.5pt] (-2.,5.)-- (-1.,4.);
\draw [line width=.5pt] (-1.,4.)-- (0.,5.);
\draw(-1.,1.2) node {\tiny 1};
\draw(-0.1,2.2) node {\tiny 2};
\draw(-1.1,3.2) node {\tiny 3};
\end{tikzpicture}}
\newcommand{\bdcinqquatre}{
\begin{tikzpicture}[line cap=round,line join=round,>=triangle 45,x=0.2cm,y=0.2cm]
\clip(-2.2,0.) rectangle (1.7,5.);
\draw [line width=.5pt] (0.,0.)-- (0.,1.);
\draw [line width=.5pt] (0.,1.)-- (-1.,2.);
\draw [line width=.5pt] (0.,1.)-- (1.,2.);
\draw [line width=.5pt] (-1.,2.)-- (-2.,3.);
\draw [line width=.5pt] (-1.,2.)-- (0.,3.);
\draw [line width=.5pt] (-1.,4.)-- (0.,3.);
\draw [line width=.5pt] (0.,3.)-- (1.,4.);
\draw [line width=.5pt] (0.,5.)-- (1.,4.);
\draw [line width=.5pt] (1.,4.)-- (2.,5.);
\draw(-1.,1.2) node {\tiny 1};
\draw(-0.1,2.2) node {\tiny 2};
\draw(0.9,3.2) node {\tiny 3};
\end{tikzpicture}}
\newcommand{\bdcinqcinq}{
\begin{tikzpicture}[line cap=round,line join=round,>=triangle 45,x=0.2cm,y=0.2cm]
\clip(-2.7,0.) rectangle (2.2,4.);
\draw [line width=.5pt] (0.,0.)-- (0.,1.);
\draw [line width=.5pt] (0.,1.)-- (-1.,2.);
\draw [line width=.5pt] (0.,1.)-- (1.,2.);
\draw [line width=.5pt] (-1.,2.)-- (-3.,4.);
\draw [line width=.5pt] (-1.,2.)-- (0.,3.);
\draw [line width=.5pt] (0.,3.)-- (1.,4.);
\draw [line width=.5pt] (0.,3.)-- (-0.5,4.);
\draw [line width=.5pt] (-1.5,4.)-- (-2.,3.);
\draw(-1.,1.2) node {\tiny 1};
\draw(-2.,2.2) node {\tiny 2};
\draw(0.,2.2) node {\tiny 3};
\end{tikzpicture}}
\newcommand{\Y}[2]{
\bigvee_{#1,#2}}
\title{Typed binary trees and generalized dendrifom algebras}
\date{}
\author{Lo\"\i c Foissy}
\affil{\small{Fédération de Recherche Mathématique du Nord Pas de Calais FR 2956,\\
Laboratoire de Mathématiques Pures et Appliquées Joseph Liouville,\\
Université du Littoral Côte d'Opale\thanks{Centre Universitaire de la Mi-Voix, 50, 
rue Ferdinand Buisson, CS 80699,  62228 Calais Cedex, France}.\\ Email: \texttt{foissy@univ-littoral.fr}}}
\theoremstyle{plain}
\newtheorem{theo}{Theorem}
\newtheorem{lemma}[theo]{Lemma}
\newtheorem{cor}[theo]{Corollary}
\newtheorem{prop}[theo]{Proposition}
\newtheorem{defi}[theo]{Definition}
\theoremstyle{remark}
\newtheorem{remark}{Remark}
\newtheorem{notation}{Notations}
\newtheorem{example}{Example}
\newcommand{\calT}{\mathcal{T}}
\newcommand{\calP}{\mathcal{P}}
\newcommand{\K}{\mathbb{K}}
\newcommand{\Z}{\mathbb{Z}}
\newcommand{\adm}{\mathrm{Adm}}
\newcommand{\DS}{\mathbf{DS}}
\newcommand{\EDS}{\mathbf{EDS}}
\newcommand{\Sh}{\mathrm{Sh}}
\newcommand{\sh}{\mathbf{sh}}
\newcommand{\dimK}{\mathrm{dim}_\K}
\begin{document}

\maketitle

\begin{abstract}
We here both unify and generalize nonassociative structures on typed binary trees, that is to say plane binary trees
which edges are decorated by elements of a set $\Omega$. We prove that we obtain such a structure, called an $\Omega$-dendriform
structure, if $\Omega$ has four products satisfying certain axioms (EDS axioms), including the axioms of a diassociative semigroup.
This includes matching dendriform algebras introduced by Zhang, Gao and Guo and family dendriform algebras associated 
to a semigroup introduced by Zhang, Gao and Manchon , and of course dendriform algebras when $\Omega$ is reduced to 
a single element. We also give examples of EDS, including all the EDS of cardinality two; a combinatorial description
of the products of such a structure on typed binary trees, but also on words; a study of the Koszul dual of the associated operads;
and considerations on the existence of a coproduct, in order to obtain dendriform bialgebras.
\end{abstract}

\keywords{Dendriform algebra; diassociative semigroup; plane binary trees; shuffle product.}\\

\AMSclass{16T30; 05C05; 18D50.}

\tableofcontents

\section*{Introduction}

Dendriform algebras are associative algebras with an associativity splitting, that is to say their associative product
can be written as a sum of two products $\prec$ and $\succ$, with the following axioms:
\begin{align*}
(x\prec y)\prec z&=x\prec (y\prec z+y\succ z),\\
(x\succ y)\prec z&=x\succ (y\prec z),\\
(x\prec y+x\succ y)\succ z&=x\succ (y\succ z).
\end{align*}
Note that summing these three relations proves that, indeed, $\prec+\succ$ is associative.
Classical examples of dendriform algebras are given by shuffle algebras, based on words, as noticed by Schützenberger
in \cite{Schutzenberger}, which justifies the terminology of noncommutative shuffle algebras used for example
in \cite{FoissyPatras}. Free dendriform algebras were first described by Loday and Ronco \cite{LodayRonco1}
and studied in \cite{AguiarSottile}: the free dendriform algebra on one generator is based on plane binary
trees, and its two products $\prec$ and $\succ$ are inductively defined using the decomposition of any plane  binary tree
(except the unit $\bun$) into a left and a right plane binary tree. 
For example, here are plane binary trees with $k=2$, $3$ or $4$ leaves:
\begin{align*}
&\bdeux,&&\btroisun,\btroisdeux,&&\bquatreun,\bquatredeux,\bquatretrois,\bquatrequatre,\bquatrecinq.
\end{align*}
Here are examples of products on plane binary trees:
\begin{align*}
\bdeux\succ \bdeux&=\btroisun,&\bdeux\prec \bdeux&=\btroisdeux,\\
\bdeux\succ \btroisun&=\bquatreun+\bquatredeux,&\bdeux \prec \btroisun&=\bquatretrois,\\
\bdeux\succ \btroisdeux&=\bquatrecinq,&\bdeux\prec \btroisdeux&=\bquatrequatre,\\
\btroisun\succ \bdeux&=\bquatreun,&\btroisun\prec \bdeux&=\bquatrecinq,\\
\btroisdeux\prec \bdeux&=\bquatretrois+\bquatrequatre,&\btroisdeux\succ \bdeux&=\bquatredeux.
\end{align*}

Recently, a new interest in typed trees were developed in the seminal work of Bruned, Hairer and Zambotti on 
stochastic PDEs \cite{Hairer}. Given a nonempty set $\Omega$, called the set of types, an $\Omega$-typed tree
is a tree with a map from this set of edges to $\Omega$: they are related to a generalization of pre-Lie algebras \cite{FoissyPrelie}.
Similarly, several generalizations of dendriform algebras were recently introduced, where plane binary trees
are replaced by typed plane binary trees. 
\begin{itemize}
\item Firstly, if $\Omega$ is a set, an $\Omega$-matching dendriform algebras \cite{GaoGuoZhang}  is a vector space
$A$ with products $\prec_\alpha$, $\succ_\alpha$, where $\alpha \in \Omega$, such that:
\begin{align*}
(x\prec_\alpha y)\prec_\beta z&=x\prec_\alpha(y\prec_\beta z)+x\prec_\beta(y\succ_\alpha z),\\
(x\succ_\alpha y)\prec_\beta z&=x\succ_\alpha (y\prec_\beta z),\\
(x\prec_\beta y)\succ_\alpha z+(x\succ_\alpha y)\succ_{\beta} z&=x\succ_\alpha (y\succ_\beta z).
\end{align*}
\item Secondly, if $(\Omega,*)$ is a semigroup, an $\Omega$-family dendriform algebra \cite{ZhangGaoManchon} is a vector space
$A$ with products $\prec_\alpha$, $\succ_\alpha$, where $\alpha \in \Omega$, such that:
\begin{align*}
(x\prec_\alpha y)\prec_\beta z&=x\prec_{\alpha*\beta}(y\prec_\beta z+y\succ_\alpha z),\\
(x\succ_\alpha y)\prec_\beta z&=x\succ_\alpha (y\prec_\beta z),\\
(x\prec_\beta y+x\succ_\alpha y)\succ_{\alpha*\beta} z&=x\succ_\alpha (y\succ_\beta z).
\end{align*}
\end{itemize}
In both cases, it was proved that the free object on one generator is based on plane $\Omega$-typed binary trees,
with products inductively defined in a similar way as the Loday-Ronco's construction.
A plane $\Omega$-typed binary tree is a plane binary tree given a map from the set of its internal edges to $\Omega$.
We shall denote them in the following way: 
\begin{align*}
&\bdeux,&&\bdtroisun(\alpha),\bdtroisdeux(\alpha),
&&\bdquatreun(\alpha,\beta),\bdquatredeux(\alpha,\beta),\bdquatretrois(\alpha,\beta),\bdquatrequatre(\alpha,\beta),
\bdquatrecinq(\alpha,\beta),
\end{align*}
where $\alpha$, $\beta \in \Omega$. 
In all cases, the type of the internal edge $1$ is $\alpha$ and the type of the internal edge $2$ is $\beta$. 
Here are examples of products in the $\Omega$-matching case:
\begin{align*}
\bdeux\succ_{\alpha} \bdeux(\alpha)&=\bdtroisun(\alpha),&\bdeux\prec_{\alpha} \bdeux&=\bdtroisdeux(\alpha),\\
\bdeux\succ_{\alpha} \bdtroisun(\beta)&=\bdquatreun(\beta,\alpha)+\bdquatredeux(\alpha,\beta),
&\bdeux \prec_{\alpha} \bdtroisun(\beta)&=\bdquatretrois(\alpha,\beta),\\
\bdeux\succ_{\alpha} \bdtroisdeux(\beta)&=\bdquatrecinq(\alpha,\beta),
&\bdeux\prec_{\alpha} \bdtroisdeux(\beta)&=\bdquatrequatre(\alpha,\beta),\\
\bdtroisun(\alpha)\succ_{\beta} \bdeux&=\bdquatreun(\beta,\alpha),
&\bdtroisun(\alpha)\prec_{\beta} \bdeux&=\bdquatrecinq(\alpha,\beta),\\
\bdtroisdeux(\alpha)\prec_{\beta} \bdeux&=\bdquatretrois(\alpha,\beta)+\bdquatrequatre(\beta,\alpha),
&\bdtroisdeux(\alpha)\succ_{\beta} \bdeux&=\bdquatredeux(\beta,\alpha).
\end{align*}

Here are examples of products in the $\Omega$-family case:
\begin{align*}
\bdeux\succ_{\alpha} \bdeux(\alpha)&=\bdtroisun(\alpha),&\bdeux\prec_{\alpha} \bdeux&=\bdtroisdeux(\alpha),\\
\bdeux\succ_{\alpha} \bdtroisun(\beta)&=\bdquatreun(\alpha*\beta,\alpha)+\bdquatredeux(\alpha*\beta,\beta),
&\bdeux \prec_{\alpha} \bdtroisun(\beta)&=\bdquatretrois(\alpha,\beta),\\
\bdeux\succ_{\alpha} \bdtroisdeux(\beta)&=\bdquatrecinq(\alpha,\beta),
&\bdeux\prec_{\alpha} \bdtroisdeux(\beta)&=\bdquatrequatre(\alpha,\beta),\\
\bdtroisun(\alpha)\succ_{\beta} \bdeux&=\bdquatreun(\beta,\alpha),
&\bdtroisun(\alpha)\prec_{\beta} \bdeux&=\bdquatrecinq(\alpha,\beta),\\
\bdtroisdeux(\alpha)\prec_{\beta} \bdeux&=\bdquatretrois(\alpha*\beta,\beta)+\bdquatrequatre(\alpha*\beta,\alpha),
&\bdtroisdeux(\alpha)\succ_{\beta} \bdeux&=\bdquatredeux(\beta,\alpha).
\end{align*}

Our aim in this article is to give both a unification and a generalization of the extended dendriform structures.
We start with a set of types $\Omega$, given four operations $\leftarrow$, $\rightarrow$, $\triangleleft$, $\triangleright$.
A $\Omega$-dendriform algebra  is a vector space
$A$ with products $\prec_\alpha$, $\succ_\alpha$, where $\alpha \in \Omega$, such that:
\begin{align*}
(x\prec_\alpha y)\prec_\beta z&=x\prec_{\alpha \leftarrow \beta} (y\prec_{\alpha \triangleleft \beta} z)
+x\prec_{\alpha \rightarrow \beta} (y\succ_{\alpha \triangleright \beta} z),\\
 x\succ_\alpha (\prec_\beta z)&=(x\succ_\alpha y)\prec_\beta z,\\
 x\succ_\alpha (y\succ_\beta z)&=(x\succ_{\alpha \triangleright \beta} y)\succ_{\alpha \rightarrow \beta} z
+(x\prec_{\alpha \triangleleft \beta} y)\succ_{\alpha \leftarrow \beta} z.
\end{align*}
We recover the notion of $\Omega$-matching dendriform algebra taking: 
\begin{align*}
&\forall \alpha,\beta \in \Omega,&
\alpha \leftarrow \beta&=\alpha,&\alpha \rightarrow \beta&=\beta,\\
&&\alpha \triangleleft \beta&=\beta,&\alpha \triangleright \beta&=\alpha;
\end{align*}
and we recover the notion of $\Omega$-family  dendriform algebra taking, for any $\alpha,\beta \in \Omega$:
\begin{align*}
&\forall \alpha,\beta \in \Omega,&
\alpha \leftarrow \beta&=\alpha*\beta,&\alpha \rightarrow \beta&=\alpha*\beta,\\
&&\alpha \triangleleft \beta&=\beta,&\alpha \triangleright \beta&=\alpha;
\end{align*}
We prove in Proposition \ref{prop15} that the free $\Omega$-dendriform algebra on one generator
 is based on plane $\Omega$-typed binary trees,  with an inductive definition  of the products $\prec_\alpha$ and $\succ_\beta$, 
 if, and only if, the four operations of $\Omega$ satisfy a bunch of 15 axioms, see Definitions \ref{defi1} and \ref{defi2};
a similar result is proved for words in Proposition \ref{prop17}, giving typed versions of shuffle algebras.
 Such a structure on $\Omega$ will be called an extended diassociative semigroup (briefly, EDS); 
 in particular, the first five axioms only involve  the two operations $\leftarrow$ and $\rightarrow$:
\begin{align*}
(\alpha \leftarrow \beta)\leftarrow \gamma&=\alpha\leftarrow (\beta \leftarrow \gamma)
=\alpha \leftarrow(\beta \rightarrow \gamma),\\
 (\alpha \rightarrow \beta)\leftarrow \gamma&=\alpha \rightarrow (\beta \leftarrow \gamma),\\
(\alpha \rightarrow \beta)\rightarrow \gamma&=(\alpha \leftarrow \beta) \rightarrow \gamma
=\alpha \rightarrow (\beta \rightarrow \gamma).
\end{align*}
These axioms are ruled by the operad on diassociative algebras, which suggested our terminology.
A noticeable fact is that this operad is the Koszul dual of the dendriform operad.
Examples of EDS include the ones, denoted by $\EDS(\Omega)$, giving matching dendriform algebras;
the ones, denoted by $\EDS(\Omega,*)$, giving family dendriform algebras; and lots more. For example,
if $\Omega$ is of cardinality two, we found 24 EDS, including $\EDS(\Omega)$
and 5 coming from associative semigroups. 

We prove that any $\Omega$-dendriform algebra $A$ gives a dendriform algebra structure on the space $\K\Omega \otimes A$
(Proposition \ref{prop18}): this was already known in the case of $\Omega$-matching dendriform algebras \cite{GaoZhang}.
The converse implication is true under a condition of nondegeneracy of the EDS $\Omega$. 

The description of free $\Omega$-dendriform algebras induces a combinatorial description of their operad.
When $\Omega$ is finite, this is a quadratic finitely generated operad, which Koszul dual is described in Proposition \ref{prop27}.
This operad is not always Koszul, and we produce a necessary condition ($\Omega$ should be weakly nondegenerate,
Definition \ref{defi28}) and a sufficient condition on it ($\Omega$ should be nondegenerate, see Definition \ref{defi4})
for the associated operad to be Koszul. For example,  $\EDS(\Omega)$ is nondegenerate; 
if $(\Omega,*)$ is a finite associative semigroup, then $\EDS(\Omega,*)$ is nondegenerate if, and only $(\Omega,*)$
is a group. \\

We also give a study of these objects, from a Hopf-algebraic and a combinatorial point of view.
In particular, we give a description of the products on trees and on words in Propositions \ref{prop33}
and \ref{prop36}, generalizing in the latter case the usual half-shuffle products. 
Shuffle algebras and the Loday-Ronco algebra are known to be Hopf algebras; this is not always true for $\Omega$-dendriform algebras,
as described in Proposition \ref{prop38}. If $\Omega$ is nondegenerate, then such a structure exists
on trees and on words (Propositions \ref{prop39} and \ref{prop40}),
which is combinatorially described in Propositions \ref{prop42} and \ref{prop43}.
These coproducts generalize the Loday-Ronco coproduct on trees and the deconcatenation coproducts on words.\\

This paper is organized as follows: the first section is devoted to the study of EDS.
We give examples based on (diassociative) monoids, and semidirect products of groups.
We also introduce nondegenerate EDS, with a reformulation of their axioms
due to a transformation of the four defining operations into four other ones; this allows to associate to any group $G$
a nondegenerate EDS $\EDS^*(G)$ (Proposition \ref{prop9}). We prove some results on particular
families of EDS: for example, we give in Proposition \ref{prop10} all nondegenerate finite $\Omega$
such that if $\alpha$ and $\beta \in \Omega$,
\[\alpha \leftarrow \beta=\beta \rightarrow \alpha=\alpha.\]
We also give in this section
a complete classification of EDS of cardinality 2 (24 objects, which 4 are nondegenerate). 

The second section is devoted to the definition of $\Omega$-dendriform algebras and to the structure on trees and words,
when $\Omega$ is an EDS. The operadic aspects are considered in the next section,
with in particular the results on the Koszulity; we also study the associative products and the dendriform products
(that is to say, morphisms from the operad of associative algebras and from the the operad of dendriform algebras)
in $\Omega$-dendriform algebras in particular cases of $\Omega$. 

We finally give a combinatorial description of the products in Section 5 and the last section is devoted to the existence of the coproducts
and their combinatorial descriptions.\\

\textbf{Acknowledgements}. The author is grateful to Professor Xing Gao, his team and Lanzhou University for their warm hospitality.\\

\begin{notation}
$\K$ is a commutative field. All the vector spaces in this text will be taken over $\K$.
If $S$ is a set, we denote by $\K S$ the vector space generated by $S$.
\end{notation}

\section{(Extended) diassociative semigroups}

\subsection{Diassociative semigroups}

\begin{defi}\label{defi1}
A diassociative semigroup is a family $(\Omega,\leftarrow,\rightarrow)$,
where $\Omega$ is a set and $\leftarrow,\rightarrow:\Omega\times \Omega \longrightarrow \Omega$
are maps such that, for any $\alpha,\beta,\gamma\in \Omega$:
\begin{align}
\label{eq1} (\alpha \leftarrow \beta)\leftarrow \gamma&=\alpha\leftarrow (\beta \leftarrow \gamma)
=\alpha \leftarrow(\beta \rightarrow \gamma),\\
\label{eq2} (\alpha \rightarrow \beta)\leftarrow \gamma&=\alpha \rightarrow (\beta \leftarrow \gamma),\\
\label{eq3} (\alpha \rightarrow \beta)\rightarrow \gamma&=(\alpha \leftarrow \beta) \rightarrow \gamma
=\alpha \rightarrow (\beta \rightarrow \gamma).
\end{align}\end{defi}

\begin{example}\begin{enumerate}
\item If $(\Omega,\star)$ is an associative semigroup, then $(\Omega,\star,\star)$ is a diassociative semigroup.
\item Let $\Omega$ be a set. We put:
\begin{align*}
&\forall \alpha,\beta \in \Omega,&\alpha \leftarrow \beta&=\alpha,&\alpha \rightarrow \beta&=\beta.
\end{align*}
Then $(\Omega,\leftarrow,\rightarrow)$ is a diassociative semigroup, denoted by $\DS(\Omega)$. 
\item Let $\Omega=(\Omega,\leftarrow,\rightarrow)$ be a diassociative semigroup. 
We define two new operations on $\Omega$ by:
\begin{align*}
&\forall \alpha,\beta \in \Omega,&\alpha \leftarrow^{op}\beta&=\beta \rightarrow \alpha,&
\alpha \rightarrow^{op}\beta&=\beta \leftarrow \alpha.
\end{align*}
This defines a new diassociative semigroup $\Omega^{op}=(\Omega,\leftarrow^{op},\rightarrow^{op})$.
We shall say that $\Omega$ is commutative if $\Omega=\Omega^{op}$, that is to say:
\begin{align*}
&\forall \alpha,\beta \in \Omega,&\alpha \rightarrow \beta&=\beta \leftarrow \alpha.
\end{align*}
In other words, a commutative diassociative semigroup is a pair $(\Omega,\leftarrow)$ such that, 
for any $\alpha,\beta,\gamma \in \Omega$:
\[(\alpha \leftarrow \beta)\leftarrow \gamma=\alpha \leftarrow (\beta\leftarrow \gamma)=
(\alpha \leftarrow \gamma)\leftarrow \beta.\]
\end{enumerate}\end{example}

\subsection{Extended diassociative semigroups}

\begin{defi}\label{defi2}
An extended  diassociative semigroup (briefly, EDS) is a family $(\Omega,\leftarrow,\rightarrow,\triangleleft,\triangleright)$,
where $\Omega$ is a set and $\leftarrow,\rightarrow,\triangleleft,\triangleright:\Omega\times \Omega \longrightarrow \Omega$
are maps such that:
\begin{enumerate}
\item $(\Omega,\leftarrow,\rightarrow)$ is a diassociative semigroup.
\item For any $\alpha,\beta,\gamma \in \Omega$:
\begin{align}
\label{eq4} \alpha\triangleright (\beta \leftarrow \gamma)&=\alpha \triangleright \beta,\\
\label{eq5} (\alpha \rightarrow \beta)\triangleleft \gamma&=\beta \triangleleft \gamma,\\
\nonumber \\
\label{eq6} (\alpha \triangleleft \beta)\leftarrow ((\alpha \leftarrow \beta) \triangleleft \gamma)
&=\alpha \triangleleft (\beta \leftarrow \gamma),\\
\label{eq7} (\alpha \triangleleft \beta)\triangleleft ((\alpha \leftarrow \beta) \triangleleft \gamma)
&=\beta \triangleleft \gamma,\\
\label{eq8} (\alpha \triangleleft \beta) \rightarrow ((\alpha \leftarrow \beta) \triangleleft \gamma)
&=\alpha \triangleleft (\beta \rightarrow \gamma),\\
\label{eq9} (\alpha \triangleleft \beta) \triangleright ((\alpha \leftarrow \beta) \triangleleft \gamma)
&=\beta \triangleright \gamma,\\
\nonumber\\
\label{eq10} (\alpha \triangleright (\beta \rightarrow \gamma))\leftarrow (\beta \triangleright \gamma)
&=(\alpha \leftarrow \beta) \triangleright \gamma,\\
\label{eq11} (\alpha \triangleright (\beta \rightarrow \gamma))\triangleleft (\beta \triangleright \gamma)
&=\alpha \triangleleft \beta,\\
\label{eq12} (\alpha \triangleright (\beta \rightarrow \gamma))\rightarrow (\beta \triangleright \gamma)
&=(\alpha \rightarrow \beta)\triangleright \gamma,\\
\label{eq13} (\alpha \triangleright (\beta \rightarrow \gamma))\triangleright (\beta \triangleright \gamma)
&=\alpha \triangleright \beta.
\end{align}\end{enumerate}\end{defi}

\begin{example}\begin{enumerate}
\item Let $\Omega=(\Omega,\leftarrow,\rightarrow)$ be a diassociative semigroup. We define two products on $\Omega$ by:
\begin{align*}
&\forall \alpha,\beta \in \Omega,&\alpha \triangleleft \beta&=\beta,&\alpha \triangleright \beta&=\alpha.
\end{align*}
Then $(\Omega,\leftarrow,\rightarrow,\triangleleft,\triangleright)$ is an EDS, denoted
by $\EDS(\Omega,\leftarrow,\rightarrow)$. When $(\Omega,\leftarrow,\rightarrow)=\DS(\Omega)$,
we shall simply write $\EDS(\Omega)$.  
\item Let $\Omega=(\Omega,\star)$ be an associative semigroup. If $\triangleleft$ and $\triangleright$ are products on $\Omega$,
then $(\Omega,\star,\star,\triangleleft,\triangleright)$ is an EDS if, and only if,
for any $\alpha,\beta,\gamma \in \Omega$:
\begin{align}
\label{eq14}\alpha\triangleright (\beta \star \gamma)&=\alpha \triangleright \beta,\\
\label{eq15} (\alpha \star \beta)\triangleleft \gamma&=\beta \triangleleft \gamma,\\
\nonumber \\
\label{eq16} (\alpha \triangleleft \beta) \star(\beta \triangleleft \gamma)&=\alpha \triangleleft(\beta \star \gamma),\\
\label{eq17} (\alpha \triangleleft \beta) \triangleleft(\beta \triangleleft \gamma)&=\beta \triangleleft \gamma,\\
\label{eq18} (\alpha \triangleleft \beta) \triangleright(\beta \triangleleft \gamma)&=\beta \triangleright \gamma,\\
\nonumber \\
\label{eq19} (\alpha \triangleright \beta) \star(\beta \triangleright \gamma)&=(\alpha\star \beta) \triangleright \gamma,\\
\label{eq20} (\alpha \triangleright \beta) \triangleleft(\beta \triangleright \gamma)&=\alpha \triangleleft \beta,\\
\label{eq21} (\alpha \triangleright \beta) \triangleright(\beta \triangleright \gamma)&=\alpha \triangleright \beta.
\end{align}
\item Let $\Omega$ be a set and let $\DS(\Omega)$ be the diassociative semigroup attached to $\Omega$. 
If $\triangleleft$ and $\triangleright$ are products on $\Omega$,
then $(\Omega,\leftarrow,\rightarrow,\triangleleft,\triangleright)$ is an EDS if, and only if,
for any $\alpha,\beta,\gamma \in \Omega$:
\begin{align}
\label{eq22} (\alpha \triangleleft \beta) \triangleleft (\alpha \triangleleft \gamma)&=\beta \triangleleft \gamma,\\
\label{eq23} (\alpha \triangleleft \beta) \triangleright (\alpha \triangleleft \gamma)&=\beta \triangleright \gamma,\\
\label{eq24} (\alpha \triangleright \gamma)\triangleleft(\beta \triangleright\gamma)&=\alpha \triangleleft \beta,\\
\label{eq25} (\alpha \triangleright \gamma)\triangleright(\beta \triangleright\gamma)&=\alpha \triangleright \beta.
\end{align}
\item  Let $\Omega=(\Omega,\leftarrow,\rightarrow)$ be a diassociative semigroup, and let $\phi_\triangleleft,\phi_\triangleright:
\Omega\longrightarrow \Omega$ be two maps. We define two products on $\Omega$ by:
\begin{align*}
&\forall \alpha,\beta \in \Omega,&
\alpha \triangleleft \beta&=\phi_\triangleleft(\beta),&\alpha \triangleright \beta&=\phi_\triangleright(\alpha).
\end{align*}
Then $(\Omega,\leftarrow,\rightarrow,\triangleleft,\triangleright)$ is an EDS if, and only if:
\begin{align}
\label{eq26}\phi_\triangleleft&=\phi_\triangleleft\circ \phi_\triangleleft=\phi_\triangleleft\circ \phi_\triangleright,\\
\label{eq27}\phi_\triangleright&=\phi_\triangleright\circ \phi_\triangleleft=\phi_\triangleright\circ \phi_\triangleright,
\end{align}
and, for any $\alpha,\beta \in \Omega$:
\begin{align}
\label{eq28}\phi_\triangleleft(\alpha \leftarrow \beta)&=\phi_\triangleleft(\alpha) \leftarrow \phi_\triangleleft(\beta),\\
\label{eq29}\phi_\triangleleft(\alpha \rightarrow \beta)&=\phi_\triangleleft(\alpha)\rightarrow\phi_\triangleleft(\beta),\\
\label{eq30}\phi_\triangleright(\alpha  \leftarrow  \beta)&=\phi_\triangleright(\alpha) \leftarrow \phi_\triangleright(\beta),\\
\label{eq31}\phi_\triangleright(\alpha \rightarrow \beta)&=\phi_\triangleright(\alpha)\rightarrow\phi_\triangleright(\beta),
\end{align}
that is to say $\phi_\triangleleft$ and $\phi_\triangleright$ are diassociative semigroup morphisms.
If so, the obtained EDS is denoted by
$\EDS(\Omega,\leftarrow,\rightarrow,\phi_\triangleleft,\phi_\triangleright)$. In particular,
\[\EDS(\Omega,\leftarrow, \rightarrow, Id_\Omega, Id_\Omega)=\EDS(\Omega, \leftarrow, \rightarrow).\]
\item Let $(\Omega,\leftarrow,\rightarrow,\triangleleft,\triangleright)$ be an EDS.
We define four new products on $\Omega$ by:
\begin{align*}
&\forall \alpha,\beta \in \Omega,&\alpha \leftarrow^{op}\beta&=\beta \rightarrow \alpha,&
\alpha \rightarrow^{op}\beta&=\beta \leftarrow \alpha,\\
&&\alpha \triangleleft^{op}\beta&=\beta \triangleright \alpha,&\alpha \triangleright^{op}\beta&=\beta \triangleleft \alpha.
\end{align*}
This defines a new diassociative semigroup $\Omega^{op}=(\Omega,\leftarrow^{op},\rightarrow^{op},
\triangleleft^{op},\triangleright^{op})$.
We shall say that $\Omega$ is commutative if $\Omega=\Omega^{op}$, that is to say, for any 
$\alpha$, $\beta$, $\gamma \in \Omega$:
\begin{align*}
\alpha \rightarrow \beta&=\beta \leftarrow \alpha,&\alpha \triangleright \beta&=\beta \triangleleft\alpha.
\end{align*}\end{enumerate}\end{example}

In the case of groups, we find semidirect products:

\begin{prop}
Let $(\Omega,\star,\star,\triangleleft,\triangleright)$ be an EDS, such that $(\Omega,\star)$
is a group. There exist three subgroups $H$, $K_\triangleleft$ and $K_\triangleright$ of $\Omega$ such that
\begin{align}
\label{eq32} \Omega&=K\rtimes H_\triangleleft=K\rtimes H_\triangleright.
\end{align}
Moreover, for any $\alpha,\beta \in \Omega$, $\alpha\triangleleft \beta$ is the canonical projection of $\beta$
 on $H_\triangleleft$ and $\alpha \triangleright \beta$ is the canonical projection  of $\alpha$ on $H_\triangleright$.
\end{prop}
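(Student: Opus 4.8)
The plan is to show that, once $(\Omega,\star)$ is a group, the two extra operations $\triangleleft$ and $\triangleright$ collapse to one-variable maps which are idempotent endomorphisms, and then to read off the semidirect decompositions from the classical correspondence ``idempotent endomorphism of a group $=$ internal semidirect product''. \emph{Step 1 (collapsing $\triangleleft$ and $\triangleright$).} Let $e$ be the neutral element of $(\Omega,\star)$. Since here $\leftarrow=\rightarrow=\star$, the EDS axioms of Definition~\ref{defi2} take the form \eqref{eq14}--\eqref{eq21}. Putting $\beta=e$ in \eqref{eq15} gives $\alpha\triangleleft\gamma=e\triangleleft\gamma$ for all $\alpha,\gamma$, so $\triangleleft$ does not depend on its first entry; set $p(\gamma):=e\triangleleft\gamma$, so that $\alpha\triangleleft\beta=p(\beta)$. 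Symmetrically, putting $\beta=e$ in \eqref{eq14} gives $\alpha\triangleright\gamma=\alpha\triangleright e$, so $\triangleright$ does not depend on its second entry; set $q(\alpha):=\alpha\triangleright e$, so that $\alpha\triangleright\beta=q(\alpha)$. Thus our EDS is one of the form $\EDS(\Omega,\star,\star,\phi_\triangleleft,\phi_\triangleright)$ of the fourth example above, with $\phi_\triangleleft=p$ and $\phi_\triangleright=q$.

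\emph{Step 2 (structure of $p$ and $q$).} Substituting $\alpha\triangleleft\beta=p(\beta)$ and $\alpha\triangleright\beta=q(\alpha)$ into the remaining axioms (equivalently, reading off \eqref{eq26}--\eqref{eq31}): from \eqref{eq16} and \eqref{eq19}, $p(\beta\star\gamma)=p(\beta)\star p(\gamma)$ and $q(\alpha\star\beta)=q(\alpha)\star q(\beta)$, so $p,q$ are semigroup endomorphisms of $(\Omega,\star)$, hence group endomorphisms since $\Omega$ is a group; from \eqref{eq17} and \eqref{eq21}, $p\circ p=p$ and $q\circ q=q$; from \eqref{eq18} and \eqref{eq20}, $q\circ p=q$ and $p\circ q=p$; and \eqref{eq14}--\eqref{eq15} become tautologies. \emph{Step 3 (semidirect products).} As $p$ is an idempotent endomorphism of $\Omega$, put $H_\triangleleft:=\operatorname{Im}p=\{g\in\Omega:p(g)=g\}$ and $K_\triangleleft:=\ker p$; then $K_\triangleleft$ is normal, $H_\triangleleft\cap K_\triangleleft=\{e\}$, and every $g\in\Omega$ factors as $g=\bigl(g\,p(g)^{-1}\bigr)\,p(g)$ with $g\,p(g)^{-1}\in\ker p$ and $p(g)\in\operatorname{Im}p$, giving $\Omega=K_\triangleleft\rtimes H_\triangleleft$ with $p$ equal to the canonical projection onto $H_\triangleleft$. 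Likewise $\Omega=K_\triangleright\rtimes H_\triangleright$ with $H_\triangleright=\operatorname{Im}q$, $K_\triangleright=\ker q$, and $q$ the canonical projection onto $H_\triangleright$. Finally $p\circ q=p$ and $q\circ p=q$ force $\ker p=\ker q$: if $q(g)=e$ then $p(g)=p(q(g))=e$, so $\ker q\subseteq\ker p$, and symmetrically $\ker p\subseteq\ker q$. Writing $K$ for this common kernel yields $\Omega=K\rtimes H_\triangleleft=K\rtimes H_\triangleright$, and since $\alpha\triangleleft\beta=p(\beta)$ and $\alpha\triangleright\beta=q(\alpha)$ are precisely those two projections, the statement follows.

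The argument has essentially one idea, namely Step~1: over a group the axioms annihilate the ``passive'' variable of $\triangleleft$ and of $\triangleright$; everything afterwards is the elementary fact that a retraction of a group onto a subgroup is the same datum as an internal semidirect product $\Omega=\ker\rtimes\operatorname{Im}$. The only point needing a little care is the identification $\ker p=\ker q$ in Step~3 — the unique place where the cross-relations $p\circ q=p$, $q\circ p=q$ are used — since it is what guarantees that the two normal complements are literally the same subgroup $K$, as required by the stated formula $\Omega=K\rtimes H_\triangleleft=K\rtimes H_\triangleright$.
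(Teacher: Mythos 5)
Your proof is correct and follows essentially the same route as the paper: collapse $\triangleleft$ and $\triangleright$ to unary maps $p,q$ via \eqref{eq14}--\eqref{eq15}, show they are idempotent group endomorphisms satisfying $p\circ q=p$ and $q\circ p=q$, and read off the two semidirect decompositions with a common kernel. The only difference is presentational: you derive the endomorphism and idempotence properties directly from \eqref{eq16}--\eqref{eq21}, whereas the paper cites the equivalent conditions \eqref{eq26}--\eqref{eq31} from its earlier example.
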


\begin{proof}
Let $\alpha,\beta,\beta'\in \Omega$.  As $\Omega$ is a group, there exists $\gamma\in \Omega$
such that $\beta \star \gamma=\beta'$. By (\ref{eq14}), $\alpha \triangleright \beta'=\alpha \triangleright \beta$.
Hence, there exists a map $\phi_\triangleright:\Omega \longrightarrow \Omega$ such that:
\begin{align*}
&\forall \alpha,\beta \in \Omega,&\phi_\triangleright(\alpha)&=\alpha \triangleright \beta.
\end{align*}
Similarly, we deduce from (\ref{eq15})  the existence of a map $\phi_\triangleleft:\Omega \longrightarrow \Omega$ such that
\begin{align*}
&\forall \alpha,\beta \in \Omega,&\phi_\triangleleft(\beta)=\alpha \triangleleft \beta.
\end{align*}
By (\ref{eq28})-(\ref{eq31}), $\phi_\triangleleft$ and $\phi_\triangleright$ are group morphisms. Let us denote by
$K_\triangleleft$ and $K_\triangleright$ their respective kernels, and by $H_\triangleleft$ and $H_\triangleright$
their respective images. By (\ref{eq26}) and (\ref{eq27}), $\phi_\triangleleft^2=\phi_\triangleleft$
and $\phi_\triangleright^2=\phi_\triangleright$, so:
\begin{align*}
G&=K_\triangleleft\rtimes H_\triangleleft=K_\triangleright\rtimes H_\triangleright.
\end{align*}
Moreover, $\phi_\triangleleft$ and $\phi_\triangleright$ are the canonical projection on, respectively,
$H_\triangleleft$ and $H_\triangleright$.\\

Let $\alpha \in K_\triangleleft$. Then $\phi_\triangleleft(\alpha)=e_G$, and, by (\ref{eq27}):
\[\phi_\triangleright(\alpha)=\phi_\triangleright \circ \phi_\triangleleft(\alpha)=\phi_\triangleright(e_G)=e_G,\]
so $\alpha \in K_\triangleright$. By symmetry, $K_\triangleleft=K_\triangleright=K$. 
\end{proof}

\begin{remark}
Conversely, if (\ref{eq32}) is satisfied, the canonical projections $\phi_\triangleleft$ and $\phi_\triangleright$
satisfy (\ref{eq26})-(\ref{eq31}), so we obtain an EDS.
\end{remark}

\subsection{Nondegenerate extended diassociative semigroups}

\begin{defi} \label{defi4}
Let $\Omega=(\Omega,\leftarrow,\rightarrow,\triangleleft,\triangleright)$ be an EDS.
We define the following maps:
\begin{align} \label{eq33}
\varphi_\leftarrow&:\left\{\begin{array}{rcl}
\Omega^2&\longrightarrow&\Omega^2\\
(\alpha,\beta)&\longrightarrow&(\alpha \leftarrow \beta,\alpha \triangleleft \beta),
\end{array}\right.&
\varphi_\rightarrow&:\left\{\begin{array}{rcl}
\Omega^2&\longrightarrow&\Omega^2\\
(\alpha,\beta)&\longrightarrow&(\alpha \rightarrow \beta,\alpha \triangleright \beta).
\end{array}\right.&
\end{align}
We shall say that $\Omega$ is nondegenerate if $\varphi_\leftarrow$ and $\varphi_\rightarrow$ are bijective.
\end{defi}

The axioms of EDS can be entirely given with the help of the maps $\varphi_\leftarrow$
and $\varphi_\rightarrow$:

\begin{lemma}
Let $\Omega=(\Omega,\leftarrow,\rightarrow,\triangleleft,\triangleright)$ be a set with four products. 
We define  $\varphi_\leftarrow$ and $\varphi_\rightarrow$ by (\ref{eq33}), and we put:
\[\tau:\left\{\begin{array}{rcl}
\Omega^2&\longrightarrow&\Omega^2\\
(\alpha,\beta)&\longrightarrow&(\beta,\alpha).
\end{array}\right.\]
Then $\Omega$ is an EDS if,  and only if:
\begin{align}
\label{eq34} (\tau \otimes Id)\circ (Id \otimes \varphi_\leftarrow)\circ (\tau \otimes Id)\circ (\varphi_\rightarrow\otimes Id)
&= (\varphi_\rightarrow\otimes Id)\circ (Id \otimes \varphi_\leftarrow),\\ 
\nonumber \\
\label{eq35} (Id \otimes \varphi_\leftarrow)\circ (\tau \otimes Id)\circ (Id \otimes \varphi_\leftarrow)\otimes
(\tau \otimes Id)\circ (\varphi_\leftarrow \otimes Id)
&=(\varphi_\leftarrow\otimes Id)\circ (Id \otimes \varphi_\leftarrow),\\
\label{eq36} (Id \otimes \varphi_\rightarrow)\circ (\tau \otimes Id)\circ (Id \otimes \varphi_\leftarrow)
\circ (\tau \otimes Id)\circ (\varphi_\leftarrow\otimes Id)&=(\varphi_\leftarrow\otimes Id)\circ (Id\otimes \varphi_\rightarrow),
\end{align}
\begin{align}
\label{eq37} (Id \otimes \varphi_\leftarrow)\circ (\varphi_\rightarrow\otimes Id)\circ (Id \otimes \varphi_\rightarrow)
&=(\varphi_\rightarrow\otimes Id)\circ (Id \otimes \tau)\circ (\varphi_\leftarrow \otimes Id),\\
\label{eq38} (Id \otimes \varphi_\rightarrow)\circ (\varphi_\rightarrow\otimes Id)\circ (Id \otimes \varphi_\rightarrow)
&=(\varphi_\rightarrow\otimes Id)\circ (Id \otimes \tau)\circ (\varphi_\rightarrow \otimes Id).
\end{align}
\end{lemma}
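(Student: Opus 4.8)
The plan is to unfold each of \eqref{eq34}--\eqref{eq38} by evaluating it on an arbitrary triple $(\alpha,\beta,\gamma)\in\Omega^3$, reading $\varphi_\leftarrow\otimes Id$, $Id\otimes\varphi_\rightarrow$, $\tau\otimes Id$, $Id\otimes\tau$, and so on, as the self-maps of $\Omega^3$ acting through $\varphi_\leftarrow$, $\varphi_\rightarrow$ or $\tau$ on the two coordinates indicated by the position of $Id$ and as the identity on the remaining coordinate. Since every map involved is built from the four products $\leftarrow,\rightarrow,\triangleleft,\triangleright$ and the flip $\tau$, each composite sends $(\alpha,\beta,\gamma)$ to an explicit triple of words in the four products, so comparing the two sides of one of the identities reduces to three scalar equalities, one per coordinate. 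The whole argument is then the mechanical check that the equalities so produced are exactly the thirteen axioms \eqref{eq1}--\eqref{eq13}.

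The outcome of this unfolding should be: \eqref{eq34} is equivalent to the conjunction of \eqref{eq2}, \eqref{eq4} and \eqref{eq5}; \eqref{eq35} is equivalent to $(\alpha\leftarrow\beta)\leftarrow\gamma=\alpha\leftarrow(\beta\leftarrow\gamma)$ together with \eqref{eq6} and \eqref{eq7}; \eqref{eq36} is equivalent to $(\alpha\leftarrow\beta)\leftarrow\gamma=\alpha\leftarrow(\beta\rightarrow\gamma)$ together with \eqref{eq8} and \eqref{eq9}; \eqref{eq37} is equivalent to $(\alpha\leftarrow\beta)\rightarrow\gamma=\alpha\rightarrow(\beta\rightarrow\gamma)$ together with \eqref{eq10} and \eqref{eq11}; and \eqref{eq38} is equivalent to $(\alpha\rightarrow\beta)\rightarrow\gamma=\alpha\rightarrow(\beta\rightarrow\gamma)$ together with \eqref{eq12} and \eqref{eq13}. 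The two first-coordinate identities coming from \eqref{eq35} and \eqref{eq36} together make up \eqref{eq1}, and those coming from \eqref{eq37} and \eqref{eq38} together make up \eqref{eq3}. Hence \eqref{eq34}--\eqref{eq38} hold simultaneously if and only if \eqref{eq1}--\eqref{eq13} do, which is precisely the statement that $(\Omega,\leftarrow,\rightarrow,\triangleleft,\triangleright)$ is an EDS; this settles both directions of the equivalence at once.

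In the write-up I would display one unfolding in detail, say that of \eqref{eq34}: starting from $(\alpha,\beta,\gamma)$, $\varphi_\rightarrow\otimes Id$ gives $(\alpha\rightarrow\beta,\alpha\triangleright\beta,\gamma)$, then $\tau\otimes Id$ gives $(\alpha\triangleright\beta,\alpha\rightarrow\beta,\gamma)$, then $Id\otimes\varphi_\leftarrow$ gives $(\alpha\triangleright\beta,(\alpha\rightarrow\beta)\leftarrow\gamma,(\alpha\rightarrow\beta)\triangleleft\gamma)$, then $\tau\otimes Id$ gives $((\alpha\rightarrow\beta)\leftarrow\gamma,\alpha\triangleright\beta,(\alpha\rightarrow\beta)\triangleleft\gamma)$; on the other side $(\varphi_\rightarrow\otimes Id)\circ(Id\otimes\varphi_\leftarrow)$ sends $(\alpha,\beta,\gamma)$ to $(\alpha\rightarrow(\beta\leftarrow\gamma),\alpha\triangleright(\beta\leftarrow\gamma),\beta\triangleleft\gamma)$, so equality of the two triples is exactly \eqref{eq2}, \eqref{eq4} and \eqref{eq5}. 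The remaining four cases are handled identically; it helps to notice that the left-hand sides of \eqref{eq35} and \eqref{eq36} differ only in their outermost factor ($Id\otimes\varphi_\leftarrow$ versus $Id\otimes\varphi_\rightarrow$), and likewise for \eqref{eq37} and \eqref{eq38}, so each pair of computations shares every step but the last. (Here \eqref{eq35} is read with the correction of the misprinted $\otimes$ into a $\circ$.)

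The one real obstacle is bookkeeping rather than mathematics: there is no conceptual step, but it is easy to lose track of which tensor slot an intermediate factor occupies. I would keep a fixed convention --- the $Id$ in ``$\varphi\otimes Id$'' is the third slot and the $Id$ in ``$Id\otimes\varphi$'' is the first slot, so $\tau\otimes Id$ swaps slots $1$ and $2$ while $Id\otimes\tau$ swaps slots $2$ and $3$ --- and carry the running element of $\Omega^3$ as an explicit ordered triple throughout.
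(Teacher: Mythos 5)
Your proposal is correct and is exactly the paper's proof: the paper also just observes that direct computation shows \eqref{eq34} is equivalent to \eqref{eq2}, \eqref{eq4}, \eqref{eq5}, that \eqref{eq35} and \eqref{eq36} each give one equality of \eqref{eq1} together with \eqref{eq6}--\eqref{eq7} and \eqref{eq8}--\eqref{eq9} respectively, and that \eqref{eq37} and \eqref{eq38} each give one equality of \eqref{eq3} together with \eqref{eq10}--\eqref{eq11} and \eqref{eq12}--\eqref{eq13} respectively. Your sample unfolding of \eqref{eq34}, the slot conventions, and the reading of the misprinted $\otimes$ in \eqref{eq35} as $\circ$ are all consistent with what the paper intends, so the write-up would in fact be more detailed than the published proof.
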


\begin{proof} Direct computations prove that (\ref{eq34}) is equivalent to (\ref{eq2}), (\ref{eq4}) and (\ref{eq5});
(\ref{eq35}) to one of the equalities of (\ref{eq1}), (\ref{eq6}) and (\ref{eq7});
(\ref{eq36}) to the other equality of (\ref{eq1}), (\ref{eq8}) and (\ref{eq9});
(\ref{eq37}) to one of the equalities of (\ref{eq3}), (\ref{eq10}) and (\ref{eq11});
(\ref{eq38}) to the other equality of (\ref{eq3}), (\ref{eq12}) and (\ref{eq13}). \end{proof}

\begin{prop}\label{prop6}
Let $\Omega=(\Omega,\leftarrow,\rightarrow,\triangleleft,\triangleright)$ be a nondegenerate EDS.
We define four products $\curvearrowleft$, $\curvearrowright$, $\blacktriangleleft$ and $\blacktriangleright$ on $\Omega$ by:
\begin{align*}
&\forall \alpha,\beta \in \Omega,&
\varphi_\leftarrow^{-1}(\alpha,\beta)&=(\alpha \curvearrowleft \beta, \alpha\blacktriangleleft \beta),&
\varphi_\rightarrow^{-1}(\alpha,\beta)&=(\beta \curvearrowright \beta,\beta \blacktriangleright \alpha).
\end{align*}
Then, for any $\alpha,\beta,\gamma \in \Omega$:
\begin{align*}
(\alpha \curvearrowright \beta)\curvearrowleft \gamma&=\alpha \curvearrowright (\beta\curvearrowleft \gamma),\\
\alpha \blacktriangleleft (\beta \blacktriangleleft \gamma)&=(\alpha \blacktriangleleft \beta)\blacktriangleleft \gamma,&
(\alpha \blacktriangleright \beta)\blacktriangleright \gamma&=\alpha \blacktriangleright(\beta\blacktriangleright \gamma),\\
\alpha \blacktriangleright(\beta \curvearrowleft \gamma)&=\alpha \blacktriangleright \beta,&
(\alpha \curvearrowright \beta) \blacktriangleleft \gamma&=\beta \blacktriangleleft \gamma,\\
(\alpha \curvearrowleft (\beta \blacktriangleleft \gamma))\curvearrowleft(\beta \curvearrowleft \gamma)
&=\alpha \curvearrowleft \beta,&
(\alpha \curvearrowright \beta) \curvearrowright((\alpha \blacktriangleright \beta)\curvearrowright \gamma)
&=\beta \curvearrowright \gamma,\\
(\alpha \curvearrowleft (\beta \blacktriangleleft \gamma))\blacktriangleleft(\beta \curvearrowleft \gamma)
&=(\alpha\blacktriangleleft\beta) \curvearrowleft \gamma,&
(\alpha \curvearrowright \beta) \blacktriangleright((\alpha \blacktriangleright \beta)\curvearrowright \gamma)
&=\alpha \curvearrowright (\beta \blacktriangleright \gamma),\\
(\alpha \curvearrowleft (\beta \curvearrowright \gamma))\curvearrowleft (\beta \blacktriangleright \gamma)&=
\alpha \curvearrowleft \gamma,&
(\alpha \blacktriangleleft \beta)\curvearrowright ((\alpha \curvearrowleft \beta)\curvearrowright \gamma)
&=\alpha \curvearrowright \gamma,\\
(\alpha \curvearrowleft (\beta \curvearrowright \gamma))\blacktriangleleft (\beta \blacktriangleright \gamma)&=
\beta \blacktriangleright (\alpha \blacktriangleleft \gamma),&
(\alpha \blacktriangleleft \beta)\blacktriangleright ((\alpha \curvearrowleft \beta)\curvearrowright \gamma)
&=(\alpha \blacktriangleright \gamma)\blacktriangleleft\beta,\\
(\alpha \curvearrowleft \beta)\blacktriangleright \gamma&=(\alpha \blacktriangleright \gamma) \curvearrowleft \beta,&
\beta \curvearrowright(\alpha \blacktriangleleft \gamma)&=\alpha \blacktriangleleft(\beta \curvearrowright \gamma).
\end{align*}
In particular, $\blacktriangleleft$ and $\blacktriangleleft$ are associative.
\end{prop}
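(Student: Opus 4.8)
The plan is to derive all the claimed identities by pushing everything through the bijections $\varphi_\leftarrow$ and $\varphi_\rightarrow$, turning the "reversed" operations $\curvearrowleft,\curvearrowright,\blacktriangleleft,\blacktriangleright$ back into the original four. Concretely, I would set up the following dictionary: since $\varphi_\leftarrow^{-1}(\alpha,\beta)=(\alpha\curvearrowleft\beta,\alpha\blacktriangleleft\beta)$, writing $(\alpha,\beta)=\varphi_\leftarrow(u,v)=(u\leftarrow v,\,u\triangleleft v)$ gives $u\curvearrowleft v=u$... no: rather $\varphi_\leftarrow^{-1}(u\leftarrow v,u\triangleleft v)=(u,v)$, so $(u\leftarrow v)\curvearrowleft(u\triangleleft v)=u$ and $(u\leftarrow v)\blacktriangleleft(u\triangleleft v)=v$. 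Dually, $\varphi_\rightarrow^{-1}(u\rightarrow v,u\triangleright v)=(v,u)$ in the paper's convention, so $(u\rightarrow v)\curvearrowright(u\triangleright v)=v$ wait — reading off Proposition~\ref{prop6}, $\varphi_\rightarrow^{-1}(\alpha,\beta)=(\beta\curvearrowright\beta,\beta\blacktriangleright\alpha)$ (with the evident typo $\beta\curvearrowright\alpha$), meaning for $(\alpha,\beta)=(u\rightarrow v,u\triangleright v)$ we get $v\curvearrowright u=v$ — that is, $(u\triangleright v)\curvearrowright(u\rightarrow v)=\,$ the first coordinate $=v$, and $(u\triangleright v)\blacktriangleright(u\rightarrow v)=u$. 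I would first fix these conversion formulas cleanly, since getting the variable bookkeeping right is the whole game.

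Next I would prove the two associativity statements and the "unit-like" rules first, as a warm-up, because they are the cleanest. For instance, to prove $(\alpha\blacktriangleleft\beta)\blacktriangleleft\gamma=\alpha\blacktriangleleft(\beta\blacktriangleleft\gamma)$, substitute $(\alpha,\beta)=\varphi_\leftarrow(a,b)$ and then express $(\,\cdot\,,\gamma)$ suitably, unwind using the dictionary above, and recognize the resulting identity among the original EDS axioms — I expect it to come down to (\ref{eq6})--(\ref{eq9}) applied in reverse. The cleanest way to organize this is to use the operadic/diagrammatic reformulation from the preceding Lemma: equations (\ref{eq34})--(\ref{eq38}) express the EDS axioms as identities of composites of $\varphi_\leftarrow,\varphi_\rightarrow,\tau$ on $\Omega^3$. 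The four products $\curvearrowleft,\blacktriangleleft$ (resp. $\curvearrowright,\blacktriangleright$) are exactly packaged by $\varphi_\leftarrow^{-1}$ (resp. by $\tau\circ\varphi_\rightarrow^{-1}$ or $\varphi_\rightarrow^{-1}$ up to the twist $\tau$). So I would rewrite each of (\ref{eq34})--(\ref{eq38}) by composing on appropriate sides with inverses: e.g. (\ref{eq35}), which reads (schematically) $A\circ\varphi_\leftarrow\!\otimes\!Id = \varphi_\leftarrow\!\otimes\!Id\circ B$, becomes $\varphi_\leftarrow^{-1}\!\otimes\!Id\circ A = B\circ\varphi_\leftarrow^{-1}\!\otimes\!Id$ after premultiplying by $\varphi_\leftarrow^{-1}\otimes Id$ and simplifying — and reading off the resulting diagram on generic $(\alpha,\beta,\gamma)$ yields exactly the new identities in the row involving $\curvearrowleft$ and $\blacktriangleleft$. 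Doing this systematically for all five diagrams and their mirror forms produces all twelve-plus identities; the two associativities fall out because the "middle" block in the corresponding diagram composition becomes associativity of the new product.

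The main obstacle, I expect, is purely organizational rather than conceptual: keeping the five $\otimes$-diagram identities and the positions of the swap maps $\tau$ straight while inverting, since $\varphi_\leftarrow$ and $\varphi_\rightarrow$ act on adjacent pairs of the three slots in different patterns across (\ref{eq34})--(\ref{eq38}), and the twist in the definition $\varphi_\rightarrow^{-1}(\alpha,\beta)=(\beta\curvearrowright\alpha,\beta\blacktriangleright\alpha)$ introduces an extra $\tau$ that must be tracked. A safe fallback, if the diagrammatic route gets unwieldy for a particular identity, is the brute-force substitution: write $\alpha=a\leftarrow b$, $\beta'=(a\leftarrow b)\triangleleft c$ etc. wherever a $\curvearrowleft$ or $\blacktriangleleft$ appears as the outer operation, solve for the "hidden" variables using bijectivity, and reduce to an instance of (\ref{eq1})--(\ref{eq13}). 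Either way, bijectivity of $\varphi_\leftarrow,\varphi_\rightarrow$ is used exactly to legitimize these substitutions and to guarantee the new operations are well-defined; associativity of $\blacktriangleleft$ and $\blacktriangleright$ is then immediate from (\ref{eq7}), (\ref{eq13}) read through the inverse maps.
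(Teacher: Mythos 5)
Your proposal matches the paper's proof exactly: the paper inverts each of (\ref{eq34})--(\ref{eq38}) (composing with $\varphi_\leftarrow^{-1}$, $\varphi_\rightarrow^{-1}$ and the swaps $\tau$) and reads off the claimed identities three at a time by applying the inverted composite to a generic triple, with nondegeneracy used precisely to legitimize the inversion, as you say. The only caution is the orientation of $\varphi_\rightarrow^{-1}$: with the paper's (typo-corrected) convention $\varphi_\rightarrow^{-1}(\alpha,\beta)=(\beta\curvearrowright\alpha,\beta\blacktriangleright\alpha)$ one gets $(u\triangleright v)\curvearrowright(u\rightarrow v)=u$ and $(u\triangleright v)\blacktriangleright(u\rightarrow v)=v$, the opposite of what your dictionary paragraph momentarily asserts, so fix that bookkeeping before unwinding the diagrams.
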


\begin{proof}
By (\ref{eq34}):
\[(\varphi_\leftarrow^{-1}\otimes Id)\circ (\tau \otimes Id)\circ (Id \otimes \varphi_\leftarrow^{-1})
\circ (\tau \otimes Id)=(Id \otimes \varphi^{-1}_\leftarrow)\circ (\varphi_\rightarrow^{-1}\otimes Id).\]
When applied  to $(\beta,\alpha,\gamma)$, we obtain:
\begin{align*}
(\alpha \blacktriangleright(\beta \curvearrowleft \gamma),
(\alpha \curvearrowright \beta)\curvearrowleft \gamma,
(\alpha \curvearrowright \beta) \blacktriangleleft \gamma)
&=(\alpha \blacktriangleright \gamma,\alpha \curvearrowright (\beta\curvearrowleft \gamma),\beta \blacktriangleleft \gamma).
\end{align*}
The other identities are proved in the same way, from (\ref{eq35})-(\ref{eq38}). \end{proof}

We now explore two families of nondegenerate EDS.

\begin{lemma} \label{lem7}
Let $(\Omega,\star)$ be an associative semigroup. The following conditions are equivalent:
\begin{enumerate}
\item $\EDS(\Omega,\star,\star)$ is nondegenerate.
\item For $\EDS(\Omega,\star,\star)$, $\varphi_\leftarrow$ and $\varphi_\rightarrow$ are surjective.
\item $(\Omega,\star)$ is a group. 
\end{enumerate}
If this holds, for any $\alpha,\beta \in \Omega$:
\begin{align*}
\alpha \curvearrowleft \beta&=\alpha \star \beta^{-1},& \alpha \curvearrowright \beta&=\alpha^{-1}\star \beta,\\
\alpha \blacktriangleleft\beta&=\beta,& \alpha \blacktriangleright \beta&=\alpha.
\end{align*}
\end{lemma}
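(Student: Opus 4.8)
The plan is to establish the cycle $(1)\Rightarrow(2)\Rightarrow(3)\Rightarrow(1)$, the first implication being trivial since a bijection is surjective. First I would specialize the maps of Definition \ref{defi4} to $\EDS(\Omega,\star,\star)$, where $\alpha\triangleleft\beta=\beta$ and $\alpha\triangleright\beta=\alpha$; this gives $\varphi_\leftarrow(\alpha,\beta)=(\alpha\star\beta,\beta)$ and $\varphi_\rightarrow(\alpha,\beta)=(\alpha\star\beta,\alpha)$. Unwinding surjectivity then becomes transparent: $\varphi_\leftarrow$ is surjective iff for all $u,v\in\Omega$ the equation $\alpha\star v=u$ has a solution, i.e.\ every right translation $R_v:\alpha\mapsto\alpha\star v$ is onto; and $\varphi_\rightarrow$ is surjective iff every left translation $L_v:\beta\mapsto v\star\beta$ is onto.

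For $(2)\Rightarrow(3)$ I would invoke the classical fact that an associative semigroup in which all left and right translations are surjective is a group, and since it is elementary I would include the argument: fix $a$ and pick $e$ with $a\star e=a$; for arbitrary $b$ write $b=c\star a$ (as $R_a$ is onto), whence $b\star e=c\star a\star e=c\star a=b$, so $e$ is a right unit; symmetrically one builds a left unit $e'$ using that $L_a$ is onto, and $e=e'\star e=e'$ is a two-sided unit; finally, for each $a$ solve $a\star x=e$ and $y\star a=e$, and $y=y\star e=y\star a\star x=e\star x=x$ exhibits the inverse. This is the only implication requiring a genuine (if standard) idea; everything else is bookkeeping.

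For $(3)\Rightarrow(1)$ I would, assuming $(\Omega,\star)$ is a group, simply exhibit the inverses: $(u,v)\mapsto(u\star v^{-1},v)$ is a two-sided inverse of $\varphi_\leftarrow$ and $(u,v)\mapsto(v,v^{-1}\star u)$ is a two-sided inverse of $\varphi_\rightarrow$, giving nondegeneracy at once. To conclude, I would read these explicit inverses against the definitions of $\curvearrowleft,\curvearrowright,\blacktriangleleft,\blacktriangleright$ from Proposition \ref{prop6}: from $\varphi_\leftarrow^{-1}(\alpha,\beta)=(\alpha\curvearrowleft\beta,\alpha\blacktriangleleft\beta)=(\alpha\star\beta^{-1},\beta)$ one gets $\alpha\curvearrowleft\beta=\alpha\star\beta^{-1}$ and $\alpha\blacktriangleleft\beta=\beta$, and matching the inverse of $\varphi_\rightarrow$ likewise gives $\alpha\curvearrowright\beta=\alpha^{-1}\star\beta$ and $\alpha\blacktriangleright\beta=\alpha$. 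I expect no real obstacle; the only point needing care is not conflating the two inner variables when transporting $\varphi_\rightarrow^{-1}$ into the product notation of Proposition \ref{prop6}.
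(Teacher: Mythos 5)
Your proposal is correct and follows essentially the same route as the paper: the trivial implication $(1)\Rightarrow(2)$, the classical "all left and right translations surjective implies group" argument for $(2)\Rightarrow(3)$ (constructing a right unit and a left unit from a fixed element, identifying them, then producing two-sided inverses), and the explicit inverses $(u,v)\mapsto(u\star v^{-1},v)$ and $(u,v)\mapsto(v,v^{-1}\star u)$ for $(3)\Rightarrow(1)$, from which the formulas for $\curvearrowleft,\curvearrowright,\blacktriangleleft,\blacktriangleright$ are read off. Your closing caution about matching the components of $\varphi_\rightarrow^{-1}$ against the notation of Proposition \ref{prop6} is well placed, since the displayed formula there contains a typo in the ordering of its components.
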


\begin{proof}
Obviously, $1.\Longrightarrow 2$.\\

$2.\Longrightarrow 3$. 
For any $\alpha,\beta \in \Omega$, $\varphi_\leftarrow(\alpha,\beta)=(\alpha\star \beta,\beta)$
and $\varphi_\rightarrow(\alpha,\beta)=(\alpha \star \beta,\alpha)$. Hence, for any $\beta,\gamma \in \Omega$,
there exist $\alpha,\alpha'\in \Omega$, such that $\alpha\star\beta=\beta \star \alpha'=\gamma$. 

Let us fix $\beta_0\in \Omega$, and let us consider elements $e$ and $e'$ such that $e\star \beta_0=\beta_0 \star e'=\beta_0$. 
Let $\gamma \in \Omega$; there exists $\alpha \in \Omega$, such that $\alpha \star \beta_0=\gamma$.
Hence,
\[\gamma\star e'=\alpha \star \beta_0\star e'=\alpha \star \beta_0=\gamma.\]
Similarly, $e\star \gamma=\gamma$ for any $\gamma$. In particular $e\star e'=e=e'$ is a unit of $\Omega$. 
For any $\beta \in \Omega$, there exist $\beta',\beta''\in \Omega$, such that
\[\beta'\star \beta=\beta \star \beta''=e.\]
Moreover, $\beta'\star \beta \star \beta''=\beta'\star e=\beta'=e\star \beta''=\beta''$,
so $\beta'=\beta''$ is an inverse of $\beta$ in $\Omega$: $\Omega$ is a group. \\

$3.\Longrightarrow 1$. The inverse bijections of $\varphi_\leftarrow$ and $\varphi_\rightarrow$ are given by
\begin{align*}
\varphi_\leftarrow^{-1}(\alpha,\beta)&=(\alpha\star \beta^{-1},\beta),&
\varphi_\rightarrow^{-1}(\alpha,\beta)&=(\beta,\beta^{-1}\star \alpha).
\end{align*}
So $\Omega$ is nondegenerate. \end{proof}

\begin{prop}
Let $\Omega=(\Omega,\star,\star,\triangleleft,\triangleright)$ be an associative semigroup. 
We assume that, either $\Omega$ is finite, or either $(\Omega,\star)$ is cancellative: for any $\alpha$, $\beta$, $\gamma\in \Omega$,
\begin{align*}
(\alpha\star \beta=\alpha \star \gamma) &\Longrightarrow (\beta=\gamma),&
(\alpha\star \gamma=\beta \star \gamma)& \Longrightarrow (\alpha=\beta).
\end{align*} 
Then $\Omega$ is nondegenerate if, and only if, the two following conditions hold:
\begin{enumerate}
\item $(\Omega,\star)$ is a group.
\item $\Omega=\EDS(\Omega,\star,\star)$.
\end{enumerate}\end{prop}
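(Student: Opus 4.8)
The plan is to treat the two implications separately; the direction ``conditions (1)--(2) $\Rightarrow$ nondegenerate'' is immediate, and all the work is in ``nondegenerate $\Rightarrow$ group''.

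For the easy direction: if $(\Omega,\star)$ is a group and $\Omega=\EDS(\Omega,\star,\star)$, then Lemma~\ref{lem7} (implication $3.\Rightarrow 1.$) states precisely that $\Omega$ is nondegenerate, so there is nothing more to do.

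For the converse, assume $\varphi_\leftarrow$ and $\varphi_\rightarrow$ are bijective. I would first prove that $(\Omega,\star)$ is a group, and then that $\triangleleft$ and $\triangleright$ are trivial. For the group statement, surjectivity of the first component of $\varphi_\leftarrow$ already gives $\Omega\star\Omega=\Omega$; the real content is to upgrade this to the two--sided divisibility $\alpha\star\Omega=\Omega=\Omega\star\alpha$ for every $\alpha$. The idea is to exploit that, by (\ref{eq15}), the map $\delta\mapsto\delta\triangleleft\gamma$ is constant on each left--multiple set $\Omega\star\beta$, and dually, by (\ref{eq14}), the map $\beta\mapsto\alpha\triangleright\beta$ is constant on each right--multiple set $\beta\star\Omega$; feeding this into the bijectivity of $\varphi_\leftarrow$ and $\varphi_\rightarrow$ together with the remaining axioms (\ref{eq16})--(\ref{eq21}) forces every fibre of $\star$ to be ``full'', i.e.\ to meet each coordinate slice, which is exactly the displayed divisibility. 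One then finishes by a standard semigroup argument: in the finite case, a finite semigroup with $\alpha\Omega=\Omega\alpha=\Omega$ for all $\alpha$ has bijective translations, hence is cancellative, hence a group; in the cancellative case, $\Omega\star\alpha=\Omega$ gives, via an idempotent $e$ with $e\star\alpha_0=\alpha_0$, a left unit and then left inverses, hence again a group. I expect this step --- extracting the divisibility of $\star$ from the full bundle of EDS axioms --- to be the main obstacle, and it is exactly where the hypothesis ``$\Omega$ finite or $\star$ cancellative'' is used.

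Once $(\Omega,\star)=:G$ is known to be a group the rest is short. Since $\beta\star\gamma$ runs over all of $G$ as $\gamma$ varies, (\ref{eq14}) forces $\alpha\triangleright\beta$ to depend only on $\alpha$; symmetrically (\ref{eq15}) forces $\alpha\triangleleft\beta$ to depend only on $\beta$. Writing $\alpha\triangleright\beta=\phi_\triangleright(\alpha)$ and $\alpha\triangleleft\beta=\phi_\triangleleft(\beta)$, the proposition on semidirect products above (equivalently, the characterization (\ref{eq26})--(\ref{eq31})) identifies $\phi_\triangleleft$ and $\phi_\triangleright$ as idempotent group endomorphisms of $G$, i.e.\ projections onto subgroups along a common kernel $K$. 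Now $\varphi_\leftarrow(\alpha,\beta)=(\alpha\star\beta,\phi_\triangleleft(\beta))$ is injective, which forces $\phi_\triangleleft$ to be injective: if $\phi_\triangleleft(\beta)=\phi_\triangleleft(\beta')$ with $\beta\neq\beta'$, choose $\alpha,\alpha'$ with $\alpha\star\beta=\alpha'\star\beta'$ to contradict injectivity. An injective idempotent endomorphism of a group is the identity, so $\phi_\triangleleft=\mathrm{Id}_G$, that is $\alpha\triangleleft\beta=\beta$; using $\varphi_\rightarrow$ in the same way gives $\alpha\triangleright\beta=\alpha$. Hence $\Omega=\EDS(\Omega,\star,\star)$, which is condition (2), and condition (1) was established first.
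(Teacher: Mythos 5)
Your easy direction and your endgame are both fine: conditions (1)--(2) give nondegeneracy by Lemma \ref{lem7}, and once $(\Omega,\star)$ is known to be a group, (\ref{eq14}) and (\ref{eq15}) do force $\alpha\triangleright\beta=\phi_\triangleright(\alpha)$ and $\alpha\triangleleft\beta=\phi_\triangleleft(\beta)$ with $\phi_\triangleleft,\phi_\triangleright$ idempotent endomorphisms, and your observation that injectivity of $\varphi_\leftarrow$ forces $\phi_\triangleleft$ to be injective, hence the identity, is correct and arguably cleaner than what the paper does at the corresponding point. The problem is the middle step. You propose to prove that $(\Omega,\star)$ is a group \emph{before} knowing anything about $\triangleleft$ and $\triangleright$, and the entire content of that step is the sentence ``feeding this into the bijectivity of $\varphi_\leftarrow$ and $\varphi_\rightarrow$ together with the remaining axioms (\ref{eq16})--(\ref{eq21}) forces every fibre of $\star$ to be full.'' That is not an argument; surjectivity of $\varphi_\leftarrow$ only gives $\Omega\star\Omega=\Omega$, and the constancy statements you extract from (\ref{eq14})--(\ref{eq15}) are constancy on the sets $\beta\star\Omega$ and $\Omega\star\beta$, whose size is exactly what you are trying to control. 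You have deferred the one genuinely hard point of the proposition, and you say so yourself.

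For comparison, the paper runs the two steps in the opposite order and supplies the missing device explicitly. It introduces the auxiliary map $\psi(\alpha,\beta,\gamma)=(\alpha\triangleright\beta,\,\beta\triangleright\gamma,\,\alpha\star\beta\star\gamma)$ and builds an explicit left inverse $\psi'$ out of $\varphi_\rightarrow^{-1}$, using the key identity (\ref{eq19}) in the form $\varphi_\rightarrow(\alpha\star\beta,\gamma)=(\alpha\star\beta\star\gamma,(\alpha\triangleright\beta)\star(\beta\triangleright\gamma))$; this proves $\psi$ injective. The hypothesis ``$\Omega$ finite or cancellative'' is used precisely here, to upgrade injectivity of $\psi$ to surjectivity (finiteness directly, cancellativity via checking $\psi\circ\psi'=\mathrm{Id}$). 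Surjectivity of the first two components of $\psi$ says every pair $(\alpha',\beta')$ is of the form $(\alpha\triangleright\beta,\beta\triangleright\gamma)$, and then the absorption identities (\ref{eq21}) and (\ref{eq17}) give $\alpha'\triangleright\beta'=\alpha'$ and $\alpha'\triangleleft\beta'=\beta'$ for \emph{all} pairs, i.e.\ $\Omega=\EDS(\Omega,\star,\star)$; only at that point does Lemma \ref{lem7} deliver the group property, since with trivial $\triangleleft$ the surjectivity of $\varphi_\leftarrow$ really does become one-sided divisibility. If you want to keep your order (group first), you would still need something playing the role of $\psi$, so as written the proposal has a genuine gap at its central step.
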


\begin{proof} $1.\Longrightarrow 2$. Let us assume that $\Omega$ is nondegenerate. We consider the map:
\begin{align*}
\psi&:\left\{\begin{array}{rcl}
\Omega^3&\longrightarrow&\Omega^3\\
(\alpha,\beta,\gamma)&\longrightarrow&(\alpha \triangleright \beta,\beta \triangleright \gamma,\alpha \star \beta \star \gamma).
\end{array}\right.
\end{align*}
Let us prove that $\psi$ is injective. We denote by $\psi_\rightarrow=(\psi_\rightarrow^1,\psi_\rightarrow^2)$ the inverse
of the bijection $\varphi_\rightarrow$. We put:
\begin{align*}
\psi'&:\left\{\begin{array}{rcl}
\Omega^3&\longrightarrow&\Omega^3\\
(\alpha,\beta,\gamma)&\longrightarrow&
(\psi_\rightarrow^1(\psi_\rightarrow^1(\gamma,\alpha\star \beta),\alpha),
\psi_\rightarrow^2(\psi_\rightarrow^1(\gamma,\alpha\star \beta),\alpha),
\psi_\rightarrow^1(\gamma,\alpha\star \beta)).
\end{array}\right.
\end{align*}
Let $\alpha,\beta,\gamma \in \Omega$. We put $\psi'\circ \psi(\alpha,\beta,\gamma)=(\alpha',\beta',\gamma')$ with:
\begin{align*}
\alpha'&=\psi_\rightarrow^1(\psi_\rightarrow^1(\alpha\star  \beta\star \gamma,
(\alpha \triangleright \beta)\star (\beta \triangleright \gamma)),\alpha\triangleright \beta),\\
\beta'&=\psi_\rightarrow^2(\psi_\rightarrow^1(\alpha\star  \beta\star \gamma,
(\alpha \triangleright \beta)\star (\beta \triangleright \gamma)),\alpha\triangleright \beta),\\
\gamma'&=\psi_\rightarrow^2(\alpha\star  \beta\star \gamma,
(\alpha \triangleright \beta)\star (\beta \triangleright \gamma)).
\end{align*}
By (\ref{eq19}):
\begin{align*}
\varphi_\rightarrow(\alpha \star \beta,\gamma)&=(\alpha\star \beta \star \gamma,
(\alpha \triangleright \beta)\star (\beta \triangleright \gamma)),
\end{align*}
Therefore:
\begin{align*}
\alpha'&=\psi_\rightarrow^1(\alpha\star  \beta,\alpha\triangleright \beta)=\alpha,&
\beta'&=\psi_\rightarrow^2(\alpha\star  \beta,\alpha\triangleright \beta)=\beta,&
\gamma'&=\gamma.
\end{align*}
So $\psi$ is injective. If $\Omega$ is finite, $\psi$ is bijective. If $\Omega$ is cancellative, let us put
$\psi'(\alpha,\beta,\gamma)=(\alpha',\beta',\gamma')$.
By definition of $\psi$, the first component of $\psi(\alpha',\beta',\gamma')$ is $\alpha$
and the third one is $\gamma$. Let us denote its second component by $\beta''=\beta'\triangleright \gamma'$.
By definition of $\psi'$:
\begin{align*}
\alpha'\star \beta'\star \gamma'&=\gamma,&
(\alpha'\star \beta')\triangleright \gamma'&=\alpha \star \beta,&
\alpha'\triangleright \beta'&=\alpha.
\end{align*}
Moreover, by (\ref{eq19}):
\[\alpha \star \beta=
(\alpha'\star \beta')\triangleright \gamma'=(\alpha'\triangleright \beta')\star(\beta'\triangleright \gamma')
=\alpha \star (\beta'\triangleright \gamma')=\alpha \star \beta''.\]
As $\Omega$ is cancellative, $\beta''=\beta$, so $\psi \circ \psi'=Id_\Omega$ and $\psi$ is surjective.\\

Consequently, if $\alpha',\beta' \in \Omega$, there exist $\alpha,\beta,\gamma\in \Omega$,
such that $\alpha'=\alpha\triangleleft \beta$ and $\beta'=\beta\triangleleft \gamma$. By (\ref{eq21}):
\[\alpha'\triangleright \beta'=\alpha'.\]
We prove similarly that $\alpha'\triangleleft \beta'=\beta'$, using (\ref{eq17}). 
By Lemma \ref{lem7}, $(\Omega,\star)$ is a group. \\

$2. \Longrightarrow 1$. The inverse implication comes from Lemma \ref{lem7}. \end{proof}

\begin{prop}\label{prop9}
Let $(H,\star)$ be a group, $K$ be a nonempty set and $\theta:K\longrightarrow H$ be a map.
We define four products on $H\times K$ in the following way:
\begin{align*}
&\forall (\alpha,\alpha'), (\beta,\beta')\in H\times K,&
(\alpha,\alpha')\leftarrow(\beta,\beta')&=(\alpha,\alpha'),\\
&&(\alpha,\alpha')\rightarrow(\beta,\beta')&=(\beta,\beta'),\\
&&(\alpha,\alpha')\triangleleft(\beta,\beta')&=(\alpha^{-1}\star \beta,\beta'),\\
&&(\alpha,\alpha')\triangleright(\beta,\beta')&=(\theta(\beta')\star \beta^{-1}\star\alpha,\alpha').
\end{align*}
This defines a nondegenerate EDS denoted by $\EDS^*(H,\star,K,\theta)$.
It is commutative if, and only if, for any $\alpha'\in K$, $\theta(\alpha')$ is the unit of $H$.  For any 
$(\alpha,\alpha'), (\beta,\beta')\in H\times K$:
\begin{align*}
(\alpha,\alpha')\curvearrowleft (\beta,\beta')&=(\alpha,\alpha'),&
(\alpha,\alpha')\blacktriangleleft (\beta,\beta')&=(\alpha\star \beta,\beta'),\\
(\alpha,\alpha')\curvearrowright (\beta,\beta')&=(\beta,\beta'),&
(\alpha,\alpha')\blacktriangleright (\beta,\beta')&=(\beta \star \theta(\beta')\star \alpha,\alpha').
\end{align*}\end{prop}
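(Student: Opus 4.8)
I would split this into three elementary computations — that it is an EDS, that it is nondegenerate, and the commutativity criterion — each made easy by the right reduction. The key observation is that $\leftarrow$ and $\rightarrow$ here are just the first and second projections of $H\times K$, so $(H\times K,\leftarrow,\rightarrow)$ is precisely $\DS(H\times K)$. Hence, by the third item of the example following Definition~\ref{defi2} (the reformulation valid for $\DS(\Omega)$), the quadruple $(H\times K,\leftarrow,\rightarrow,\triangleleft,\triangleright)$ is an EDS if and only if the four identities \eqref{eq22}, \eqref{eq23}, \eqref{eq24}, \eqref{eq25} hold, so the first step is to check exactly those four. Writing $\alpha=(a,a')$, $\beta=(b,b')$, $\gamma=(c,c')$ with $a,b,c\in H$ and $a',b',c'\in K$ and unwinding the definitions of $\triangleleft$ and $\triangleright$, the $K$-components of both sides agree at once and the $H$-components reduce to identities in the group $H$ in which the factors $\theta(\cdot)^{\pm1}$ and the group inverses cancel in pairs; for instance \eqref{eq22} becomes $(a^{-1}b)^{-1}(a^{-1}c)=b^{-1}c$ and \eqref{eq24} becomes $(\theta(c')c^{-1}a)^{-1}(\theta(c')c^{-1}b)=a^{-1}b$, and \eqref{eq23} and \eqref{eq25} are of the same nature.

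For nondegeneracy, I would write out explicitly the two maps of Definition~\ref{defi4}:
\[
\varphi_\leftarrow\big((a,a'),(b,b')\big)=\big((a,a'),(a^{-1}\star b,b')\big),\qquad
\varphi_\rightarrow\big((a,a'),(b,b')\big)=\big((b,b'),(\theta(b')\star b^{-1}\star a,a')\big).
\]
Both are manifestly bijective, since from the right-hand sides one solves back uniquely: $\varphi_\leftarrow$ is inverted by $\big((a,a'),(x,x')\big)\mapsto\big((a,a'),(a\star x,x')\big)$ and $\varphi_\rightarrow$ by $\big((b,b'),(y,y')\big)\mapsto\big((b\star\theta(b')^{-1}\star y,y'),(b,b')\big)$, and checking these are two-sided inverses is a one-line substitution. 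Thus $\EDS^*(H,\star,K,\theta)$ is nondegenerate, and the four formulas for $\curvearrowleft$, $\blacktriangleleft$, $\curvearrowright$ and $\blacktriangleright$ in the statement follow by reading off the two components of $\varphi_\leftarrow^{-1}$ and $\varphi_\rightarrow^{-1}$ against their definitions in Proposition~\ref{prop6}.

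For the commutativity criterion, I would use the description of commutative EDS from the last item of the example following Definition~\ref{defi2}: $\Omega=\Omega^{op}$ holds if and only if $\alpha\rightarrow\beta=\beta\leftarrow\alpha$ and $\alpha\triangleright\beta=\beta\triangleleft\alpha$ for all $\alpha,\beta$. The first equality is automatic, both sides returning the second argument; the second reads $\big(\theta(\beta')\star\beta^{-1}\star\alpha,\alpha'\big)=\big(\beta^{-1}\star\alpha,\alpha'\big)$ for all $(\alpha,\alpha'),(\beta,\beta')\in H\times K$, which after cancelling $\beta^{-1}\star\alpha$ on the right in $H$ is equivalent to $\theta(\beta')$ being the unit of $H$ for every $\beta'\in K$.

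There is no real conceptual obstacle here; the statement is entirely a matter of careful computation. The only point demanding attention, since $H$ need not be abelian, is bookkeeping the order of the group factors and the precise position of the $\theta(\beta')$ terms throughout the verification of \eqref{eq22}--\eqref{eq25} and the inversion of $\varphi_\leftarrow$ and $\varphi_\rightarrow$; substituting $\alpha=(a,a')$ etc.\ and cancelling from the innermost factors outward keeps this under control.
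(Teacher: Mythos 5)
Your proof is correct and follows essentially the same route as the paper: observe that $(\leftarrow,\rightarrow)=\DS(H\times K)$ so that only identities (\ref{eq22})--(\ref{eq25}) need checking, verify them componentwise in the group $H$, and exhibit explicit inverses of $\varphi_\leftarrow$ and $\varphi_\rightarrow$; you additionally spell out the commutativity criterion, which the paper's proof leaves to the reader. One remark in your favour: your inverse $\varphi_\rightarrow^{-1}\big((b,b'),(y,y')\big)=\big((b\star\theta(b')^{-1}\star y,y'),(b,b')\big)$ is the correct one --- the paper prints $\theta(\alpha')$ without the inverse, a typo --- and consequently reading off $\blacktriangleright$ from it yields $(\alpha,\alpha')\blacktriangleright(\beta,\beta')=(\beta\star\theta(\beta')^{-1}\star\alpha,\alpha')$, consistent with the computation in the proof of Proposition \ref{prop10} but differing from the displayed formula in the statement by that same inverse.
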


\begin{proof} Note that $(H\times K,\leftarrow,\rightarrow)=\DS(H\times K)$.
Direct computations prove that (\ref{eq22})-(\ref{eq25}) are satisfied. Moreover, $\varphi_\leftarrow$ and $\varphi_\rightarrow$
are bijections, which inverses are given by:
\begin{align*}
\varphi_\leftarrow^{-1}((\alpha,\alpha'),(\beta,\beta'))
&=((\alpha,\alpha'),(\alpha\star \beta,\beta')),&
\varphi_\rightarrow^{-1}((\alpha,\alpha'),(\beta,\beta'))
&=(\alpha\star \theta(\alpha')\star \beta,\beta'),(\alpha,\alpha')).
\end{align*}
So this EDS  is nondegenerate.
\end{proof}

\begin{example}\begin{enumerate}
\item If $K$ is reduced to a single element, let us denote by $\omega$ the image of this element by $\theta$.
As a set, $\EDS^*(H,\star,\Omega,\theta)$ is identified with $H$, given the products:
\begin{align*}
\alpha \leftarrow \beta&=\alpha,&\alpha \rightarrow \beta&=\beta,\\
\alpha \triangleleft \beta&=\alpha^{-1}\star \beta,&
\alpha \triangleright \beta&=\omega \star \beta^{-1}\star \alpha.
\end{align*}
This diassociative semigroup will be denoted by $\EDS^*(H,\star,\omega)$.
It is commutative if, and only if, $\omega$ is the unit of $H$. In this case, we shall simply denote it by $\EDS^*(H,\star)$. 
\item If $H$ is a null group, we identify $H\times K$ and $K$. We obtain $\EDS(K)$. 
\end{enumerate}\end{example}

\begin{prop} \label{prop10}
Let $(\Omega,\leftarrow,\rightarrow,\triangleleft,\triangleright)$ be a finite nondegenerate EDS,
such that  $(\Omega,\leftarrow,\rightarrow)=\DS(\Omega)$.
There exist a  group $(H,\star)$, a nonempty set $K$ and a map $\theta:K\longrightarrow H$
such that $\Omega$ is isomorphic to $\EDS^*(H,\star,K,\theta)$.
\end{prop}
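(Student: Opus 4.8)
The plan is to reconstruct the triple $(H,\star),K,\theta$ directly inside $\Omega$ and to exhibit an explicit isomorphism onto $\EDS^*(H,\star,K,\theta)$. Since $(\Omega,\leftarrow,\rightarrow)=\DS(\Omega)$, the defining axioms reduce to (\ref{eq22})--(\ref{eq25}), and nondegeneracy says exactly that for each $\alpha$ the map $L_\alpha:\beta\mapsto\alpha\triangleleft\beta$ is a bijection of $\Omega$ and for each $\beta$ the map $\mu_\beta:\alpha\mapsto\alpha\triangleright\beta$ is a bijection of $\Omega$. Put $\alpha\blacktriangleleft\beta:=L_\alpha^{-1}(\beta)$ and $q(\alpha):=\alpha\triangleleft\alpha$.

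From (\ref{eq22}) one gets $L_{\alpha\triangleleft\beta}\circ L_\alpha=L_\beta$, hence $L_{\alpha\blacktriangleleft\beta}=L_\beta\circ L_\alpha$ (so $\blacktriangleleft$ is associative, as in Proposition \ref{prop6}), $L_\alpha\circ L_\beta^{-1}=L_{\beta\triangleleft\alpha}$, and every $q(\alpha)$ is $\triangleleft$-idempotent; thus $E:=\{e:e\triangleleft e=e\}=q(\Omega)=\{e:L_e=\mathrm{Id}\}$, each $e\in E$ being a left unit for $\triangleleft$ and for $\blacktriangleleft$, and $G:=\{L_\alpha:\alpha\in\Omega\}$ is a subgroup of the group of permutations of $\Omega$, with $\lambda:\alpha\mapsto L_\alpha$ an anti-morphism for $\blacktriangleleft$. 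Fix $e_0\in E$ (possible since $\Omega\neq\emptyset$) and set $H:=q^{-1}(e_0)$, $\star:=\blacktriangleleft$. Using (\ref{eq22}) one checks that $H$ is $\blacktriangleleft$-stable, that $e_0$ is a two-sided unit, and that $h\triangleleft e_0$ is a $\star$-inverse of $h$; hence $(H,\star)$ is a group, with $L_h(e_0)=h^{-1}$ and $L_h^{-1}=L_{h^{-1}}$ for $h\in H$, and $\lambda|_H:H\to G$ is an anti-isomorphism (surjective since $L_\alpha=\lambda(\alpha\blacktriangleleft e_0)$ with $\alpha\blacktriangleleft e_0\in H$, injective since $h=L_h^{-1}(e_0)$ recovers $h$ from $L_h$). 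Writing $h_\alpha:=\alpha\blacktriangleleft e_0\in H$ and $e_\alpha:=q(\alpha)\in E$, the identity $\alpha=h_\alpha\blacktriangleleft e_\alpha$ (immediate from associativity and $\alpha\blacktriangleleft(\alpha\triangleleft\alpha)=\alpha$), together with a surjectivity check via (\ref{eq22}), shows that $\Psi:\alpha\mapsto(h_\alpha,e_\alpha)$ is a bijection $\Omega\to H\times E$.

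It remains to see that $\Psi$ carries the four products to those of $\EDS^*(H,\star,E,\theta)$ for a suitable $\theta:E\to H$. For $\leftarrow,\rightarrow$ there is nothing to check, both sides being $\DS$. For $\triangleleft$: (\ref{eq22}) gives $q(\alpha\triangleleft\beta)=e_\beta$, and applying $\lambda|_H$ to $L_{\alpha\triangleleft\beta}=L_\beta\circ L_\alpha^{-1}$ gives $h_{\alpha\triangleleft\beta}=h_\alpha^{-1}\star h_\beta$, i.e. $\Psi(\alpha\triangleleft\beta)=(h_\alpha^{-1}\star h_\beta,e_\beta)$, the $\EDS^*$-rule for $\triangleleft$. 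For $\triangleright$: by (\ref{eq24}) one has $L_{\alpha\triangleright\beta}\circ\mu_\beta=L_\alpha$, so $\mu_\beta\in G$ and $\mu_\beta=L_{m_\beta}$ for a unique $m_\beta\in H$, that is $\alpha\triangleright\beta=m_\beta\triangleleft\alpha$; again (\ref{eq24}) gives $q(\alpha\triangleright\beta)=e_\alpha$, and substituting $\alpha\triangleright\beta=m_\beta\triangleleft\alpha$ into (\ref{eq23}) and pushing through $\lambda|_H$ yields $m_{\alpha\triangleleft\gamma}=h_\alpha^{-1}\star m_\gamma$; in particular $m$ is onto $H$, and taking $\gamma=e_\beta$ and $\alpha=h_\beta^{-1}$ (for which $h_\beta^{-1}\triangleleft e_\beta=\beta$) gives $m_\beta=h_\beta\star m_{e_\beta}$. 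Defining $\theta(e):=m_e^{-1}$, the identity $L_{\alpha\triangleright\beta}=L_{m_\beta\triangleleft\alpha}=L_\alpha\circ L_{m_\beta}^{-1}$ pushed through $\lambda|_H$ gives $h_{\alpha\triangleright\beta}=m_\beta^{-1}\star h_\alpha=\theta(e_\beta)\star h_\beta^{-1}\star h_\alpha$, which is the $\EDS^*$-rule for $\triangleright$. Hence $\Psi$ is an isomorphism from $\Omega$ onto $\EDS^*(H,\star,E,\theta)$, which is of the required form.

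The main obstacle is the $\triangleright$-part of the last step: one must first observe that every $\mu_\beta$ lands in the group $G$ (so that the ``potential'' $m_\beta\in H$ is meaningful), and then exploit the two remaining axioms (\ref{eq23}) and (\ref{eq24}) to pin $m_\beta$ down in terms of $h_\beta$ and $e_\beta$ --- the relation $m_{\alpha\triangleleft\gamma}=h_\alpha^{-1}\star m_\gamma$ produced by (\ref{eq23}) is exactly what forces $m$ to be surjective and makes $\theta$ consistent. A steady bookkeeping hazard is that $\lambda$ is an anti-morphism, so each passage through $\lambda|_H$ reverses the order of a product; one should also note at the outset that $\Omega\neq\emptyset$ (implicit, since $\EDS^*$ has nonempty underlying set), which is what allows the choice of $e_0\in E$.
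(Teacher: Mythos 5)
Your proof is correct and follows essentially the same route as the paper's: both use the left-translation maps of $\triangleleft$ (equivalently of $\blacktriangleleft$) to produce a group of permutations of $\Omega$, split $\Omega$ as $H\times K$ with $K$ the set of elements acting trivially, and read off $\theta$ from how $\triangleright$ interacts with the unit. The differences are cosmetic --- you characterize $K$ as the set of $\triangleleft$-idempotents and $H$ as the fiber $q^{-1}(e_0)$ of $\alpha\mapsto\alpha\triangleleft\alpha$ (which coincide with the paper's $K$ and $H$), and you pin down $\triangleright$ from (\ref{eq23})--(\ref{eq24}) where the paper uses the derived relation (\ref{eq40}), but the sets and computations are the same.
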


\begin{proof} \textit{First step.} For any $\alpha,\beta \in \Omega$, $\varphi_\leftarrow (\alpha,\beta)
=(\alpha,\alpha\triangleleft \beta)$ and $\varphi_\rightarrow(\alpha,\beta)=(\beta,\alpha \triangleright \beta)$.
 With the notations of Proposition \ref{prop6}, for any $\alpha$, $\beta \in \Omega$:
\begin{align*}
\alpha \curvearrowleft \beta&=\alpha,&\alpha \curvearrowright \beta&=\beta.
\end{align*}
Moreover, for any $\alpha$, $\beta$, $\gamma\in \Omega$:
\begin{align*}
\alpha \triangleleft \beta&=\gamma\:\Longleftrightarrow \:\beta =\alpha \blacktriangleleft \gamma,&
\alpha \triangleright \beta&=\gamma\:\Longleftrightarrow \:\alpha =\gamma \blacktriangleright \beta.
\end{align*}

The relations of Proposition \ref{prop6} simplify: for any $\alpha,\beta,\gamma\in \Omega$, 
\begin{align}
\label{eq39} \alpha\blacktriangleleft(\beta\blacktriangleleft \gamma)
&=(\alpha\blacktriangleleft\beta)\blacktriangleleft \gamma,&
 \alpha\blacktriangleright(\beta\blacktriangleright \gamma)
&=(\alpha\blacktriangleright\beta)\blacktriangleright \gamma,\\
\label{eq40} (\alpha\blacktriangleleft\beta)\blacktriangleright \gamma
&=(\alpha\blacktriangleright\gamma)\blacktriangleleft \beta,&
\alpha\blacktriangleleft(\beta\blacktriangleright \gamma)
&=\beta\blacktriangleright(\alpha\blacktriangleleft \gamma).
\end{align}

\textit{Second step}. Let us study the semigroup $(\Omega,\blacktriangleleft)$. 
For any $\alpha \in \Omega$, we consider the map
\[f_\alpha:\left\{\begin{array}{rcl}
\Omega&\longrightarrow&\Omega\\
\beta&\longrightarrow&\alpha \blacktriangleleft\beta.
\end{array}\right.\]
This is an element of the symmetric group $\mathfrak{S}(\Omega)$. 
By (\ref{eq39}), for any $\alpha,\beta \in \Omega$:
\[f_\alpha \circ f_\beta=f_{\alpha \blacktriangleleft \beta}.\]
Hence, if $H'=\{f_\alpha,\alpha \in \Omega\}$, $H'$ is a sub-semigroup of $\mathfrak{S}(\Omega)$. 
As $\Omega$ is finite, this is a subgroup of $\mathfrak{S}(\Omega)$. Consequently, the following set is nonempty:
\[K=\{\alpha \in \Omega,\:f_\alpha=Id_\Omega\}
=\{\alpha \in \Omega,\: \forall \beta \in \Omega,\:\alpha \blacktriangleleft \beta=\beta\}.\]
Let us choose $e\in K$. We consider the map
\[\psi:\left\{\begin{array}{rcl}
H'&\longrightarrow&\Omega\\
f&\longrightarrow&f(e).
\end{array}\right.\]
For any $\alpha,\beta \in \Omega$, as $e\blacktriangleleft \beta=\beta$:
\[\psi(f_\alpha \circ f_\beta)=\alpha \blacktriangleleft \beta \blacktriangleleft e
=\alpha \blacktriangleleft e\blacktriangleleft \beta \blacktriangleleft e
=f_\alpha(e)\blacktriangleleft f_\beta(e).\]
So $\psi$ is a semigroup morphism. Let us assume that $\psi(f_\alpha)=\psi(f_\beta)$.
For any $\gamma \in \Omega$:
\[f_\alpha(\gamma)=\alpha \blacktriangleleft \gamma=\alpha\blacktriangleleft  e\blacktriangleleft \gamma
=\psi(f_\alpha)\blacktriangleleft \gamma=\psi(f_\beta)\blacktriangleleft \gamma=f_\beta(\gamma),\]
so $f_\alpha=f_\beta$: $\psi$ is injective. Let us denote by $H$ its image; then $H$ is a sub-semigroup of $(\Omega,\blacktriangleleft)$
and is a group, of unit $e$. For any $\beta=f_\alpha(e)\in \Omega$, for any $\gamma \in \Omega$:
\[f_\beta(\gamma)=f_\alpha(e)\blacktriangleleft \gamma=\alpha \blacktriangleleft e\blacktriangleleft \gamma
=\alpha \blacktriangleleft \gamma=f_\alpha(\gamma),\]
so $f_\beta=f_\alpha$. Hence, the inverse of $\psi$ is:
\[\psi^{-1}:\left\{\begin{array}{rcl}
H&\longrightarrow&H'\\
\beta&\longrightarrow&f_\beta.
\end{array}\right.\]
We denote by $\star$ the product of $H$: for any $\alpha,\beta\in H$, $\alpha \star \beta=\alpha\blacktriangleleft\beta$.
We define a product on $H\times K$ by:
\[(\alpha,\alpha')\blacktriangleleft (\beta,\beta')=(\alpha \star \beta,\beta').\]
Let us consider the map
\[\Theta:\left\{\begin{array}{rcl}
H\times K&\longrightarrow&\Omega\\
(\alpha,\alpha')&\longrightarrow&\alpha\blacktriangleleft\alpha'.
\end{array}\right.\]
For any $(\alpha,\alpha'),(\beta,\beta')\in H\times K$, as $\alpha'\in K$:
\begin{align*}
\Theta((\alpha,\alpha')\blacktriangleleft(\beta,\beta'))&=\Theta(\alpha\star \beta,\beta')
=\alpha \blacktriangleleft \beta \blacktriangleleft \beta'
=\alpha \blacktriangleleft \alpha' \blacktriangleleft \beta \blacktriangleleft \beta'
=\Theta(\alpha,\alpha')\blacktriangleleft\Theta(\beta,\beta').
\end{align*}
So $\Theta$ is a semigroup morphism. 

Let $(\alpha,\alpha'),(\beta,\beta')\in H\times K$, such that $\Theta(\alpha,\alpha')=\Theta(\beta,\beta')$. 
Hence, $\alpha \blacktriangleleft \alpha'=\beta \blacktriangleleft \beta'$. Therefore, as $\alpha',\beta'\in K$:
\[f_\alpha=f_\alpha \circ f_{\alpha'}=f_{\alpha \blacktriangleleft \alpha'}
=f_{\beta \blacktriangleleft \beta'}=f_\beta\circ f_{\beta'}=f_\beta.\]
As $\psi^{-1}$ is injective, $\alpha=\beta$. Consequently:
\[\alpha'=\alpha \triangleleft(\alpha \blacktriangleleft \alpha')=\beta \triangleleft(\beta \blacktriangleleft \beta')=\beta'.\]
So $\Theta$ is injective.

Let $\gamma \in \Omega$. There exists a unique $\alpha\in H$, such that $f_\alpha=f_\gamma$.
Let $\alpha^{-1}$ be the inverse of $\alpha$ in the group $H$ and $\alpha'=\alpha^{-1}\blacktriangleleft \gamma$.
Then:
\[\alpha\blacktriangleleft \alpha=\alpha\blacktriangleleft \alpha^{-1}\blacktriangleleft \gamma
=f_{\alpha \blacktriangleleft \alpha^{-1}}(\gamma)=Id_\Omega(\gamma)=\gamma.\]
Moreover, for any $\beta \in \Omega$:
\[f_{\alpha'}=f_{\alpha^{-1}}\circ f_\gamma=f_\alpha^{-1}\circ f_\gamma=f_\gamma^{-1}\circ f_\gamma=Id_\Omega,\]
so $\alpha'\in K$ and $\gamma=\Theta(\alpha,\alpha')$: $\Theta$ is surjective.\\

From now, we assume, up to an isomorphism, that $(\Omega,\blacktriangleleft)=(H\times K,\blacktriangleleft)$.
By definition of $\blacktriangleleft$, for any $(\alpha,\alpha'),(\beta,\beta')\in H\times K$,
\[(\alpha,\alpha')\triangleleft (\beta,\beta')=(\alpha^{-1}\star \beta,\beta').\]

\textit{Last step.} Let us now study the product $\blacktriangleright$. Recall that $e$ is the unit of $H$
and let us choose $\alpha \in K$. We put, for any $(\gamma,\gamma')\in H\times K$:
\[(e,a)\blacktriangleright(\gamma,\gamma')=(\iota(\gamma,\gamma'),\iota'(\gamma,\gamma')).\]
 By (\ref{eq40}), for any $(\alpha,\alpha'),(\beta,\beta')\in H\times K$:
\[ (\beta,\beta')\blacktriangleright (\gamma,\gamma')=
((e,\alpha)\blacktriangleleft (\beta,\beta'))\blacktriangleright (\gamma,\gamma')
=((e,\alpha)\blacktriangleright (\gamma,\gamma'))\blacktriangleleft(\beta,\beta')
=(\iota(\gamma,\gamma')\star \beta,\beta').\]
Still by (\ref{eq40}):
\begin{align*}
(\alpha,\alpha')\blacktriangleleft((\beta,\beta)'\blacktriangleright(\gamma,\gamma'))
&=(\alpha \star \iota(\gamma,\gamma')\star \beta,\beta')\\
&=(\beta,\beta')\blacktriangleright((\alpha,\alpha')\blacktriangleleft(\gamma,\gamma')\\
&=(\iota(\alpha\star \gamma,\gamma')\star \beta,\beta').
\end{align*}
Hence, $\iota(\alpha\star \gamma,\gamma')=\alpha \star \iota(\gamma,\gamma')$. 
We put $\theta(\alpha')=\iota(e,\alpha')^{-1}$. For any $\alpha,\alpha'\in K$, taking $\gamma=e$:
\[\iota(\alpha,\alpha')=\alpha\star \iota(e,\alpha')=\alpha \star \theta(\alpha')^{-1}.\]
Finally, for any $(\alpha,\alpha'),(\beta,\beta')\in H\times K$:
\[(\alpha,\alpha')\blacktriangleright (\beta,\beta')=(\beta \star \theta(\beta')^{-1}\star \alpha,\alpha').\]
By definition of $\blacktriangleright$, 
$(\alpha,\alpha')\triangleright (\beta,\beta')=(\theta(\beta')\star \beta^{-1}\star \alpha,\alpha')$.
So $\Omega=\EDS^*(H,\star,K,\theta)$.  \end{proof}

\subsection{Extended diassociative semigroups of cardinality two}

Let $\Omega=\{a,b\}$ be a set of cardinality two. There are 16 maps from $\Omega^2$ to $\Omega$.
Testing all possibilities with a computer, we find 13 structures of diassociative semigroups on $\Omega$,
which restrict to 8 up to isomorphism, and 45 structures of EDS on $\Omega$,
which restrict to 24 up to isomorphism. In order to describe them, 
we shall use the maps $\phi_a,\phi_b:\Omega\longrightarrow \Omega$, such that for any $\alpha \in \Omega$:
\begin{align*}
\phi_a(\alpha)&=a,&\phi_b(\alpha)&=b.
\end{align*}
We shall meet six possible products for $\triangleleft$ and $\triangleright$, denoted by:
\begin{align*}
&\begin{array}{|c|c|c|}
\hline m_a&a&b\\
\hline a&a&a\\
\hline b&a&a\\
\hline \end{array}&
&\begin{array}{|c|c|c|}
\hline m_b&a&b\\
\hline a&b&b\\
\hline b&b&b\\
\hline \end{array}&
&\begin{array}{|c|c|c|}
\hline \triangleleft_{\EDS}&a&b\\
\hline a&a&b\\
\hline b&a&b\\
\hline \end{array}&
&\begin{array}{|c|c|c|}
\hline \triangleright_{\EDS}&a&b\\
\hline a&a&a\\
\hline b&b&b\\
\hline \end{array}&\\
&&&\begin{array}{|c|c|c|}
\hline m_1&a&b\\
\hline a&a&b\\
\hline b&b&a\\
\hline \end{array}&
&\begin{array}{|c|c|c|}
\hline m_2&a&b\\
\hline a&b&a\\
\hline b&a&b\\
\hline \end{array}&
\end{align*}
\begin{description}
\item[A.]
\begin{align*}
&\begin{array}{|c|c|c|}
\hline \leftarrow_A=\rightarrow_A&a&b\\
\hline a&a&a\\
\hline b&a&a\\
\hline \end{array}
\end{align*}
This is the diassociative semigroup attached to the semigroup such that:
\begin{align*}
&\forall \alpha,\beta\in \Omega,&\alpha \star _A\beta=a.
\end{align*}
\begin{description}
\item[A1.] $(\{a,b\},\star_A,\star_A,m_a,m_a)$. 
This is $\EDS(\{a,b\},\star_A,\star_A,\phi_a,\phi_a)$.\\ It is commutative.
\item[A2.] $(\{a,b\},\star_A,\star_A,\triangleleft_{\EDS},\triangleright_{\EDS})$.
This is $\EDS(\{a,b\},\star_A,\star_A)$.  \\ It is commutative.
\end{description}

\item[B.]
\begin{align*}
&\begin{array}{|c|c|c|}
\hline \leftarrow_B&a&b\\
\hline a&a&a\\
\hline b&a&a\\
\hline \end{array}&
&\begin{array}{|c|c|c|}
\hline \rightarrow_B&a&b\\
\hline a&a&b\\
\hline b&a&b\\
\hline \end{array}&
\end{align*}
\begin{description}
\item[B1.] $(\{a,b\},\leftarrow_B,\rightarrow_B,m_a,m_a)$.
This is $\EDS(\{a,b\},\leftarrow_B,\rightarrow_B,\phi_a,\phi_a)$. \\ It is the opposite of D1.
\item[B2.] $(\{a,b\},\leftarrow_B,\rightarrow_B,\triangleleft_{\EDS},\triangleright_{\EDS})$.
This is $\EDS(\{a,b\},\leftarrow_B,\rightarrow_B)$.\\  It is the opposite of D2.   
\end{description}

\item[C.]
\begin{align*}
&\begin{array}{|c|c|c|}
\hline \leftarrow_C=\rightarrow_C&a&b\\
\hline a&a&a\\
\hline b&a&b\\
\hline \end{array}&
\end{align*}
This is the diassociative semigroup $(\Z/2\Z,\times,\times)$, with $a=\overline{0}$ and $b=\overline{1}$.
\begin{description}
\item[C1.] $(\{a,b\},\leftarrow_C,\rightarrow_C,m_a,m_a)$.
This is $\EDS(\Z/2\Z,\times,\times,\phi_a,\phi_a)$.\\  It is commutative.
\item[C2.] $(\{a,b\},\leftarrow_C,\rightarrow_C,m_a,m_b)$.
This is $\EDS(\Z/2\Z,\times,\times,\phi_a,\phi_b)$.\\  It is the opposite of C4.
\item[C3.] $(\{a,b\},\leftarrow_C,\rightarrow_C,\triangleleft_{\EDS},\triangleright_{\EDS})$.
This is $\EDS(\Z/2\Z,\times,\times)$.\\  It is commutative.
\item[C4.] $(\{a,b\},\leftarrow_C,\rightarrow_C,m_b,m_a)$.
This is $\EDS(\Z/2\Z,\times,\times,\phi_b,\phi_a)$.\\  It is the opposite of C2.
\item[C5.] $(\{a,b\},\leftarrow_C,\rightarrow_C,m_b,m_b)$.
This is $\EDS(\Z/2\Z,\times,\times,\phi_b,\phi_b)$.\\  It is commutative.
\end{description}

\item[D.]
\begin{align*}
&\begin{array}{|c|c|c|}
\hline \leftarrow_D&a&b\\
\hline a&a&a\\
\hline b&b&b\\
\hline \end{array}&
&\begin{array}{|c|c|c|}
\hline \rightarrow_D&a&b\\
\hline a&a&a\\
\hline b&a&a\\
\hline \end{array}&
\end{align*}
\begin{description}
\item[D1.] $(\{a,b\},\leftarrow_D,\rightarrow_D,m_a,m_a)$.
This is $\EDS(\{a,b\},\leftarrow_D,\rightarrow_D,\phi_a,\phi_a)$.\\ It is the opposite of B1.
\item[D2.] $(\{a,b\},\leftarrow_D,\rightarrow_D,\triangleleft_{\EDS},\triangleright_{\EDS})$.
This is $\EDS(\{a,b\},\leftarrow_D,\rightarrow_D)$.\\  It is the opposite of B2.
\end{description}

\item[E.]
\begin{align*}
&\begin{array}{|c|c|c|}
\hline \leftarrow_E=\rightarrow_E&a&b\\
\hline a&a&a\\
\hline b&b&b\\
\hline \end{array}&
\end{align*}
This is the diassociative semigroup attached to the semigroup 
such that:
\begin{align*}
&\forall\alpha,\beta \in \Omega,&\alpha \star_E \beta&=\alpha.
\end{align*}

\begin{description}
\item[E1.] $(\{a,b\},\star_E,\star_E,m_a,m_a)$.
This is $\EDS(\{a,b\},\star_E,\star_E,\phi_a,\phi_a)$.\\  It is the opposite of G1.
\item[E2.] $(\{a,b\},\star_E,\star_E,m_a,m_b)$.
This is $\EDS(\{a,b\},\star_E,\star_E,\phi_a,\phi_b)$.\\  It is the opposite of G2.
\item[E3.] $(\{a,b\},\star_E,\star_E,\rightarrow_D,\triangleleft_{\EDS},\triangleright_{\EDS})$.
This is $\EDS(\{a,b\},\star_E,\star_E)$.\\  It is the opposite of G3.
\end{description}

\item[F.]
\begin{align*}
&\begin{array}{|c|c|c|}
\hline \leftarrow_F&a&b\\
\hline a&a&a\\
\hline b&b&b\\
\hline \end{array}&
&\begin{array}{|c|c|c|}
\hline \rightarrow_F&a&b\\
\hline a&a&b\\
\hline b&a&b\\
\hline \end{array}&
\end{align*}
This is $\DS(\{a,b\})$. 
\begin{description}
\item[F1.] $(\{a,b\},\leftarrow_F,\rightarrow_F,m_a,m_a)$.
This is $\EDS(\{a,b\},\leftarrow_F,\rightarrow_F,\phi_a,\phi_a)$.\\  It is commutative.
\item[F2.] $(\{a,b\},\leftarrow_F,\rightarrow_F,m_a,m_b)$.
This is $\EDS(\{a,b\},\leftarrow_F,\rightarrow_F,\phi_a,\phi_b)$.\\  It is commutative.
\item[F3.] $(\{a,b\},\leftarrow_F,\rightarrow_F,\triangleleft_{\EDS},\triangleright_{\EDS})$.
This is $\EDS(\{a,b\})$.\\  It is commutative.
\item[F4.] $(\{a,b\},\leftarrow_F,\rightarrow_F,m_1,m_1)$.
This is $\EDS^*(\Z/2\Z,+)$, with $a=\overline{0}$ and $b=\overline{1}$.\\   It is commutative. 
\item[F5.] $(\{a,b\},\leftarrow_F,\rightarrow_F,m_1,m_2)$.
This is $\EDS^*(\Z/2\Z,+,\overline{1})$, with $a=\overline{0}$ and $b=\overline{1}$. \\  
It is not commutative, but is isomorphic to its opposite via the map permuting $a$ and $b$. 
\end{description}

\item[G.]
\begin{align*}
&\begin{array}{|c|c|c|}
\hline \leftarrow_G=\rightarrow_G&a&b\\
\hline a&a&b\\
\hline b&a&b\\
\hline \end{array}&
\end{align*}
This is the diassociative semigroup attached to the semigroup such that:
\begin{align*}
&\forall\alpha,\beta \in \Omega,& \alpha \star_G \beta&=\beta.
\end{align*}
\begin{description}
\item[G1.] $(\{a,b\},\star_G,\star_G,m_a,m_a)$.
This is $\EDS(\{a,b\},\star_G,\star_G,\phi_a,\phi_a)$.\\  It is the opposite of E1.
\item[G2.] $(\{a,b\},\star_G,\star_G,m_a,m_b)$.
This is $\EDS(\{a,b\},\star_G,\star_G,\phi_a,\phi_b)$.\\  It is the opposite of E2.
\item[G3.] $(\{a,b\},\star_G,\star_G,\rightarrow_D,\triangleleft_{\EDS},\triangleright_{\EDS})$.
This is $\EDS(\{a,b\},\star_G,\star_G)$.\\  It is the opposite of E3.
\end{description}

\item[H.]
\begin{align*}
&\begin{array}{|c|c|c|}
\hline \leftarrow_H=\rightarrow_H&a&b\\
\hline a&a&b\\
\hline b&b&a\\
\hline \end{array}&
\end{align*}
This is the diassociative semigroup attached to the group $(\Z/2\Z,+)$, with $a=\overline{0}$ and $b=\overline{1}$. 
\begin{description}
\item[H1.] $(\{a,b\},\star_H,\star_H,m_a,m_a)$.
This is $\EDS(\Z/2\Z,+,+,\phi_a,\phi_a)$.\\  It is commutative.
\item[H2.] $(\{a,b\},\star_H,\star_H,\rightarrow_D,\triangleleft_{\EDS},\triangleright_{\EDS})$.
This is $\EDS(\Z/2\Z,+,+)$.\\  It is commutative.
\end{description}
\end{description}

Only four of these EDS are nondegenerate: F3, F4, F5, H2. 

\begin{remark}
Similar computations can be done for EDS of cardinality 3.
Up to isomorphism, there are four nondegenerate EDS of cardinality 3:
\begin{align*}
&\EDS(\{a,b,c\}),&&\EDS(\Z/3\Z,+,+),&& \EDS^*(\Z/3\Z,+),&& \EDS^*(\Z/3\Z,+,\overline{1}).
\end{align*}\end{remark}

\section{$\Omega$-dendriform algebras}

\subsection{Definition and example}

\begin{defi}
Let $\Omega$ be a set with four products $\leftarrow, \rightarrow, \triangleleft, \triangleright$. 
An $\Omega$-dendriform algebra is a family $(A,(\prec_\alpha)_{\alpha \in \Omega},(\succ_\alpha)_{\alpha \in  \Omega})$
where $A$ is a vector space and $\prec_\alpha,\succ_\alpha:A\otimes A\longrightarrow A$,
such that for any $x,y,z\in A$, for any $\alpha,\beta \in \Omega$:
\begin{align}
\label{eq41} (x\prec_\alpha y)\prec_\beta z&=x\prec_{\alpha \leftarrow \beta} (y\prec_{\alpha \triangleleft \beta} z)
+x\prec_{\alpha \rightarrow \beta} (y\succ_{\alpha \triangleright \beta} z),\\
\label{eq42} x\succ_\alpha (\prec_\beta z)&=(x\succ_\alpha y)\prec_\beta z,\\
\label{eq43} x\succ_\alpha (y\succ_\beta z)&=(x\succ_{\alpha \triangleright \beta} y)\succ_{\alpha \rightarrow \beta} z
+(x\prec_{\alpha \triangleleft \beta} y)\succ_{\alpha \leftarrow \beta} z.
\end{align}
\end{defi}

\begin{example}\begin{enumerate}
\item If $(\Omega,\star)$ is a semigroup, we recover the definition of \emph{dendriform family algebra} 
\cite{ZhangGaoManchon} when we consider $\EDS(\Omega,\star,\star)$:
\begin{align*}
\alpha \leftarrow \beta&=\alpha \rightarrow \beta=\alpha \star \beta,&
\alpha \triangleleft \beta&=\beta,&\alpha \triangleright \beta&=\alpha.
\end{align*}
Note that in this case, $(\Omega,\leftarrow,\rightarrow)$ is an EDS.
\item For any set $\Omega$, considering $\EDS(\Omega)$, we recover the definition of \emph{matching dendriform algebras}
\cite{GaoGuoZhang}.
\end{enumerate}
\end{example}

\begin{remark}
Let $A$ be an $\Omega$-dendriform algebra. For any $a,b\in A$, for any $\alpha \in \Omega$, we put:
\begin{align*}
a\prec_\alpha^{op} b&=b\succ_\alpha a,&
a\succ_\alpha^{op} b&=b\prec_\alpha a.
\end{align*}
Then $(A,(\prec^{op}_\alpha)_{\alpha \in \Omega}, (\succ^{op}_\alpha)_{\alpha \in \Omega})$
is an $\Omega^{op}$-dendriform algebra, where the products of $\Omega^{op}$ are defined by:
\begin{align*}
&\forall \alpha,\beta \in \Omega,&\alpha \leftarrow^{op}\beta&=\beta \rightarrow \alpha,&
\alpha \rightarrow^{op}\beta&=\beta \leftarrow \alpha,\\
&&\alpha \triangleleft^{op}\beta&=\beta \triangleright \alpha,&
\alpha \triangleright^{op}\beta&=\beta \triangleleft \alpha.
\end{align*}
This gives the notion of commutative  $\Omega$-dendriform algebra:
\end{remark}

\begin{defi}
Let $\Omega$ be a set with four products $\leftarrow, \rightarrow, \triangleleft, \triangleright$ such that, 
for any $\alpha$, $\beta\in \Omega$:
\begin{align*}
\alpha \leftarrow\beta&=\beta \rightarrow \alpha,&\alpha \triangleleft\beta&=\beta \triangleright \alpha.
\end{align*}
Let $A$ be an $\Omega$-dendriform algebra. We shall say that $A$ is commutative if for any $\alpha$, $\beta\in \Omega$,
for any $a$, $b\in A$:
\begin{align*}
a\prec_\alpha b&=b\succ_\alpha a.
\end{align*} \end{defi}

\subsection{Structures on typed binary trees}

\begin{defi}
Let $\Omega$ be a set. 
\begin{enumerate}
\item An $\Omega$-typed binary tree is a pair $(T,\tau)$, where $T$ is a plane binary tree and
$\tau$ is a map from the set on internal edges of $T$ to $\Omega$. 
For any internal edge $e$ of $T$, $\tau(e)$ is called the type of $e$.
\item The set of $\Omega$-typed binary trees is denoted by $\calT_\Omega$. 
We denote by $\calT_\Omega^+$ the set of $\Omega$-types binary trees different from the trivial tree $\bun$.
\item For any $n\geq 0$, the set of $\Omega$-typed binary trees with $n$ internal vertices (and $n+1$ leaves) 
is denoted by $\calT_\Omega(n)$. 
\end{enumerate}
Consequently:
\begin{align*}
\calT_\Omega&=\bigsqcup_{n\geqslant 0} \calT_\Omega(n),&
\calT_\Omega^+&=\bigsqcup_{n\geqslant 1} \calT_\Omega(n).
\end{align*}\end{defi}

\begin{example}
Here are plane binary trees with $n\leq 3$ leaves:
\begin{align*}
&\bun,&&\bdeux,&&\btroisun,\btroisdeux,&&\bquatreun,\bquatredeux,\bquatretrois,\bquatrequatre,\bquatrecinq.
\end{align*}
For any $T=(T,\tau)\in \calT_\Omega$, we shall give indices to internal edges and indicate their types in this way:
\begin{align*}
&\bdtroisun(\alpha),\bdtroisdeux(\alpha),
&\bdquatreun(\alpha,\beta),\bdquatredeux(\alpha,\beta),\bdquatretrois(\alpha,\beta),\bdquatrequatre(\alpha,\beta),
\bdquatrecinq(\alpha,\beta).
\end{align*}
In all cases, the type of the internal edge $1$ is $\alpha$ and the type of the internal edge $2$ is $\beta$. 
\end{example}

\begin{defi}
Let $T_1,T_2\in \calT_\Omega$, and $\alpha,\beta\in \Omega$. We denote by 
$\displaystyle T_1 \Y{\alpha}{\beta} T_2$ the tree $T\in \calT_\Omega$ obtained by grafting $T_1$ on the left and $T_2$ and the right
on a common root. If $T_1\neq \bun$, the type of the internal edge between the root of $T$ and the root of $T_1$ is  $\alpha$.
If $T_2\neq \bun$, the type of  internal edge between the root of $T$ and the root of $T_2$ is $\beta$.
\end{defi}

\begin{example}
For example, for any $\alpha,\beta,\gamma \in \Omega$:
\begin{align*}
\bdquatreun(\alpha,\beta)&=\bdtroisun(\beta) \Y{\alpha}{\gamma}\bun,&
\bdquatredeux(\alpha,\beta)&=\bdtroisdeux(\beta) \Y{\alpha}{\gamma}\bun,\\
\bdquatretrois(\alpha,\beta)&=\bun \Y{\gamma}{\alpha} \bdtroisun(\beta),&
\bdquatrequatre(\alpha,\beta)&=\bun \Y{\gamma}{\alpha} \bdtroisdeux(\beta),\\
\bdquatrecinq(\alpha,\beta)&=\bdeux \Y{\alpha}{\beta}\bdeux.
\end{align*}\end{example}

\begin{remark}
Note that any element $T\in \calT_\Omega(n)$, with $n\geq 1$, can be written under the form
\[T=T_1\Y{\alpha}{\beta} T_2,\]
with $T_1,T_2\in \calT_\Omega$, $\alpha,\beta \in \Omega$. This writing is unique except if 
$T_1=\mid$ or $T_2=\mid$: in this case, one can change arbitrarily $\alpha$ or $\beta$.
In order to solve this notational problem, we add an element denoted by $\emptyset$ to $\Omega$
and we shall always assume that if $T_1=\mid$, then $\alpha=\emptyset$;
if $T_2=\mid$, then $\beta=\emptyset$.
\end{remark}

\begin{prop}\label{prop15}
Let $\Omega$ be a set with four products $\leftarrow, \rightarrow, \triangleleft, \triangleright$. 
We define products $\prec_\alpha$ and $\succ_\alpha$  on $\K \calT_\Omega^+$, for $\alpha\in \Omega$,
by the following recursive formulas: for any $T,T_1,T_2\in \calT_\Omega^+$, for any $\alpha,\beta,\gamma\in \Omega$,
\begin{align*}
\bdeux\prec_\alpha T&=\bun \Y{\emptyset}{\alpha} T,\\
(T_1\Y{\alpha}{\emptyset} \bun)\prec_\beta T&=T_1\Y{\alpha}{\beta} T,\\
\left(T_1\Y{\alpha}{\beta}T_2\right)\prec_\gamma T
&=T_1\Y{\alpha}{\beta\leftarrow \gamma} (T_2\prec_{\beta\triangleleft \gamma} T)
+T_1\Y{\alpha}{\beta\rightarrow \gamma} (T_2\succ_{\beta\triangleright \gamma} T),\\ \\
T\succ_\alpha \bdeux&=T\Y{\alpha}{\emptyset}\bun,\\
T\succ_\alpha (\bun \Y{\emptyset}{\beta} T_2)&=T\Y{\alpha}{\beta} T_2,\\
T\succ_\alpha \left(T_1 \Y{\beta}{\gamma}T_2\right)&=(T\succ_{\alpha\triangleright \beta}T_1)
\Y{\alpha\rightarrow \beta}{\gamma}T_2+(T\prec_{\alpha\triangleleft \beta}T_1)
\Y{\alpha\leftarrow \beta}{\gamma}T_2.
\end{align*}
The following conditions are equivalent:
\begin{enumerate}
\item With these products, $\K\calT_\Omega^+$ is the free  $\Omega$-dendriform freely generated by $\bdeux$.
\item With these products, $\K\calT_\Omega^+$ is $\Omega$-dendriform.
\item $(\Omega,\leftarrow,\rightarrow,\triangleleft,\triangleright)$ is an EDS.
\end{enumerate}
\end{prop}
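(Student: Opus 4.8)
The plan is to prove the cycle $1\Rightarrow 2\Rightarrow 3\Rightarrow 1$. The implication $1\Rightarrow 2$ is immediate: a free $\Omega$-dendriform algebra is in particular an $\Omega$-dendriform algebra.

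For $2\Rightarrow 3$, I would assume that $(\ref{eq41})$--$(\ref{eq43})$ hold in $\K\calT_\Omega^+$ and read off the fifteen EDS axioms by evaluating these relations on well-chosen small typed trees. Since $\calT_\Omega^+$ is a basis of $\K\calT_\Omega^+$, equating two such expansions forces coefficientwise equalities among composites of $\leftarrow,\rightarrow,\triangleleft,\triangleright$. Concretely, evaluating $(\ref{eq42})$ on small trees gives $(\ref{eq4})$ and $(\ref{eq5})$; evaluating $(\ref{eq41})$ on triples $(\bdeux,\bdeux,z)$ and $(\bdeux,y,\bdeux)$, with $y,z\in\{\bdtroisun(\gamma),\bdtroisdeux(\gamma)\}$, and expanding both sides with the recursive formulas, yields $(\ref{eq1})$ and $(\ref{eq6})$--$(\ref{eq9})$; the symmetric evaluation of $(\ref{eq43})$ yields $(\ref{eq3})$ and $(\ref{eq10})$--$(\ref{eq13})$, and $(\ref{eq2})$ comes out of the same computations (alternatively, the $(\ref{eq43})$-axioms follow from the $(\ref{eq41})$-ones via the mirror-image symmetry below). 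The mildly delicate point is to collect enough test trees so that every one of the fifteen axioms is actually produced, and to keep the $\emptyset$-convention for edges adjacent to $\bun$ consistent.

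For $3\Rightarrow 1$, assume $\Omega$ is an EDS. I would first show that $\prec_\alpha,\succ_\alpha$ make $\K\calT_\Omega^+$ an $\Omega$-dendriform algebra, i.e.\ that $(\ref{eq41})$--$(\ref{eq43})$ hold, by induction on the total number of internal vertices of the three trees involved: in the inductive step one expands both sides with the recursive definitions and checks that the resulting typed trees match, the matching being exactly what the EDS axioms provide. It is convenient to observe that the mirror-image involution on typed binary trees (recursively exchanging left and right subtrees, keeping edge types) relates the products $\prec_\alpha,\succ_\alpha$ on $\K\calT_\Omega^+$ to the products attached to the opposite EDS $\Omega^{op}$, exchanging $\prec$ and $\succ$; since $\Omega^{op}$ is again an EDS, relation $(\ref{eq43})$ for all EDS follows from relation $(\ref{eq41})$ for all EDS, so the core of the computation is $(\ref{eq41})$ together with the milder $(\ref{eq42})$.

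It then remains to prove the universal property. For generation, one shows by induction on the number of internal vertices that every typed binary tree lies in the subalgebra generated by $\bdeux$, using $\bdeux\prec_\beta T=\bun\Y{\emptyset}{\beta}T$, $T\succ_\alpha\bdeux=T\Y{\alpha}{\emptyset}\bun$, and hence $(T_1\succ_\alpha\bdeux)\prec_\beta T_2=T_1\Y{\alpha}{\beta}T_2$. For the morphism, given an $\Omega$-dendriform algebra $A$ and $a\in A$, define the linear map $\Phi\colon\K\calT_\Omega^+\to A$ on the tree basis by $\Phi(\bdeux)=a$, $\Phi(\bun\Y{\emptyset}{\beta}T_2)=a\prec_\beta\Phi(T_2)$, $\Phi(T_1\Y{\alpha}{\emptyset}\bun)=\Phi(T_1)\succ_\alpha a$ and $\Phi(T_1\Y{\alpha}{\beta}T_2)=(\Phi(T_1)\succ_\alpha a)\prec_\beta\Phi(T_2)$ for $T_1,T_2\in\calT_\Omega^+$; then verify $\Phi(S\prec_\gamma T)=\Phi(S)\prec_\gamma\Phi(T)$ and $\Phi(S\succ_\gamma T)=\Phi(S)\succ_\gamma\Phi(T)$ by induction on the number of internal vertices of $S$ and $T$, using the recursive product formulas on the tree side and the $\Omega$-dendriform axioms of $A$ on the target side. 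Uniqueness of $\Phi$ follows from generation, completing $3\Rightarrow 1$. I expect the main obstacle to be the inductive verification in $3\Rightarrow 1$ that $(\ref{eq41})$ holds (and, dually, $(\ref{eq43})$): it breaks into several cases according to whether each of $x,y,z$ equals $\bdeux$ and whether its subtrees are trivial, and in each case one must track precisely which composite of $\leftarrow,\rightarrow,\triangleleft,\triangleright$ occurs in order to invoke the right EDS axiom --- the same bookkeeping that, read in reverse, drives $2\Rightarrow 3$.
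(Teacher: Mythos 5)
Your proposal is correct and follows essentially the same route as the paper: the cycle $1\Rightarrow 2\Rightarrow 3\Rightarrow 1$, with $2\Rightarrow 3$ obtained by expanding the dendriform relations on small typed trees and identifying coefficients on the tree basis, $3\Rightarrow 1$ by induction on the size of the three arguments, and freeness via the morphism $\Phi(T_1\Y{\alpha}{\beta}T_2)=\Phi(T_1)\succ_\alpha a\prec_\beta\Phi(T_2)$ together with the generation/uniqueness argument. Your only deviations are cosmetic: the paper extracts all fifteen axioms from the single evaluation of (\ref{eq43}) on $x=y=\bdeux$, $z=\bdtroisun(\gamma)$ rather than several test triples, and it verifies (\ref{eq42}), (\ref{eq43}) directly instead of invoking the mirror/opposite-EDS symmetry, which is a legitimate shortcut.
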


\begin{proof}
We extend the products $\prec_\alpha$ and $\succ_\alpha$ to the space
$\K\calT_\Omega^+\otimes \K\calT_\Omega+\K\calT_\Omega\otimes \K\calT_\Omega^+$ by putting:
\begin{align*}
&\forall x\in \K\calT_\Omega^+,&
x\prec_\alpha \bun&=\bun \succ_\alpha x=x,&\bun \prec_\alpha x&=x\succ_\alpha \bun=0.
\end{align*}
By convention, we consider the added element $\emptyset$ as a unit for the four products of $\Omega$.
The definition of the products $\prec_\alpha$ and $\succ_\alpha$ can be rewritten in the following way:
for any $T\in \calT_\Omega^+$, for any $T_1,T_2\in \calT_\Omega$, for any $\alpha,\beta,\gamma \in \Omega$,
\begin{align*}
T\prec_\alpha \bun&=\bun \succ_\alpha T=T,\\
\bun \prec_\alpha T&=T\succ_\alpha \bun=0,\\
\left(T_1\Y{\alpha}{\beta}T_2\right)\prec_\gamma T
&=T_1\Y{\alpha}{\beta\leftarrow \gamma} (T_2\prec_{\beta\triangleleft \gamma} T)
+T_1\Y{\alpha}{\beta\rightarrow \gamma} (T_2\succ_{\beta\triangleright \gamma} T),\\ 
T\succ_\alpha \left(T_1 \Y{\beta}{\gamma}T_2\right)&=(T\succ_{\alpha\triangleright \beta}T_1)
\Y{\alpha\rightarrow \beta}{\gamma}T_2+(T\prec_{\alpha\triangleleft \beta}T_1)
\Y{\alpha\leftarrow \beta}{\gamma}T_2.
\end{align*}

Obviously, $1.\Longrightarrow 2$. Let us prove that $2.\Longrightarrow 3$. Let $\alpha,\beta,\gamma \in \Omega$.
For $x=y=\bdeux$ and $z=\bdtroisun(\gamma)$:
\begin{align*}
x\succ_\alpha(y\succ_\beta z)&=
\bdcinqun(\alpha \rightarrow (\beta \rightarrow \gamma), (\alpha\triangleright(\beta \rightarrow \gamma))\rightarrow
(\beta \triangleright \gamma), (\alpha \triangleright (\beta \rightarrow \gamma)) \triangleright (\beta \triangleright \gamma))\\
&+\bdcinqdeux(\alpha \rightarrow (\beta \rightarrow \gamma), (\alpha\triangleright(\beta \rightarrow \gamma))\leftarrow
(\beta \triangleright \gamma), (\alpha \triangleright (\beta \rightarrow \gamma)) \triangleleft (\beta \triangleright \gamma))\\
&+\bdcinqtrois(\alpha \leftarrow (\beta \rightarrow \gamma), \alpha \triangleleft(\beta \rightarrow \gamma),
\beta \triangleleft \gamma)\\
&+\bdcinqcinq(\alpha \rightarrow (\beta \leftarrow \gamma), \alpha \triangleright (\beta \leftarrow \gamma), 
\beta \triangleleft \gamma)\\
&+\bdcinqquatre(\alpha \leftarrow (\beta \leftarrow \gamma), \alpha \triangleleft(\beta \leftarrow \gamma),
\beta \triangleleft \gamma),\\
\\
(x\succ_{\alpha \triangleright \beta} y)\succ_{\alpha \rightarrow \beta} z&=
\bdcinqun((\alpha \rightarrow \beta)\rightarrow \gamma,(\alpha \rightarrow \beta)\triangleright \gamma,
\alpha \triangleright \beta)\\
&+\bdcinqcinq((\alpha \rightarrow \beta)\leftarrow \gamma,\alpha \triangleright \beta,
(\alpha \rightarrow \beta)\triangleleft \gamma),\\
\\
(x\prec_{\alpha \triangleleft \beta} y)\succ_{\alpha \leftarrow \beta} z&=
\bdcinqdeux((\alpha \leftarrow \beta) \rightarrow \gamma, (\alpha \leftarrow \beta)\triangleleft \gamma,
\alpha \triangleleft \beta)\\
&+\bdcinqquatre((\alpha \leftarrow \beta)\leftarrow \gamma,
(\alpha \triangleleft \beta)\leftarrow ((\alpha \leftarrow \beta)\triangleleft \gamma),
(\alpha \triangleleft \beta)\triangleleft ((\alpha \leftarrow \beta)\triangleleft \gamma))\\
&+\bdcinqtrois((\alpha \leftarrow \beta)\leftarrow \gamma,
(\alpha \triangleleft \beta)\rightarrow ((\alpha \leftarrow \beta)\triangleleft \gamma),
(\alpha \triangleleft \beta)\triangleright ((\alpha \leftarrow \beta)\triangleleft \gamma)).
\end{align*}
Identifying the decorations of the trees in these expressions,  we obtain relations (\ref{eq1})-(\ref{eq13}).\\

$3.\Longrightarrow 1$. Let us first prove that $\K\calT_\Omega^+$ is an $\Omega$-dendriform algebra.
Let us first prove relations (\ref{eq41})-(\ref{eq43}) for $x,y,z\in \calT_\Omega$ by induction
on the total number $N$ of leaves of $x$, $y$ and $z$.
Firstly, observe that (\ref{eq41}) is obviously satisfied if $x=\mid$;
(\ref{eq42}) is obviously satisfied if $y=\mid$;
(\ref{eq43}) is obviously satisfied if $z=\mid$; hence, there is nothing to prove if $N\leqslant 3$. Let us assume the result at all ranks
$<N$. Let us first prove (\ref{eq42}) for $x,y,z$. We can assume that $\displaystyle y=T_1 \Y{\gamma}{\delta} T_2$,
where $T_1,T_2\in \calT_\Omega$ and $\gamma,\delta \in \Omega \sqcup \{\emptyset\}$
($\gamma=\emptyset$ if $T_1=\bun$; $\delta=\emptyset$ if $\delta=\bun$).
Then:
\begin{align*}
(x\succ_\alpha y)\prec_\beta z&=(x\succ_{\alpha \triangleright \gamma} T_1) 
\Y{\alpha \rightarrow \gamma}{\delta\rightarrow \beta}
(T_2\succ_{\delta \triangleright \beta} T_1)
+(x\succ_{\alpha \triangleright \gamma} T_1) \Y{\alpha \rightarrow \gamma}{\delta\leftarrow \beta}
(T_2\prec_{\delta \triangleleft \beta} z)\\
&+(x\prec_{\alpha \triangleleft \gamma} T_1) \Y{\alpha \leftarrow \gamma}{\delta\rightarrow \beta}
(T_2\succ_{\delta \triangleright \beta} T_1)
+(x\prec_{\alpha \triangleleft \gamma} T_1) \Y{\alpha \leftarrow \gamma}{\delta\leftarrow \beta}
(T_2\prec_{\delta \triangleleft \beta} z)\\
&=x\succ_\alpha (y\prec_\beta z).
\end{align*}
Let us now prove (\ref{eq43}) for $x,y,z$. We can assume that $\displaystyle z=T_1\Y{\gamma}{\delta} T_2$. Then:
\begin{align*}
x\succ_\alpha(y\succ_\beta z)&=(x\succ_{\alpha\triangleright(\beta \rightarrow \gamma)} 
(y\succ_{\beta\triangleright \gamma} T_1))\Y{\alpha\rightarrow(\beta \rightarrow \gamma)}{\delta} T_2\\
&+(x\prec_{\alpha\triangleleft(\beta \rightarrow \gamma)} 
(y\succ_{\beta\triangleright \gamma} T_1))\Y{\alpha\leftarrow(\beta \rightarrow \gamma)}{\delta} T_2\\
&+(x\succ_{\alpha\triangleright(\beta \leftarrow \gamma)} 
(y\prec_{\beta\triangleleft \gamma} T_1))\Y{\alpha\rightarrow(\beta \leftarrow \gamma)}{\delta} T_2\\
&+(x\prec_{\alpha\triangleleft(\beta \leftarrow \gamma)} 
(y\prec_{\beta\triangleleft \gamma} T_1))\Y{\alpha\leftarrow(\beta \leftarrow \gamma)}{\delta} T_2,\\ 
\\
(x\succ_{\alpha\triangleright \beta} y)\succ_{\alpha \rightarrow \beta} z&=
((x\succ_{\alpha \triangleright \beta} y)\succ_{(\alpha\rightarrow \beta)\triangleright \gamma} T_1)
\Y{(\alpha \rightarrow \beta)\rightarrow \gamma}{\delta} T_2\\
&+((x\succ_{\alpha \triangleright \beta} y)\prec_{(\alpha\rightarrow \beta)\triangleleft \gamma} T_1)
\Y{(\alpha \rightarrow \beta)\leftarrow \gamma}{\delta} T_2,\\ 
\\
(x\prec_{\alpha\triangleleft \beta} y)\succ_{\alpha \leftarrow \beta} z&=
((x\prec_{\alpha \triangleleft \beta} y)\succ_{(\alpha\leftarrow \beta)\triangleright \gamma} T_1)
\Y{(\alpha \leftarrow \beta)\rightarrow \gamma}{\delta} T_2\\
&+((x\prec_{\alpha \triangleleft \beta} y)\prec_{(\alpha\leftarrow \beta)\triangleleft \gamma} T_1)
\Y{(\alpha \leftarrow \beta)\leftarrow \gamma}{\delta} T_2.
\end{align*}
Using the induction hypothesis and relations (\ref{eq1})-(\ref{eq13}),
putting $\alpha'=\alpha \triangleright(\beta \rightarrow \gamma)$ 
and $\beta'=\beta \triangleright \gamma$:
\begin{align*}
&((x\succ_{\alpha \triangleright \beta} y)\succ_{(\alpha\rightarrow \beta)\triangleright \gamma} T_1)
\Y{(\alpha \rightarrow \beta)\rightarrow \gamma}{\delta} T_2
+((x\prec_{\alpha \triangleleft \beta} y)\succ_{(\alpha\leftarrow \beta)\triangleright \gamma} T_1)
\Y{(\alpha \leftarrow \beta)\rightarrow \gamma}{\delta} T_2\\
&=(x\succ_{\alpha'}(y\succ_{\beta'} T_2))\Y{\alpha \rightarrow(\beta \rightarrow \gamma)}{\delta} T_2\\
&=(y\succ_{\beta\triangleright \gamma} T_1))\Y{\alpha\rightarrow(\beta \rightarrow \gamma)}{\delta} T_2.
\end{align*}
Similarly:
\begin{align*}
&((x\succ_{\alpha \triangleright \beta} y)\prec_{(\alpha\rightarrow \beta)\triangleleft \gamma} T_1)
\Y{(\alpha \rightarrow \beta)\leftarrow \gamma}{\delta} T_2=(x\succ_{\alpha\triangleright(\beta \leftarrow \gamma)} 
(y\prec_{\beta\triangleleft \gamma} T_1))\Y{\alpha\rightarrow(\beta \leftarrow \gamma)}{\delta} T_2,
\end{align*}
and
\begin{align*}
&(x\prec_{\alpha\triangleleft(\beta \rightarrow \gamma)} 
(y\succ_{\beta\triangleright \gamma} T_1))\Y{\alpha\leftarrow(\beta \rightarrow \gamma)}{\delta} T_2
+(x\prec_{\alpha\triangleleft(\beta \leftarrow \gamma)} 
(y\prec_{\beta\triangleleft \gamma} T_1))\Y{\alpha\leftarrow(\beta \leftarrow \gamma)}{\delta} T_2\\
&=((x\prec_{\alpha \triangleleft \beta} y)\prec_{(\alpha\leftarrow \beta)\triangleleft \gamma} T_1)
\Y{(\alpha \leftarrow \beta)\leftarrow \gamma}{\delta} T_2.
\end{align*}
So (\ref{eq43}) is satisfied for $x,y,z$. Relation (\ref{eq41}) is proved similarly. 
We obtain that $\K\calT_\Omega^+$ is $\Omega$-dendriform. \\

Let us now prove its freeness. Let $A$ be an $\Omega$-dendriform algebra and let $a\in A$.
Let us prove the existence and uniqueness of an $\Omega$-dendriform algebra morphism $\Phi$ from $\K\calT_\Omega^+$
to $A$ such that $\Phi(\bdeux)=a$.

We first extend the products of $A$ to $\K \otimes A+A\otimes \K+A\otimes A$
by putting, for any $b\in A$:
\begin{align*}
b\succ_\alpha 1=1\prec_\alpha b&=0,& 1\succ_\alpha b=b\prec_\alpha 1&=b.
\end{align*}
We then define $\Phi(T)$ for any tree $T\in \calT_\Omega$ by induction on its number of leaves:
\begin{align*}
\Phi(\bun)&=1,\\
\Phi\left(T_1\Y{\alpha}{\beta}T_2\right)&=\Phi(T_1)\succ_\alpha a\prec_\beta \Phi(T_2).
\end{align*}
Let us prove that $\Phi$ is an $\Omega$-dendriform algebra morphism. Let $x,y\in \calT_\Omega$
and let us prove that 
\begin{align*}
\Phi(x\succ_\alpha y)&=\Phi(x)\succ_\alpha \Phi(y),&\Phi(x\prec_\alpha y)&=\Phi(x)\prec_\alpha \Phi(y)
\end{align*}
by induction on the total number $N$ of leaves of $x$ and $y$. 
If $y=\bun$, then:
\begin{align*}
\Phi(x\succ_\alpha \bun)&=0=\Phi(x)\succ_\alpha 1,&
\Phi(x\prec_\alpha \bun)&=\Phi(x)=\Phi(x)\succ_\alpha 1.
\end{align*}
The proof is similar if $x=\bun$. Let us now assume that $x,y\neq \bun$. 
Let us put $\displaystyle y=T_1\Y{\beta}{\gamma} T_2$. By the induction hypothesis applied to $T_1$:
\begin{align*}
\Phi(x\succ_\alpha y)&=\Phi\left((x\succ_{\alpha \triangleright \beta} T_1)\Y{\alpha\rightarrow \beta}{\gamma}T_2\right)
+\Phi\left((x\prec_{\alpha \triangleleft \beta} T_1)\Y{\alpha\leftarrow \beta}{\gamma}T_2\right)\\
&=(\Phi(x)\succ_{\alpha \triangleright \beta} \Phi(T_1))\succ_{\alpha\rightarrow \beta} a\prec_{\gamma}\Phi(T_2)\\
&+(\Phi(x)\prec_{\alpha \triangleleft \beta} \Phi(T_1))\succ_{\alpha\leftarrow \beta}a\prec_{\gamma}\Phi(T_2)\\
&=(\Phi(x)\succ_\alpha (\Phi(T_1)\succ_\beta a))\prec_\gamma \Phi(T_2)\\
&=\Phi(x)\succ_\alpha((\Phi(T_1)\succ_\beta a\prec_\gamma\Phi(T_2))\\
&=\Phi(x)\succ_\alpha \Phi(y).
\end{align*}
Similarly, $\Phi(x\prec_\alpha y)=\Phi(x)\prec_\alpha \Phi(y)$. \\

Let us now prove the unicity of $\Phi$. Let $\Psi$ be another morphism from $\K\calT_\Omega^+$ to $A$
such that $\Psi(\bdeux)=a$. For any tree $T\neq \bun$, putting $\displaystyle T=T_1\Y{\alpha}{\beta}T_2$:
\begin{align*}
\Psi(T)&=\Psi(T_1\succ_\alpha \bdeux \prec_\beta T_2)=\Psi(T_1)\succ_\alpha a\prec_\beta \Psi(T_2),
\end{align*}
so $\Psi=\Phi$. \end{proof}

\begin{example}
Let $\alpha,\beta \in \Omega$.
\begin{align*}
\bdeux\prec_\beta \bdeux&=\bdtroisdeux(\beta),\\
\bdeux\succ_\alpha \bdeux&=\bdtroisun(\alpha),\\
\bdeux\succ_\alpha \bdtroisun(\beta)&=\bdquatreun(\alpha\rightarrow \beta, \alpha \triangleright \beta)
+\bdquatredeux(\alpha\leftarrow \beta, \alpha \triangleleft \beta),\\
\bdeux\succ_\alpha \bdtroisdeux(\beta)&=\bdquatrecinq(\alpha,\beta),\\
\bdtroisun(\alpha)\succ_\beta \bdeux&=\bdquatreun(\beta,\alpha),\\
\bdtroisdeux(\alpha)\succ_\beta \bdeux&=\bdquatredeux(\beta,\alpha).
\end{align*}
\end{example}

\begin{remark}
\begin{enumerate}
\item An easy induction proves that the $\Omega$-dendriform algebra $\K\calT_\Omega^+$ is graded:
\begin{align*}
&\forall \alpha \in \Omega,\:\forall k,l\geqslant 1,&
\K\calT_\Omega(k)\prec_\alpha \K\calT_\Omega(l)+\K\calT_\Omega(k)\succ_\alpha \K\calT_\Omega(l)
&\subseteq \K\calT_\Omega(k+l).
\end{align*}
\item Similar results can be proved for $\Omega$-typed $D$-decorated plane binary trees, that is to say
$\Omega$-typed plane binary trees given a map from the set of internal vertices to $D$.
We obtain in this way the free $\Omega$-dendriform generated by $D$.
\end{enumerate}
\end{remark}

\subsection{Structure on typed words}

\begin{defi}
Let $\Omega$ be a set and let $V$ be a vector space. The space of $\Omega$-typed words in $V$ is
\[\Sh^+_\Omega(V)=\bigoplus_{n\geqslant 1} (\K\Omega)^{\otimes (n-1)}\otimes V^{\otimes n}.\]
Tensors of $\Sh^+_\Omega(V)$ will be written in the form
\[\alpha_2\ldots \alpha_n\otimes v_1\ldots v_n,\]
where $n\geqslant 1$, $\alpha_2,\ldots,\alpha_n \in \Omega$ and $v_1,\ldots,v_n \in V$.
Such a tensor will be called an $\Omega$-typed word in $V$; its length is the integer $n$.
We also put $\Sh_\Omega(V)=\K\oplus \Sh_\Omega^+(V)$.
\end{defi}

\begin{prop}\label{prop17}
Let $\Omega$ be a set with four operations $\leftarrow,\rightarrow,\triangleleft,\triangleright$. 
For any vector space $V$, we give $\Sh_\Omega(V)$ products $\prec_\alpha$, $\succ_\alpha$, where $\alpha \in \Omega$,
inductively defined in the following way:
\begin{align*}
1\prec_\alpha \alpha_2\ldots \alpha_m\otimes v_1\ldots v_m&=\alpha_2\ldots \alpha_m\otimes v_1\ldots v_m\succ_\alpha 1=0,\\
\alpha_2\ldots \alpha_m\otimes v_1\ldots v_m\prec_\alpha 1&=
1\succ_\alpha \alpha_2\ldots \alpha_n\otimes v_1\ldots v_m=
\alpha_2\ldots \alpha_m\otimes v_1\ldots v_m,
\end{align*}
and
\begin{align*}
&\alpha_2\ldots \alpha_m\otimes v_1\ldots v_m\prec_\alpha
\beta_2\ldots \beta_n\otimes w_1\ldots w_n\\
&=((\alpha_2\rightarrow \alpha) \otimes v_1)\cdot
(\alpha_3\ldots \alpha_m \otimes v_2\ldots v_m \prec_{\alpha_2\triangleleft \alpha}
\beta_2\ldots \beta_n\otimes w_1\ldots w_n)\\
&+((\alpha_2\leftarrow \alpha) \otimes v_1)\cdot
(\alpha_3\ldots \alpha_m \otimes v_2\ldots v_m \succ_{\alpha_2\triangleright \alpha}
\beta_2\ldots \beta_n\otimes w_1\ldots w_n),\\ \\
&\alpha_2\ldots \alpha_m\otimes v_1\ldots v_m\succ_\alpha
\beta_2\ldots \beta_n\otimes w_1\ldots w_n\\
&=((\alpha\rightarrow \beta_2) \otimes w_1)\cdot
(\alpha_2\ldots \alpha_m \otimes v_1\ldots v_m \prec_{\alpha\triangleleft \beta_2}
\beta_3\ldots \beta_n\otimes w_2\ldots w_n)\\
&+((\alpha\leftarrow \beta_2) \otimes w_1)\cdot
(\alpha_2\ldots \alpha_m \otimes v_1\ldots v_m \succ_{\alpha\triangleright \beta_2}
\beta_3\ldots \beta_n\otimes w_2\ldots w_n),
\end{align*}
where $\cdot$ is the concatenation product:
\begin{align*}
\cdot&\left\{\begin{array}{rcl}
(\K\Omega \otimes V)\otimes \Sh_\Omega(V)&\longrightarrow&\Sh_\Omega^+(V)\\
(\alpha \otimes v)\otimes (\alpha_2\ldots \alpha_n\otimes v_1\ldots v_n)&\longrightarrow&
\alpha \alpha_2\ldots \alpha_n\otimes v v_1\ldots v_n.
\end{array}\right.
\end{align*}
 The following conditions are equivalent:
\begin{enumerate}
\item With these products, $\Sh_\Omega^+(V)$ is an $\Omega$-dendriform algebra for any vector space $V$.
\item $\Omega$ is an EDS.
\end{enumerate}
If this holds and if $\Omega$ is commutative, then $\Sh_\Omega^+(V)$ is the free commutative $\Omega$-dendriform
algebra generated by $V$.
\end{prop}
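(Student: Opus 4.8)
The plan is to follow the proof of Proposition~\ref{prop15} very closely for the equivalence of conditions (1) and (2), and to treat the freeness statement afterwards. First I would extend the products to all of $\Sh_\Omega(V)$ by the conventions $x\prec_\alpha 1=1\succ_\alpha x=x$ and $1\prec_\alpha x=x\succ_\alpha 1=0$ for $x\in\Sh_\Omega^+(V)$, and adjoin to $\Omega$ a symbol $\emptyset$ acting as a two-sided unit for $\leftarrow,\rightarrow,\triangleleft,\triangleright$; then a typed word of length one plays the role of $\bdeux$ and the recursive formulas acquire the uniform shape used in Proposition~\ref{prop15} (peel the first letter of the left factor for $\prec_\alpha$, of the right factor for $\succ_\alpha$).

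For the implication ``$\Omega$ is an EDS $\Rightarrow$ $\Sh_\Omega^+(V)$ is $\Omega$-dendriform for every $V$'' I would prove the identities~(\ref{eq41})--(\ref{eq43}) for typed words $x,y,z$ by a simultaneous induction on the total length of $x,y,z$; when one of the three factors is the empty word the identity is immediate from the unit conventions. In the inductive step one peels the first letter of the left factor (for~(\ref{eq41})), of the middle factor (for~(\ref{eq42})), or of the right factor (for~(\ref{eq43})), expands both sides with the recursive definition, and identifies the resulting typed words term by term: the underlying words agree because the same letter is peeled on both sides and the shorter recursive calls agree by the induction hypothesis, while the type sequences are forced to coincide by the EDS axioms~(\ref{eq1})--(\ref{eq13}). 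This is exactly the bookkeeping of Proposition~\ref{prop15}, with the five trees occurring there replaced by the corresponding shuffle terms.

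For the converse I would take $V$ with a basis containing four distinct vectors $v_1,v_2,v_3,v_4$ and evaluate~(\ref{eq43}) on $x=v_1$, $y=v_2$, $z=\gamma\otimes v_3v_4$ for arbitrary $\alpha,\beta,\gamma\in\Omega$ (the analogue of $x=y=\bdeux$, $z=\bdtroisun(\gamma)$); since typed words in distinct basis vectors are linearly independent in $\Sh_\Omega^+(V)$, comparing coefficients forces~(\ref{eq1})--(\ref{eq13}), so $\Omega$ is an EDS. Now assume moreover that $\Omega$ is commutative. One first checks that $\Sh_\Omega^+(V)$ is a commutative $\Omega$-dendriform algebra, i.e.\ $a\prec_\alpha b=b\succ_\alpha a$, by induction on the total length using the recursive formulas together with $\alpha\rightarrow\beta=\beta\leftarrow\alpha$ and $\alpha\triangleright\beta=\beta\triangleleft\alpha$. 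An immediate induction from the recursive definition also shows that every typed word is an iterated half-product of its letters,
\[\alpha_2\ldots\alpha_n\otimes v_1\ldots v_n=v_1\prec_{\alpha_2}\bigl(v_2\prec_{\alpha_3}(\cdots\prec_{\alpha_n}v_n)\bigr),\]
so a morphism out of $\Sh_\Omega^+(V)$ is determined by its restriction to $V$, which gives uniqueness.

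For the freeness itself, given a commutative $\Omega$-dendriform algebra $A$ and a linear map $f\colon V\to A$, I would \emph{define} $\Phi\colon\Sh_\Omega^+(V)\to A$ on the basis of typed words by $\Phi(\alpha_2\ldots\alpha_n\otimes v_1\ldots v_n)=f(v_1)\prec_{\alpha_2}\bigl(f(v_2)\prec_{\alpha_3}(\cdots\prec_{\alpha_n}f(v_n))\bigr)$, so that $\Phi|_V=f$ and, by construction, $\Phi$ of the word obtained by prepending a letter $v$ with type $\gamma$ to a word $u$ equals $f(v)\prec_\gamma\Phi(u)$. One then proves $\Phi(x\prec_\alpha y)=\Phi(x)\prec_\alpha\Phi(y)$ by induction on the total length: writing $x=v_1\prec_{\alpha_2}x''$ with $x''$ shorter, rewrite $x\prec_\alpha y$ via axiom~(\ref{eq41}) in $\Sh_\Omega^+(V)$, apply $\Phi$ using the construction, use the induction hypothesis on $x''$, and re-assemble via axiom~(\ref{eq41}) in $A$; the identity for $\succ_\alpha$ then follows from commutativity of $\Sh_\Omega^+(V)$ and of $A$, since $x\succ_\alpha y=y\prec_\alpha x$. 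The step I expect to be the main obstacle is, as in Proposition~\ref{prop15}, the bookkeeping in the first implication: keeping track of the towers of nested operations $\leftarrow,\rightarrow,\triangleleft,\triangleright$ produced by iterating the recursion and checking that each of~(\ref{eq1})--(\ref{eq13}) is invoked exactly where needed; a secondary subtlety is making the several inductions in the freeness part interlock, in particular establishing commutativity of $\Sh_\Omega^+(V)$ before using it for the $\succ_\alpha$-compatibility of $\Phi$.
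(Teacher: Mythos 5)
Your proposal follows the paper's proof essentially step for step: the same four-letter evaluation of (\ref{eq43}) on $x=v_1$, $y=v_2$, $z=\gamma\otimes v_3v_4$ with linear independence of the permuted words to extract (\ref{eq1})--(\ref{eq13}); the same induction on total length, peeling the first letter of the appropriate factor, for the converse; and the same construction of $\Phi$ by iterated left half-products with commutativity used to handle $\succ_\alpha$. This is correct and matches the paper's argument, including the point that commutativity of $\Sh_\Omega^+(V)$ must be established before the $\succ_\alpha$-compatibility of $\Phi$.
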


\begin{proof} Note that these products $\prec_\alpha$, $\succ_\alpha$ are defined on 
\[(\Sh_\Omega(V)\otimes\Sh_\Omega(V))^+
=\Sh_\Omega(V)\otimes \Sh_\Omega^+(V)+\Sh_\Omega^+(V)\otimes \Sh_\Omega(V).\]
$1.\Longrightarrow 2$. Let $V$ be a vector space of dimension 4 and $(v_1,v_2,v_3,v_4)$ be a basis of $V$.
Let $\alpha,\beta,\gamma\in \Omega$.
\begin{align*}
&v_1\succ_\alpha (v_2\succ_\beta \gamma\otimes v_3v_4)\\
&=\alpha \rightarrow (\beta \rightarrow \gamma)\cdot
(\alpha \triangleright(\beta \rightarrow \gamma))\rightarrow (\beta \triangleright \gamma)\cdot
(\alpha \triangleright(\beta \rightarrow \gamma))\triangleright (\beta \triangleright \gamma)
\otimes v_3v_4v_2v_1\\
&+\alpha \rightarrow (\beta \rightarrow \gamma)\cdot
(\alpha \triangleright(\beta \rightarrow \gamma))\leftarrow (\beta \triangleright \gamma)\cdot
(\alpha \triangleright(\beta \rightarrow \gamma))\triangleleft (\beta \triangleright \gamma)
\otimes v_3v_4v_1v_2\\
&+\alpha\leftarrow(\beta \rightarrow \gamma) \cdot \alpha \triangleleft(\beta \rightarrow \gamma)\cdot
\beta \triangleleft \gamma \otimes v_3v_1v_4v_2\\
&+\alpha\rightarrow(\beta\leftarrow \gamma)\cdot(\alpha \triangleright(\beta\leftarrow \gamma))
\rightarrow(\beta \triangleleft \gamma)\cdot (\alpha \triangleright(\beta\leftarrow \gamma))
\triangleright(\beta \triangleleft \gamma)\otimes v_3v_2v_4v_1\\
&+\alpha\rightarrow(\beta\leftarrow \gamma)\cdot(\alpha \triangleright(\beta\leftarrow \gamma))
\leftarrow(\beta \triangleleft \gamma)\cdot (\alpha \triangleright(\beta\leftarrow \gamma))
\triangleleft(\beta \triangleleft \gamma)\otimes v_3v_2v_1v_4\\
&+\alpha\leftarrow (\beta \leftarrow \gamma) \cdot \alpha\triangleleft(\beta\leftarrow \gamma)\cdot
\beta \triangleleft \gamma \otimes v_3v_1v_2v_4,\\
\\
&(v_1\succ_{\alpha \triangleright \beta} v_2)\succ_{\alpha \rightarrow \beta} \gamma \otimes v_3v_4\\
&=(\alpha \rightarrow \beta)\rightarrow \gamma\cdot (\alpha \rightarrow \beta) \triangleright \gamma
\cdot \alpha \triangleright \beta \otimes v_3 v_4v_2v_1\\
&+(\alpha \rightarrow \beta)\leftarrow \gamma\cdot (\alpha \triangleright \beta)\leftarrow
((\alpha \rightarrow \beta)\triangleleft \gamma)\cdot (\alpha \triangleright \beta)\triangleleft
((\alpha \rightarrow \beta)\triangleleft \gamma)\otimes v_3v_2v_1v_4\\
&+(\alpha \rightarrow \beta)\leftarrow \gamma\cdot (\alpha \triangleright \beta)\rightarrow
((\alpha \rightarrow \beta)\triangleleft \gamma)\cdot (\alpha \triangleright \beta)\triangleright
((\alpha \rightarrow \beta)\triangleleft \gamma)\otimes v_3v_2v_4v_1,\\
\\
&(v_1\prec_{\alpha \triangleleft \beta} v_2)\succ_{\alpha \leftarrow \beta} \gamma \otimes v_3v_4\\
&=(\alpha \leftarrow \beta)\rightarrow \gamma\cdot (\alpha \leftarrow \beta) \triangleright \gamma
\cdot \alpha \triangleright \beta \otimes v_3 v_4v_1v_2\\
&+(\alpha \leftarrow \beta)\rightarrow \gamma\cdot (\alpha \triangleleft \beta)\leftarrow
((\alpha \leftarrow \beta)\triangleleft \gamma)\cdot (\alpha \triangleleft \beta)\triangleleft
((\alpha \leftarrow \beta)\triangleleft \gamma)\otimes v_3v_1v_2v_4\\
&+(\alpha \leftarrow \beta)\rightarrow \gamma\cdot (\alpha \triangleleft \beta)\rightarrow
((\alpha \leftarrow \beta)\triangleleft \gamma)\cdot (\alpha \triangleleft \beta)\triangleright
((\alpha \leftarrow \beta)\triangleleft \gamma)\otimes v_3v_1v_4v_2.
\end{align*}
As the family $(v_{\sigma(1)}v_{\sigma(2)}v_{\sigma(3)}v_{\sigma(4)})_{\sigma \in \mathfrak{S}_4}$
is linearly independent, identifying in (\ref{eq43}), we obtain (\ref{eq1})-(\ref{eq13}). \\

$2.\Longrightarrow 1$. Let us prove (\ref{eq41})-(\ref{eq43}) for $x,y,z$ typed words by induction on the total
length $N$ of $x$, $y$ and $z$. If $x=1$, then (\ref{eq41}) is trivially satisfied;
If $y=1$, then (\ref{eq42}) is trivially satisfied; if $z=1$, then (\ref{eq43}) is trivially satisfied.
This proves the result if $N\leqslant 2$. We now suppose that $x,y,z\neq 1$ and let us assume the result at all ranks $<N$.
Let us put $z=\gamma \otimes v\cdot z'$. Using the induction hypothesis:
\begin{align*}
x\succ_\alpha (y\succ_\beta z)&=
\alpha \rightarrow(\beta \rightarrow \gamma) \otimes v\cdot 
(x\succ_{\alpha\triangleright (\beta \rightarrow \gamma)} (y\succ_{\beta \triangleright \gamma} z'))\\
&+\alpha \leftarrow(\beta \rightarrow \gamma) \otimes v\cdot 
(x\prec_{\alpha\triangleleft (\beta \rightarrow \gamma)} (y\succ_{\beta \triangleright \gamma} z'))\\
&+\alpha \rightarrow(\beta \leftarrow \gamma) \otimes v\cdot 
(x\succ_{\alpha\triangleright (\beta \leftarrow \gamma)} (y\prec_{\beta \triangleleft \gamma} z'))\\
&+\alpha \leftarrow(\beta \leftarrow \gamma) \otimes v\cdot 
(x\prec_{\alpha\triangleleft (\beta \leftarrow \gamma)} (y\prec_{\beta \triangleleft \gamma} z'))\\
&=\alpha \rightarrow(\beta \rightarrow \gamma) \otimes v\cdot 
(x\succ_{(\alpha \triangleright(\beta \rightarrow \gamma))\triangleright (\beta \triangleleft \gamma)} y)
\succ_{(\alpha \triangleright(\beta \rightarrow \gamma))\rightarrow (\beta \triangleleft \gamma)}z'\\
&+\alpha \rightarrow(\beta \rightarrow \gamma) \otimes v\cdot 
(x\prec_{(\alpha \triangleright(\beta \rightarrow \gamma))\triangleleft (\beta \triangleleft \gamma)} y)
\succ_{(\alpha \triangleright(\beta \rightarrow \gamma))\leftarrow (\beta \triangleleft \gamma)}z'\\
&+\alpha \rightarrow(\beta \leftarrow \gamma) \otimes v\cdot 
(x\succ_{\alpha \triangleright (\beta \leftarrow \gamma)} y)\prec_{\beta \leftarrow \gamma} z'\\
&+\alpha \leftarrow(\beta \leftarrow \gamma) \otimes v\cdot 
x \prec_{\alpha \triangleleft (\beta \leftarrow \gamma)} (y\prec_{\beta \triangleleft \gamma} z')\\
&+\alpha \leftarrow(\beta \rightarrow \gamma) \otimes v\cdot 
x \prec_{\alpha \triangleleft (\beta \rightarrow \gamma)} (y\succ_{\beta \triangleright \gamma} z').
\end{align*} 
Moreover:
\begin{align*}
(x\succ_{\alpha \triangleright \beta} y)\succ_{\alpha \rightarrow \beta} z
&=(\alpha \rightarrow \beta) \rightarrow \gamma \otimes v\cdot
(x\succ_{\alpha \triangleright \beta} y)\succ_{(\alpha \rightarrow \beta)\triangleright \gamma} z'\\
&+(\alpha \rightarrow \beta) \leftarrow \gamma \otimes v\cdot
(x\succ_{\alpha \triangleright \beta} y)\prec_{(\alpha \rightarrow \beta)\triangleleft \gamma} z';\\
\\
(x\prec_{\alpha \triangleleft \beta} y)\succ_{\alpha \leftarrow \beta} z
&=(\alpha \leftarrow \beta)\rightarrow\gamma\otimes v\cdot
(x\prec_{\alpha \triangleleft \beta} y)\succ_{(\alpha \leftarrow \beta)\triangleright \gamma} z'\\
&+(\alpha \leftarrow \beta)\leftarrow\gamma\otimes v\cdot
(x\prec_{\alpha \triangleleft \beta} y)\prec_{(\alpha \leftarrow \beta)\triangleleft \gamma} z'\\
&=(\alpha \leftarrow \beta)\rightarrow\gamma\otimes v\cdot
(x\prec_{\alpha \triangleleft \beta} y)\succ_{(\alpha \leftarrow \beta)\triangleright \gamma} z'\\
&+(\alpha \leftarrow \beta)\leftarrow \gamma\otimes v\cdot
x\prec_{(\alpha \triangleleft \beta)\leftarrow ((\alpha \leftarrow \beta)\triangleleft \gamma)}
(y\prec_{(\alpha \triangleleft \beta)\triangleleft ((\alpha \leftarrow \beta)\triangleleft \gamma)} z')\\
&+(\alpha \leftarrow \beta)\leftarrow \gamma\otimes v\cdot
x\prec_{(\alpha \triangleleft \beta)\rightarrow ((\alpha \leftarrow \beta)\triangleleft \gamma)}
(y\succ_{(\alpha \triangleleft \beta)\triangleright ((\alpha \leftarrow \beta)\triangleleft \gamma)} z').
\end{align*}
With (\ref{eq1})-(\ref{eq13}), we conclude that (\ref{eq43}) is satisfied for $x,y,z$. 
Relations (\ref{eq41}) and (\ref{eq42}) are proved in the same way. \\

Observe that:
\begin{align*}
1\prec^{op}_\alpha \alpha_2\ldots \alpha_m\otimes v_1\ldots v_m
&=\alpha_2\ldots \alpha_m\otimes v_1\ldots v_m\succ^{op}_\alpha 1=0,\\
\alpha_2\ldots \alpha_m\otimes v_1\ldots v_m\prec^{op}_\alpha 1&=
1\succ^{op}_\alpha \alpha_2\ldots \alpha_n\otimes v_1\ldots v_m=
\alpha_2\ldots \alpha_m\otimes v_1\ldots v_m,
\end{align*}
and
\begin{align*}
&\alpha_2\ldots \alpha_m\otimes v_1\ldots v_m\prec^{op}_\alpha
\beta_2\ldots \beta_n\otimes w_1\ldots w_n\\
&=((\alpha\leftarrow \alpha_2) \otimes v_1)\cdot
(\alpha_3\ldots \alpha_m \otimes v_2\ldots v_m \prec^{op}_{\alpha_2\triangleleft \alpha}
\beta_2\ldots \beta_n\otimes w_1\ldots w_n)\\
&+((\alpha\rightarrow \alpha_2) \otimes v_1)\cdot
(\alpha_3\ldots \alpha_m \otimes v_2\ldots v_m \succ^{op}_{\alpha_2\triangleright \alpha}
\beta_2\ldots \beta_n\otimes w_1\ldots w_n)\\
&=((\alpha_2\rightarrow^{op} \alpha) \otimes v_1)\cdot
(\alpha_3\ldots \alpha_m \otimes v_2\ldots v_m \prec^{op}_{\alpha_2\triangleleft \alpha}
\beta_2\ldots \beta_n\otimes w_1\ldots w_n)\\
&+((\alpha_2\leftarrow^{op} \alpha) \otimes v_1)\cdot
(\alpha_3\ldots \alpha_m \otimes v_2\ldots v_m \succ^{op}_{\alpha_2\triangleright \alpha}
\beta_2\ldots \beta_n\otimes w_1\ldots w_n),\\ \\
&\alpha_2\ldots \alpha_m\otimes v_1\ldots v_m\succ^{op}_\alpha
\beta_2\ldots \beta_n\otimes w_1\ldots w_n\\
&=((\beta_2\leftarrow \alpha) \otimes w_1)\cdot
(\alpha_2\ldots \alpha_m \otimes v_1\ldots v_m \prec^{op}_{\alpha\triangleleft \beta_2}
\beta_3\ldots \beta_n\otimes w_2\ldots w_n)\\
&+((\beta_2\rightarrow \alpha)  \otimes w_1)\cdot
(\alpha_2\ldots \alpha_m \otimes v_1\ldots v_m \succ^{op}_{\alpha\triangleright \beta_2}
\beta_3\ldots \beta_n\otimes w_2\ldots w_n)\\
&=((\alpha\rightarrow^{op} \beta_2) \otimes w_1)\cdot
(\alpha_2\ldots \alpha_m \otimes v_1\ldots v_m \prec^{op}_{\alpha\triangleleft \beta_2}
\beta_3\ldots \beta_n\otimes w_2\ldots w_n)\\
&+((\alpha\leftarrow^{op} \beta_2) \otimes w_1)\cdot
(\alpha_2\ldots \alpha_m \otimes v_1\ldots v_m \succ^{op}_{\alpha\triangleright \beta_2}
\beta_3\ldots \beta_n\otimes w_2\ldots w_n),
\end{align*}
so
\[\Sh^+_\Omega(V)^{op}=\Sh^+_{\Omega^{op}}(V).\]
In particular, if $\Omega$ is commutative, the $\Omega$-dendriform algebra $\Sh^+_\Omega(V)$
is commutative.\\

Let us assume that $\Omega$ is commutative. Let $A$ be a commutative $\Omega$-dendriform algebra
and let $\phi:V\longrightarrow A$ be any linear map. Let us prove that there exists a unique map
$\Phi:\Sh_\Omega^+(V)\longrightarrow A$ of $\Omega$-dendriform algebras such that $\Phi_{\mid V}=\phi$. \\

\textit{Existence.} We inductively define $\Phi$ by:
\begin{align*}
\Phi(v)&=\phi(v),\\
\Phi(\alpha_2\ldots \alpha_n\otimes v_1\ldots v_n)&=\phi(v_1) \prec_{\alpha_2} \Phi(\alpha_3\ldots \alpha_n\otimes
v_2\ldots v_n)\mbox{ if }n\geqslant 2.
\end{align*}
Let us prove that $\Phi(x\prec_\alpha y)=\Phi(x)\prec_\alpha \Phi(y)$ for any typed words $x$ and $y$
by induction on the total length $N$ of $x$ and $y$. If the length of $x$ is $1$:
\begin{align*}
\Phi(x \prec_\alpha y)&=\Phi(\alpha \otimes x\cdot y)=\phi(x)\prec_\alpha \Phi(y)=\Phi(x)\prec_\alpha \Phi(y).
\end{align*}
This proves the result if $N=2$. Let us assume the result at all ranks $<N$. We can restrict ourselves to the case
where the length of $x$ is not $1$. We put $x=(\beta\otimes v)\cdot x'$, with $v\in V$
and $x'$ is a typed word. Then:
\begin{align*}
\Phi(x\prec_\alpha y)&=\Phi(\beta \leftarrow \alpha \otimes v\cdot x'\prec_{\alpha_\triangleleft \beta}y)
+\Phi(\beta \rightarrow \alpha \otimes v\cdot x'\succ_{\alpha_\triangleright \beta}y)\\
&=\phi(v)\prec_{\beta \leftarrow \alpha}\Phi( x'\prec_{\alpha_\triangleleft \beta}y)
+\phi(v) \prec_{\beta \rightarrow \alpha}\Phi(x'\succ_{\alpha_\triangleright \beta}y)\\
&=\phi(v)\prec_{\beta \leftarrow \alpha}\Phi( x'\prec_{\alpha_\triangleleft \beta}y)
+\phi(v) \prec_{\beta \rightarrow \alpha}\Phi(y\prec_{\alpha_\triangleright \beta}x')\\
&=\phi(v)\prec_{\beta \leftarrow \alpha}(\Phi( x')\prec_{\alpha_\triangleleft \beta}\Phi(y))
+\phi(v) \prec_{\beta \rightarrow \alpha}(\Phi(y)\prec_{\alpha_\triangleright \beta}\Phi(x'))\\
&=\phi(v)\prec_{\beta \leftarrow \alpha}(\Phi( x')\prec_{\alpha_\triangleleft \beta}\Phi(y))
+\phi(v) \prec_{\beta \rightarrow \alpha}(\Phi(x')\succ_{\alpha_\triangleright \beta}\Phi(y))\\
&=(\phi(v)\prec_\beta \Phi(x'))\prec_\alpha \Phi(y)\\
&=\Phi(x) \prec_\alpha \Phi(y).
\end{align*}
So $\Phi$ is compatible with $\prec_\alpha$. As $A$ and $\Sh_\Omega^+(V)$ are commutative, for any $x,y\in \Sh_\Omega^+(V)$,
\[\Phi(x\succ_\alpha y)=\Phi(y\prec_\alpha x)=\Phi(y)\prec_\alpha \Phi(x)=\Phi(x)\succ_\alpha \Phi(y).\]
So $\Phi$ is a morphism of $\Omega$-dendriform algebras.\\

\textit{Unicity}. Let $\Psi$ be such a morphism. Then for any typed word $x=(\alpha \otimes v)\cdot x'$
of length $\geqslant 2$:
\begin{align*}
\Psi(x)&=\Psi(v\prec_\alpha x')=\phi(v)\prec_\alpha \Psi(x').
\end{align*}
Hence, $\Psi=\Phi$.  \end{proof}

\subsection{From $\Omega$-dendrifrom algebras to  dendriform algebras}

\begin{prop}\label{prop18}
Let $\Omega$ be an EDS and let $A$ be a vector space equipped with bilinear products
$\prec_\alpha$ and $\succ_\alpha$. We equip $\K\Omega \otimes A$ with two bilinear products
$\prec,\succ$ defined in the following way: 
\begin{align*}
&\forall\alpha,\beta \in \Omega,\: \forall x,y\in A,&
\alpha \otimes x\prec \beta \otimes y&=\alpha \leftarrow \beta \otimes x\prec_{\alpha \triangleleft \beta} y,\\
&&\alpha \otimes x\succ\beta \otimes y&=\alpha \rightarrow \beta \otimes x\succ_{\alpha \triangleright \beta} y.
\end{align*}
\begin{enumerate}
\item If $(A,(\prec_\alpha)_{\alpha \in \Omega},(\succ_\alpha)_{\alpha\in \Omega})$ is $\Omega$-dendriform,
then $(\K\Omega \otimes A,\prec,\succ)$ is dendriform.
\item If $\varphi_\leftarrow$ and $\varphi_\rightarrow$ are surjective, then the converse implication is true.
\end{enumerate}\end{prop}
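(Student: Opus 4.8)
The plan is to verify the three defining identities of a dendriform algebra for $(\K\Omega\otimes A,\prec,\succ)$ by a direct bilinear computation that factors through the $\Omega$-dendriform identities of $A$, and then to run that computation backwards under the surjectivity hypothesis.

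\textit{Forward implication.} By bilinearity it suffices to evaluate each dendriform axiom on elements $\alpha\otimes x$, $\beta\otimes y$, $\gamma\otimes z$ with $\alpha,\beta,\gamma\in\Omega$. Expanding both sides with the defining formulas, each side becomes a $\K\Omega$-labelled element of $A$ (or a sum of two such), and I would check separately that the $\K\Omega$-labels agree and that the $A$-components agree. For the labels, the three axioms produce exactly the chains $(\alpha\leftarrow\beta)\leftarrow\gamma=\alpha\leftarrow(\beta\leftarrow\gamma)=\alpha\leftarrow(\beta\rightarrow\gamma)$, then $(\alpha\rightarrow\beta)\leftarrow\gamma=\alpha\rightarrow(\beta\leftarrow\gamma)$, then $(\alpha\leftarrow\beta)\rightarrow\gamma=(\alpha\rightarrow\beta)\rightarrow\gamma=\alpha\rightarrow(\beta\rightarrow\gamma)$, i.e.\ (\ref{eq1})--(\ref{eq3}), which hold since $\Omega$ is an EDS. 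For the $A$-components one rewrites the subscripts: in axiom $1$ via (\ref{eq6})--(\ref{eq9}), so that the pair $(\alpha\triangleleft\beta,\,(\alpha\leftarrow\beta)\triangleleft\gamma)$ takes the role of $(\alpha',\beta')$; in axiom $2$ via (\ref{eq4}) and (\ref{eq5}), so that $(\alpha\triangleright\beta,\,\beta\triangleleft\gamma)$ does; in axiom $3$ via (\ref{eq10})--(\ref{eq13}), so that $(\alpha\triangleright(\beta\rightarrow\gamma),\,\beta\triangleright\gamma)$ does. After these substitutions each required identity is precisely one of (\ref{eq41}), (\ref{eq42}), (\ref{eq43}) for $A$. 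Thus the three dendriform axioms for $\K\Omega\otimes A$ correspond, one-to-one, to the three $\Omega$-dendriform axioms for $A$.

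\textit{Converse.} Assume $\varphi_\leftarrow$ and $\varphi_\rightarrow$ are surjective and $\K\Omega\otimes A$ is dendriform; I would reverse the computation above. Given $\alpha',\beta'\in\Omega$ and $x,y,z\in A$, the identity (\ref{eq41}) for $(\alpha',\beta',x,y,z)$ is the $A$-component of dendriform axiom $1$ applied to $\alpha\otimes x$, $\beta\otimes y$, $\gamma\otimes z$ for any triple with $\alpha\triangleleft\beta=\alpha'$ and $(\alpha\leftarrow\beta)\triangleleft\gamma=\beta'$; such a triple exists because surjectivity of $\varphi_\leftarrow$ makes the binary operation $\triangleleft$ surjective (read off the second coordinate), so one chooses first $\mu,\gamma$ with $\mu\triangleleft\gamma=\beta'$ and then $\alpha,\beta$ with $\varphi_\leftarrow(\alpha,\beta)=(\mu,\alpha')$. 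Symmetrically, (\ref{eq43}) comes from axiom $3$ using only surjectivity of $\varphi_\rightarrow$ (choose $\alpha_0,r$ with $\alpha_0\triangleright r=\alpha'$, then $\beta,\gamma$ with $\varphi_\rightarrow(\beta,\gamma)=(r,\beta')$). For (\ref{eq42}), from axiom $2$ one needs $\alpha\triangleright\beta=\alpha'$ and $\beta\triangleleft\gamma=\beta'$: choose $\alpha_0,r$ with $\alpha_0\triangleright r=\alpha'$ ($\triangleright$ is surjective since $\varphi_\rightarrow$ is), then $\beta,\gamma$ with $\varphi_\leftarrow(\beta,\gamma)=(r,\beta')$; now $\beta\triangleleft\gamma=\beta'$, and by (\ref{eq4}) one has $\alpha_0\triangleright\beta=\alpha_0\triangleright(\beta\leftarrow\gamma)=\alpha_0\triangleright r=\alpha'$. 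Hence (\ref{eq41})--(\ref{eq43}) all hold for $A$, so $A$ is $\Omega$-dendriform.

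The one step that is not routine is the index-realization in the converse: one must observe that surjectivity of $\varphi_\leftarrow$ and of $\varphi_\rightarrow$ is exactly what forces the three auxiliary maps $\Omega^3\to\Omega^2$ arising from the re-indexings above to be surjective, with (\ref{eq42}) being the single case that genuinely uses both hypotheses at once (together with (\ref{eq4})). Everything else is the bilinear expansion and the bookkeeping of which EDS axiom validates which fragment.
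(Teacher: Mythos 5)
Your proof is correct and follows essentially the same route as the paper: bilinear expansion on elementary tensors, matching of the $\K\Omega$-labels via the diassociativity axioms (\ref{eq1})--(\ref{eq3}), matching of the $A$-components via the re-indexings validated by (\ref{eq4})--(\ref{eq13}), and, for the converse, realization of an arbitrary index pair by a two-step choice using the surjectivity of $\varphi_\leftarrow$ and $\varphi_\rightarrow$ (the paper packages this as surjectivity of a composite map $\Omega^3\to\Omega^3$). Your explicit treatment of the middle axiom (\ref{eq42}), which is the one case needing both surjectivity hypotheses together with (\ref{eq4}), merely fills in a case the paper dismisses as ``similarly proved.''
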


\begin{proof}
Let $\alpha,\beta,\gamma \in \Omega$ and $x,y,z\in A$.
\begin{align*}
(\alpha \otimes x\prec \beta \otimes y)\prec \gamma \otimes z
&=(\alpha \rightarrow \beta)\rightarrow \gamma \otimes
(x\prec_{\alpha \triangleleft \beta} y)\prec_{(\alpha \leftarrow \beta)\triangleleft \gamma} z,\\
\alpha \otimes x\prec (\beta \otimes y\prec \gamma \otimes z+\beta \otimes y\succ \gamma \otimes z)
&=\alpha \leftarrow (\beta \leftarrow \gamma)\otimes
x\prec_{\alpha \triangleleft(\beta \leftarrow \gamma)}(y\prec_{\beta \triangleleft \gamma} z)\\
&+\alpha \leftarrow (\beta \rightarrow \gamma)\otimes
x\prec_{\alpha \triangleleft(\beta \rightarrow \gamma)}(y\succ_{\beta \triangleright \gamma} z).
\end{align*}
As $(\Omega,\leftarrow,\rightarrow)$ is diassociative,
\[(\alpha \rightarrow \beta)\rightarrow \gamma=\alpha \leftarrow (\beta \leftarrow \gamma)
=\alpha \leftarrow (\beta \rightarrow \gamma).\]

1. Let us assume that $A$ is $\Omega$-dendriform. Then, as $\Omega$ is an EDS:
\begin{align*}
(x\prec_{\alpha \triangleleft \beta} y)\prec_{(\alpha \leftarrow \beta)\triangleleft \gamma} z
&=x\prec_{(\alpha \triangleleft \beta)\leftarrow ((\alpha \leftarrow \beta)\triangleleft \gamma)}
(y\prec_{(\alpha \triangleleft \beta)\triangleleft((\alpha \leftarrow \beta)\triangleleft \gamma)}z)\\
&+x\prec_{(\alpha \triangleleft \beta)\rightarrow ((\alpha \leftarrow \beta)\triangleleft \gamma)}
(y\succ_{(\alpha \triangleleft \beta)\triangleright(((\alpha \leftarrow \beta)\triangleleft \gamma)}z)\\
&=x\prec_{\alpha \triangleleft(\beta \leftarrow \gamma)}(y\prec_{\beta \triangleleft \gamma} z)
+x\prec_{\alpha \triangleleft(\beta \rightarrow \gamma)}(y\succ_{\beta \triangleright \gamma} z).
\end{align*}
So the first dendriform relation is satisfied. The second and third ones are proved in the same way.

2. Let us assume that $\K\Omega\otimes A$ is dendriform and that $\varphi_\leftarrow$ and $\varphi_\rightarrow$ are surjective.
For any $x,y,z\in A$, for any $\alpha,\beta,\gamma \in \Omega$:
\begin{align*}
(x\prec_{\alpha \triangleleft \beta} y)\prec_{(\alpha \leftarrow \beta)\triangleleft \gamma} z
&=x\prec_{\alpha \triangleleft(\beta \leftarrow \gamma)}(y\prec_{\beta \triangleleft \gamma} z)
+x\prec_{\alpha \triangleleft(\beta \rightarrow \gamma)}(y\succ_{\beta \triangleright \gamma} z).
\end{align*}
By hypothesis, the following map is surjective:
\[\varphi'_\leftarrow:\left\{\begin{array}{rcl}
\Omega^2&\longrightarrow&\Omega^2\\
(\alpha,\beta)&\longrightarrow&(\alpha\triangleleft,\alpha\leftarrow \beta).
\end{array}\right.\]
By composition, the following map is surjective:
\[
(Id \otimes \varphi'_\leftarrow)\circ (\varphi'_\leftarrow\otimes Id):\left\{\begin{array}{rcl}
\Omega^3&\longrightarrow&\Omega^3\\
(\alpha,\beta,\gamma)&\longrightarrow&(\alpha \triangleleft\beta,(\alpha\leftarrow \beta)\triangleleft \gamma,
\alpha \leftarrow \beta \leftarrow \gamma).
\end{array}\right.\]
Let $(\alpha',\beta',\gamma')\in \Omega^3$ and let $(\alpha,\beta,\gamma)\in \Omega^3$
such that:
\begin{align*}
\alpha'&=\alpha \triangleleft\beta,&\beta'&=(\alpha\rightarrow \beta)\triangleleft \gamma,&
\gamma'&=\alpha \leftarrow \beta \leftarrow \gamma.
\end{align*}
Then, by (\ref{eq6})-(\ref{eq9}):
\begin{align*}
(x\prec_{\alpha'} y)\prec_{\beta'} z&=
x\prec_{\alpha \triangleleft(\beta \leftarrow \gamma)}(y\prec_{\beta \triangleleft \gamma} z)
+x\prec_{\alpha \triangleleft(\beta \rightarrow \gamma)}(y\succ_{\beta \triangleright \gamma} z)\\
&=x\prec_{\alpha'\leftarrow \beta'}(y\prec_{\alpha'\triangleleft \beta'} z)
+x\prec_{\alpha'\rightarrow \beta'}(y\succ_{\alpha'\triangleright \beta'} z)
\end{align*}
So the first $\Omega$-dendriform relation is satisfied. The two other ones are similarly proved.
\end{proof}

Let us now study the dendriform algebras $\K\Omega \otimes \K\calT_\Omega^+$ and $\K\Omega \otimes \Sh_\Omega^+(V)$. 

\begin{prop}\label{prop19}
Let $\Omega$ be an EDS. 
\begin{enumerate}
\item The following assertions are equivalent:
\begin{enumerate}
\item The dendriform algebra $\K\Omega \otimes \K\calT_\Omega^+$ is generated by the elements
$\alpha \otimes \bdeux$, $\alpha \in \Omega$.
\item $\varphi_\leftarrow$ and $\varphi_\rightarrow$ are surjective.
\end{enumerate}
\item The following assertions are equivalent:
\begin{enumerate}
\item The dendriform subalgebra of $\K\Omega \otimes \K\calT_\Omega^+$ generated by the elements
$\alpha \otimes \bdeux$, $\alpha \in \Omega$, is free.
\item $\varphi_\leftarrow$ and $\varphi_\rightarrow$ are injective.
\end{enumerate}
\end{enumerate}
\end{prop}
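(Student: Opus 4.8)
The strategy is to realise $B$ as the image of the canonical map from a free dendriform algebra and then show that this map is, after choosing bases, just a map of sets. Write $\mathcal{F}$ for the free dendriform algebra on the vector space $W$ with basis $(e_\alpha)_{\alpha\in\Omega}$; by the Loday--Ronco description recalled in the remark following Proposition \ref{prop15} (applied to a one-element type set), $\mathcal{F}$ has a basis indexed by plane binary trees whose internal vertices are decorated by $\Omega$. Since $\Omega$ is an EDS, $\K\Omega\otimes\K\calT_\Omega^+$ is a dendriform algebra (Propositions \ref{prop15} and \ref{prop18}), so there is a unique morphism of dendriform algebras $\pi\colon\mathcal{F}\to\K\Omega\otimes\K\calT_\Omega^+$ with $\pi(e_\alpha)=\alpha\otimes\bdeux$, and its image is exactly $B$. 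Consequently assertion (1)(a) says that $\pi$ is surjective and assertion (2)(a) says that $\pi$ is injective, and these are what we must match with surjectivity, resp. injectivity, of $\varphi_\leftarrow$ and $\varphi_\rightarrow$.

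The crucial observation is that $\pi$ sends every basis vector of $\mathcal{F}$ to a \emph{single} basis vector of $\K\Omega\otimes\K\calT_\Omega^+$. I would prove this by induction on the number of internal vertices. A decorated tree $(S,v)$ with root decoration $\lambda$ and left/right maximal subtrees $(S_\ell,v_\ell),(S_r,v_r)$ satisfies $(S,v)=\bigl((S_\ell,v_\ell)\succ e_\lambda\bigr)\prec(S_r,v_r)$ in $\mathcal{F}$, with the obvious conventions when a subtree is trivial; hence $\pi(S,v)=\bigl(\pi(S_\ell,v_\ell)\succ(\lambda\otimes\bdeux)\bigr)\prec\pi(S_r,v_r)$. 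Substituting $\pi(S_\ell,v_\ell)=\mu_\ell\otimes T_\ell$ and $\pi(S_r,v_r)=\mu_r\otimes T_r$ from the inductive hypothesis and expanding with the formulas of Propositions \ref{prop18} and \ref{prop15}, one only ever meets the \emph{single-term} evaluations $T\succ_\gamma\bdeux=T\Y{\gamma}{\emptyset}\bun$, $\bdeux\prec_\gamma T=\bun\Y{\emptyset}{\gamma}T$ and $(T\Y{\gamma}{\emptyset}\bun)\prec_\delta T'=T\Y{\gamma}{\delta}T'$, never the genuinely two-term grafting rules; so $\pi(S,v)$ comes out as one basis vector, whose underlying plane binary tree is again $S$. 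This produces a map of sets $\beta$, from $\Omega$-vertex-decorated plane binary trees to $\Omega\times\calT_\Omega^+$, such that $\pi$ is the linear extension of $\beta$; in particular $\pi$ is injective, resp. surjective, if and only if $\beta$ is injective, resp. surjective, as a map of sets.

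Finally one analyses $\beta$. The computation above gives its recursion explicitly: in the generic case (both subtrees nontrivial) $\beta(S,v)=\bigl((\mu_\ell\rightarrow\lambda)\leftarrow\mu_r\bigr)\otimes\bigl(T_\ell\Y{\mu_\ell\triangleright\lambda}{(\mu_\ell\rightarrow\lambda)\triangleleft\mu_r}T_r\bigr)$, where $\mu_\ell\otimes T_\ell=\beta(S_\ell,v_\ell)$ and $\mu_r\otimes T_r=\beta(S_r,v_r)$, and the two degenerate cases involve only $\leftarrow,\triangleleft$ (left subtree trivial) or only $\rightarrow,\triangleright$ (right subtree trivial). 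Running this recursion backwards — reconstructing $(S,v)$ from $\beta(S,v)$, or building a preimage of a prescribed $\mu\otimes(T,\tau)$ — the underlying tree $T$ is known, and at each node one must solve $\varphi_\rightarrow(\mu_\ell,\lambda)=(\zeta,a)$ and then $\varphi_\leftarrow(\zeta,\mu_r)=(\mu,b)$ for the unknowns $\lambda,\mu_\ell,\mu_r$, where $a,b$ are the decorations of the two edges below the root and $\mu$ the prescribed $\Omega$-component; thus injectivity (resp. surjectivity) of $\varphi_\leftarrow,\varphi_\rightarrow$ yields injectivity (resp. surjectivity) of $\beta$ by induction on the number of vertices. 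For the reverse implications it suffices to look in degree two, where $\pi(e_\alpha\prec e_\beta)=(\alpha\leftarrow\beta)\otimes\bdtroisdeux(\alpha\triangleleft\beta)$ and $\pi(e_\alpha\succ e_\beta)=(\alpha\rightarrow\beta)\otimes\bdtroisun(\alpha\triangleright\beta)$: a coincidence of values of $\varphi_\leftarrow$ or $\varphi_\rightarrow$ breaks injectivity of $\pi$, and a value missed by $\varphi_\leftarrow$ or $\varphi_\rightarrow$ exhibits a basis vector of $\K\Omega\otimes\K\calT_\Omega^+$ lying outside $B$. Assembling these, part (1) is the surjectivity statement and part (2) the injectivity statement.

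The step I expect to cost the most care is the basis-to-basis lemma of the second paragraph: it relies on choosing the right node decomposition of a decorated tree and then checking, term by term, that the dendriform expansion on $\K\Omega\otimes\K\calT_\Omega^+$ never triggers a two-term grafting rule — a short verification, but one that must be performed attentively in view of the conventions for the trivial tree $\bun$ and the auxiliary symbol $\emptyset$. Once it is in place, everything downstream is routine bookkeeping with the two maps $\varphi_\leftarrow$ and $\varphi_\rightarrow$.
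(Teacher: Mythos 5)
Your proposal is correct and follows essentially the same route as the paper: your basis-to-basis lemma for the canonical morphism from the free dendriform algebra on $\Omega$, with the recursion $\bigl((\mu_\ell\rightarrow\lambda)\leftarrow\mu_r\bigr)\otimes\bigl(T_\ell\Y{\mu_\ell\triangleright\lambda}{(\mu_\ell\rightarrow\lambda)\triangleleft\mu_r}T_r\bigr)$, is exactly the paper's computation of $\Theta(T)$ in part 2.(b)$\Rightarrow$(a), your backward recursion is the paper's induction on leaves for 1.(b)$\Rightarrow$(a), and your degree-two computations are the paper's arguments for both converse implications. The one point you gloss over --- that freeness of the subalgebra forces the comparison morphism $\pi$ itself to be injective, rather than merely an isomorphism onto some other free presentation --- is dispatched by the paper with the same brief appeal to the graduation, so the two arguments are on equal footing there.
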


\begin{proof} Firstly, observe that the dendriform algebra $\K\Omega \otimes \calT_\Omega^+$ is graded, 
with for any $n\geqslant 1$,
\[(\K\Omega \otimes \calT_\Omega^+)(n)=\K\Omega \otimes \calT_\Omega(n).\]

1. $(a)\Longrightarrow (b)$. Let $\alpha,\beta \in \Omega$. As $\K\Omega \otimes \calT_\Omega^+$
is graded, there exists families of scalars $(\lambda_{a,b})_{a,b\in\Omega}$ and $(\mu_{a,b})_{a,b\in\Omega}$ such that:
\begin{align*}
\alpha \otimes\bdtroisun(\beta)&=\sum_{a,b\in \Omega}\lambda_{a,b} a\otimes \bdeux \prec b\otimes\bdeux+
\sum_{a,b\in \Omega} \mu_{a,b}a\otimes\bdeux \succ b\otimes\bdeux\\
&=\sum_{a,b\in \Omega}\lambda_{a,b} a\leftarrow b\otimes \bdtroisdeux(a\triangleleft b)+
\sum_{a,b\in \Omega} \mu_{a,b}a\rightarrow b\otimes\bdtroisun(a\triangleright b).
\end{align*}
Hence, there exists $(a,b)\in \Omega^2$, such that $a\rightarrow b=\alpha$ and $a\triangleright b=\beta$:
$\varphi_\rightarrow$ is surjective. Similarly, $\varphi_\leftarrow$ is surjective.\\

1. $(b)\Longrightarrow (a)$. Let us denote by $A$ the dendriform subalgebra of $\K\Omega\otimes \K\calT_\Omega^+$
generated by the elements $\alpha\otimes \bdeux$. Let us prove that for any $\alpha \in \Omega$,
$T\in \calT_\Omega$, $\alpha \otimes T\in A$ by induction on the number $N$ of leaves of $T$.
If $N=2$, then $T=\bdeux$ and it is obvious. Otherwise, let us put $T=T_1 \Y{\beta}{\gamma}T_2$. 
By the induction hypothesis, for $i=1$ or $2$, $T_i=\bun$ or $T_i\in A$.
If $T_1\neq\bun$, let $(\alpha',\beta')\in \Omega$ such that $\varphi_\rightarrow (\alpha',\beta')=(\alpha,\beta)$. 
Then
\begin{align*}
\alpha'\otimes T_1 \succ \beta'\otimes \bun \Y{\emptyset}{\gamma}T_2&
=\alpha'\rightarrow \beta'\otimes T_1 \succ_{\alpha'\triangleright \beta'}\bun \Y{\emptyset}{\gamma}T_2\\
&=\alpha \otimes T_1\succ_\beta \bun \Y{\emptyset}{\gamma}T_2\\
&=\alpha \otimes T_1\Y{\beta}{\gamma} T_2.
\end{align*}
So $\alpha \otimes T\in A$. Similarly, if $T_2\neq \bun$, then $\alpha \otimes T \in A$.\\

2. $(a)\Longrightarrow (b)$. Because of the graduation, $A$ is freely generated by the elements
$\alpha \otimes \bdeux$, with $\alpha \in \Omega$. Let $(\alpha,\beta), (\alpha',\beta') \in \Omega^2$,
such that $\varphi_\leftarrow(\alpha,\beta)=\varphi_\leftarrow(\alpha',\beta')$.
Then:
\[\alpha \otimes \bdeux \prec \beta \otimes \bdeux=\alpha \leftarrow \beta \otimes
\bdtroisdeux(\alpha \triangleleft \beta)\alpha' \leftarrow \beta' \otimes
\bdtroisdeux(\alpha' \triangleleft \beta')=\alpha' \otimes \bdeux \prec \beta' \otimes \bdeux.\]
By freeness of $\K\Omega \otimes \K\calT_\Omega^+$, $(\alpha,\beta)=(\alpha',\beta') $, so $\varphi_\leftarrow$ is injective.
The proof is similar for $\varphi_\rightarrow$.\\

2. $(b)\Longrightarrow (a)$. Let $\mathrm{Dend}(\Omega)$ be the free dendriform algebra generated
by $\Omega$. As a vector space, it is generated by plane binary trees which internal vertices are decorated by $\Omega$.
Let $\Theta:\mathrm{Dend}(\Omega)\longrightarrow \K\Omega \otimes \K\calT_\Omega^+$ be the unique dendriform algebra
morphism sending $\alpha\in \Omega$ to $\alpha \otimes \bdeux$. Then,
for any tree $T\in \mathrm{Dend}(\Omega)$, writing it as $T=T_1\bigvee_\alpha T_2$, $\alpha$ being the decoration of the root of $T$,
let us denote
\begin{align*}
\Theta(T_1)&=\sum_i \alpha_i T_1^{(i)},&\Theta(T_2)&=\sum_j\beta_j T_2^{(j)}.
\end{align*}
Then:
\[\Theta(T)=\sum_{i,j}\alpha_i\rightarrow \alpha\leftarrow \beta_j
\otimes T_1^{(i)}\Y{\alpha_i\triangleright \alpha}{(\alpha_i\rightarrow \alpha)\triangleleft \beta_j} T_2^{(j)}.\]
We conclude that $\Theta(T)$ is a typed tree of the same form as $T$, 
with types of edges obtained from the decorations of the vertices of $T$ by the application of compositions of maps
$Id^{\otimes (i-1)}\otimes \varphi_\leftarrow \otimes Id^{\otimes (n-i)}$ 
and $Id^{\otimes (i-1)}\otimes \varphi_\rightarrow \otimes Id^{\otimes (n-i)}$.
As $\varphi_\leftarrow$ and $\varphi_\rightarrow$ are injective, $\Theta$ is injective. \end{proof}

\begin{prop}\label{prop20}
Let $\Omega$ be a commutative EDS and $V$ be a nonzero vector space. 
\begin{enumerate}
\item The following assertions are equivalent:
\begin{enumerate}
\item The dendriform algebra $\K\Omega \otimes \Sh^+_\Omega(V)$ is generated by the elements
$\alpha \otimes v$, $\alpha \in \Omega$, $v\in V$.
\item $\varphi_\leftarrow$ is  surjective.
\end{enumerate}
\item The following assertions are equivalent:
\begin{enumerate}
\item The commutative dendriform subalgebra of $\K\Omega \otimes \Sh^+_\Omega(V)$ generated by the elements
$\alpha \otimes v$, $\alpha \in \Omega$, $v\in V$, is free.
\item $\varphi_\leftarrow$ is injective.
\end{enumerate}
\end{enumerate}
\end{prop}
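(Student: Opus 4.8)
The plan is to mimic the proof of Proposition~\ref{prop19}, replacing trees by typed words and the free dendriform algebra by the free \emph{commutative} one. Three preliminary facts organize the argument. First, applying Proposition~\ref{prop17} to a one-element EDS (where $\K\Omega=\K$) identifies the free commutative dendriform algebra on a vector space $W$ with $\Sh^+(W)=\bigoplus_{n\geq 1}W^{\otimes n}$ under the usual half-shuffle products; in particular it is graded, with $W$ in degree~$1$. Second, since $\Omega$ is commutative one has $\varphi_\rightarrow=\varphi_\leftarrow\circ\tau$ with $\tau(\alpha,\beta)=(\beta,\alpha)$, so $\varphi_\rightarrow$ is surjective (resp.\ injective) iff $\varphi_\leftarrow$ is; moreover $\Sh^+_\Omega(V)$ is commutative by Proposition~\ref{prop17}, hence $\K\Omega\otimes\Sh^+_\Omega(V)$ is a commutative dendriform algebra by Proposition~\ref{prop18}, graded by word length with $\alpha\otimes v$ in degree~$1$. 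Third, I would record, from Proposition~\ref{prop18} and the base case of the recursion of Proposition~\ref{prop17}, the identities
\begin{align*}
\alpha\otimes v\prec\beta\otimes w&=(\alpha\leftarrow\beta)\otimes\bigl((\alpha\triangleleft\beta)\otimes vw\bigr),&
\alpha\otimes v\succ\beta\otimes w&=(\alpha\rightarrow\beta)\otimes\bigl((\alpha\triangleright\beta)\otimes wv\bigr),
\end{align*}
and, for a typed word $u=\beta_2\ldots\beta_n\otimes w_1\ldots w_n$,
\[\alpha\otimes v\prec\beta\otimes u=(\alpha\leftarrow\beta)\otimes\bigl((\alpha\triangleleft\beta)\,\beta_2\ldots\beta_n\otimes v\,w_1\ldots w_n\bigr),\]
i.e.\ the typed analogue of ``the first letter stays in front''.

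For part~1, I would prove $(a)\Rightarrow(b)$ in degree~$2$: fixing a basis of $V$ through a vector $e$ and projecting $V^{\otimes 2}\to\K(e\otimes e)$, the image of the degree-$2$ part of the generated subalgebra is, by the first two displayed identities, spanned by $\bigl(\mathrm{Im}\,\varphi_\leftarrow\cup\mathrm{Im}\,\varphi_\rightarrow\bigr)\otimes e\otimes e$; surjectivity of the subalgebra thus forces $\mathrm{Im}\,\varphi_\leftarrow\cup\mathrm{Im}\,\varphi_\rightarrow=\Omega^2$, hence $\mathrm{Im}\,\varphi_\leftarrow=\Omega^2$ because $\mathrm{Im}\,\varphi_\rightarrow=\mathrm{Im}\,\varphi_\leftarrow$. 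For $(b)\Rightarrow(a)$ I would show by induction on the length $n$ of $u=\beta_2\ldots\beta_n\otimes v_1\ldots v_n$ that $\alpha\otimes u$ lies in the generated subalgebra $A$ for every $\alpha$: for $n\geq 2$ pick $(a,b)$ with $\varphi_\leftarrow(a,b)=(\alpha,\beta_2)$, note that $a\otimes v_1\in A$ is a generator and $b\otimes(\beta_3\ldots\beta_n\otimes v_2\ldots v_n)\in A$ by induction, and conclude $a\otimes v_1\prec b\otimes(\beta_3\ldots\beta_n\otimes v_2\ldots v_n)=\alpha\otimes u$ by the third displayed identity — only $\varphi_\leftarrow$ is used.

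For part~2, I would identify $A$ with the image of the canonical morphism $\iota\colon\Sh^+(\K\Omega\otimes V)\to\K\Omega\otimes\Sh^+_\Omega(V)$ sending $\alpha\otimes v$ to $\alpha\otimes v$, so that ``$A$ is free'' means $\iota$ injective. For $(a)\Rightarrow(b)$: if $\iota$ is injective then the family $\{(\alpha\otimes e)\prec(\beta\otimes e)\}_{\alpha,\beta\in\Omega}$ is linearly independent in degree~$2$ (image of a basis), but $\varphi_\leftarrow(\alpha,\beta)=\varphi_\leftarrow(\alpha',\beta')$ gives $(\alpha\otimes e)\prec(\beta\otimes e)=(\alpha'\otimes e)\prec(\beta'\otimes e)$ by the first displayed identity, forcing $(\alpha,\beta)=(\alpha',\beta')$. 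For $(b)\Rightarrow(a)$: $\Sh^+(\K\Omega\otimes V)$ is spanned by the left-combed products $g_1\prec(g_2\prec(\cdots\prec g_n))$ with $g_k=\alpha_k\otimes v_{i_k}$ in a basis of $\K\Omega\otimes V$; using the displayed identities and induction, $\iota$ sends this to the single typed word $\mu\otimes(\nu_2\ldots\nu_n\otimes v_{i_1}\ldots v_{i_n})$ with $(\mu,\nu_2,\ldots,\nu_n)=\Phi_n(\alpha_1,\ldots,\alpha_n)$ and
\[\Phi_n=(\varphi_\leftarrow\otimes\mathrm{Id}^{\otimes(n-2)})\circ(\mathrm{Id}\otimes\varphi_\leftarrow\otimes\mathrm{Id}^{\otimes(n-3)})\circ\cdots\circ(\mathrm{Id}^{\otimes(n-2)}\otimes\varphi_\leftarrow).\]
If $\varphi_\leftarrow$ is injective then each factor, hence $\Phi_n$, is injective; since $\iota$ preserves the underlying word $v_{i_1}\ldots v_{i_n}$, distinct basis elements map to distinct basis typed words, so $\iota$ is injective.

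I expect the main obstacle to be the $(b)\Rightarrow(a)$ computation in part~2: pinning down the base cases of the half-shuffle recursion and checking cleanly that $\iota$ turns each left-combed basis word into a genuine typed word whose $\Omega$-decoration is exactly $\Phi_n$ of the original decoration, so that injectivity of $\varphi_\leftarrow$ propagates to injectivity of $\iota$. The remaining ingredients — the gradings, the identification of the free commutative dendriform algebra with $\Sh^+(\cdot)$ via a one-element EDS, the reduction from $\varphi_\rightarrow$ to $\varphi_\leftarrow$ via $\varphi_\rightarrow=\varphi_\leftarrow\circ\tau$, and the two inductions in part~1 — are routine and parallel Proposition~\ref{prop19}.
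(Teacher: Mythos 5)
Your proposal is correct and follows essentially the same route as the paper: degree-$2$ identification for the two "only if" directions, an induction peeling off the first letter via a $\varphi_\leftarrow$-preimage for $1(b)\Rightarrow(a)$, and the morphism from $\Sh^+(\K\Omega\otimes V)$ whose effect on decorations is the composite $(\varphi_\leftarrow\otimes\mathrm{Id}^{\otimes(n-2)})\circ\cdots\circ(\mathrm{Id}^{\otimes(n-2)}\otimes\varphi_\leftarrow)$ for $2(b)\Rightarrow(a)$. The only cosmetic differences are your explicit use of $\varphi_\rightarrow=\varphi_\leftarrow\circ\tau$ and the projection onto $\K(e\otimes e)$ in $1(a)\Rightarrow(b)$, where the paper instead absorbs $\succ$ into $\prec$ directly via commutativity.
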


\begin{proof}
1. $(a) \Longrightarrow (b)$. Let $\alpha,\beta \in \Omega$. Let us choose a nonzero element of $V$. 
Then $\alpha \otimes (\beta \otimes v)$ belongs to the dendriform subalgebra of $\K\Omega\otimes \Sh_\Omega^+(V)$.
As it is graded, it can be written under the form:
\begin{align*}
\alpha \otimes (\beta \otimes vv)&=\sum_i \alpha_i \otimes v_i \prec \beta_i\otimes w_i
=\sum_i \alpha_i\leftarrow \beta_i \otimes (\alpha_i\triangleleft \beta_i \otimes v_iw_i).
\end{align*}
where $v_i,w_i\in V$ and $\alpha_i,\beta_i\in \Omega$ for any $i$. Hence, there exists $i$, such that $\varphi_\leftarrow(\alpha_i,
\beta_i)=(\alpha,\beta)$.\\

1. $(b) \Longrightarrow (a)$. Let us assume that $\varphi_\leftarrow$ is surjective.
Let us denote by $A$ the dendriform subalgebra of $\K\Omega\otimes \Sh^+_\Omega(V)$ 
generated by the elements $\alpha \otimes v$.
Let us prove that any typed word $\alpha_2\ldots \alpha_k\otimes v_1\ldots v_k$, for any $\alpha_1\in \Omega$,
$\alpha_1\otimes (\alpha_2\ldots \alpha_k\otimes v_1\ldots v_k)$ belongs to $A$ by induction on $n$.
It is obvious if $n=1$. Otherwise, let $(\beta_1,\beta_2)\in \Omega^2$, such that $\varphi(\beta_1,\beta_2)=(\alpha_1,\alpha_2)$.
Then:
\begin{align*}
\beta_1\otimes v_1\prec \beta_2\otimes (\alpha_3\ldots \alpha_k\otimes v_2\ldots v_k)
&=\beta_1\leftarrow \beta_2 \otimes (v_1 \prec_{\beta_1\triangleleft \beta_2} \alpha_3\ldots \alpha_k\otimes v_2\ldots v_k)\\
&=\alpha_1\otimes (v_1 \prec_{\alpha_2} \alpha_3\ldots \alpha_k\otimes v_2\ldots v_k)\\
&=\alpha_1\otimes (\alpha_2\ldots \alpha_k\otimes v_1\ldots v_k).
\end{align*}
By the induction hypothesis, this belongs to $A$.\\

2. $(a)\Longrightarrow (b)$. Let $\alpha,\beta,\alpha',\beta'\in \Omega$ such that $\varphi_\leftarrow(\alpha,\beta)
=\varphi_\leftarrow(\alpha',\beta')$. Let $v\in V$, nonzero. Then:
\begin{align*}
\alpha\otimes v\prec \beta \otimes v&=\alpha \leftarrow \beta \otimes (\alpha \triangleleft \beta \otimes vv)
=\alpha' \leftarrow \beta' \otimes (\alpha' \triangleleft \beta' \otimes vv)=\alpha'\otimes v\prec \beta' \otimes v.
\end{align*}
By freeness of $\K\Omega \otimes \Sh_\Omega^+(V)$, $(\alpha,\beta)=(\alpha',\beta')$.\\

3. $(b)\Longrightarrow (a)$. Recall that the free commutative dendriform algebra generated by
$\K\Omega \otimes V$ is the shuffle algebra $\Sh^+(\K\Omega \otimes V)$, with the usual half-shuffle product. 
Hence, there exists a dendriform algebra morphism
$\Phi:\Sh^+(\K\Omega \otimes V)\longrightarrow \K\Omega\otimes \Sh^+_\Omega(V)$, sending $\alpha \otimes v$
to itself. For any $\alpha_1,\ldots,\alpha_n\in \Omega$, $v_1,\ldots,v_n \in V$,  in $\Sh^+(V)$:
\[(\alpha_1\otimes v_1)\ldots (\alpha_n\otimes v_n)
=(\alpha_1\otimes v_1)\prec \left((\alpha_2\otimes v_2)\ldots (\alpha_n\otimes v_n)\right).\]
Hence, an easy induction allows to prove that
\begin{align*}
\Psi((\alpha_1\otimes v_1)\ldots (\alpha_n\otimes v_n))&=(\varphi_\leftarrow \otimes Id^{\otimes (n-2)})\circ \ldots
\circ (Id^{\otimes (n-2)}\otimes \varphi_\leftarrow)(\alpha_1\ldots \alpha_n)\otimes v_1 \ldots v_n.
\end{align*}
As $\varphi_\leftarrow$ is injective, $\Psi$ is injective. \end{proof}

\section{Operad of $\Omega$-dendriform algebras}

We fix in this section an EDS $(\Omega,\leftarrow,\rightarrow,\triangleleft,\triangleright)$.

\subsection{Combinatorial description of the operad}

Let us denote by $\calP_\Omega$ the (nonsymmetric) operad of $\Omega$-dendriform algebras.
It is generated by elements $\prec_\alpha$, $\succ_\alpha \in \calP(2)$, with $\alpha \in \Omega$, and the relations:
\begin{align*}
&\forall \alpha,\beta \in \Omega,&
\prec_\beta \circ (\prec_\alpha,I)&=\prec_{\alpha \leftarrow \beta}\circ(I,\prec_{\alpha \triangleleft \beta})
+\prec_{\alpha \rightarrow \beta}\circ(I,\succ_{\alpha \triangleright \beta}),\\
&&\succ_\alpha \circ (I,\prec_\beta)&=\prec_\beta \circ (\succ_\alpha,I),\\
&&\succ_\alpha\circ (I,\succ_\beta)&=\succ_{\alpha\rightarrow \beta}\circ (\succ_{\alpha \triangleright \beta},I)
+\succ_{\alpha\leftarrow \beta}\circ (\prec_{\alpha \triangleleft \beta},I).
\end{align*}
As we know from Proposition \ref{prop15} a combinatorial description of the free $\Omega$-dendriform algebra
on one generator, we obtain a combinatorial description of this operad:
\begin{align*}
&\forall n \geqslant 1,&\calP_\Omega(n)&=\K\calT_\Omega(n).
\end{align*}
The composition is given by the actions of the products of $\K\calT_\Omega^+$. In particular:
\begin{align*}
I&=\bdeux,&\prec_\alpha&=\bdeux \prec_\alpha \bdeux=\bdtroisdeux(\alpha),&
\succ_\alpha&=\bdeux \succ_\alpha \bdeux=\bdtroisun(\alpha).
\end{align*}
The operadic composition can be inductively computed with the help of the following formula:
\[T_1\Y{\alpha}{\beta} T_2\circ (T'_1,\ldots,T'_k)
=T_1\circ (T'_1,\ldots,T'_i) \succ_\alpha T'_{i+1} \prec_\beta T_2\circ (T'_{i+2},\ldots,T'_k),\]
where $T_1$ is a tree with $i$ internal vertices, $T_2$ is a tree with  $k-i-1$ internal vertices, 
and $T'_1,\ldots,T'_k$ are trees. 

\begin{example} Here are examples of operadic compositions:
\begin{align*}
\prec_\alpha\circ (\prec_\beta,I)&=\bdtroisdeux(\beta)\prec_\alpha \bdeux
=\bdquatrequatre(\beta \leftarrow \alpha,\beta \triangleleft \alpha)+
\bdquatretrois(\beta \rightarrow \alpha,\beta \triangleright \alpha),\\
\prec_\alpha\circ (I,\prec_\beta)&= \bdeux\prec_\alpha\bdtroisdeux(\beta)=\bdquatrequatre(\alpha,\beta),\\
\succ_\alpha\circ(\prec_\beta,I)&=\bdtroisdeux(\beta)\succ_\alpha \bdeux
=\bdquatredeux(\alpha,\beta),\\
\succ_\alpha\circ(I,\prec_\beta)&=\bdeux\succ_\alpha \bdtroisdeux(\beta)
=\bdquatrecinq(\alpha,\beta),\\
\prec_\alpha\circ (\succ_\beta,I)&=\bdtroisun(\beta)\prec_\alpha \bdeux
=\bdquatrecinq(\beta,\alpha),\\
\prec_\alpha\circ (I,\succ_\beta)&= \bdeux\prec_\alpha\bdtroisun(\beta)=\bdquatretrois(\alpha,\beta),\\
\succ_\alpha\circ(\succ_\beta,I)&=\bdtroisun(\beta)\succ_\alpha \bdeux
=\bdquatreun(\alpha,\beta),\\
\succ_\alpha\circ(I,\succ_\beta)&=\bdeux\succ_\alpha \bdtroisun(\beta)
=\bdquatredeux(\alpha\leftarrow \beta,\alpha \triangleleft \beta)+
\bdquatreun(\alpha \rightarrow \beta,\alpha \triangleright \beta).
\end{align*}\end{example}

\subsection{Associative products}

\begin{prop} \label{prop21}
Let $m\in \calP_\Omega(2)$, written under the form
\begin{align*}
m&=\sum_{\alpha \in \Omega} a_\alpha \prec_\alpha+\sum_{\alpha \in \Omega} b_\alpha \succ_\alpha.
\end{align*}
Then $m\circ (I,m)=m\circ(m,I)$ if, and only if, for any $\alpha,\beta\in \Omega$:
\begin{align}
\label{eq44}
b_\alpha b_\beta&=\sum_{\varphi_\rightarrow(\alpha',\beta')=(\alpha,\beta)}b_{\alpha'}b_{\beta'},&
a_\alpha a_\beta&=\sum_{\varphi_\leftarrow(\alpha',\beta')=(\alpha,\beta)}a_{\alpha'}a_{\beta'},\\
\nonumber b_\alpha a_\beta&=\sum_{\varphi_\leftarrow(\alpha',\beta')=(\alpha,\beta)}b_{\alpha'}b_{\beta'},&
a_\alpha b_\beta&=\sum_{\varphi_\rightarrow(\alpha',\beta')=(\alpha,\beta)}a_{\alpha'}a_{\beta'}.
\end{align}
\end{prop}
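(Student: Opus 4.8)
The plan is to expand both $m\circ(m,I)$ and $m\circ(I,m)$ as elements of $\calP_\Omega(3)=\K\calT_\Omega(3)$ using the operadic composition rules, and then match coefficients of the basis of $\calT_\Omega(3)$. First I would record the eight elementary compositions $\prec_\alpha\circ(\prec_\beta,I)$, $\prec_\alpha\circ(I,\prec_\beta)$, $\succ_\alpha\circ(\prec_\beta,I)$, $\succ_\alpha\circ(I,\prec_\beta)$, $\prec_\alpha\circ(\succ_\beta,I)$, $\prec_\alpha\circ(I,\succ_\beta)$, $\succ_\alpha\circ(\succ_\beta,I)$, $\succ_\alpha\circ(I,\succ_\beta)$, which are exactly the list given in the Example just before the statement. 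Note that among these eight, six produce a single typed tree while two, namely $\prec_\alpha\circ(\prec_\beta,I)$ and $\succ_\alpha\circ(I,\succ_\beta)$, produce a sum of two typed trees, the second summand involving the maps $\varphi_\leftarrow$ and $\varphi_\rightarrow$.

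Next I would compute
\[
m\circ(I,m)=\sum_{\alpha,\beta}a_\alpha a_\beta\,\prec_\alpha\circ(I,\prec_\beta)
+\sum_{\alpha,\beta}a_\alpha b_\beta\,\prec_\alpha\circ(I,\succ_\beta)
+\sum_{\alpha,\beta}b_\alpha a_\beta\,\succ_\alpha\circ(I,\prec_\beta)
+\sum_{\alpha,\beta}b_\alpha b_\beta\,\succ_\alpha\circ(I,\succ_\beta),
\]
and similarly
\[
m\circ(m,I)=\sum_{\alpha,\beta}a_\alpha a_\beta\,\prec_\alpha\circ(\prec_\beta,I)
+\sum_{\alpha,\beta}a_\alpha b_\beta\,\prec_\alpha\circ(\succ_\beta,I)
+\sum_{\alpha,\beta}b_\alpha a_\beta\,\succ_\alpha\circ(\prec_\beta,I)
+\sum_{\alpha,\beta}b_\alpha b_\beta\,\succ_\alpha\circ(\succ_\beta,I).
\]
Substituting the explicit trees, the five trees $\bdquatreun$, $\bdquatredeux$, $\bdquatretrois$, $\bdquatrequatre$, $\bdquatrecinq$ (with all possible typings) appear on both sides. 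I would group the contributions tree-by-tree. The tree $\bdquatrecinq(\alpha,\beta)$ collects $b_\alpha a_\beta$ from $\succ_\alpha\circ(I,\prec_\beta)$ on the left and $a_\beta b_\alpha$ from $\prec_\beta\circ(\succ_\alpha,I)$ on the right, giving an automatic identity; similarly $\bdquatretrois(\alpha,\beta)$ compares $a_\alpha b_\beta$ (from $\prec_\alpha\circ(I,\succ_\beta)$) with the $\varphi_\rightarrow$-twisted contribution of $a_{\alpha'}a_{\beta'}$ coming from the second summand of $\prec\circ(\prec,I)$; this yields the relation $a_\alpha b_\beta=\sum_{\varphi_\rightarrow(\alpha',\beta')=(\alpha,\beta)}a_{\alpha'}a_{\beta'}$. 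In the same way $\bdquatredeux$ gives $b_\alpha a_\beta=\sum_{\varphi_\leftarrow(\alpha',\beta')=(\alpha,\beta)}b_{\alpha'}b_{\beta'}$ (from the $\varphi_\leftarrow$-twisted second summand of $\succ\circ(I,\succ)$), the tree $\bdquatrequatre$ gives $a_\alpha a_\beta=\sum_{\varphi_\leftarrow(\alpha',\beta')=(\alpha,\beta)}a_{\alpha'}a_{\beta'}$, and the tree $\bdquatreun$ gives $b_\alpha b_\beta=\sum_{\varphi_\rightarrow(\alpha',\beta')=(\alpha,\beta)}b_{\alpha'}b_{\beta'}$. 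Summing over $(\alpha,\beta)$ in each group shows each coefficient equation is both necessary and sufficient, proving the equivalence.

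The main obstacle is purely bookkeeping: one must be careful that the typing parameters $(\alpha',\beta')$ appearing in the two-term compositions are summed correctly, so that the coefficient of a fixed typed tree on the right-hand side is literally $\sum_{\varphi_\leftarrow(\alpha',\beta')=(\alpha,\beta)}(\dots)$ rather than a single term; equivalently, one reindexes the double sum over $(\alpha',\beta')$ by the value $(\alpha,\beta)=\varphi_\leftarrow(\alpha',\beta')$ (resp. $\varphi_\rightarrow$). Once the reindexing is done, no use of the EDS axioms themselves is needed beyond the already-established combinatorial description $\calP_\Omega(n)=\K\calT_\Omega(n)$ and linear independence of the typed trees, so the argument closes immediately.
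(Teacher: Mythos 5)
Your proposal is correct and follows exactly the paper's argument: the paper also expands $m\circ(I,m)$ and $m\circ(m,I)$ in the basis of typed trees $\calT_\Omega(3)$ using the elementary compositions listed in the preceding Example and identifies coefficients, with the two-term compositions $\prec_\alpha\circ(\prec_\beta,I)$ and $\succ_\alpha\circ(I,\succ_\beta)$ producing the fibre sums over $\varphi_\rightarrow$ and $\varphi_\leftarrow$. Your tree-by-tree assignment of which basis element yields which of the four relations (and which comparison is automatic) matches the paper's computation.
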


\begin{proof}
Indeed:
\begin{align*}
m\circ(I,m)&=\sum_{\alpha,\beta\in \Omega} a_\alpha a_\beta\bdquatrequatre(\alpha,\beta)
+\sum_{\alpha,\beta\in \Omega} a_\alpha b_\beta\bdquatretrois(\alpha,\beta)\\
&+\sum_{\alpha,\beta\in \Omega} b_\alpha b_\beta\left(\bdquatreun(\alpha\rightarrow \beta,\alpha \triangleright\beta)
+\bdquatredeux(\alpha\leftarrow \beta,\alpha \triangleleft \beta)\right)
+\sum_{\alpha,\beta\in \Omega} b_\alpha a_\beta\bdquatrecinq(\alpha,\beta),\\
m\circ(m,I)&=\sum_{\alpha,\beta\in \Omega} a_\alpha a_\beta
\left(\bdquatretrois(\alpha\rightarrow \beta,\alpha \triangleright\beta)
+\bdquatrequatre(\alpha\leftarrow \beta,\alpha \triangleleft \beta)\right)
+\sum_{\alpha,\beta\in \Omega} a_\alpha b_\beta \bdquatrecinq(\beta,\alpha)\\
&+\sum_{\alpha,\beta\in \Omega} b_\alpha a_\beta \bdquatredeux(\alpha,\beta)
+\sum_{\alpha,\beta\in \Omega} b_\alpha b_\beta \bdquatreun(\alpha,\beta).
\end{align*}
Identifying, we obtain the announced equations. \end{proof}

\begin{cor}
If $\Omega$ is nondegenerate, then $m\circ (I,m)=m\circ(m,I)$ if, and only if, 
for any $\alpha,\beta\in \Omega$:
\begin{align}
\label{eq45} b_{\alpha\rightarrow \beta} b_{\alpha \triangleright\beta}
&=b_{\alpha}b_{\beta},&
a_{\alpha\leftarrow \beta} a_{\alpha \triangleleft\beta}
&=a_{\alpha}a_{\beta},\\
\nonumber b_{\alpha\leftarrow \beta} a_{\alpha \triangleleft\beta}
&=b_{\alpha}b_{\beta},&
a_{\alpha\rightarrow \beta} b_{\alpha \triangleright\beta}
&=a_{\alpha}a_{\beta}.
\end{align}\end{cor}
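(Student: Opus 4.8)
The plan is to read the corollary off Proposition \ref{prop21} using only the bijectivity of $\varphi_\leftarrow$ and $\varphi_\rightarrow$. Since $\Omega$ is nondegenerate, each of the maps $\varphi_\leftarrow,\varphi_\rightarrow:\Omega^2\to\Omega^2$ is a bijection, so for every $(\alpha,\beta)\in\Omega^2$ the fibres $\varphi_\leftarrow^{-1}(\alpha,\beta)$ and $\varphi_\rightarrow^{-1}(\alpha,\beta)$ are single points. Hence every sum on the right-hand side of (\ref{eq44}) collapses to exactly one term: the first equation of (\ref{eq44}) becomes $b_\alpha b_\beta=b_{\alpha'}b_{\beta'}$ with $(\alpha',\beta')=\varphi_\rightarrow^{-1}(\alpha,\beta)$, the second becomes $a_\alpha a_\beta=a_{\alpha'}a_{\beta'}$ with $(\alpha',\beta')=\varphi_\leftarrow^{-1}(\alpha,\beta)$, the third $b_\alpha a_\beta=b_{\alpha'}b_{\beta'}$ with $(\alpha',\beta')=\varphi_\leftarrow^{-1}(\alpha,\beta)$, and the fourth $a_\alpha b_\beta=a_{\alpha'}a_{\beta'}$ with $(\alpha',\beta')=\varphi_\rightarrow^{-1}(\alpha,\beta)$.

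Next I would reindex each collapsed identity. Running over all $(\alpha,\beta)\in\Omega^2$ is the same as running over all $(\alpha',\beta')\in\Omega^2$ via $(\alpha,\beta)=\varphi_\rightarrow(\alpha',\beta')=(\alpha'\rightarrow\beta',\alpha'\triangleright\beta')$ (and likewise with $\varphi_\leftarrow$), because these maps are bijections. Substituting this and then renaming $(\alpha',\beta')$ back to $(\alpha,\beta)$ turns the four collapsed equations into, respectively, $b_{\alpha\rightarrow\beta}b_{\alpha\triangleright\beta}=b_\alpha b_\beta$, $a_{\alpha\leftarrow\beta}a_{\alpha\triangleleft\beta}=a_\alpha a_\beta$, $b_{\alpha\leftarrow\beta}a_{\alpha\triangleleft\beta}=b_\alpha b_\beta$, and $a_{\alpha\rightarrow\beta}b_{\alpha\triangleright\beta}=a_\alpha a_\beta$, which is precisely the system (\ref{eq45}). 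Conversely, reading (\ref{eq45}) through $\varphi_\rightarrow$ and $\varphi_\leftarrow$ gives back the single-term form of (\ref{eq44}), so the two systems are equivalent; combined with Proposition \ref{prop21} this yields the claimed equivalence with $m\circ(I,m)=m\circ(m,I)$.

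There is essentially no obstacle here; the whole argument is reindexing. The only point requiring care is matching each equation of (\ref{eq44}) to the correct map: the two ``mixed'' equations $b_\alpha a_\beta=\cdots$ and $a_\alpha b_\beta=\cdots$ are summed on the right over $\varphi_\leftarrow$ and $\varphi_\rightarrow$ respectively, which is exactly what makes the $\leftarrow/\triangleleft$ pair appear in the third equation of (\ref{eq45}) and the $\rightarrow/\triangleright$ pair in the fourth. It should also be noted explicitly that bijectivity is used in both directions (to collapse the sums and to perform the change of summation index), so the equivalence genuinely requires nondegeneracy and not merely surjectivity or injectivity alone.
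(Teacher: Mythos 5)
Your argument is correct and is exactly the reasoning the paper leaves implicit: the corollary is stated without proof as an immediate consequence of Proposition \ref{prop21}, obtained by collapsing each fibre sum in (\ref{eq44}) to a single term and reindexing through the bijections $\varphi_\leftarrow$ and $\varphi_\rightarrow$. Your matching of each equation of (\ref{eq44}) to the correct map, and hence to the correct line of (\ref{eq45}), is accurate.
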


In particular cases of EDS:

\begin{prop}\begin{enumerate}
\item Let $(\Omega,\star)$ be a group. In $\calP_{\EDS(\Omega,\star,\star)}$, the nonzero associative products are of the form
\[\lambda\sum_{\alpha \in G}\prec_\alpha+\succ_\alpha,\]
where $\lambda$ is a nonzero scalar and $G$ is a subgroup of $(\Omega,\star)$.
\item Let $(H,\star)$ be a group, $K$ be a nonempty set and $\theta:K\longrightarrow H$ be a map.
In $\calP_{\EDS^*(H,\star,K,\theta)}$,  the nonzero associative products are of the form
\[\sum_{\alpha'\in K} \lambda_{\alpha'}\left(\sum_{\alpha \in G}\prec_{(\alpha,\alpha')}
+\succ_{(\theta(\alpha')\star\alpha,\alpha')}\right),\]
where $(\lambda_{\alpha'})_{\alpha'\in K}$ is a nonzero family of scalars with finite support 
and $G$ is a subgroup of $(H,\star)$.
\end{enumerate}\end{prop}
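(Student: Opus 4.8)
The plan is to reduce both statements to the corollary of Proposition~\ref{prop21}. Both extended diassociative semigroups involved are nondegenerate --- by Lemma~\ref{lem7} for $\EDS(\Omega,\star,\star)$ since $(\Omega,\star)$ is a group, and by Proposition~\ref{prop9} for $\EDS^*(H,\star,K,\theta)$ --- so an element $m=\sum_{\alpha}a_\alpha\prec_\alpha+\sum_\alpha b_\alpha\succ_\alpha$ of $\calP_\Omega(2)$ is associative if and only if the four relations (\ref{eq45}) hold. I would start by noting that $m\neq 0$ forces both families $(a_\alpha)$ and $(b_\alpha)$ to be nonzero: if all $a_\alpha=0$, the third relation of (\ref{eq45}) gives $b_\alpha b_\beta=0$ for all $\alpha,\beta$, hence all $b_\alpha=0$, and symmetrically using the fourth relation. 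The rest is a support-and-normalisation analysis of (\ref{eq45}) in each case.

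\textbf{Part 1.} For $\EDS(\Omega,\star,\star)$ one has $\alpha\leftarrow\beta=\alpha\rightarrow\beta=\alpha\star\beta$, $\alpha\triangleleft\beta=\beta$ and $\alpha\triangleright\beta=\alpha$, so (\ref{eq45}) becomes
\begin{align*}
b_{\alpha\star\beta}b_\alpha&=b_\alpha b_\beta,&a_{\alpha\star\beta}a_\beta&=a_\alpha a_\beta,\\
b_{\alpha\star\beta}a_\beta&=b_\alpha b_\beta,&a_{\alpha\star\beta}b_\alpha&=a_\alpha a_\beta.
\end{align*}
Put $S=\{\alpha:a_\alpha\neq 0\}$ and $S'=\{\alpha:b_\alpha\neq 0\}$. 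The third relation shows $\alpha,\beta\in S'$ implies $\beta\in S$ and $\alpha\star\beta\in S'$, and the fourth shows $\alpha,\beta\in S$ implies $\alpha\in S'$; together $S=S'=:G$, closed under $\star$. Dividing the first relation by $b_\alpha$ for $\alpha\in G$ gives $b_{\alpha\star\beta}=b_\beta$ for all $\beta\in\Omega$; taking $\beta$ the unit $e$ gives $b_\alpha=b_e$ for $\alpha\in G$, so $b$ is constant on $G$ with nonzero value $\lambda:=b_e$ and $e\in G$; since then $\alpha\star(\cdot)$ permutes $G$ for $\alpha\in G$, $G$ contains the inverse of each of its elements, hence is a subgroup. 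Dividing the second relation by $a_\beta$ for $\beta\in G$ gives $a_{\alpha\star\beta}=a_\alpha$ for all $\alpha$, so with $\alpha=e$ the family $a$ is constant on $G$, and the third relation at $\alpha=\beta=e$ gives $a_e=b_e=\lambda$. Thus $m=\lambda\sum_{\alpha\in G}(\prec_\alpha+\succ_\alpha)$; conversely every such $m$ clearly satisfies (\ref{eq45}).

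\textbf{Part 2.} Over $\Omega=H\times K$ I would run the same scheme while carrying the $K$-coordinate. Specialising (\ref{eq45}) to the products of $\EDS^*(H,\star,K,\theta)$, the relation coming from $\triangleright$ (the fourth of (\ref{eq45})), after fixing a pair $(\beta_0,\beta_0')$ with $a_{(\beta_0,\beta_0')}\neq 0$ and dividing by that coefficient, reads as an identity expressing every $b_{(\gamma,\alpha')}$ as $a_{(c\star\gamma,\alpha')}$ for one fixed $c:=\beta_0\star\theta(\beta_0')^{-1}\in H$; this reduces everything to the family $a$. The relation coming from $\leftarrow$ and $\triangleleft$ (the second of (\ref{eq45})) says that $a_{(\alpha,\alpha')}\neq 0$ implies $a_{(\alpha^{-1}\star\beta,\beta')}=a_{(\beta,\beta')}$ for all $\beta,\beta'$; taking $\beta=\alpha$ gives $a_{(\alpha,\beta')}=a_{(e,\beta')}$ for every $\beta'$. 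Setting $\lambda_{\beta'}:=a_{(e,\beta')}$ --- a finitely supported family, since $m\in\calP_\Omega(2)$ is a finite combination --- and $G:=\{g\in H:\ a_{(g,\alpha')}\neq 0\text{ for some }\alpha'\}$, one gets $a_{(g,\alpha')}=\lambda_{\alpha'}$ for $g\in G$ and $0$ otherwise, and the same elementary argument as in Part~1 (using the second relation and $e\in G$) shows $G$ is a subgroup. Substituting $b_{(\gamma,\alpha')}=a_{(c\star\gamma,\alpha')}$ gives $b_{(\gamma,\alpha')}=\lambda_{\alpha'}$ for $\gamma\in c^{-1}\star G$ and $0$ otherwise; using the $\triangleright$-relation once more to rewrite this fixed coset as $\theta(\alpha')\star G$ for each $\alpha'$ in the support of $(\lambda_{\alpha'})$, one finds $b_{(\theta(\alpha')\star g,\alpha')}=\lambda_{\alpha'}$ for $g\in G$, i.e. $m=\sum_{\alpha'\in K}\lambda_{\alpha'}\big(\sum_{g\in G}\prec_{(g,\alpha')}+\succ_{(\theta(\alpha')\star g,\alpha')}\big)$.

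\textbf{The main obstacle} is the bookkeeping in Part~2: one must check that the ``divide by the nonzero coefficient'' steps are not circular, which is why the argument is bootstrapped from the fact that $m\neq 0$ forces some $a$- and some $b$-coefficient to be nonzero, and why the $b\leftrightarrow a$ reduction through a single fixed $c$ is performed before the slice-wise analysis; the separation between the $H$-coordinate (carrying the subgroup and coset structure) and the $K$-coordinate (carrying only the scalars $\lambda_{\alpha'}$) is what makes the accounting delicate. If one wants a genuine equivalence in Part~2, one additionally records that these data automatically satisfy $\theta(\alpha')\star G=\theta(\beta')\star G$ on the support of $(\lambda_{\alpha'})$, a consequence of the $\triangleright$-relation, so no extra hypothesis on $\theta$ is needed.
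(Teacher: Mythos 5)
Your proof is correct and follows essentially the same route as the paper: specialize the relations (\ref{eq45}) to each EDS and run a support-and-normalisation analysis on the coefficient families. In Part 2 you organise the bookkeeping differently --- you first convert the whole family $(b)$ into a translate of $(a)$ via the single element $c=\beta_0\star\theta(\beta_0')^{-1}$ and then analyse $(a)$, whereas the paper works slice-by-slice with subgroups $G_a(\alpha')$, $G_b(\alpha')$ of $H$ and only afterwards shows all the nonempty ones coincide --- but this is cosmetic. Your closing remark, however, is a genuine improvement and should be kept: the converse implication in Part 2 (that every $m$ of the displayed form is associative) really does require the coset condition $\theta(\alpha')\star G=\theta(\beta')\star G$ on the support of $(\lambda_{\alpha'})$, which the paper's one-line ``conversely'' glosses over. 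For instance, with $H=\Z/2\Z$, $K=\{x,y\}$, $\theta(x)=\overline{0}$, $\theta(y)=\overline{1}$, $G=\{\overline{0}\}$ and $\lambda_x=\lambda_y=1$, the displayed $m$ violates the relation $a_{\alpha\rightarrow\beta}\,b_{\alpha\triangleright\beta}=a_\alpha a_\beta$ at $\alpha=(\overline{0},y)$, $\beta=(\overline{0},x)$; as you observe, the forward direction automatically produces data satisfying the coset condition, so the clean statement is an equivalence with that condition recorded as part of the data.
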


\begin{proof}
1. In this case, (\ref{eq45}) becomes:
\begin{align*}
b_{\alpha \star \beta} b_\alpha&=b_\alpha b_\beta,&
a_{\alpha \star \beta} a_\beta&=a_\alpha a_\beta,\\
b_{\alpha \star \beta} b_\beta&=b_\alpha a_\beta,&
a_{\alpha \star \beta} a_\alpha&=a_\alpha b_\beta.
\end{align*}
We put $G_a=\{\alpha \in \Omega, a_\alpha\neq 0\}$ and $G_b=\{\alpha \in \Omega,b_\alpha\neq 0\}$.
At least one of them is not empty: let us assume for example that $G_b\neq \emptyset$. 
Let $\alpha \in G_b$. If $\beta=e$ is the unit of $\Omega$:
\[b_{\alpha \star e}b_\alpha=b_\alpha^2=b_\alpha b_e,\]
so $b_e=b_\alpha\neq 0$: $e\in G_b$ and for any $\alpha \in G_b$, $b_\alpha=b_e$. 
If $\alpha,\beta \in G_b$, then:
\[b_{\alpha \star \beta} b_\alpha=b_\alpha b_\beta \neq 0,\]
so $\alpha \star \beta \in G_b$. For any $\alpha \in G_b$, if $\beta=\alpha^{-1}$:
\[b_e b_\alpha=b_\alpha b_{\alpha^{-1}}\neq 0,\]
so $\alpha^{-1}\in G_b$: we proved that $G_b$ is a subgroup of $\Omega$.
If $\beta \in G_a$, then
\[a_e b_\beta=a_{e\star \beta}a_\beta=a_\beta^2\neq 0,\]
so $\beta \in G_b$: $G_a\subseteq G_b$.  Conversely, if $\beta \in G_b$,
\[b_e a_\beta=b_{e\star \beta}b_\beta=b_\beta^2\neq 0,\]
so $a_\beta \neq 0$: $G_b\subseteq G_a$. Moreover, as $b_e=b_\beta$ for any $\beta \in \Omega$,
we obtain that $a_\beta=b_e$. Putting $\lambda=b_\beta$ and $G=G_a=G_b$, we obtain that
\[m=\lambda \sum_{\alpha \in G} \prec_\alpha+\succ_\alpha.\]
Conversely, for such a $m$, (\ref{eq45}) is satisfied, so $m$ is associative. \\

2. In this case, (\ref{eq45}) becomes:
\begin{align*}
b_{(\beta,\beta')}b_{(\theta(\beta')\star \beta^{-1}\star \alpha,\alpha')}&=b_{(\alpha,\alpha')}b_{(\beta,\beta')},\\
a_{(\alpha,\alpha')}a_{(\alpha^{-1}\star \beta,\beta')}&=a_{(\alpha,\alpha')}a_{(\beta,\beta')},\\
b_{(\alpha,\alpha')}a_{(\alpha^{-1}\star \beta,\beta')}&=b_{(\alpha,\alpha')}b_{(\beta,\beta')},\\
a_{(\beta,\beta')}b_{(\theta(\beta')\star \beta^{-1}\star \alpha,\alpha')}&=a_{(\alpha,\alpha')}a_{(\beta,\beta')}.
\end{align*}
For any $\alpha'\in K$, we put:
\begin{align*}
G_a(\alpha')&=\{\alpha \in H,\: a_{(\alpha,\alpha')}\neq 0\},&
G_b(\alpha')&=\{\alpha \in H,\: b_{(\theta(\alpha')\star\alpha,\alpha')}\neq 0\}.
\end{align*}

Let $\alpha'\in K$, such that $G_a(\alpha')\neq \emptyset$. For any $\alpha,\beta \in G_a(\alpha')$:
\begin{align*}
a_{(\alpha,\alpha')}a_{(\beta,\alpha')}&=a_{(\alpha,\alpha')} a_{(\alpha^{-1}\star \beta,\alpha')},&
a_{(\beta,\alpha')}&= a_{(\alpha^{-1}\star \beta,\alpha')}.
\end{align*}
So $\alpha^{-1}\star \beta \in G_a(\alpha')$. Hence, $G_a(\alpha')$ is a subgroup of $H$.
Moreover, there exists a nonzero scalar $a_{\alpha'}$ such that for any $\alpha \in G_a(\alpha')$,
$a_{(\alpha,\alpha')}=a_{\alpha'}$.

Let $\alpha'\in K$, such that $G_b(\alpha')\neq \emptyset$. For any $\alpha,\beta \in G_b(\alpha')$:
\begin{align*}
b_{(\theta(\alpha')\star \alpha,\alpha')}b_{(\theta(\alpha')\star \beta,\alpha')}
&=b_{(\theta(\alpha')\star \beta^{-1}\star \theta(\alpha')^{-1}\star \theta(\alpha') \star \alpha,\alpha')}
b_{(\theta(\alpha')\star \beta,\alpha')},\\
b_{(\theta(\alpha')\star \alpha,\alpha')}&=
b_{(\theta(\alpha')\star \beta^{-1} \star \alpha,\alpha')}.
\end{align*}
So $\beta^{-1}\star \alpha \in G_b(\alpha')$. Hence, $G_b(\alpha')$ is a subgroup of $H$.
Moreover, there exists a nonzero scalar $b_{\alpha'}$ such that for any $\alpha \in G_b(\alpha')$,
$b_{(\theta(\alpha')\star\alpha,\alpha')}=b_{\alpha'}$. \\

Let $\alpha \in G_a(\alpha')$. Then $G_a(\alpha')\neq \emptyset$, so is a subgroup of $H$, and the unit
$e$ of $H$ belongs to $G_a(\alpha')$. Then:
\[0\neq a_{(\alpha,\alpha')}a_{(e,\alpha')}=a_{(e,\alpha')} b_{(\theta(\alpha')\star \alpha,\alpha')}.\]
Therefore, $\alpha \in G_b(\alpha')$: we obtain that $G_a(\alpha')\subseteq G_b(\alpha')$. 

Let $\beta \in G_b(\alpha')$. Then $G_b(\alpha')\neq \emptyset$ is a subgroup of $H$, and $e\in G_b(\alpha')$. 
Hence:
\[0\neq b_{(\theta(\alpha'),\alpha')}b_{(\theta(\alpha')\star \beta,\alpha')}
=b_{(\theta(\alpha'),\alpha')}a_{(\theta(\alpha')^{-1}\star \theta(\alpha')\star \beta,\alpha'}.\]
We obtain that $\beta \in G_a(\alpha')$. Finally, for any $\alpha' \in K$, $G_a(\alpha')=G_b(\alpha')$.
We denote this set by $G(\alpha')$. \\

If $G(\alpha')\neq \emptyset$, we obtain, for $\alpha=\beta=e$:
\[a_{\alpha'}b_{\alpha'}=a_{\alpha'}a_{\alpha'}.\]
Consequently, $b_{\alpha'}=a_{\alpha'}$. We denote by $\lambda_{\alpha'}$ this scalar. \\

As $m\neq 0$, at least one of the $G(\alpha')$ is nonempty. We consider
\[K'=\{\alpha'\in K,\: G(\alpha')\neq \emptyset\}.\]
Let $\alpha',\beta'\in K'$. For any $\alpha \in G(\alpha')$, for $\beta=e$:
\[\lambda_{\alpha'}a_{(\alpha^{-1},\beta')}=\lambda_{\alpha'}\lambda_{\beta'}\neq 0.\]
Hence, $\alpha^{-1}\in G(\beta')$. As this is a subgroup, $\alpha \in G(\beta')$, and $G(\alpha')\subseteq G(\beta')$.
By symmetry, $G(\alpha')=G(\beta')$. We denote by $G$ this subset. Then:
\begin{align*}
m&=\sum_{\alpha'\in K'} \lambda_{\alpha'}\sum_{\alpha \in G} \prec_{(\alpha,\alpha')}
+\succ_{(\theta(\alpha')\star \alpha,\alpha')}
=\sum_{\alpha'\in K} \lambda_{\alpha'}\sum_{\alpha \in G} \prec_{(\alpha,\alpha')}
+\succ_{(\theta(\alpha')\star \alpha,\alpha')},
\end{align*}
where we put $\lambda_{\alpha'}=0$ if $\alpha'\notin K$. 
Conversely, for such a $m$, (\ref{eq45}) is satisfied, so $m$ is associative.  \end{proof}

\begin{example}\begin{enumerate}
\item If $H$ is a null group, we obtain the case of $\EDS(\Omega)$. The associative products are of the form
\[\sum_{\alpha \in \Omega} \lambda_\alpha(\prec_\alpha+\succ_\alpha),\]
where $(\lambda_{\alpha})_{\alpha\in \Omega}$ is family of scalars with finite support.
\item If $K$ is reduced to a single element, we obtain the case of $\EDS^*(\Omega,\star,\omega)$. 
The associative products are of the form
\[\lambda \sum_{\alpha \in H} \prec_\alpha+\succ_{\omega\star \alpha},\]
where $\lambda$ is a scalar and $H$ is a subgroup of $(\Omega,\star)$.
\end{enumerate}\end{example}

\begin{prop}
Let $\Omega$ be an EDS. We suppose that there exist $\alpha_0,\beta_0\in \Omega$
such that:
\begin{align*}
&\forall\alpha,\beta \in \Omega,&\alpha \triangleleft \beta&=\beta_0,&\alpha \triangleright \beta&=\alpha_0.
\end{align*}
The associative products of $\calP_\Omega$ are of the form
\[\lambda(\prec_{\beta_0}+\succ_{\alpha_0}),\]
where $\lambda$ is a scalar.
\end{prop}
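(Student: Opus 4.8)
The plan is to apply Proposition \ref{prop21}: writing $m=\sum_{\alpha\in\Omega}a_\alpha\prec_\alpha+\sum_{\alpha\in\Omega}b_\alpha\succ_\alpha$ (a finite sum), $m$ is associative if and only if the four families of identities (\ref{eq44}) hold, and since $\Omega$ is not assumed nondegenerate I work with (\ref{eq44}) directly rather than with the simplified form (\ref{eq45}). First I would record the effect of the hypothesis on the structure maps: since $\alpha\triangleleft\beta=\beta_0$ and $\alpha\triangleright\beta=\alpha_0$ for all $\alpha,\beta$, we have $\varphi_\leftarrow(\alpha,\beta)=(\alpha\leftarrow\beta,\beta_0)$ and $\varphi_\rightarrow(\alpha,\beta)=(\alpha\rightarrow\beta,\alpha_0)$, so the image of $\varphi_\leftarrow$ lies in $\Omega\times\{\beta_0\}$ and that of $\varphi_\rightarrow$ in $\Omega\times\{\alpha_0\}$. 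I would also extract from the EDS axioms the idempotence relations $\alpha_0\leftarrow\alpha_0=\alpha_0\rightarrow\alpha_0=\alpha_0$ and $\beta_0\leftarrow\beta_0=\beta_0\rightarrow\beta_0=\beta_0$: specializing (\ref{eq10}) and (\ref{eq12}) turns their left-hand sides into $\alpha_0\leftarrow\alpha_0$, resp.\ $\alpha_0\rightarrow\alpha_0$, with right-hand sides equal to $\alpha_0$, and (\ref{eq6}), (\ref{eq8}) give the analogous statements for $\beta_0$.

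Next I would pin down the supports of the families $a$ and $b$. In the second identity of (\ref{eq44}), namely $a_\alpha a_\beta=\sum_{\varphi_\leftarrow(\alpha',\beta')=(\alpha,\beta)}a_{\alpha'}a_{\beta'}$, the fibre of $\varphi_\leftarrow$ over $(\alpha,\beta)$ is empty as soon as $\beta\neq\beta_0$, so $a_\alpha a_\beta=0$ for every $\alpha$ in that case; if $a_\gamma\neq0$ for some $\gamma\neq\beta_0$, taking $\beta=\gamma$ would force $a_\alpha=0$ for all $\alpha$, a contradiction. Hence $a$ is supported on $\{\beta_0\}$, and symmetrically the first identity of (\ref{eq44}) together with $\mathrm{Im}\,\varphi_\rightarrow\subseteq\Omega\times\{\alpha_0\}$ shows $b$ is supported on $\{\alpha_0\}$. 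Thus $m=a\prec_{\beta_0}+b\succ_{\alpha_0}$, where I write $a:=a_{\beta_0}$ and $b:=b_{\alpha_0}$.

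Finally I would read off the relation between $a$ and $b$ from the two cross identities of (\ref{eq44}). The third one at $(\alpha,\beta)=(\alpha_0,\beta_0)$ reads $ab=\sum_{\varphi_\leftarrow(\alpha',\beta')=(\alpha_0,\beta_0)}b_{\alpha'}b_{\beta'}$; the only possibly nonzero summand is $(\alpha',\beta')=(\alpha_0,\alpha_0)$, and by $\alpha_0\leftarrow\alpha_0=\alpha_0$ this pair does lie in the fibre, so $ab=b^2$. The fourth identity at $(\alpha,\beta)=(\beta_0,\alpha_0)$, using $\beta_0\rightarrow\beta_0=\beta_0$, gives $ab=a^2$ in the same way. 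Therefore $(a-b)^2=a^2-2ab+b^2=0$ over the field $\K$, whence $a=b=:\lambda$ and $m=\lambda(\prec_{\beta_0}+\succ_{\alpha_0})$. Conversely, for such an $m$ a routine check shows all four identities of (\ref{eq44}) hold: on each side the only nonzero value is $\lambda^2$, attained at the single index pair picked out by the relevant point of $\mathrm{Im}\,\varphi_\leftarrow$ or $\mathrm{Im}\,\varphi_\rightarrow$ (using the idempotence relations above), so $m$ is associative. I do not anticipate a genuine obstacle; the only place where care is needed is the fibre bookkeeping in the support argument, which is however immediate once the images of $\varphi_\leftarrow$ and $\varphi_\rightarrow$ are computed.
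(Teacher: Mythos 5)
Your proposal is correct and follows essentially the same route as the paper's proof: reduce to the identities of Proposition \ref{prop21}, observe that the images of $\varphi_\leftarrow$ and $\varphi_\rightarrow$ lie in $\Omega\times\{\beta_0\}$ and $\Omega\times\{\alpha_0\}$ to pin the supports of $a$ and $b$, extract the idempotence of $\alpha_0$ and $\beta_0$ from (\ref{eq6}), (\ref{eq8}), (\ref{eq10}), (\ref{eq12}), and conclude $a_{\beta_0}=b_{\alpha_0}$ from the two cross identities. The only differences are cosmetic (the paper takes $\alpha=\beta$ directly in the support step and argues by cases instead of via $(a-b)^2=0$).
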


\begin{proof} 
Let $m$ be an associative product. In this case, (\ref{eq44}) becomes:
\begin{align*}
b_\alpha b_\beta&=\begin{cases}
\displaystyle \sum_{\alpha'\rightarrow \beta'=\alpha} b_{\alpha'}b_{\beta'}\mbox{ if }\beta=\alpha_0,\\
0\mbox{ otherwise;}
\end{cases}
&a_\alpha a_\beta&=\begin{cases}
\displaystyle \sum_{\alpha'\leftarrow \beta'=\alpha} a_{\alpha'}a_{\beta'}\mbox{ if }\beta=\beta_0,\\
0\mbox{ otherwise;}
\end{cases}\\
b_\alpha a_\beta&=\begin{cases}
\displaystyle \sum_{\alpha'\leftarrow \beta'=\alpha} b_{\alpha'}b_{\beta'}\mbox{ if }\beta=\beta_0,\\
0\mbox{ otherwise;}
\end{cases}
&a_\alpha b_\beta&=\begin{cases}
\displaystyle \sum_{\alpha'\rightarrow \beta'=\alpha} a_{\alpha'}a_{\beta'}\mbox{ if }\beta=\alpha_0,\\
0\mbox{ otherwise.}
\end{cases}
\end{align*}
In particular, if $\alpha=\beta\neq \alpha_0$, $b_\alpha^2=0$, so $b_\alpha=0$.
Similarly, if $\beta \neq \beta_0$, $a_\beta=0$. By (\ref{eq6}), (\ref{eq8}), (\ref{eq10}) and (\ref{eq12}):
\begin{align*}
\alpha_0\leftarrow \alpha_0&=\alpha_0\rightarrow \alpha_0=\alpha_0,&
\beta_0\leftarrow \beta_0&=\beta_0\rightarrow \beta_0=\beta_0.
\end{align*}
Hence, $b_{\alpha_0}a_{\beta_0}=b_{\alpha_0}^2$, so $b_{\alpha_0}=0$ or $b_{\alpha_0}=a_{\beta_0}$;
$a_{\beta_0}b_{\alpha_0}=a_{\beta_0}^2$, so $a_{\beta_0}=0$ or $b_{\alpha_0}=a_{\beta_0}$.
Finally, $a_{\beta_0}=b_{\alpha_0}=\lambda$ and $m=\lambda(\prec_{\beta_0}+\succ_{\alpha_0})$.
The converse is trivial. \end{proof}

\begin{example}
Let us give the associative products in the 24 four cases of cardinality $2$.
Here, $\lambda$, $\mu$ and $\nu$ are scalars.
\[\begin{array}{|c|c||c|c|}
\hline A1&\lambda(\prec_a+\succ_a)&A2&\lambda(\prec_a+\succ_a)\\
\hline B1&\lambda(\prec_a+\succ_a)&B2&\lambda(\prec_a+\succ_a),\mu(\succ_a-\succ_b)\\
\hline C1&\lambda(\prec_a+\succ_a)&C2&\lambda(\prec_a+\succ_b)\\
\hline C3&\lambda(\prec_a+\succ_a),\mu(\prec_b+\succ_b)&C4&\lambda(\prec_b+\succ_a)\\
\hline C5&\lambda(\prec_b+\succ_b)&D1&\lambda(\prec_a+\succ_a)\\
\hline D2&\lambda(\prec_a+\succ_a),\mu(\prec_a-\prec_b)&E1&\lambda(\prec_a+\succ_a)\\
\hline E2&\lambda(\prec_a+\succ_b)&E3&\lambda(\prec_a+\succ_a), \mu(\prec_b+\succ_b),
\nu(\prec_a-\prec_b)\\
\hline F1&\lambda(\prec_a+\succ_a)&F2&\lambda(\prec_a+\succ_b)\\
\hline F3&\lambda(\prec_a+\succ_a)+\mu(\prec_b+\succ_b)&F4&
\lambda(\prec_a+\succ_a),\mu(\prec_a+\prec_b+\succ_a+\succ_b)\\
\hline F5&\lambda(\prec_a+\succ_b),\nu(\prec_a+\prec_b+\succ_a+\succ_b)&G1&\lambda(\prec_a+\succ_a)\\
\hline G2&\lambda(\prec_a+\succ_b)&G3&\lambda(\prec_a+\succ_a),\mu(\prec_b+\succ_b),\nu(\succ_a-\succ_b)\\
\hline H1&\lambda(\prec_a+\succ_a)&H2&\lambda(\prec_a+\succ_a),
\mu(\prec_a+\prec_b+\succ_a+\succ_b)\\
\hline \end{array}\]
\end{example}

\subsection{Dendriform products}

\begin{prop}
Let $\prec,\succ\in \calP(2)$, written under the form
\begin{align*}
\prec&=\sum_{\alpha \in \Omega} a_\alpha \prec_\alpha+\sum_{\alpha \in \Omega} b_\alpha \succ_\alpha,&
\succ&=\sum_{\alpha \in \Omega}c_\alpha \prec_\alpha+\sum_{\alpha \in \Omega} d_\alpha \succ_\alpha,\\
\end{align*}
Then $(\prec,\succ)$ satisfies the dendriform relations
\begin{align*}
\prec \circ (\prec,I)&=\prec \circ (I,\prec+\succ),&
\prec \circ (\succ,I)&=\succ\circ (I,\prec),&
\succ \circ (I,\succ)&=\succ \circ (\prec+\succ,I)
\end{align*}
if, and only if, for any $\alpha,\beta\in \Omega$:
\begin{align}
\label{eq46}
b_\alpha b_\beta&=\sum_{\varphi_\rightarrow(\alpha',\beta')=(\alpha,\beta)}b_{\alpha'}(b_{\beta'}+d_{\beta'}),&
a_\alpha (a_\beta+c_\beta)&=\sum_{\varphi_\leftarrow(\alpha',\beta')=(\alpha,\beta)}a_{\alpha'}a_{\beta'},\\
\nonumber b_\alpha a_\beta&=\sum_{\varphi_\leftarrow(\alpha',\beta')=(\alpha,\beta)}b_{\alpha'}(b_{\beta'}+d_{\beta'}),&
a_\alpha (b_\beta+d_\beta)&=\sum_{\varphi_\rightarrow(\alpha',\beta')=(\alpha,\beta)}a_{\alpha'}a_{\beta'},\\
\nonumber b_\alpha c_\beta&=0,\\
\nonumber \\
\nonumber 0&=\sum_{\varphi_\leftarrow(\alpha',\beta')=(\alpha,\beta)}d_{\alpha'}b_{\beta'},&
b_\alpha d_\beta&=\sum_{\varphi_\rightarrow(\alpha',\beta')=(\alpha,\beta)}d_{\alpha'}b_{\beta'},\\
\nonumber 0&=\sum_{\varphi_\rightarrow(\alpha',\beta')=(\alpha,\beta)}c_{\alpha'}a_{\beta'},&
c_\alpha a_\beta&=\sum_{\varphi_\leftarrow(\alpha',\beta')=(\alpha,\beta)}c_{\alpha'}a_{\beta'},\\
\nonumber\\
\nonumber c_\alpha c_\beta&=\sum_{\varphi_\leftarrow(\alpha',\beta')=(\alpha,\beta)}(a_{\alpha'}+c_{\alpha'})c_{\beta'},&
d_\alpha (b_\beta+d_\beta)&=\sum_{\varphi_\rightarrow(\alpha',\beta')=(\alpha,\beta)}d_{\alpha'}d_{\beta'},\\
\nonumber c_\alpha d_\beta&=\sum_{\varphi_\rightarrow(\alpha',\beta')=(\alpha,\beta)}(a_{\alpha'}+c_{\alpha'})c_{\beta'},&
d_\alpha (a_\beta+c_\beta)&=\sum_{\varphi_\leftarrow(\alpha',\beta')=(\alpha,\beta)}d_{\alpha'}d_{\beta'}.
\end{align}
\end{prop}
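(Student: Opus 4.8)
The proof is the same kind of direct computation as for Proposition~\ref{prop21}, now carried out in the degree-three part $\calP_\Omega(3)=\K\calT_\Omega(3)$ of the operad, whose basis is the family of typed three-leaf trees $\bdquatreun(\gamma,\delta)$, $\bdquatredeux(\gamma,\delta)$, $\bdquatretrois(\gamma,\delta)$, $\bdquatrequatre(\gamma,\delta)$, $\bdquatrecinq(\gamma,\delta)$ with $\gamma,\delta\in\Omega$. The plan is to substitute $\prec=\sum_\alpha a_\alpha\prec_\alpha+\sum_\alpha b_\alpha\succ_\alpha$ and $\succ=\sum_\alpha c_\alpha\prec_\alpha+\sum_\alpha d_\alpha\succ_\alpha$ into each of the three dendriform relations, use bilinearity of the operadic composition to reduce each side to a linear combination of the eight elementary compositions $\prec_\alpha\circ(\prec_\beta,I)$, $\prec_\alpha\circ(I,\prec_\beta)$, $\prec_\alpha\circ(\succ_\beta,I)$, $\prec_\alpha\circ(I,\succ_\beta)$, $\succ_\alpha\circ(\prec_\beta,I)$, $\succ_\alpha\circ(I,\prec_\beta)$, $\succ_\alpha\circ(\succ_\beta,I)$, $\succ_\alpha\circ(I,\succ_\beta)$, whose values in the tree basis are those recorded in the Example preceding Proposition~\ref{prop21}, and then to compare the coefficients of each basis tree on the two sides.

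The feature producing the fibre sums is that, among the eight elementary compositions, $\prec_\alpha\circ(\prec_\beta,I)$ contributes $\bdquatrequatre(\beta\leftarrow\alpha,\beta\triangleleft\alpha)+\bdquatretrois(\beta\rightarrow\alpha,\beta\triangleright\alpha)$ and $\succ_\alpha\circ(I,\succ_\beta)$ contributes $\bdquatredeux(\alpha\leftarrow\beta,\alpha\triangleleft\beta)+\bdquatreun(\alpha\rightarrow\beta,\alpha\triangleright\beta)$, so that their edge types are precisely values of $\varphi_\leftarrow$ and $\varphi_\rightarrow$, while the six remaining compositions only relabel the types. Hence, on whichever side of a dendriform relation such a composition sits, the coefficient of a fixed typed tree among $\bdquatreun(\gamma,\delta),\dots,\bdquatrecinq(\gamma,\delta)$ is a sum over the $\varphi_\leftarrow$- or $\varphi_\rightarrow$-fibre of $(\gamma,\delta)$, whereas on the other side it is a single monomial in the $a,b,c,d$. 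Reading off the five tree shapes in the first relation $\prec\circ(\prec,I)=\prec\circ(I,\prec+\succ)$ yields the first five identities of (\ref{eq46}); doing the same for the third relation $\succ\circ(I,\succ)=\succ\circ(\prec+\succ,I)$ yields the last four identities of (\ref{eq46}) together with the auxiliary relation $c_\alpha b_\beta=0$ coming from the shape $\bdquatrecinq$; and the second relation $\prec\circ(\succ,I)=\succ\circ(I,\prec)$ yields, apart from one trivial identity, the four relations of the middle block of (\ref{eq46}), two of them first appearing in the non-reduced form $b_\alpha c_\beta=\sum_{\varphi_\leftarrow(\alpha',\beta')=(\alpha,\beta)}d_{\alpha'}b_{\beta'}$ and $c_\alpha b_\beta=\sum_{\varphi_\rightarrow(\alpha',\beta')=(\alpha,\beta)}c_{\alpha'}a_{\beta'}$.

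To match this with the thirteen identities displayed in (\ref{eq46}) I would note that the two vanishing conditions $b_\alpha c_\beta=0$ and $c_\alpha b_\beta=0$ are equivalent, since each of them says that $b$ or $c$ vanishes identically; keeping $b_\alpha c_\beta=0$ (equivalently $c_\alpha b_\beta=0$) and substituting into the two non-reduced equations of the second relation turns them into $0=\sum_{\varphi_\leftarrow(\alpha',\beta')=(\alpha,\beta)}d_{\alpha'}b_{\beta'}$ and $0=\sum_{\varphi_\rightarrow(\alpha',\beta')=(\alpha,\beta)}c_{\alpha'}a_{\beta'}$, which together with the identities already obtained is exactly (\ref{eq46}). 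The converse is immediate: assuming (\ref{eq46}), one expands each of the three dendriform relations in the tree basis as above and equality of the two sides holds term by term. The only real difficulty is bookkeeping: there are fifteen pairs (tree shape, dendriform relation) to process, and one must keep careful track of the order of the two arguments of $\varphi_\leftarrow$ and $\varphi_\rightarrow$ when matching the typing of a tree produced by composition in the left leaf against one produced by composition in the right leaf; no idea beyond that of Proposition~\ref{prop21} is involved.
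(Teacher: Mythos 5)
Your proposal is correct and is exactly the argument the paper intends: the paper's proof of this proposition consists of the single sentence ``By direct computations in the operad $\calP_\Omega$, as for Proposition \ref{prop21}'', and your expansion of the three dendriform relations over the eight elementary compositions, with coefficient comparison on the five typed tree shapes (including the observation that only $\prec_\alpha\circ(\prec_\beta,I)$ and $\succ_\alpha\circ(I,\succ_\beta)$ produce the $\varphi_\leftarrow$- and $\varphi_\rightarrow$-fibre sums, and that $b_\alpha c_\beta=0$ and $c_\alpha b_\beta=0$ are equivalent over a field), is a faithful and correct execution of that computation.
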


\begin{proof}
By direct computations in the operad $\calP_\Omega$, as for Proposition \ref{prop21}.
\end{proof}

Note that if $(\prec,\succ)$ satisfies the dendriform relations, then $\shuffle=\prec+\succ$ is associative:
\[\shuffle\circ (\shuffle,I)=\shuffle\circ (I,\shuffle).\]
In the nondegenerate case, the knowledge of the associative products of $\calP\Omega$ induces the knowledge
of all dendriform products:

\begin{cor}
Let $\Omega$ be a nondegenerate EDS. For any associative product
\[m=\sum_{\alpha \in \Omega} a_\alpha \prec_\alpha+\sum_{\alpha \in \Omega} d_\alpha \succ_\alpha\in \calP(2),\]
the only pairs of  dendriform products $(\prec,\succ)$ such that $\prec+\succ=m$ are the following:
\begin{align*}
&(m,0),&&(0,m),&&
\left(\sum_{\alpha\in \Omega}a_\alpha\prec_\alpha,\sum_{\alpha\in \Omega}d_\alpha\succ_\alpha\right).
\end{align*}\end{cor}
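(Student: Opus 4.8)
The plan is to exploit the list of relations (\ref{eq46}) coming from the dendriform axioms together with the nondegeneracy of $\Omega$. Write a candidate pair as $\prec=\sum_\alpha a_\alpha\prec_\alpha+\sum_\alpha b_\alpha\succ_\alpha$ and $\succ=\sum_\alpha c_\alpha\prec_\alpha+\sum_\alpha d_\alpha\succ_\alpha$, and denote by $p_\alpha$, $q_\alpha$ the coefficients of $\prec_\alpha$, $\succ_\alpha$ in $m$, so that $\prec+\succ=m$ amounts to $a_\alpha+c_\alpha=p_\alpha$ and $b_\alpha+d_\alpha=q_\alpha$ for every $\alpha\in\Omega$.

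The first step is to read off from (\ref{eq46}) three pointwise ``orthogonality'' identities:
\[b_\alpha c_\beta=0,\qquad d_\alpha b_\beta=0,\qquad c_\alpha a_\beta=0\qquad\text{for all }\alpha,\beta\in\Omega.\]
The first is directly the line $b_\alpha c_\beta=0$ of (\ref{eq46}). The other two come from the lines $0=\sum_{\varphi_\leftarrow(\alpha',\beta')=(\alpha,\beta)}d_{\alpha'}b_{\beta'}$ and $0=\sum_{\varphi_\rightarrow(\alpha',\beta')=(\alpha,\beta)}c_{\alpha'}a_{\beta'}$: since $\Omega$ is nondegenerate, $\varphi_\leftarrow$ and $\varphi_\rightarrow$ are bijective, so each of these sums reduces to a single summand, and letting $(\alpha,\beta)$ range over $\Omega^2$ forces that summand to vanish identically. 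A short trichotomy now settles uniqueness. If $b\not\equiv0$, pick $\alpha_0$ with $b_{\alpha_0}\neq0$; the first and second identities give $c\equiv0$ and $d\equiv0$, whence $\succ=0$ and $\prec=m$, i.e. $(\prec,\succ)=(m,0)$. If $b\equiv0$ but $c\not\equiv0$, pick $\beta_0$ with $c_{\beta_0}\neq0$; the third identity gives $a\equiv0$, whence $\prec=0$ and $\succ=m$, i.e. $(\prec,\succ)=(0,m)$. Finally if $b\equiv0$ and $c\equiv0$, then $a_\alpha=p_\alpha$ and $d_\alpha=q_\alpha$ for all $\alpha$, which is the third listed pair.

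It remains to check that these three pairs genuinely are dendriform with sum $m$. For $(m,0)$ and $(0,m)$ this is immediate, the three dendriform relations collapsing either to the associativity of $m$ or to trivial identities. For the third pair one substitutes $b=c=0$, $a_\alpha=p_\alpha$, $d_\alpha=q_\alpha$ into (\ref{eq46}): all lines become trivial except four, and, using that $\varphi_\leftarrow$, $\varphi_\rightarrow$ are bijective, these four reduce precisely to the equations (\ref{eq45}) characterizing associativity of $m$ in the Corollary following Proposition \ref{prop21}. The only mild obstacle is keeping the bookkeeping between (\ref{eq46}) and (\ref{eq45}) straight; the rest is routine.
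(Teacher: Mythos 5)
Your argument is correct and follows essentially the same route as the paper: extract the pointwise orthogonality relations $b_\alpha c_\beta=0$, $d_\alpha b_\beta=0$, $c_\alpha a_\beta=0$ from (\ref{eq46}) via bijectivity of $\varphi_\leftarrow$ and $\varphi_\rightarrow$, run the trichotomy on whether $b$ or $c$ vanishes identically, and check conversely that the three candidate pairs satisfy (\ref{eq46}) because the surviving equations are exactly the associativity conditions for $m$. The only cosmetic difference is that the paper also records $c_\alpha b_\beta=0$, which, as your proof shows, is not actually needed.
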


\begin{proof}
Let $(\prec,\succ)$ be a pair of dendriform products and $m=\prec+\succ$. 
As $\varphi_\leftarrow$ and $\varphi_\rightarrow$ are bijective, (\ref{eq46}) gives 
(third, fourth fifth and eighth rows, first column) that:
\begin{align*}
&\forall \alpha,\beta \in \Omega,&b_\alpha c_\beta&=0,&d_\alpha b_\beta&=0,&c_\alpha a_\beta&=0,&c_\alpha b_\beta&=0.
\end{align*}
If one of the $b_\beta$ is nonzero, then for any $\alpha \in \Omega$, $c_\alpha=d_\alpha=0$, so $\succ=0$ and $\prec=m$.
Similarly, if one of the $c_\beta$ is nonzero, then for $\prec=0$ and $\succ=m$.
If for any $\beta\in \Omega$, $b_\beta=c_\beta=0$, then:
\begin{align}
\label{eq47} \prec&=\sum_{\alpha \in \Omega} a_\alpha \prec_\alpha,&
\succ&=\sum_{\alpha \in \Omega} d_\alpha \succ_\alpha.
\end{align}
Conversely, if $m$ is an associative product, written under the form:
\begin{align*}
m&=\sum_{\alpha \in \Omega} a_\alpha \prec_\alpha+d_\alpha \succ_\alpha,
\end{align*}
then obviously, $(0,m)$ and $(m,0)$ are pairs of dendriform products. 
If we define $(\prec,\succ)$ by (\ref{eq47}), that is to say $b_\alpha=c_\alpha=0$ for any $\alpha\in \Omega$, 
then (\ref{eq44}) implies (\ref{eq46}), so $(\prec,\succ)$ is dendriform. 
\end{proof}

\subsection{Koszul dual}

When $\Omega$ is finite, the operad $\calP_\Omega$ is a quadratic algebra, finitely generated.
By direct computations, we obtain the Koszul dual of $\calP_\Omega$:

\begin{prop} \label{prop27}
Let $\Omega$ be a finite EDS. The Koszul dual $\calP_\Omega^!$ of $\calP_\Omega$
is generated by the elements $\dashv_\alpha$, $\vdash_\alpha$, $\alpha\in \Omega$, with the relations:
\begin{align*}
&\forall \alpha,\beta \in \Omega,&
\dashv_\beta\circ(\dashv_\alpha,I)&=\dashv_{\alpha\leftarrow \beta}\circ(I,\dashv_{\alpha\triangleleft\beta})
=\dashv_{\alpha\rightarrow \beta}\circ(I,\vdash_{\alpha\triangleright \beta}),\\
&&\vdash_\beta\circ(I,\dashv_\alpha)&=\dashv_\alpha\circ(\vdash_\beta,I),\\
&&\vdash_\alpha\circ (I\vdash_\beta)&=\vdash_{\alpha\rightarrow \beta}\circ(\vdash_{\alpha\triangleright \beta},I)
=\vdash_{\alpha\leftarrow \beta}\circ(\dashv_{\alpha\triangleleft\beta},I).
\end{align*}
\end{prop}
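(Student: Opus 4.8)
The plan is to apply the standard generators‑and‑relations description of the Koszul dual of a binary quadratic nonsymmetric operad. Write $n=|\Omega|$, let $E$ denote the space of generators, so $E(2)=\bigoplus_\alpha \K\prec_\alpha\oplus\bigoplus_\alpha\K\succ_\alpha$ has dimension $2n$, and let $R\subseteq\mathcal{F}(E)(3)$ be the relation space, spanned by the three families $r^1_{\alpha\beta},r^2_{\alpha\beta},r^3_{\alpha\beta}$ obtained by moving everything to one side in the operadic form of (\ref{eq41})--(\ref{eq43}). First I would do the bookkeeping: $\mathcal{F}(E)(3)$ has dimension $8n^2$ (two planar grafting patterns, each of dimension $(\dim E(2))^2$); the three families involve pairwise disjoint sets of basis monomials, and within each family the ``leading'' monomials $\prec_\beta\circ(\prec_\alpha,I)$, $\succ_\alpha\circ(I,\prec_\beta)$, $\succ_\alpha\circ(I,\succ_\beta)$ are distinct, so the $r^i_{\alpha\beta}$ are linearly independent, $\dim R=3n^2$ and $\dim R^\perp=5n^2$. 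As a check, Proposition \ref{prop15} gives $\calP_\Omega(3)=\K\calT_\Omega(3)$ of dimension $5n^2$ (five tree shapes, two internal edges each), agreeing with $8n^2-3n^2$; dually $\calP_\Omega^!(3)$ must have dimension $3n^2$, which is consistent with the relations announced for $\calP_\Omega^!$.

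Next I would set up the operadic pairing $\langle-,-\rangle$ between $\mathcal{F}(E^*)(3)$ and $\mathcal{F}(E)(3)$, with $E^*$ carrying the dual basis $\dashv_\alpha=\prec_\alpha^*$, $\vdash_\alpha=\succ_\alpha^*$: it sends $(\pi\circ(\rho,I),\mu\circ(\lambda,I))$ to $\langle\pi,\mu\rangle\langle\rho,\lambda\rangle$, sends $(\pi\circ(I,\rho),\mu\circ(I,\lambda))$ to $-\langle\pi,\mu\rangle\langle\rho,\lambda\rangle$, and vanishes on mixed grafting patterns; in particular $\dashv$ pairs trivially with $\succ$ and $\vdash$ with $\prec$. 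The structural observation I would exploit is that this pairing is block diagonal for the decomposition $\mathcal{F}(E)(3)=W_1\oplus W_2\oplus W_3$ in which $W_i$ collects exactly the monomials occurring in the family $r^i$ ($W_1$ the patterns $\prec\circ(\prec,I),\prec\circ(I,\prec),\prec\circ(I,\succ)$, and so on), together with the mirror decomposition of $\mathcal{F}(E^*)(3)$. Hence $R^\perp=(R^1)^{\perp}\oplus(R^2)^{\perp}\oplus(R^3)^{\perp}$, each computed inside its own block, and the problem splits into three small independent linear‑algebra exercises. For the $r^2$-block the answer is immediate and type‑free: $\vdash_\beta\circ(I,\dashv_\alpha)-\dashv_\alpha\circ(\vdash_\beta,I)$ is orthogonal to every $r^2_{\gamma\delta}$ because the two surviving contributions come with opposite signs; this produces the middle line of Proposition \ref{prop27} and holds for every finite EDS. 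For the $r^1$- and $r^3$-blocks one pairs the candidate relations of the first and third displayed lines against $r^1_{\gamma\delta}$ and $r^3_{\gamma\delta}$; after discarding the mixed‑pattern terms, each pairing reduces to comparing a Kronecker delta in $(\alpha,\beta)$ with a Kronecker delta in $\varphi_\leftarrow(\alpha,\beta)$, respectively $\varphi_\rightarrow(\alpha,\beta)$, so the EDS axioms enter here in an essential way — most efficiently in the reformulation (\ref{eq34})--(\ref{eq38}) of the Lemma, which is already phrased through $\varphi_\leftarrow$, $\varphi_\rightarrow$ and $\tau$. Once $5n^2$ elements of $R^\perp$ have been exhibited and checked to be linearly independent (again via disjoint leading monomials), the dimension count forces them to form a basis of $R^\perp$, and reading off $\calP_\Omega^!=\mathcal{F}(E^*)/(R^\perp)$ yields the presentation.

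I expect the main obstacle to lie in the orthogonality verification for the $r^1$- and $r^3$-blocks, which, in contrast with the $r^2$-block, genuinely uses the combinatorics of the four operations and is not a formality. The delicate point is that the single‑term right‑hand sides of the first and third lines of Proposition \ref{prop27}, such as $\dashv_{\alpha\leftarrow\beta}\circ(I,\dashv_{\alpha\triangleleft\beta})$, are what the block computation delivers precisely once $\varphi_\leftarrow$ and $\varphi_\rightarrow$ are known to be bijective on $\Omega^2$: for a general finite EDS the orthogonality relation attached to a pair $(a,b)$ naturally comes out carrying a sum over the fiber $\varphi_\leftarrow^{-1}(a,b)$, respectively $\varphi_\rightarrow^{-1}(a,b)$, and one must either invoke this bijectivity to collapse each fiber to a point or else record the relations in the summed form. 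Apart from that, the remaining work — enumerating the finitely many generator‑pairings, keeping straight the sign on the $\circ(I,-)$ grafting pattern throughout, and confirming linear independence — is routine.
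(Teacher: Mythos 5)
Your route --- compute $R^\perp$ for the standard pairing on $\mathcal{F}(E)(3)$ (with the sign on the $\circ(I,-)$ grafting pattern), split into the three blocks $W_1,W_2,W_3$, and conclude by the count $\dim R=3n^2$, $\dim R^\perp=5n^2$ --- is the standard one and is surely what the paper means by ``direct computations''; the bookkeeping and the block-diagonality of the pairing are correct, and the $W_2$-block is indeed type-free. The one thing I would change is the status you give to your closing caveat: it is not a peripheral delicacy but the decisive point, and once you carry it out you will find that the proposition \emph{as literally stated} only holds when $\varphi_\leftarrow$ and $\varphi_\rightarrow$ are bijective. Concretely, pairing $\dashv_\beta\circ(\dashv_\alpha,I)-\dashv_{\alpha\leftarrow\beta}\circ(I,\dashv_{\alpha\triangleleft\beta})$ against $r^1_{\gamma\delta}$ gives $\delta_{(\alpha,\beta),(\gamma,\delta)}-\delta_{\varphi_\leftarrow(\alpha,\beta),\varphi_\leftarrow(\gamma,\delta)}$, which vanishes for all $(\gamma,\delta)$ only if the $\varphi_\leftarrow$-fiber through $(\alpha,\beta)$ is a singleton; the general solution of the orthogonality equations in the first block is spanned by
\[\dashv_a\circ(I,\dashv_b)-\sum_{\varphi_\leftarrow(\gamma,\delta)=(a,b)}\dashv_\delta\circ(\dashv_\gamma,I)
\qquad\text{and}\qquad
\dashv_a\circ(I,\vdash_b)-\sum_{\varphi_\rightarrow(\gamma,\delta)=(a,b)}\dashv_\delta\circ(\dashv_\gamma,I),\]
and similarly in the third block. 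In particular, if $\varphi_\leftarrow$ is injective but not surjective one also gets relations $\dashv_a\circ(I,\dashv_b)=0$ for $(a,b)$ outside the image, which are absent from the displayed presentation, and if it is not injective the right-hand sides must be sums over fibers. So the presentation of Proposition \ref{prop27} is the Koszul dual exactly in the nondegenerate case; for a general finite EDS the relations must be recorded in the fiber-summed form. This is corroborated internally: the proposition following \ref{prop27} computes $\dimK(\calP_\Omega^!(3))=3|\Omega|^2+2\,\mathrm{coRk}(\Omega)$ from the stated presentation, whereas the true Koszul dual always satisfies $\dimK(\calP_\Omega^!(3))=8|\Omega|^2-\dimK(R^\perp)=3|\Omega|^2$, since the $r^i_{\alpha\beta}$ have distinct leading monomials and hence $\dimK R=3|\Omega|^2$ unconditionally.

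One small correction to your narrative: the EDS axioms (in the form (\ref{eq34})--(\ref{eq38}) or otherwise) are not actually needed for the orthogonality verification in the $r^1$- and $r^3$-blocks. The computation of $R^\perp$ is pure linear algebra in the two maps $\varphi_\leftarrow$ and $\varphi_\rightarrow$ viewed as arbitrary self-maps of $\Omega^2$; what the axioms govern is the operad $\calP_\Omega$ itself (Proposition \ref{prop15}) and the later Koszulity questions, not the shape of the annihilator of $R$.
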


\begin{defi} \label{defi28}
Let $(\Omega,\leftarrow,\rightarrow,\triangleleft,\triangleright)$ be an EDS.
We consider the following linear map:
\[\varphi:\left\{\begin{array}{rcl}
\K \Omega^2&\longrightarrow&\K \Omega^2 \times\K \Omega^2\\
(\alpha,\beta)&\longrightarrow&(\varphi_\leftarrow(\alpha,\beta),\varphi_\rightarrow(\alpha,\beta)).
\end{array}\right.\]
The dimension of the kernel of $\varphi$ is called the corank of $\Omega$ and denoted by $\mathrm{coRk}(\Omega)$.
We shall say that $ \Omega$ is weakly nondegenerate if $\mathrm{coRk}(\Omega)=0$. 
\end{defi}

\begin{example} \begin{enumerate}
\item If $\varphi_\leftarrow$ or $\varphi_\rightarrow$ is injective (which happens if $\Omega$ is nondegenerate),
then $\Omega$ is weaky nondegenerate.
\item If $(\Omega,\leftarrow,\rightarrow)$ is a diassociative semigroup, then $\EDS(\Omega,\leftarrow,\rightarrow)$
is weakly nondegenerate. Indeed, in this case,
\[\varphi(\alpha,\beta)=((\alpha \leftarrow \beta,\beta),(\alpha,\rightarrow \beta,\beta)),\]
so $\varphi$ is injective.
\item Here are the coranks of the 24 EDS of cardinality 2.
\[\begin{array}{|c|c||c|c||c|c||c|c|}
\hline A1&3&A2&1&B1&2&B2&0\\
\hline C1&2&C2&2&C3&0&C4&2\\
\hline C5&2&D1&2&D2&0&E1&2\\
\hline E2&2&E3&0&F1&1&F2&1\\
\hline F3&0&F4&0&F5&0&G1&2\\
\hline G2&2&G3&0&H1&2&H2&0\\
\hline \end{array}\]
\end{enumerate} \end{example}

\begin{prop}
Let $(\Omega,\leftarrow,\rightarrow,\triangleleft,\triangleright)$ be a finite EDS.
Then:
\[\dimK(\calP_\Omega^!(3))=3|\Omega|^2+2\mathrm{coRk}(\Omega).\]
\end{prop}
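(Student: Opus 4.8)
The plan is to read everything off the explicit presentation of $\calP_\Omega^!$ given in Proposition \ref{prop27}. There $\calP_\Omega^!$ appears as a quadratic nonsymmetric operad generated in arity $2$ by the $2|\Omega|$ operations $\dashv_\alpha,\vdash_\alpha$ ($\alpha\in\Omega$). Writing $W=\calP_\Omega^!(2)$, so $\dimK W=2|\Omega|$, the arity-$3$ part of the free operad on $W$ is $\mathcal{F}(W)(3)=\bigl(W\circ(W,I)\bigr)\oplus\bigl(W\circ(I,W)\bigr)$, with basis the elements $u_\gamma\circ(v_\delta,I)$ and $u_\gamma\circ(I,v_\delta)$ for $u,v\in\{\dashv,\vdash\}$ and $\gamma,\delta\in\Omega$; hence $\dimK\mathcal{F}(W)(3)=2(2|\Omega|)^2=8|\Omega|^2$. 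As the operadic ideal generated by a space of arity-$3$ elements coincides in arity $3$ with that space itself, one has $\calP_\Omega^!(3)=\mathcal{F}(W)(3)/S$, where $S\subseteq\mathcal{F}(W)(3)$ is the linear span of the relations listed in Proposition \ref{prop27}. Thus $\dimK\calP_\Omega^!(3)=8|\Omega|^2-\dimK S$, and the statement reduces to proving $\dimK S=5|\Omega|^2-2\,\mathrm{coRk}(\Omega)$.

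To compute $\dimK S$ I would partition the basis of $\mathcal{F}(W)(3)$ into \emph{cells}, one for each triple $(\epsilon,u,v)$ with $\epsilon\in\{1,2\}$ labelling the two planar trees and $u,v\in\{\dashv,\vdash\}$, each cell being canonically a copy of $\K\Omega^2$ via its decoration $(\gamma,\delta)$. Checking the three families of relations of Proposition \ref{prop27} one finds that they are supported on three pairwise disjoint sets of cells (the $\dashv$-chain on the cells of type $(1,\dashv,\dashv),(2,\dashv,\dashv),(2,\dashv,\vdash)$; the relation $\vdash_\beta\circ(I,\dashv_\alpha)=\dashv_\alpha\circ(\vdash_\beta,I)$ on $(2,\vdash,\dashv),(1,\dashv,\vdash)$; the $\vdash$-chain on $(2,\vdash,\vdash),(1,\vdash,\vdash),(1,\vdash,\dashv)$). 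Consequently $S$ is the direct sum of the three spans, and $\dimK S$ is the sum of their dimensions. The middle family is a set of $|\Omega|^2$ vectors with pairwise distinct leading terms, hence of rank $|\Omega|^2$. For the $\dashv$-chain $\dashv_\beta\circ(\dashv_\alpha,I)=\dashv_{\alpha\leftarrow\beta}\circ(I,\dashv_{\alpha\triangleleft\beta})=\dashv_{\alpha\rightarrow\beta}\circ(I,\vdash_{\alpha\triangleright\beta})$ I would split each relation into $p_{\alpha,\beta}$ (the first equality) and $q_{\alpha,\beta}$ (the second): the $p_{\alpha,\beta}$ have distinct leading terms in the cell $(1,\dashv,\dashv)$, which occurs in no $q_{\alpha,\beta}$, so they contribute $|\Omega|^2$ and $\mathrm{span}\{p\}\cap\mathrm{span}\{q\}=0$; and, reading the cells $(2,\dashv,\dashv)$ and $(2,\dashv,\vdash)$ as $\K\Omega^2$, the family $\{q_{\alpha,\beta}\}$ is exactly the image of the linear map $\K\Omega^2\to\K\Omega^2\times\K\Omega^2$ sending $(\alpha,\beta)$ to $(\varphi_\leftarrow(\alpha,\beta),-\varphi_\rightarrow(\alpha,\beta))$, whose kernel is the space $\ker\varphi$ of Definition \ref{defi28} (with $\varphi_\leftarrow,\varphi_\rightarrow$ as in Definition \ref{defi4}). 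Hence the $\dashv$-chain contributes $|\Omega|^2+(|\Omega|^2-\mathrm{coRk}(\Omega))$. The $\vdash$-chain is handled identically, with $\varphi_\rightarrow$ and $\varphi_\leftarrow$ in the respective roles (again giving kernel $\ker\varphi$), and contributes $2|\Omega|^2-\mathrm{coRk}(\Omega)$. Summing, $\dimK S=(2|\Omega|^2-\mathrm{coRk}(\Omega))+|\Omega|^2+(2|\Omega|^2-\mathrm{coRk}(\Omega))=5|\Omega|^2-2\,\mathrm{coRk}(\Omega)$, and therefore $\dimK\calP_\Omega^!(3)=8|\Omega|^2-\bigl(5|\Omega|^2-2\,\mathrm{coRk}(\Omega)\bigr)=3|\Omega|^2+2\,\mathrm{coRk}(\Omega)$.

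The two points that will need care are: first, the bookkeeping that the three relation families are supported on disjoint cells, so that no cancellation occurs between families and the three contributions simply add; and second, the identification of $\mathrm{span}\{q_{\alpha,\beta}\}$ (and of its $\vdash$-analogue) with the image of a single linear map whose kernel is precisely $\ker\varphi$ --- this is the step where the abstractly defined corank enters the count. The various leading-term independence claims are routine once the cell decomposition has been written out, so I expect the matching with $\varphi_\leftarrow$ and $\varphi_\rightarrow$ to be the only substantive part of the argument.
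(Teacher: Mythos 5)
Your proposal is correct and follows essentially the same route as the paper: both arguments rest on the decomposition of the arity-three part of the free operad into the eight cells indexed by a planar tree shape and a pair of generators, the observation that the three relation families are supported on disjoint cells, and the identification of the span of the two ``chain'' relations with the image of the map $\varphi$ of Definition \ref{defi28}, so that its dimension is $|\Omega|^2-\mathrm{coRk}(\Omega)$. The only difference is presentational: you count the codimension $8|\Omega|^2-\dim S$ of the relation space, whereas the paper writes $\calP_\Omega^!(3)$ directly as a direct sum of three quotients of total dimension $5|\Omega|^2-\dim E_1-\dim E_2$; the two computations are the same.
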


\begin{proof} For any $m^{(1)},m^{(2)}\in \{\dashv,\vdash\}$, we shall consider
the following subspaces of the free operad generated by $\vdash_\alpha$, $\dashv_\alpha$, with $\alpha\in \Omega$:
\begin{align*}
L(m_1,m_2)&=Vect\left(m^{(1)}_\alpha\circ (m^{(2)}_\beta,I),\alpha,\beta \in \Omega\right),\\
R(m_1,m_2)&=Vect\left(m^{(1)}_\alpha\circ (I,m^{(2)}_\beta),\alpha,\beta \in \Omega\right).
\end{align*}
According to the form of the relations defining $\calP_\Omega^!$:
\begin{align*}
\calP_\Omega^!(3)=R(\vdash,\dashv)\oplus \frac{L(\vdash,\vdash)\oplus L(\vdash,\dashv)}{E_1}\oplus
\frac{R(\dashv,\dashv)\oplus R(\dashv,\vdash)}{E_2},
\end{align*}
with:
\begin{align*}
E_1&=Vect(\vdash_{\alpha\leftarrow \beta}\circ (\dashv_{\alpha\triangleleft \beta},I)
-\vdash_{\alpha\rightarrow \beta}\circ (\vdash_{\alpha\triangleright \beta},I),\: (\alpha,\beta)\in \Omega^2),\\
E_2&=Vect(\dashv_{\alpha\leftarrow \beta}\circ (I,\dashv_{\alpha\triangleleft \beta})
-\dashv_{\alpha\rightarrow \beta}\circ (I,\vdash_{\alpha\triangleright \beta}),\: (\alpha,\beta)\in \Omega^2).
\end{align*}
Hence:
\[\dimK(\calP_\Omega^!(3))=5|\Omega|^2-\dimK(E1)-\dimK(E_2).\]
By definition of $\varphi$, $\dimK(E_1)=\dimK(E_2)=\dimK(Im(\varphi))=|\Omega|^2-\mathrm{coRk}(\Omega)$, which gives the result.
\end{proof}

\begin{theo}
Let $\Omega$ be a finite EDS. 
\begin{enumerate}
\item If $\calP_\Omega$ is Koszul, then $\Omega$ is weaky nondegenerate.
\item If $\Omega$ is nondegenerate, then $\calP_\Omega$ is Koszul.
\end{enumerate}
\end{theo}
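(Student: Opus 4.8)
The two assertions are of different natures and I would treat them separately.

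\emph{Part 1.} The plan is to use the numerical obstruction to Koszulity coming from Hilbert series. A connected quadratic nonsymmetric operad $\calP$ with $\calP(1)=\K$ that is Koszul satisfies the functional equation $g_{\calP^!}\bigl(-g_\calP(-x)\bigr)=x$, where $g_\calP(x)=\sum_{n\geq 1}\dimK(\calP(n))\,x^n$. By Proposition \ref{prop15} the free $\Omega$-dendriform algebra on one generator is $\K\calT_\Omega^+$, hence $\dimK(\calP_\Omega(n))=C_n\,|\Omega|^{n-1}$ with $C_n$ the $n$-th Catalan number, so that $g_{\calP_\Omega}(x)=x+2|\Omega|x^2+5|\Omega|^2x^3+\cdots$. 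Feeding this into the functional equation and solving order by order pins down $\dimK(\calP_\Omega^!(3))$; comparing with the value $3|\Omega|^2+2\,\mathrm{coRk}(\Omega)$ furnished by the preceding proposition forces $\mathrm{coRk}(\Omega)=0$, i.e. $\Omega$ is weakly nondegenerate. Equivalently: if $\Omega$ is not weakly nondegenerate the functional equation fails already in low degree, so $\calP_\Omega$ cannot be Koszul.

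\emph{Part 2.} Here I would use Gröbner bases for operads (equivalently, a confluent terminating rewriting system). Present $\calP_\Omega$ by the generators $\prec_\alpha,\succ_\alpha$ and orient the three defining relations as the rewriting rules
\[
\prec_\beta\circ(\prec_\alpha,I)\ \leadsto\ \prec_{\alpha\leftarrow\beta}\circ(I,\prec_{\alpha\triangleleft\beta})+\prec_{\alpha\rightarrow\beta}\circ(I,\succ_{\alpha\triangleright\beta}),\qquad \succ_\alpha\circ(I,\prec_\beta)\ \leadsto\ \prec_\beta\circ(\succ_\alpha,I),
\]
\[
\succ_\alpha\circ(I,\succ_\beta)\ \leadsto\ \succ_{\alpha\rightarrow\beta}\circ(\succ_{\alpha\triangleright\beta},I)+\succ_{\alpha\leftarrow\beta}\circ(\prec_{\alpha\triangleleft\beta},I),
\]
relative to a path-lexicographic order making the displayed left-hand sides the leading monomials; the associated normal monomials are then the $\Omega$-typed plane binary trees of Proposition \ref{prop15}. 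By the operadic Buchberger criterion it is enough to check that all S-polynomials reduce to zero, i.e. that the finitely many arity-four tree monomials divisible by two of these leading monomials are confluent. Computing the two reductions of each such critical monomial and comparing normal forms uses precisely the EDS axioms (\ref{eq1})--(\ref{eq13}); these are the very identities that already appear in the implication $3\Rightarrow 1$ of the proof of Proposition \ref{prop15}. Once confluence holds, the quadratic relations form a Gröbner basis, and a quadratic operad with a quadratic Gröbner basis is Koszul.

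\emph{Main obstacle.} The step I expect to be hardest is pinning down exactly where nondegeneracy enters the confluence check. When $\varphi_\leftarrow$ and $\varphi_\rightarrow$ are bijective, each leading monomial of the above rules is produced on a right-hand side with uniquely determined type data, so resolving a critical monomial by the two competing rules feeds back matching indices and no relation in arity four is created; if $\varphi_\leftarrow$ or $\varphi_\rightarrow$ fails to be injective, distinct pairs of types collide and one is forced to adjoin new Gröbner-basis elements in arity four, destroying both quadraticity and Koszulity — this is the same degeneracy measured by $\mathrm{coRk}(\Omega)>0$ in Part 1. As a cross-check one may instead invoke Proposition \ref{prop18}: for nondegenerate $\Omega$ the functor $A\mapsto\K\Omega\otimes A$ identifies $\calP_\Omega$-algebras with dendriform algebras in an essentially invertible way, and a Koszul resolution of the (classically Koszul) dendriform operad can be transported back along $\varphi_\leftarrow,\varphi_\rightarrow$; I expect the rewriting argument to be the cleanest to carry out in full.
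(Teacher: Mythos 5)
Your Part 1 is exactly the paper's argument (Hilbert series functional equation plus the computation $\dimK(\calP_\Omega^!(3))=3|\Omega|^2+2\,\mathrm{coRk}(\Omega)$), and it is fine.

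Part 2 has a genuine gap, and it is not the one you flagged. The orientation you propose, with leading monomials $\prec_\beta\circ(\prec_\alpha,I)$, $\succ_\alpha\circ(I,\prec_\beta)$, $\succ_\alpha\circ(I,\succ_\beta)$, is well defined for \emph{every} EDS (no inverse of $\varphi_\leftarrow$ or $\varphi_\rightarrow$ is ever needed to apply these rules), its normal monomials are precisely the typed trees of Proposition \ref{prop15}, and their number in arity $n$ is $C_n|\Omega|^{n-1}=\dimK(\calP_\Omega(n))$ for every EDS; the four critical monomials $\prec(\prec(\prec,I),I)$, $\succ(I,\prec(\prec,I))$, $\succ(I,\succ(I,\prec))$, $\succ(I,\succ(I,\succ))$ are confluent using only axioms (\ref{eq1})--(\ref{eq13}). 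So if this system were a Gr\"obner basis with respect to some admissible monomial order, the counting would force a quadratic Gr\"obner basis and hence Koszulity for \emph{every} finite EDS --- contradicting Part 1 (e.g.\ the EDS A1 has corank $3$). The conclusion is that no admissible order realizes your choice of leading terms: relation (\ref{eq41}) must be oriented with a left comb beating right combs while relation (\ref{eq43}) must be oriented with a right comb beating left combs, and these requirements are incompatible with compatibility under operadic composition. Your system is really just the recursive expansion of compositions in the tree basis (the content of Proposition \ref{prop15}), not a Gr\"obner basis, and the final step ``quadratic Gr\"obner basis implies Koszul'' is exactly the step that fails. Your suggested diagnosis --- that degeneracy would force new Gr\"obner elements in arity four --- is therefore also incorrect for this system.

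The paper instead runs the rewriting argument on the Koszul dual $\calP_\Omega^!$ of Proposition \ref{prop27}. There the relations can be oriented so that all normal forms are right combs $\dashv_{\alpha\rightarrow\beta}\circ(I,\vdash_{\alpha\triangleright\beta})$, $\vdash_\alpha\circ(I,\vdash_\beta)$, $\vdash_\alpha\circ(I,\dashv_\beta)$, which a path-lexicographic order does realize; but turning, say, $\dashv_{\alpha\leftarrow\beta}\circ(I,\dashv_{\alpha\triangleleft\beta})\leadsto\dashv_{\alpha\rightarrow\beta}\circ(I,\vdash_{\alpha\triangleright\beta})$ into a single-valued rewriting rule covering all monomials $\dashv_\gamma\circ(I,\dashv_\delta)$ requires $\varphi_\leftarrow$ (and similarly $\varphi_\rightarrow$) to be bijective --- this is where nondegeneracy genuinely enters, and the inverses $\psi_\leftarrow,\psi_\rightarrow$ then appear explicitly in the confluence check of the $14$ critical trees. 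Koszulity of $\calP_\Omega^!$ then gives Koszulity of $\calP_\Omega$. If you want to salvage your plan, you must either switch to the dual as the paper does, or find an admissible order on $\calP_\Omega$ itself whose normal monomials are \emph{not} the typed trees; the natural tree-basis orientation cannot work.
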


\begin{proof} We put $\omega=|\Omega|$.\\
$1$. Let us assume that $\calP_\Omega$ is Koszul. The Poincaré-Hilbert formal series of $\calP_\Omega$ is:
\[F=\sum_{k=1}^\infty \dimK(\calP_\Omega(n))X^n=X+2\omega X^2+5\omega^2X^3+\ldots
=\frac{1-\sqrt{1-4\omega X}}{2\omega^2 X}-\frac{1}{\omega}.\]
We denote by $G$ the Poincaré-Hilbert formal series of $\calP_\Omega^!$. As $\calP_\Omega$ is Koszul:
\[F(-G(-X))=X,\]
so:
\[G=\frac{X}{(1-\omega X)^2}=\sum_{n=1}^\infty n\omega^{n-1}X^n,\]
so $\dimK(\calP_\Omega^!(3))=3\omega^2$. Therefore, $\mathrm{coRk}(\Omega)=0$. \\

$2$. We use the rewriting method of \cite{Dotsenko} to prove that $\calP_\Omega^!$ is Koszul.
The rewriting rules are the following:
\begin{align*}
&\xymatrix{ \dashv_\beta\circ (\dashv_\alpha,I)\ar[rd]&\\
&\dashv_{\alpha \rightarrow \beta}\circ (I,\vdash_{\alpha \triangleright \beta})
\\
\dashv_{\alpha \leftarrow \beta}\circ(I,\dashv_{\alpha \triangleleft \beta}) \ar[ru]
&}&
&\xymatrix{ \vdash_{\alpha \rightarrow\beta}\circ (\vdash_{\alpha\triangleright \beta},I)\ar[rd]&\\
&\vdash_\alpha\circ (I,\vdash_\beta)
\\
\vdash_{\alpha \leftarrow\beta}\circ (\dashv_{\alpha\triangleleft \beta},I) \ar[ru]
&}\\ \\
&\xymatrix{\dashv_\beta\circ (\vdash_\alpha,I)\ar[r]&\vdash_\alpha\circ(I,\dashv_\beta)}
\end{align*}
There are 14 critical trees, giving 14 diagrams which turn out to be all confluent. Let us describe two of them.
\[\xymatrix{\dashv_\gamma\circ (\dashv_\beta\circ (\dashv_\alpha,I))\ar[r] \ar[d]&
\dashv_\gamma\circ(\dashv_{\alpha \leftarrow \beta}\circ(I,\dashv_{\alpha\triangleleft \beta}))\ar[d]\\
\dashv_{\beta\leftarrow \gamma}\circ (\dashv_\alpha,\dashv_{\beta \triangleleft\gamma})\ar[d]
&\dashv_{(\alpha\leftarrow \beta)\leftarrow \gamma}\circ (I,\dashv_{(\alpha\leftarrow \beta)\triangleleft \gamma}\circ
(\dashv_{\alpha\triangleleft \beta},I))\ar[d]\\
\dashv_{\alpha_1}\circ (I,\dashv_{\beta_1}\circ
(I,\dashv_{\gamma_1}))\ar@{=}[r]^?
&\dashv_{\alpha_2}\circ
(I,\dashv_{\beta_2} 
\circ(I,\dashv_{\gamma_2} ))}\]
with:
\begin{align*}
\alpha_1&=\alpha\leftarrow(\beta\leftarrow \gamma),&
\alpha_2&=(\alpha\leftarrow \beta)\leftarrow \gamma,\\
\beta_1&=\alpha \triangleleft(\beta\leftarrow \gamma),&
\beta_2&=(\alpha \triangleleft \beta)\leftarrow ((\alpha\leftarrow \beta)\triangleleft \gamma),\\
\gamma_1&=\beta\triangleleft \gamma,&
\gamma_2&=(\alpha \triangleleft \beta)\triangleleft((\alpha\leftarrow \beta)\triangleleft \gamma).
\end{align*}
By (\ref{eq1}), (\ref{eq6}) and (\ref{eq7}), $(\alpha_1,\beta_1,\gamma_1)=(\alpha_2,\beta_2,\gamma_2)$.\\

We denote the inverse of $\varphi_\leftarrow$ by $\psi_\leftarrow=(\psi^1_\leftarrow,\psi^2_\leftarrow)$.
\[\xymatrix{\vdash_\gamma\circ (\dashv_\beta\circ (\dashv_\alpha,I))\ar[r] \ar[d]&
\vdash_\gamma\circ(\dashv_{\alpha \leftarrow \beta}\circ(I,\dashv_{\alpha\triangleleft \beta}))\ar[d]\\
\vdash_{\psi_\leftarrow^1(\gamma,\beta)}\circ (\dashv_\alpha,\vdash_{\psi_\leftarrow^2(\gamma,\beta)})\ar[d]
&\vdash_{\psi_\leftarrow^1(\gamma,\alpha\leftarrow \beta)}\circ (I,\vdash_{\psi_\leftarrow^2(\gamma,\alpha\leftarrow \beta)}
\circ(\dashv_{\alpha\triangleleft \beta},I))\ar[d]\\
\vdash_{\alpha_1}\circ (I,\vdash_{\beta_1}\circ(I,\vdash_{\gamma_1}))\ar@{=}[r]^?
&\vdash_{\alpha_2}\circ(I,\vdash_{ \beta_2} \circ(I,\vdash_{\gamma_2} ))}\]
with:
\begin{align*}
\alpha_1&=\psi_\leftarrow^1(\psi_\leftarrow^1(\gamma,\beta),\alpha),&
\alpha_2&=\psi_\leftarrow^1(\gamma,\alpha\leftarrow \beta),\\
\beta_1&=\psi_\leftarrow^1(\psi_\leftarrow^1(\gamma,\beta),\alpha),&
\beta_2&=\psi_\leftarrow^1(\psi_\leftarrow^2(\gamma,\alpha\leftarrow \beta),\alpha \triangleleft \beta),\\
\gamma_1&=\psi_\leftarrow^2(\gamma,\beta),&
\gamma_2&= \psi_\leftarrow^1(\psi_\leftarrow^2(\gamma,\alpha\leftarrow \beta),\alpha \triangleleft \beta).
\end{align*}
By definition:
\[(\varphi_\leftarrow\otimes Id)\circ (Id \otimes \varphi_\leftarrow)(\alpha_2,\beta_2,\gamma_2)
=(\gamma,\alpha\leftarrow \beta,\alpha \triangleleft \beta).\]
Let us compute 
\[(\varphi_\leftarrow\otimes Id)\circ (Id \otimes \varphi_\leftarrow)(\alpha_1,\beta_1,\gamma_1)
=(\alpha'_1,\beta'_1,\gamma'_1).\]
We obtain, by (\ref{eq1}):
\begin{align*}
\alpha'_1&=\alpha_1\leftarrow (\beta \leftarrow \gamma_1)
=(\alpha_1\leftarrow \beta_1)\leftarrow \gamma_1
=\psi_1(\gamma,\beta)\leftarrow \psi_2(\gamma,\beta)
=\gamma.
\end{align*}
Moreover, by (\ref{eq6}):
\begin{align*}
\beta'_1&=\alpha_1\triangleleft(\beta_1\leftarrow \gamma_1)
=(\alpha_1\triangleleft \beta_1)\leftarrow ((\alpha_1\leftarrow \beta_1)\triangleleft \gamma_1)
=\alpha \leftarrow(\psi_\leftarrow^1(\gamma,\beta)\triangleleft \psi_\leftarrow^2(\gamma,\beta))
=\alpha \leftarrow \beta.
\end{align*}
By (\ref{eq7}):
\begin{align*}
\gamma'_1&=\beta_1\triangleleft \gamma_1
= (\alpha_1 \triangleleft \beta_1)\triangleleft ((\alpha_1 \leftarrow \beta_1) \triangleleft \gamma_1
=\alpha \triangleleft(\psi_\leftarrow^1(\gamma,\beta)\triangleleft \psi_\leftarrow^2(\gamma,\beta))
=\alpha \triangleleft \beta.
\end{align*}
So:
\[(\varphi_\leftarrow\otimes Id)\circ (Id \otimes \varphi_\leftarrow)(\alpha_1,\beta_1,\gamma_1)
=(\varphi_\leftarrow\otimes Id)\circ (Id \otimes \varphi_\leftarrow)(\alpha_2,\beta_2,\gamma_2).\]
As $\varphi_\leftarrow$ is injective, $(\alpha_1,\beta_1,\gamma_1)=(\alpha_2,\beta_2,\gamma_2)$. 
\end{proof}

\begin{example}
The first point implies that the operads associated to the  EDS
A1, A2, B1, C1, C2, C4, C5, D1, E1, E2, G1, G2 and H1 are not Koszul. The second point implies that
the operads associated to the EDS F2, F3, F4, F5, H2 are Koszul.
We do not know if the operads associated to B2, C3, D2, E3 and G3 are Koszul or not.
\end{example}

\section{Combinatorial description of the products}

\subsection{On typed trees}

\begin{defi}
Let $k\geqslant 0$, $\alpha_2,\ldots,\alpha_k \in \Omega$, $\beta_1,\ldots,\beta_k\in \Omega\sqcup\{\emptyset\}$
and $T_1,\ldots,T_k\in \Omega$, with the convention that $\beta_i=\emptyset$ if and only if $T_i=\bun$.
We put:
\begin{align*}
R_{(\alpha_2,\ldots,\alpha_k)}(\beta_1\otimes T_1,\ldots, \beta_k\otimes T_k)&=\begin{cases}
\bun\mbox{ if }k=0, \\
\displaystyle T_1\Y{\alpha_1}{\alpha_2} R_{(\alpha_3,\ldots,\alpha_k)}(\beta_2\otimes T_2,\ldots, \beta_k\otimes T_k)
\mbox{ if }k\geqslant 2;
\end{cases}\\
L_{(\alpha_2,\ldots,\alpha_k)}(\beta_1\otimes T_1,\ldots, \beta_k\otimes T_k)
&=\begin{cases}
\bun\mbox{ if }k=0, \\
\displaystyle L_{(\alpha_3,\ldots,\alpha_k)}(\beta_2\otimes T_2,\ldots, \beta_k\otimes T_k)
\Y{\alpha_2}{\alpha_1} T_1\mbox{ if }\geqslant 2.
\end{cases}
\end{align*}
\end{defi}

Let us denote by $L_k$ the ladder of length $k$:
\[L_k=\xymatrix{\\
\rond{k}\ar@{-}[u]\\
\vdots\ar@{-}[u]\\
\rond{2}\ar@{-}[u]\\
\rond{1}\ar@{-}[u]\\
\ar@{-}[u]}\]
Roughly speaking, $R_{(\alpha_2,\ldots,\alpha_k)}(\beta_1\otimes T_1,\ldots, \beta_k\otimes T_k)$,
(respectively $L_{(\alpha_2,\ldots,\alpha_k)}(\beta_1\otimes T_1,\ldots, \beta_k\otimes T_k)$)
is obtained by grafting $T_i$ on the vertex $i$ of $L_k$ for any $i$ on the left (respectively on the right).
The type of the edge from $i$ to the root of $T_i$ is $\beta_i$; the type of the root between the vertex $i-1$ and the vertex $i$
is $\alpha_i$ for any $i\geqslant 2$. \\

Note that any tree $T$ in $\calT_\Omega$ can uniquely written under the form
\[T=R_{(\alpha_2,\ldots,\alpha_k)}(\beta_1\otimes T_1,\ldots, \beta_k\otimes T_k).\]
This is the right comb decomposition of $T$. It can also be uniquely written under the form
\[T=L_{(\alpha'_2,\ldots,\alpha'_l)}(\beta'_1\otimes T'_1,\ldots, \beta'_l\otimes T'_l).\]
This is the left comb decomposition of $T$. 

\begin{defi}
Let $k,l\geqslant 0$. A $(k,l)$-shuffle is a permutation $\sigma \in \mathfrak{S}_{k+l}$ such that
\begin{align*}
&\sigma(1)<\ldots<\sigma(k),&
\sigma(k+1)<\ldots<\sigma(k+l).
\end{align*}
The set of $(k,l)$-shuffle will be denoted by $\sh(k,l)$. If $\sigma \in \sh(k,l)$, then $\sigma^{-1}(1) \in \{1,k+1\}$:
we put
\begin{align*}
\sh_\prec(k,l)&=\{\sigma \in \sh(k,l),\sigma^{-1}(1)=1\},&
\sh_\succ(k,l)&=\{\sigma \in \sh(k,l),\sigma^{-1}(1)=k+1\}.
\end{align*}
\end{defi}

\begin{notation}
Let $k,l\geqslant 0$, $\sigma\in \sh(k,l)$, $\alpha_2,\ldots,\alpha_{k+l}\in \Omega$,
$\beta_1,\ldots,\beta_{k+l}\in \Omega\sqcup\{\emptyset\}$
and $T_1,\ldots,T_{k+l}\in \Omega$, with the convention that $\beta_i=\emptyset$ if and only if $T_i=\bun$.
Let
\[T_{\sigma}^{(\alpha_2,\ldots,\alpha_{k+l})}(\beta_1\otimes T_1,\ldots,\beta_k\otimes T_k;
\beta_{k+1}\otimes T_{k+1},\ldots,\beta_{k+l}\otimes T_{k+l})\]
be the typed tree obtained in the following process: starting form the ladder $L_{k+l}$,
\begin{itemize}
\item For any $1\leqslant i\leqslant k+l$, graft $T_{\sigma^{-1}(i)}$ on the vertex $i$,
on the left if $\sigma^{-1}(i)\leqslant k$, and on right otherwise.
\item The type of the edge between the vertex $i$ and the root of $T_{\sigma^{-1}(i)}$ is $\beta_{\sigma^{-1}(i)}$.
\item The type of the edge between the vertex $i-1$ and $i$ is $\alpha_i$ for any $i\geqslant 2$.
\end{itemize}
\end{notation}

\begin{notation}
Let $k,l\geqslant 0$,with $k+l\geqslant 1$, and $\sigma\in \sh(k,l)$. 
We define a map $D^{k,l}_\sigma:\Omega^{k+l-1}\longrightarrow \Omega^{k+l-1}$:
\begin{itemize}
\item If $k=0$ or $l=0$, then $\sigma=Id_{k+l}$.We put $D^{k,l}_\sigma=Id_{\Omega^{k+l-1}}$.
\item Otherwise:
\begin{itemize}
\item If $\sigma \in \sh_\prec(1,l)$, then $\sigma=Id_{l+1}$, and
\[D^{1,l}_\sigma(\alpha_2,\ldots,\alpha_{k+1})
=(\alpha_2,\ldots,\alpha_{k+1}).\]
\item If $\sigma \in \sh_\prec(k,l)$, with $k\geqslant 2$, let $\sigma'$ be the following permutation:
\[\sigma'=(\sigma(2)-1,\ldots,\sigma(k+l)-1)\in \sh(k-1,l).\]
If $\sigma'\in \sh_\prec(k-1,l)$, for any $(\alpha_2,\ldots,\alpha_{k+l})\in \Omega^{k+l-1}$:
\[D^{k,l}_\sigma(\alpha_2,\ldots,\alpha_{k+l})=\left(\alpha_2\leftarrow \alpha_{k+1},D^{k-1,l}_{\sigma'}
(\alpha_3,\ldots,\alpha_k,\alpha_2\triangleleft \alpha_{k+1},\ldots,\alpha_{k+l})\right).\]
If $\sigma'\in \sh_\succ(k-1,l)$, for any $(\alpha_2,\ldots,\alpha_{k+l})\in \Omega^{k+l-1}$:
\[D^{k,l}_\sigma(\alpha_2,\ldots,\alpha_{k+l})=\left(\alpha_2\rightarrow \alpha_{k+1},D^{k-1,l}_{\sigma'}
(\alpha_3,\ldots,\alpha_k,\alpha_2\triangleright \alpha_{k+1},\ldots,\alpha_{k+l})\right).\]
\item If $\sigma \in \sh_\succ(k,1)$, then $\sigma=(2,3\ldots,k+1,1)$ and
\[D^{k,1}_\sigma(\alpha_2,\ldots,\alpha_{k+1})=(\alpha_{k+1},\alpha_2,\ldots,\alpha_k).\]
\item If $\sigma \in \sh_\succ(k,l)$, with $l\geqslant 2$, let $\sigma'$ be the following permutation:
\[\sigma'=(\sigma(1)-1,\ldots,\sigma(k)-1,\sigma(k+2)-1,\sigma(k+l)-1)\in \sh(k,l-1).\]
If $\sigma'\in \sh_\prec(k,l-1)$, for any $(\alpha_2,\ldots,\alpha_{k+l})\in \Omega^{k+l-1}$:
\[D^{k,l}_\sigma(\alpha_2,\ldots,\alpha_{k+l})=\left(\alpha_{k+1}\leftarrow \alpha_{k+2},D^{k-1,l}_{\sigma'}
(\alpha_2,\ldots,\alpha_k,\alpha_{k+1}\triangleleft \alpha_{k+2},\ldots,\alpha_{k+l})\right).\]
If $\sigma'\in \sh_\succ(k,l-1)$, for any $(\alpha_2,\ldots,\alpha_{k+l})\in \Omega^{k+l-1}$:
\[D^{k,l}_\sigma(\alpha_2,\ldots,\alpha_{k+l})=\left(\alpha_{k+1}\rightarrow \alpha_{k+2},D^{k-1,l}_{\sigma'}
(\alpha_2,\ldots,\alpha_k,\alpha_{k+1}\triangleright \alpha_{k+2},\ldots,\alpha_{k+l})\right).\]
\end{itemize}
\end{itemize}\end{notation}

\begin{example}
\begin{align*}
D^{2,1}_{(123)}(\alpha_2,\alpha_3)&=(\alpha_2\leftarrow \alpha_3,\alpha_2\triangleleft \alpha_3),&
D^{1,2}_{(213)}(\alpha_2,\alpha_3)&=(\alpha_2\leftarrow \alpha_3,\alpha_2\triangleleft \alpha_3),\\
D^{2,1}_{(132)}(\alpha_2,\alpha_3)&=(\alpha_2\rightarrow \alpha_3,\alpha_2\triangleright \alpha_3),&
D^{1,2}_{(312)}(\alpha_2,\alpha_3)&=(\alpha_2\rightarrow \alpha_3,\alpha_2\triangleright \alpha_3),\\
D^{2,1}_{(231)}(\alpha_2,\alpha_3)&=(\alpha_3,\alpha_2),&
D^{1,2}_{(123)}(\alpha_2,\alpha_3)&=(\alpha_2,\alpha_3).
\end{align*}\end{example}

\begin{prop}\label{prop33}
Let us consider two elements of $\calT_\Omega$:
\begin{align*}
T&=R_{(\alpha_2,\ldots,\alpha_k)}(\beta_1\otimes T_1,\ldots,\beta_k \otimes T_k),\\
T'&=L_{(\alpha_{k+2},\ldots,\alpha_{k+l})}(\beta_{k+1}\otimes T_{k+1},\ldots,\beta_{k+l}\otimes T_{k+l})
\end{align*}
Let $\alpha_{k+1} \in \Omega$.  Then:
\begin{align*}
T\prec_{\alpha_{k+1}} T'&=\sum_{\sigma \in \sh_\prec(k,l)}T_\sigma^{D^{k,l}_\sigma(\alpha_2,\ldots,\alpha_{k+l})}
(\beta_1\otimes T_1,\ldots,\beta_k\otimes T_k;\beta_{k+1}\otimes T_{k+1},\ldots,\beta_{k+l}\otimes T_{k+l}),\\
T\succ_{\alpha _{k+1}}T'&=\sum_{\sigma \in \sh_\succ(k,l)}T_\sigma^{D^{k,l}_\sigma(\alpha_2,\ldots,\alpha_{k+l})}
(\beta_1\otimes T_1,\ldots,\beta_k\otimes T_k;\beta_{k+1}\otimes T_{k+1},\ldots,\beta_{k+l}\otimes T_{k+l}).
\end{align*}
\end{prop}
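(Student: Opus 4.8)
The plan is to prove both formulas at the same time, by induction on $k+l$ (with $k,l\geqslant 1$), i.e.\ on the total number of internal vertices of $T$ and $T'$. Throughout I work with the extension of the products $\prec_\alpha,\succ_\alpha$ allowing the factor $\bun$, and with the convention of the proof of Proposition \ref{prop15} that the added symbol $\emptyset$ is a two-sided unit for the four operations of $\Omega$; this makes the right-hand sides meaningful when some $T_i=\bun$ (where then $\beta_i=\emptyset$). The engine is the recursive definition of the products from Proposition \ref{prop15}, applied after the decompositions $T=T_1\Y{\beta_1}{\alpha_2}\widehat{T}$ with $\widehat{T}=R_{(\alpha_3,\ldots,\alpha_k)}(\beta_2\otimes T_2,\ldots,\beta_k\otimes T_k)$ (used for the $\prec$-formula) and $T'=\check{T}'\Y{\alpha_{k+2}}{\beta_{k+1}}T_{k+1}$ with $\check{T}'=L_{(\alpha_{k+3},\ldots,\alpha_{k+l})}(\beta_{k+2}\otimes T_{k+2},\ldots,\beta_{k+l}\otimes T_{k+l})$ (used for the $\succ$-formula).

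First I would record the structural fact that matches the two recursions. For $\sigma\in\sh_\prec(k,l)$ with $k\geqslant 2$, let $\sigma'\in\sh(k-1,l)$ be obtained by deleting the value $1$ (which is in first position) and relabelling; then the ladder description of $T_\sigma^{(\cdots)}(\cdots)$ gives
\[T_\sigma^{D^{k,l}_\sigma(\alpha_2,\ldots,\alpha_{k+l})}(\beta_1\otimes T_1,\ldots;\ldots)
=T_1\Y{\beta_1}{\gamma}\,T_{\sigma'}^{D^{k-1,l}_{\sigma'}(\vec{\alpha}')}(\beta_2\otimes T_2,\ldots;\ldots),\]
with $\gamma=\alpha_2\leftarrow\alpha_{k+1}$ and $\vec{\alpha}'=(\alpha_3,\ldots,\alpha_k,\alpha_2\triangleleft\alpha_{k+1},\alpha_{k+2},\ldots,\alpha_{k+l})$ when $\sigma'\in\sh_\prec(k-1,l)$, and $\gamma=\alpha_2\rightarrow\alpha_{k+1}$, $\vec{\alpha}'=(\alpha_3,\ldots,\alpha_k,\alpha_2\triangleright\alpha_{k+1},\alpha_{k+2},\ldots,\alpha_{k+l})$ when $\sigma'\in\sh_\succ(k-1,l)$; this is exactly what the recursive clauses defining $D^{k,l}_\sigma$ on $\sh_\prec(k,l)$ say. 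The symmetric statement, peeling $T_{k+1}$ off the bottom of $T_\sigma^{(\cdots)}(\cdots)$ for $\sigma\in\sh_\succ(k,l)$ with $l\geqslant 2$, is encoded by the clauses defining $D^{k,l}_\sigma$ on $\sh_\succ(k,l)$. I also note that deletion as above gives bijections $\sh_\prec(k,l)\cong\sh_\prec(k-1,l)\sqcup\sh_\succ(k-1,l)$ for $k\geqslant 2$ and $\sh_\succ(k,l)\cong\sh_\prec(k,l-1)\sqcup\sh_\succ(k,l-1)$ for $l\geqslant 2$.

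Granting this, the $\prec$-formula goes by induction as follows. The base case $k=1$: $T=\bdeux$ if $T_1=\bun$, and $T=T_1\Y{\beta_1}{\emptyset}\bun$ otherwise, so the first two recursive clauses for $\prec$ give $T\prec_{\alpha_2}T'=\bun\Y{\emptyset}{\alpha_2}T'$ resp.\ $T_1\Y{\beta_1}{\alpha_2}T'$; since $\sh_\prec(1,l)=\{\mathrm{Id}\}$ and $D^{1,l}_{\mathrm{Id}}=\mathrm{Id}$, the ladder description shows the single tree on the right equals this. For $k\geqslant 2$, the third recursive clause for $\prec$ gives
\[T\prec_{\alpha_{k+1}}T'=T_1\Y{\beta_1}{\alpha_2\leftarrow\alpha_{k+1}}(\widehat{T}\prec_{\alpha_2\triangleleft\alpha_{k+1}}T')+T_1\Y{\beta_1}{\alpha_2\rightarrow\alpha_{k+1}}(\widehat{T}\succ_{\alpha_2\triangleright\alpha_{k+1}}T'),\]
and the induction hypothesis expands $\widehat{T}\prec_{\alpha_2\triangleleft\alpha_{k+1}}T'$ over $\sh_\prec(k-1,l)$ (using the $\prec$-formula at parameters $(k-1,l)$) and $\widehat{T}\succ_{\alpha_2\triangleright\alpha_{k+1}}T'$ over $\sh_\succ(k-1,l)$ (using the $\succ$-formula). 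Applying the structural fact to each summand turns $T_1\Y{\beta_1}{\gamma}(\cdots)$ into $T_\sigma^{D^{k,l}_\sigma(\alpha_2,\ldots,\alpha_{k+l})}(\cdots)$ for the corresponding lift $\sigma$, and summing over the two pieces of $\sh_\prec(k,l)$ yields the claim. The $\succ$-formula is proved in the mirror way: base case $l=1$ (subcases $T_{k+1}=\bun$ and $T_{k+1}\neq\bun$ give the unique element of $\sh_\succ(k,1)$), and for $l\geqslant 2$ the third recursive clause for $\succ$ applied to $T'=\check{T}'\Y{\alpha_{k+2}}{\beta_{k+1}}T_{k+1}$ produces a $\succ$-term and a $\prec$-term to which the induction at parameters $(k,l-1)$ applies. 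Alternatively, the $\succ$-formula follows from the $\prec$-formula via the left--right reflection of plane binary trees and the identity $a\prec_\alpha^{op}b=b\succ_\alpha a$: the reflection exchanges right and left comb decompositions, and the recursion defining $D^{k,l}_\sigma$ is invariant under it.

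The part I expect to require the actual work is the structural fact of the second paragraph, namely checking clause by clause that the ad hoc recursive definition of $D^{k,l}_\sigma$ (including its degenerate cases $k=1$, $l=1$ and the bookkeeping of $\emptyset$ when some $T_i=\bun$) is precisely the one forced by the recursive definition of the products read on the ladder $L_{k+l}$. This is not conceptually difficult, but it is the lengthy verification on which everything rests; once it is done, the induction above is mechanical.
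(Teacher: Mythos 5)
Your proof is correct and follows essentially the same route as the paper's: induction on $k+l$, peeling $T_1$ off the right-comb decomposition of $T$, applying the recursive definition of $\prec_{\alpha_{k+1}}$, invoking the induction hypothesis for both formulas at $(k-1,l)$, and regrouping via the bijection $\sh_\prec(k,l)\cong\sh_\prec(k-1,l)\sqcup\sh_\succ(k-1,l)$, with the mirror argument for $\succ$; the "structural fact" you isolate is exactly the lifting $\sigma\mapsto\overline{\sigma}$ the paper uses implicitly. The only (negligible) difference is that the paper also records the trivial cases $k=0$ and $l=0$ explicitly, whereas you start the induction at $k,l\geqslant 1$.
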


\begin{proof}
If $k=0$, observe that:
\begin{align*}
\sh_\prec(0,l)&=\emptyset,&\sh_\succ(0,l)&=\{Id_l\},\\
\bun \prec_{\alpha_1} T'&=0,&\bun\succ_{\alpha_1} T'&=T'.
\end{align*}
So the result is immediate if $k=0$. It is proved in the same way if $l=0$. We now assume that $k,l\geqslant 1$, 
and we proceed by induction on $k+l$. There is nothing more to prove if $k+l\leq 1$. Otherwise, by the induction hypothesis,
putting $S=R_{(\alpha_3,\ldots,\alpha_k)}(\beta_2\otimes T_2,\ldots,\beta_k \otimes T_k),$:
\begin{align*}
T\prec_{\alpha_{k+1}} T'&=T_1\Y{\beta_1}{\alpha_2} S\prec_{\alpha_{k+1}}T'\\
&=T_1\Y{\beta_1}{\alpha_2\leftarrow \alpha_{k+1}} (S\prec_{\alpha_2\triangleleft \alpha_{k+1}} T')
+T_1\Y{\beta_1}{\alpha_2\rightarrow \alpha_{k+1}} (S\succ_{\alpha_2\triangleright \alpha_{k+1}} T')\\
&=\sum_{\sigma \in \sh_\prec(k-1,l)} T_1\Y{\beta_1}{\alpha_2\leftarrow \alpha_{k+1}} 
T_\sigma^{D^{k-1,l}_\sigma(\alpha_3,\ldots, \alpha_2\triangleleft \alpha_{k+1},\ldots \alpha_{k+l})}
(\beta_2\otimes T_2,\ldots,\beta_{k+l}\otimes T_{k+l})\\
&+\sum_{\sigma \in \sh_\succ(k-1,l)} T_1\Y{\beta_1}{\alpha_2\rightarrow \alpha_{k+1}} 
T_\sigma^{D^{k-1,l}_\sigma(\alpha_3,\ldots, \alpha_2\triangleright \alpha_{k+1},\ldots \alpha_{k+l})}
(\beta_2\otimes T_2,\ldots,\beta_{k+l}\otimes T_{k+l})\\
&=\sum_{\sigma \in \sh_\prec(k-1,l)} T_{\overline{\sigma}}^{D^{k,l}_{\overline{\sigma}}
(\alpha_2,\ldots,\alpha_{k+l})}(\beta_1\otimes T_1,\ldots,\beta_{k+l}\otimes T_{k+l})\\
&+\sum_{\sigma \in \sh_\succ(k-1,l)} T_{\overline{\sigma}}^{D^{k,l}_{\overline{\sigma}}
(\alpha_2,\ldots,\alpha_{k+l})}(\beta_1\otimes T_1,\ldots,\beta_{k+l}\otimes T_{k+l})\\
&=\sum_{\sigma \in \Sh_(k,l)} T_{\sigma}^{D^{k,l}_\sigma
(\alpha_2,\ldots,\alpha_{k+l})}(\beta_1\otimes T_1,\ldots,\beta_{k+l}\otimes T_{k+l}),
\end{align*}
where $\overline{\sigma}=(1,\sigma(1)+1,\ldots,\sigma(k+l-1)+1)$. The formula for $T\succ_{\alpha_{k+1}} T'$
is proved in the same way. \end{proof}

The formulas for $D^{k,l}_\sigma$ can be simplified when $\triangleleft$ and $\triangleright$ are trivial:

\begin{prop}
Let $(\Omega,\leftarrow,\rightarrow)$ be a diassociative semigroup. We work with the EDS
$\EDS(\Omega)$. Let $k,l\geqslant 0$, $\sigma \in \sh(k,l)$,  and $\alpha_2,\ldots,\alpha_{k+l}\in \Omega$. We put:
\begin{enumerate}
\item For any $2\leqslant i\leqslant k+l$:
\begin{align*}
L_\sigma(i)&=\begin{cases}
\alpha_{k+1}\mbox{ if }i\leqslant \sigma(1),\\
\alpha_p\mbox{ if }\sigma(p-1)<i\leqslant \sigma(p), \mbox{ with }2\leqslant p\leqslant k,\\
\emptyset \mbox{ if }i>\sigma(k);
\end{cases}\\
R_\sigma(i)&=\begin{cases}
\alpha_{k+1}\mbox{ if }i\leqslant \sigma(k+1),\\
\alpha_p\mbox{ if }\sigma(p-1)<i\leqslant \sigma(p), \mbox{ with }k+2\leqslant p\leqslant k+l,\\
\emptyset \mbox{ if }i>\sigma(k+l).
\end{cases}
\end{align*}
\item For any $2\leqslant i\leqslant k+l$,
\[D_\sigma(i)=\begin{cases}
L_\sigma(i)\leftarrow R_\sigma(i) \mbox{ if }\sigma^{-1}(i)\leqslant k,\\
L_\sigma(i)\rightarrow R_\sigma(i) \mbox{ if }\sigma^{-1}(i)> k,
\end{cases}\]
with the convention $\alpha \leftarrow \emptyset=\emptyset \rightarrow \alpha=\alpha$ for any $\alpha \in \Omega$. 
\end{enumerate}
Then:
\[D^{k,l}_\sigma(\alpha_2,\ldots,\alpha_{k+l})
=(D_\sigma(2),\ldots,D_\sigma(k+l)).\]
\end{prop}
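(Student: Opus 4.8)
The plan is to prove the identity by induction on $k+l$, checking that the closed tuple $(D_\sigma(2),\dots,D_\sigma(k+l))$ satisfies the recursive definition of $D^{k,l}_\sigma$ once one imposes the relations $\alpha\triangleleft\beta=\beta$ and $\alpha\triangleright\beta=\alpha$ valid in $\EDS(\Omega)$ (the products $\triangleleft$, $\triangleright$ being the projections onto the second and first factor). The key preliminary remark is that these relations collapse the recursion for $D^{k,l}_\sigma$: for $\sigma\in\sh_\prec(k,l)$ with $k\geq 2$ and $\sigma'\in\sh(k-1,l)$ the associated shuffle, one gets
\[D^{k,l}_\sigma(\alpha_2,\dots,\alpha_{k+l})=\left(\alpha_2\leftarrow\alpha_{k+1},\;D^{k-1,l}_{\sigma'}(\alpha_3,\dots,\alpha_k,\alpha_{k+1},\alpha_{k+2},\dots,\alpha_{k+l})\right)\]
if $\sigma'\in\sh_\prec(k-1,l)$, and the analogous formula with $\leftarrow$ replaced by $\rightarrow$ and with the entry $\alpha_{k+1}$ in the separating slot replaced by $\alpha_2$ if $\sigma'\in\sh_\succ(k-1,l)$; the case $\sigma\in\sh_\succ(k,l)$ is symmetric. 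Thus the inner argument of the recursion is merely a relabelling of the original tuple, which is precisely where the special shape of $\EDS(\Omega)$ is used.

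For the base of the induction, if $k=0$ or $l=0$ then $\sigma=\mathrm{id}$ and $D^{k,l}_\sigma=\mathrm{Id}$; reading off the definitions of $L_\sigma$ and $R_\sigma$ (with the convention $R_\sigma\equiv\emptyset$ when $l=0$, $L_\sigma\equiv\emptyset$ when $k=0$) gives $L_\sigma(i)=\alpha_i$, $R_\sigma(i)=\emptyset$ (or the reverse), hence $D_\sigma(i)=\alpha_i$ by the convention $\alpha\leftarrow\emptyset=\emptyset\rightarrow\alpha=\alpha$, as wanted. The two special sub-cases $\sigma\in\sh_\prec(1,l)$ and $\sigma\in\sh_\succ(k,1)$ occurring in the recursion are handled the same way, by unwinding $L_\sigma$ and $R_\sigma$ directly.

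For the inductive step I take $k,l\geq1$, $\sigma\in\sh_\prec(k,l)$ with $k\geq2$ (the case $\sigma\in\sh_\succ(k,l)$, $l\geq2$, being mirror-symmetric) and argue in three stages. \emph{First coordinate:} since $\sigma(1)=1$ and the first $T'$-subtree lands at position $\sigma(k+1)\geq2$, one finds $L_\sigma(2)=\alpha_2$ and $R_\sigma(2)=\alpha_{k+1}$; moreover vertex $2$ carries a $T$-subtree exactly when $\sigma'\in\sh_\prec(k-1,l)$ and a $T'$-subtree exactly when $\sigma'\in\sh_\succ(k-1,l)$, so $D_\sigma(2)$ is $\alpha_2\leftarrow\alpha_{k+1}$ in the first case and $\alpha_2\rightarrow\alpha_{k+1}$ in the second, matching the displayed recursion. \emph{Relabelling:} one has $\sigma'(j)=\sigma(j+1)-1$, whence $(\sigma')^{-1}(j)\leq k-1$ if and only if $\sigma^{-1}(j+1)\leq k$, so the $\leftarrow/\rightarrow$ choice prescribed for $\sigma'$ at position $j$ coincides with the one prescribed for $\sigma$ at position $j+1$. \emph{Stability of $L$ and $R$:} with the inner tuple given above, a direct entrywise check shows $L_{\sigma'}(j)=L_\sigma(j+1)$ and $R_{\sigma'}(j)=R_\sigma(j+1)$ for $2\leq j\leq k+l-1$; applying the induction hypothesis to $D^{k-1,l}_{\sigma'}$ then produces $(D_\sigma(3),\dots,D_\sigma(k+l))$ for the remaining coordinates, which closes the induction.

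The main obstacle is the last stage. The bookkeeping is genuinely delicate because the entry migrating into the separating slot of the reduced tuple is $\alpha_{k+1}$ when $\sigma'\in\sh_\prec(k-1,l)$ but the just-consumed type $\alpha_2$ when $\sigma'\in\sh_\succ(k-1,l)$, so the reduced problem is governed by two different relabellings of the parameters; one must verify, case by case on whether $(\sigma')^{-1}(j)$ and $(\sigma')^{-1}(k)$ land among the left or the right subtrees, that in each situation the value sitting in the separating slot is exactly the one $L_{\sigma'}$ or $R_{\sigma'}$ assigns to the positions it controls. The relation $\alpha\triangleright\beta=\alpha$ is essential here: it is what ensures that in the $\succ$-branch the type $\alpha_2$ is carried forward unchanged rather than mixed, so that it can still play the role of the virtual ``top type'' of the tree $R_{(\alpha_3,\dots,\alpha_k)}(\beta_2\otimes T_2,\dots,\beta_k\otimes T_k)$ in the recursive step.
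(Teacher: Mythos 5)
Your proof is correct and follows exactly the route the paper takes: the paper's entire proof of this proposition is the single line ``Induction on $k+l$,'' and your argument is a faithful, detailed execution of that induction, including the genuinely delicate point (correctly identified and resolved) that the separating slot of the reduced tuple receives $\alpha_{k+1}$ or $\alpha_2$ according to whether $\sigma'\in\sh_\prec$ or $\sh_\succ$, and that in each case it is only queried by the one of $L_{\sigma'}$, $R_{\sigma'}$ for which it carries the right value.
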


\begin{proof} Induction on $k+l$. \end{proof}

\begin{remark}
Working with $\Omega$ reduced to a single element, we obtain the dual description of the coproduct of the Hopf algebra
$\mathcal{Y}Sym$ described in \cite{AguiarSottile}.
\end{remark}

\begin{cor}
Let $\Omega$ be a set. We work in $\EDS(\Omega)$. For any $k,l\geqslant 0$, for any $\sigma \in \sh(k,l)$:
\[D^{k,l}_\sigma(\alpha_2,\ldots,\alpha_{k+l})
=(\alpha_{\sigma^{-1}(2)},\ldots,\alpha_{\sigma^{-1}(k+l)}),\]
with the convention $\alpha_1=\alpha_{k+1}$. 
\end{cor}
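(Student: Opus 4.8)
The plan is to deduce this corollary directly from the previous proposition, which already expresses $D^{k,l}_\sigma$ through the auxiliary data $L_\sigma(i)$, $R_\sigma(i)$, $D_\sigma(i)$, and then to simplify those data using that in $\DS(\Omega)$ the two operations are the projections, $\alpha \leftarrow \beta = \alpha$ and $\alpha \rightarrow \beta = \beta$ for all $\alpha,\beta \in \Omega$.

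First I would dispose of the degenerate cases: if $k = 0$ or $l = 0$ then $\sigma = Id_{k+l}$, so $\sigma^{-1} = Id$ and $D^{k,l}_\sigma = Id_{\Omega^{k+l-1}}$, and the claimed equality is immediate. From now on assume $k,l \geq 1$, fix $i$ with $2 \leq i \leq k+l$, and set $p = \sigma^{-1}(i)$, so that $i = \sigma(p)$. By the previous proposition the $i$-th entry of $D^{k,l}_\sigma(\alpha_2,\ldots,\alpha_{k+l})$ equals $D_\sigma(i)$, so it remains to show $D_\sigma(i) = \alpha_p$, reading $\alpha_1 := \alpha_{k+1}$.

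The computation then splits on whether $p \leq k$ or $p > k$. If $p \leq k$, then $\sigma^{-1}(i) \leq k$, hence $D_\sigma(i) = L_\sigma(i) \leftarrow R_\sigma(i)$; since $i = \sigma(p) \leq \sigma(k)$, the value $L_\sigma(i)$ is not $\emptyset$, and inspecting its definition gives $L_\sigma(i) = \alpha_p$ (the middle case $\sigma(p-1) < i \leq \sigma(p)$ when $p \geq 2$, the first case $i \leq \sigma(1)$ when $p = 1$, where $\alpha_{k+1} = \alpha_1$). As $\leftarrow$ keeps its left argument, $D_\sigma(i) = L_\sigma(i) = \alpha_p$. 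Symmetrically, if $p > k$, then $\sigma^{-1}(i) > k$, hence $D_\sigma(i) = L_\sigma(i) \rightarrow R_\sigma(i)$; since $i = \sigma(p) \leq \sigma(k+l)$, one gets $R_\sigma(i) = \alpha_p$ (the middle case when $p \geq k+2$, the first case when $p = k+1$), which is again $\neq \emptyset$, and since $\rightarrow$ keeps its right argument, $D_\sigma(i) = R_\sigma(i) = \alpha_p$. In both cases $D_\sigma(i) = \alpha_p = \alpha_{\sigma^{-1}(i)}$, which is the assertion.

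There is essentially no obstacle: all the work was done in the previous proposition, and what remains is the remark that in $\DS(\Omega)$ the operation applied at the $i$-th vertex of the ladder retains exactly the label of the unique subtree grafted there, namely $\alpha_{\sigma^{-1}(i)}$. The only mildly delicate point is the handling of the two boundary blocks, $p = 1$ on the left side and $p = k+1$ on the right side, and this is exactly what the convention $\alpha_1 = \alpha_{k+1}$ is designed to absorb.
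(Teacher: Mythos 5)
Your proof is correct and follows essentially the same route as the paper: specialize the preceding proposition, observe that in $\EDS(\Omega)$ the projections $\leftarrow$ and $\rightarrow$ reduce $D_\sigma(i)$ to $L_\sigma(i)$ or $R_\sigma(i)$ according as $\sigma^{-1}(i)\leqslant k$ or $\sigma^{-1}(i)>k$, and check from the definitions that this value is $\alpha_{\sigma^{-1}(i)}$. The paper states this in two lines; your version merely spells out the boundary cases absorbed by the convention $\alpha_1=\alpha_{k+1}$.
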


\begin{proof}
Let $2\leqslant i\leqslant k+l$. 
If $\sigma^{-1}(i)\leqslant k$, then $D_\sigma(i)=L_\sigma(i)=\alpha_{\sigma^{-1}(i)}$.
If $\sigma^{-1}(i)>k$, then $D_\sigma(i)=R_\sigma(i)=\alpha_{\sigma^{-1}(i)}$.
\end{proof}

\subsection{On typed words}

Remark that:
\[\K\Omega \otimes \Sh_\Omega^+(V)=\bigoplus_{n=1}^\infty (\K\Omega)^{\otimes n}\otimes V^{\otimes n}.\]

\begin{prop}\label{prop36}
Let $\Omega$ be an EDS, and $V$ be a vector space.
For any $\alpha_2,\ldots,\alpha_{k+l}\in \Omega$, for any $v_1,\ldots,v_{k+l}\in V$:
\begin{align*}
&\alpha_2\ldots \alpha_k\otimes v_1\ldots v_k \prec_{\alpha_ {k+1}}
\alpha_{k+2}\ldots \alpha_{k+l}\otimes v_{k+1}\ldots v_{k+l}\\
&=\sum_{\sigma\in \sh_\prec(k,l)}D^{k,l}_\sigma(\alpha_2,\ldots,\alpha_{k+l})
\otimes v_{\sigma^{-1}(1)}\ldots v_{\sigma^{-1}(k+l)},\\
&\alpha_2\ldots \alpha_k\otimes v_1\ldots v_k \succ_{\alpha_ {k+1}}
\alpha_{k+2}\ldots \alpha_{k+l}\otimes v_{k+1}\ldots v_{k+l}\\
&=\sum_{\sigma\in \sh_\succ(k,l)}D^{k,l}_\sigma(\alpha_2,\ldots,\alpha_{k+l})
\otimes v_{\sigma^{-1}(1)}\ldots v_{\sigma^{-1}(k+l)}.
\end{align*}
\end{prop}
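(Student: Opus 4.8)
The plan is to prove both formulas at once by induction on the total length $N=k+l$, reading off the recursive definition of $\prec_\alpha$ and $\succ_\alpha$ on $\Sh_\Omega(V)$ from Proposition~\ref{prop17} together with the recursive clauses defining $D^{k,l}_\sigma$. The argument is the word-analogue of the proof of Proposition~\ref{prop33}, with the concatenation product $\cdot$ playing the role that grafting a subtree on top of a ladder played there.

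I would first dispose of the degenerate cases $k=0$ and $l=0$. When $k=0$ the left factor is the scalar $1$, so $1\prec_{\alpha_{k+1}}(\cdots)=0$ and $1\succ_{\alpha_{k+1}}(\cdots)=(\cdots)$, while $\sh_\prec(0,l)=\emptyset$ and $\sh_\succ(0,l)=\{\mathrm{Id}\}$ with $D^{0,l}_{\mathrm{Id}}=\mathrm{Id}$, so both identities hold; the case $l=0$ is symmetric, and these settle $N\leqslant 1$. For the inductive step with $k,l\geqslant 1$ and the $\prec$-identity, I would peel the first letter of the left word, writing $\alpha_2\ldots\alpha_k\otimes v_1\ldots v_k=(\alpha_2\otimes v_1)\cdot x'$ with $x'=\alpha_3\ldots\alpha_k\otimes v_2\ldots v_k$; the defining formula of Proposition~\ref{prop17} then expresses $(\cdots)\prec_{\alpha_{k+1}}(\cdots)$ as a sum of two concatenations $(\,\star\otimes v_1)\cdot\bigl(x'\prec_{\alpha_2\triangleleft\alpha_{k+1}}(\cdots)\bigr)$ and $(\,\star'\otimes v_1)\cdot\bigl(x'\succ_{\alpha_2\triangleright\alpha_{k+1}}(\cdots)\bigr)$ of words of total length $N-1$, to which the induction hypothesis applies, yielding sums over $\sh_\prec(k-1,l)$ and $\sh_\succ(k-1,l)$. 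One then reassembles this via the bijection $\sigma\mapsto\sigma'=(\sigma(2)-1,\ldots,\sigma(k+l)-1)$, which partitions $\sh_\prec(k,l)$ according to whether $\sigma'\in\sh_\prec(k-1,l)$ or $\sigma'\in\sh_\succ(k-1,l)$ — exactly the two clauses in the recursive definition of $D^{k,l}_\sigma$ — and under which $v_{\sigma^{-1}(1)}\ldots v_{\sigma^{-1}(k+l)}$ has $v_1$ as its first letter followed by $v_{(\sigma')^{-1}(1)}\ldots$, matching the letter ordering produced by the concatenation. The $\succ$-identity follows the same way after peeling the first letter $v_{k+1}$ of the right word and using the second defining formula, now partitioning $\sh_\succ(k,l)$.

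The one step requiring genuine care, rather than mere patience, is checking that the decorations $\star,\star'\in\Omega$ prepended by a single step of the Proposition~\ref{prop17} recursion, together with the letter $\alpha_2\triangleleft\alpha_{k+1}$ (resp. $\alpha_2\triangleright\alpha_{k+1}$) substituted into the argument of $D^{k-1,l}_{\sigma'}$, agree term-by-term with the clauses $D^{k,l}_\sigma(\alpha_2,\ldots)=\bigl(\alpha_2\leftarrow\alpha_{k+1},\,D^{k-1,l}_{\sigma'}(\alpha_3,\ldots,\alpha_2\triangleleft\alpha_{k+1},\ldots)\bigr)$ for $\sigma'\in\sh_\prec(k-1,l)$ and $\bigl(\alpha_2\rightarrow\alpha_{k+1},\,D^{k-1,l}_{\sigma'}(\alpha_3,\ldots,\alpha_2\triangleright\alpha_{k+1},\ldots)\bigr)$ for $\sigma'\in\sh_\succ(k-1,l)$, and likewise for the right-word recursion against the $\sh_\succ$ clauses; this is a finite bookkeeping of which of $\leftarrow,\rightarrow,\triangleleft,\triangleright$ is paired with $\sh_\prec$ and which with $\sh_\succ$ in each of the two directions. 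Once that single correspondence is pinned down, the rest of the induction is purely formal.
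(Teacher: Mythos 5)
Your argument is correct and is exactly what the paper intends: the paper's entire proof of Proposition \ref{prop36} is the single line ``Similar as the proof of Proposition \ref{prop33}'', and your induction on $k+l$ --- base cases $k=0$ or $l=0$, peeling the first letter, and reassembling via the bijection $\sigma\mapsto\sigma'$ against the recursive clauses of $D^{k,l}_\sigma$ --- is precisely the word-for-word transcription of that proof. For the bookkeeping step you defer, note that the recursion displayed in Proposition \ref{prop17} must be read with $\leftarrow$ accompanying the $\prec_{\alpha_2\triangleleft\alpha_{k+1}}$ call and $\rightarrow$ the $\succ_{\alpha_2\triangleright\alpha_{k+1}}$ call (as in the tree recursion of Proposition \ref{prop15} and in the explicit computation inside the proof of Proposition \ref{prop17}); with that orientation the prepended decorations match the clauses of $D^{k,l}_\sigma$ exactly.
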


\begin{proof} Similar as the proof of Proposition \ref{prop33}.  \end{proof}

\section{Hopf algebraic structure}

\subsection{Existence of dendriform bialgebraic structures}

Let us recall the notion of dendriform bialgebra introduced by Loday and Ronco \cite{LodayRonco2,LodayRonco3,LodayRonco1,Ronco}:

\begin{defi}
A dendriform bialgebra is a family $(A,\prec,\succ,\Delta)$, where $(A,\prec,\succ)$ is a dendriform algebra,
$(A,\Delta)$ a coassociative coalgebra (not necessarily counitary) such that, for any $x,y\in A$:
\begin{align*}
\Delta(x\prec y)&=x\otimes y+x'\prec y\otimes x''+x'\otimes x''\cdot y+x\prec y'\otimes y''+x'\prec y'\otimes x''\cdot y'',\\
\Delta(x\succ y)&=y\otimes x+x'\succ y\otimes x''+y'\otimes x\cdot y''+x\succ y'\otimes y''+x'\succ y'\otimes x'' \cdot y'', 
\end{align*}
where $\cdot=\prec+\succ$ is the associative product associated to $(A,\prec,\succ)$.
We use Sweedler's notations $\Delta(a)=a'\otimes a''$ for any $a\in A$. 
\end{defi}

\begin{prop} \label{prop38}
Let $\Omega$ be an extended disasociative semigroup. 
If there exists a nonzero graded $\Omega$-dendriform algebra $A$, with $A_0=(0)$, with a homogeneous
coproduct $\Delta$ making $\K\Omega \otimes A$ a dendriform bialgebra, then 
$\varphi_\leftarrow$ and $\varphi_\rightarrow$ are injective.
\end{prop}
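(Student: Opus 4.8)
The plan is to exploit the lowest nonzero degree of the grading directly; the statement is a shadow of (but much weaker than) the Loday--Ronco rigidity theorem, and does not require it. Write $H=\K\Omega\otimes A$, equipped with the dendriform products $\prec,\succ$ of Proposition \ref{prop18} and graded by $H_n=\K\Omega\otimes A_n$, so that $H_0=(0)$ and the products and $\Delta$ are all homogeneous of degree $0$. Since $A$ is nonzero, graded and $A_0=(0)$, there is a smallest integer $d\geqslant 1$ with $A_d\neq (0)$; fix once and for all a nonzero $w\in A_d$.

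First I would show that every element of $H_d$ is primitive. As $\Delta$ is homogeneous of degree $0$, $\Delta(H_d)\subseteq\bigoplus_{p+q=d}H_p\otimes H_q$; but $H_p\neq(0)$ forces $p\geqslant d$ by minimality of $d$, so a nonzero summand $H_p\otimes H_q$ would require $p+q\geqslant 2d>d$, which is impossible. Hence $\Delta(H_d)=(0)$. Next, for primitive $u,v\in H_d$, I would feed $u,v$ into the two dendriform bialgebra relations: since $\Delta u=\Delta v=0$, every summand on the right-hand sides in which the Sweedler components $u',u'',v',v''$ occur vanishes, and one is left with
\[
\Delta(u\prec v)=u\otimes v,\qquad \Delta(u\succ v)=v\otimes u.
\]

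Now I would specialise to $u=\alpha\otimes w$ and $v=\beta\otimes w$, for $\alpha,\beta\in\Omega$. By the formulas of Proposition \ref{prop18},
\[
(\alpha\otimes w)\prec(\beta\otimes w)=(\alpha\leftarrow\beta)\otimes\bigl(w\prec_{\alpha\triangleleft\beta}w\bigr),\qquad
(\alpha\otimes w)\succ(\beta\otimes w)=(\alpha\rightarrow\beta)\otimes\bigl(w\succ_{\alpha\triangleright\beta}w\bigr),
\]
both lying in $H_{2d}$. Suppose $\varphi_\leftarrow(\alpha,\beta)=\varphi_\leftarrow(\alpha',\beta')$, i.e. $\alpha\leftarrow\beta=\alpha'\leftarrow\beta'$ and $\alpha\triangleleft\beta=\alpha'\triangleleft\beta'$. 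Then $(\alpha\otimes w)\prec(\beta\otimes w)$ and $(\alpha'\otimes w)\prec(\beta'\otimes w)$ are the same element of $H$; applying $\Delta$ and the first identity above gives $(\alpha\otimes w)\otimes(\beta\otimes w)=(\alpha'\otimes w)\otimes(\beta'\otimes w)$ in $H_d\otimes H_d$. Since $w\neq 0$ and $\{\alpha:\alpha\in\Omega\}$ is a basis of $\K\Omega$, the family $\{\alpha\otimes w\}_{\alpha\in\Omega}$ is linearly independent, hence so is $\{(\alpha\otimes w)\otimes(\beta\otimes w)\}_{(\alpha,\beta)\in\Omega^2}$; therefore $(\alpha,\beta)=(\alpha',\beta')$, so $\varphi_\leftarrow$ is injective. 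The identical argument with $\succ$ in place of $\prec$, using $\Delta(u\succ v)=v\otimes u$, shows $\varphi_\rightarrow$ is injective.

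The argument is essentially routine once arranged this way; the only points needing care are the passage to the lowest nonzero degree $d$ (which is what guarantees a supply of primitive elements, and which must be done because $A_1$ may vanish) and the verification that all the mixed terms in the dendriform bialgebra relations indeed disappear on primitives. No weight or filtration bookkeeping and no appeal to freeness are needed, since $\Delta$ is only applied to degree-$2d$ elements that are manifestly products of two primitives.
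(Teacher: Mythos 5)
Your proof is correct and follows essentially the same route as the paper's: both take a nonzero element of minimal positive degree, observe that homogeneity forces it to be primitive, and then read off $(\alpha,\beta)$ from $\Delta\bigl((\alpha\otimes w)\prec(\beta\otimes w)\bigr)=(\alpha\otimes w)\otimes(\beta\otimes w)$ and the analogous identity for $\succ$. Your write-up merely makes explicit two points the paper leaves implicit (that all mixed Sweedler terms vanish on primitives, and the linear-independence step at the end).
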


\begin{proof} Let $a$ be a nonzero element of $A$ of minimal degree $n$. As $n\geqslant 0$, 
necessarily, for any $\alpha \in \Omega$,
\[\Delta(\alpha \otimes a)=0.\]
Let $\alpha,\beta,\alpha',\beta' \in \Omega$, such that $\varphi_\leftarrow(\alpha,\beta)=
\varphi_\leftarrow(\alpha',\beta')$. Then, in $\K\Omega\otimes A$:
\[\alpha \otimes a\prec \beta \otimes a=\alpha \leftarrow \beta \otimes a\prec_{\alpha\triangleleft \beta} a
=\alpha' \leftarrow \beta' \otimes a\prec_{\alpha'\triangleleft \beta'} a=\alpha'\otimes a\prec \beta'\otimes a.\]
Hence, by the compatibility between $\Delta$ and $\prec$:
\begin{align*}
\Delta(\alpha \otimes a\prec \beta \otimes a)&=(\alpha\otimes a)\otimes (\beta \otimes a)
=\Delta(\alpha' \otimes a\prec \beta' \otimes a)=(\alpha'\otimes a)\otimes (\beta' \otimes a).
\end{align*}
As $a\neq 0$, $(\alpha,\beta)=(\alpha',\beta')$. Using the compatibility between $\Delta$ and $\succ$,
we obtain that $\varphi_\rightarrow$ is injective. \end{proof}

\begin{prop} \label{prop39}
If $\Omega$ is a nondegenerate EDS, there exists a unique coproduct $\Delta$ on 
$\K\Omega\otimes \K\calT_\Omega^+$, making it a dendriform bialgebra, such that:
\begin{align*}
&\forall \alpha \in \Omega,&\Delta(\alpha \otimes \bdeux)&=0.
\end{align*}
Moreover, this dendriform bialgebra is graded by the number of internal vertices.
\end{prop}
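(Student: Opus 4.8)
The plan is to identify $\K\Omega\otimes\K\calT_\Omega^+$ with a free dendriform algebra and to transport onto it the dendriform bialgebra structure of Loday and Ronco. Since $\Omega$ is nondegenerate, $\varphi_\leftarrow$ and $\varphi_\rightarrow$ are bijective, hence both surjective and injective; Proposition~\ref{prop19} then gives that $\K\Omega\otimes\K\calT_\Omega^+$ is generated by the family $(\alpha\otimes\bdeux)_{\alpha\in\Omega}$ and that the dendriform subalgebra they generate is free. In other words, $\K\Omega\otimes\K\calT_\Omega^+$ is canonically isomorphic to the free dendriform algebra $\mathrm{Dend}(\K\Omega)$ on the vector space $\K\Omega\cong\K\Omega\otimes\K\bdeux$, the isomorphism sending $\alpha\in\Omega$ to $\alpha\otimes\bdeux$. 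Under this isomorphism the grading of $\K\Omega\otimes\K\calT_\Omega^+$ by the number of internal vertices (recalled in the proof of Proposition~\ref{prop19}) matches the canonical grading of $\mathrm{Dend}(\K\Omega)$, each generator $\alpha\otimes\bdeux$ lying in degree $1$.

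For the existence of $\Delta$, I would invoke the dendriform bialgebra structure on free dendriform algebras of Loday and Ronco \cite{LodayRonco1,LodayRonco2,LodayRonco3,Ronco}: the free dendriform algebra on a vector space $W$ admits a coassociative coproduct, compatible with $\prec$ and $\succ$ in the sense of the definition of a dendriform bialgebra, for which every element of $W$ is primitive (has zero coproduct); moreover this coproduct is homogeneous for the canonical grading, with $W$ placed in degree $1$. Taking $W=\K\Omega\otimes\K\bdeux$ and transporting through the isomorphism above produces a homogeneous coproduct $\Delta$ making $\K\Omega\otimes\K\calT_\Omega^+$ a dendriform bialgebra, graded by the number of internal vertices, with $\Delta(\alpha\otimes\bdeux)=0$ for every $\alpha\in\Omega$.

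For uniqueness, suppose $\Delta'$ also makes $\K\Omega\otimes\K\calT_\Omega^+$ a dendriform bialgebra and satisfies $\Delta'(\alpha\otimes\bdeux)=0$ for all $\alpha$. I would show $\Delta=\Delta'$ by induction on the number $n$ of internal vertices. For $n=1$ this is the hypothesis on generators. For $n\geqslant 2$, the degree-$n$ component of $\K\Omega\otimes\K\calT_\Omega^+\cong\mathrm{Dend}(\K\Omega)$ is spanned by products $u\prec v$ and $u\succ v$ with $u,v$ homogeneous of degrees summing to $n$ and each at least $1$ (the bimodule decomposition $\mathrm{Dend}(W)=W\oplus\mathrm{Dend}(W)\prec\mathrm{Dend}(W)\oplus\mathrm{Dend}(W)\succ\mathrm{Dend}(W)$). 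The dendriform bialgebra compatibility relations express $\Delta(u\prec v)$ and $\Delta(u\succ v)$ purely in terms of $u$, $v$, $\Delta(u)$, $\Delta(v)$ and the products $\prec$, $\succ$, and likewise for $\Delta'$; by the induction hypothesis $\Delta(u)=\Delta'(u)$ and $\Delta(v)=\Delta'(v)$, so $\Delta$ and $\Delta'$ agree on degree $n$. The same computation, read off the homogeneity of the compatibility relations, shows directly that $\Delta$ preserves the grading, so gradedness need not even be imported from Loday--Ronco.

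The step I expect to be the main obstacle is the existence part, i.e.\ producing a \emph{well-defined}, coassociative, $(\prec,\succ)$-compatible $\Delta$: the bialgebra axioms force the value of $\Delta$ on every element once it is fixed on generators, but verifying that this forced assignment is consistent and coassociative is exactly the nontrivial content of the Loday--Ronco construction, which I would quote rather than reprove. An alternative, purely internal route would be to write down the explicit recursive formula for $\Delta$ on the right comb decomposition of a typed tree and check the axioms by a double induction on the numbers of internal vertices; this is essentially the content of the combinatorial description in Proposition~\ref{prop42}, and could be used here in place of citing \cite{LodayRonco1,LodayRonco2,LodayRonco3,Ronco}.
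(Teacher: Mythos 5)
Your proposal is correct and follows the same route as the paper: nondegeneracy plus Proposition \ref{prop19} identifies $\K\Omega\otimes\K\calT_\Omega^+$ with the free dendriform algebra on $\K\Omega$, and the existence and uniqueness of the graded coproduct with primitive generators is then imported from the Loday--Ronco theory of free dendriform bialgebras. The paper's proof is exactly this two-line reduction; your added induction for uniqueness and the remark on homogeneity are just explicit versions of what the cited references provide.
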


\begin{proof}
If $\Omega$ is nondegenerate, by Proposition \ref{prop19}, $\K\Omega \otimes \K\calT_\Omega^+$
is freely generated, as a dendriform algebra, by the elements $\alpha \otimes \bdeux$. From \cite{LodayRonco1,LodayRonco2,LodayRonco3,Ronco,AguiarSottile},
such a $\Delta$ exists and is unique.
\end{proof}

\begin{prop}\label{prop40}
Let $\Omega$ be a nondegenerate EDS and let $V$ be a nonzero vector space.
The following conditions are equivalent:
\begin{enumerate}
\item There exists a unique coproduct on $\K\Omega \otimes \Sh^+_\Omega(V)$, making it a dendrifrom bialgebra,
such that for any $v\in V$, for any $\alpha \in \Omega$, $\Delta(\alpha \otimes v)=0$. 
\item $\Omega$ is commutative.
\end{enumerate}
\end{prop}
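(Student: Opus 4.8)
The plan is to prove the two implications separately, leaning on the description of $\Sh^+_\Omega(V)$ obtained in Propositions \ref{prop17}, \ref{prop18} and \ref{prop20}.

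For $(2)\Rightarrow(1)$, I would first observe that when $\Omega$ is commutative the dendriform algebra $\K\Omega\otimes\Sh^+_\Omega(V)$ is itself \emph{commutative}: by Proposition \ref{prop17} the $\Omega$-dendriform algebra $\Sh^+_\Omega(V)$ is commutative, and feeding this together with the relations $\alpha\leftarrow\beta=\beta\rightarrow\alpha$, $\alpha\triangleleft\beta=\beta\triangleright\alpha$ into the formulas of Proposition \ref{prop18} yields $\alpha\otimes x\prec\beta\otimes y=\beta\otimes y\succ\alpha\otimes x$ directly. Since $\Omega$ is nondegenerate, $\varphi_\leftarrow$ is bijective, so both halves of Proposition \ref{prop20} apply and identify $\K\Omega\otimes\Sh^+_\Omega(V)$ with the free commutative dendriform algebra on $\K\Omega\otimes V$, i.e. with the shuffle algebra $\Sh^+(\K\Omega\otimes V)$. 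The latter carries its usual deconcatenation coproduct, making it a dendriform bialgebra in which every element of $\K\Omega\otimes V$ — in particular each $\alpha\otimes v$ — is primitive; and because the bialgebra compatibilities express $\Delta$ on a product through $\Delta$ of its factors, freeness forces this $\Delta$ to be the only coproduct with $\Delta(\alpha\otimes v)=0$ (exactly as in the proof of Proposition \ref{prop39}, citing \cite{LodayRonco1,LodayRonco2,LodayRonco3,Ronco,AguiarSottile}). It is homogeneous because $\Sh^+_\Omega(V)$ is graded by length with trivial degree-$0$ part.

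For $(1)\Rightarrow(2)$, I would fix $v\in V$ nonzero and use that in $\Sh^+_\Omega(V)$ one has $v\prec_\gamma v=v\succ_\gamma v=\gamma\otimes vv$ (the two letters coincide); hence in $\K\Omega\otimes\Sh^+_\Omega(V)$, under the identification with $\bigoplus_n(\K\Omega)^{\otimes n}\otimes V^{\otimes n}$,
\[(\alpha\otimes v)\prec(\beta\otimes v)=\varphi_\leftarrow(\alpha,\beta)\otimes vv,\qquad(\alpha\otimes v)\succ(\beta\otimes v)=\varphi_\rightarrow(\alpha,\beta)\otimes vv.\]
As $\Omega$ is nondegenerate, given any $(\gamma_1,\gamma_2)\in\Omega^2$ I can write $\gamma_1\gamma_2\otimes vv=(a\otimes v)\prec(b\otimes v)=(c\otimes v)\succ(d\otimes v)$ with $(a,b)=\varphi_\leftarrow^{-1}(\gamma_1,\gamma_2)$ and $(c,d)=\varphi_\rightarrow^{-1}(\gamma_1,\gamma_2)$. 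Applying $\Delta$ and using that each generator is primitive, the two dendriform-bialgebra axioms collapse to $\Delta(x\prec y)=x\otimes y$ and $\Delta(x\succ y)=y\otimes x$, so
\[(a\otimes v)\otimes(b\otimes v)=\Delta(\gamma_1\gamma_2\otimes vv)=(d\otimes v)\otimes(c\otimes v).\]
Since $v\neq0$, comparing these pure tensors in $(\K\Omega\otimes V)\otimes(\K\Omega\otimes V)$ forces $a=d$ and $b=c$; as $(\gamma_1,\gamma_2)$ was arbitrary this says $\varphi_\rightarrow=\varphi_\leftarrow\circ\tau$ with $\tau$ the flip, i.e. $\varphi_\leftarrow(\alpha,\beta)=\varphi_\rightarrow(\beta,\alpha)$ for all $\alpha,\beta$. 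Unwinding, $\alpha\leftarrow\beta=\beta\rightarrow\alpha$ and $\alpha\triangleleft\beta=\beta\triangleright\alpha$, which is precisely the commutativity of $\Omega$.

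The main obstacle is not conceptual but bookkeeping: correctly deriving $v\prec_\gamma v=v\succ_\gamma v=\gamma\otimes vv$ from the recursive definition of Proposition \ref{prop17}, tracking the grading and the ``outer'' $\Omega$-factor through the identification $\K\Omega\otimes\Sh^+_\Omega(V)=\bigoplus_n(\K\Omega)^{\otimes n}\otimes V^{\otimes n}$, and pinning down the freeness/uniqueness of the shuffle dendriform-bialgebra structure in the commutative setting. The one genuinely substantive step is the commutativity of $\K\Omega\otimes\Sh^+_\Omega(V)$ used in $(2)\Rightarrow(1)$, since that is what licenses the appeal to Proposition \ref{prop20}.
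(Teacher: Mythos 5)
Your proposal is correct and follows essentially the same route as the paper: for $(1)\Rightarrow(2)$ you equate a $\prec$-product and a $\succ$-product of primitives representing the same element of $(\K\Omega)^{\otimes 2}\otimes V^{\otimes 2}$ and compare the resulting coproducts to force $\varphi_\leftarrow(\alpha,\beta)=\varphi_\rightarrow(\beta,\alpha)$, and for $(2)\Rightarrow(1)$ you invoke Proposition \ref{prop20} to identify $\K\Omega\otimes\Sh^+_\Omega(V)$ with the free commutative dendriform algebra on $\K\Omega\otimes V$ and cite the existence and uniqueness of the coproduct on that free object. The only difference is expository: you spell out the commutativity of $\K\Omega\otimes\Sh^+_\Omega(V)$ and the explicit form $\varphi_\rightarrow=\varphi_\leftarrow\circ\tau$, which the paper leaves implicit.
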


\begin{proof}
$1.\Longrightarrow 2$.
Let $(\alpha,\beta)\in \Omega^2$. As $\Omega$ is nondegenerate, there exists a unique $(\alpha',\beta')\in \Omega^2$,
such that $\varphi_\leftarrow(\alpha,\beta)=\varphi_\rightarrow(\alpha',\beta')$. Let $v$ be a nonzero element of $V$.
By construction of $(\alpha',\beta')$:
\begin{align*}
\alpha \otimes v\prec \beta \otimes v&=\alpha \leftarrow \beta \otimes (\alpha \triangleleft \beta \otimes vv)
=\alpha'\rightarrow \beta'\otimes (\alpha'\triangleright \beta' \otimes vv)
=\alpha'\otimes v\succ \beta'\otimes v.
\end{align*}
Hence:
\begin{align*}
\Delta(\alpha \otimes v\prec \beta \otimes v)&=(\alpha \otimes v)\otimes (\beta \otimes v)
=\Delta(\alpha'\otimes v\succ \beta'\otimes v)
=(\beta'\otimes v)\otimes (\alpha'\otimes v).
\end{align*}
As $v\neq 0$, $(\alpha',\beta')=(\beta,\alpha)$, so, by definition of $(\alpha',\beta')$:
\begin{align*}
\beta \rightarrow \alpha&=\alpha \leftarrow \beta,&
\beta \triangleright \alpha&=\alpha \triangleleft \beta.
\end{align*}
Therefore, $\Omega$ is commutative.\\

$2.\Longrightarrow 1$. By Proposition \ref{prop20}, $\K\Omega\otimes \Sh^+_\Omega(V)$
is freely generated, as a commutative dendriform algebra, by $\K\Omega \otimes V$.
From \cite{AguiarSottile,LodayRonco1,Foissy1}, such a $\Delta$ exists and is unique. \end{proof}

\subsection{Combinatorial description of the coproducts on typed trees}

Let us generalize the combinatorial description of the coproduct given in \cite{GaoZhang}. 
We work with $\K\Omega \otimes \calT^+_\Omega$ where $\Omega$ is a nondegenerate EDS.
We shall use the notations of Proposition \ref{prop6}. 

\begin{notation}\begin{enumerate}
\item For any $\alpha\in \Omega$, let $\varphi^\blacktriangleleft_\alpha$ and 
$\varphi^\blacktriangleright_\alpha:\Omega\longrightarrow\Omega$
defined by:
\begin{align*}
\varphi^\blacktriangleleft_\alpha(\beta)&=\beta\blacktriangleleft \alpha,&
\varphi^\blacktriangleright_\alpha(\beta)&=\alpha \blacktriangleright \beta.
\end{align*}
\item Let $T\in \calT^+_\Omega$ and let $\alpha_0\in \Omega$. Let us choose an internal edge $e$ of $T$.
\begin{enumerate}
\item We denote by $T_e$ the typed plane binary subtree of $T$ formed by all the vertices of $T$ which are born from $e$.
\item Let $e_1,\ldots,e_k$ be the internal edges on the unique path in $T$ from its root to the extremity of $e$;
in particular, $e_k=e$. For any $i$, let $\alpha_i$ be the type of $e_i$, and:
\begin{itemize}
\item $\blacksquare_i=\blacktriangleleft$ if $e_i$ is a right edge;
\item $\blacksquare_i=\blacktriangleright$ if $e_i$ is a left edge.
\end{itemize}
We then put:
\[T_e(\alpha_0)=\varphi_{\alpha_k}^{\blacksquare_k}\circ \ldots \circ \varphi_{\alpha_1}^{\blacksquare_1}(\alpha_0)\otimes 
T_e \in \K\Omega \otimes \K\calT_\Omega^+.\]
\end{enumerate}\end{enumerate} \end{notation}

\begin{defi}
Let $T\in \calT_\Omega^+$. 
\begin{enumerate}
\item A cut of $T$ is a nonempty subset $c$ of the set of internal edges of $T$. 
\item A cut of $T$ is \emph{admissible} if any path in the tree meets at most one element of $c$. 
The set of admissible cuts of $T$ is denoted by $\adm(T)$.  Note that if $c$ is an admissible cut of $T$,
its elements are naturally ordered from left to right, and we shall write $c=\{e_1<\ldots<e_k\}$. 
\item Let $c$ be an admissible cut of $T$. The typed plane binary subtree obtained from $T$
by deleting $T_e$ for any $e\in c$ is denoted by $R^c(T)$. 
\end{enumerate}
\end{defi}

\begin{prop}\label{prop42}
Let $\Omega$ be a nondegenerate EDS, and let $T\in \calT_\Omega^+$ and $\alpha \in \Omega$. 
Then:
\[\Delta(\alpha \otimes T)=\sum_{c=\{e_1<\ldots<e_k\}\in \adm(T)} (\alpha \otimes R^c(T))\otimes 
(T_{e_1}(\alpha)\cdot\ldots\cdot T_{e_k}(\alpha)),\]
where $\cdot=\prec+\succ$. 
\end{prop}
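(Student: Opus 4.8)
The plan is to prove the formula by induction on the number of internal vertices of $T$, using the recursive structure of the coproduct that comes from the freeness of $\K\Omega\otimes\K\calT_\Omega^+$ as a dendriform algebra (Proposition \ref{prop39}) together with the explicit dendriform-bialgebra compatibility axioms. The base case is $T=\bdeux$: then $\calT_\Omega^+$ has no internal edges, $\adm(\bdeux)=\emptyset$, so the right-hand side is the empty sum, and indeed $\Delta(\alpha\otimes\bdeux)=0$ by the defining property in Proposition \ref{prop39}. For the inductive step I would write a general $T\in\calT_\Omega^+$ with at least two internal vertices in the form $T=T_1\Y{\gamma}{\delta}T_2$, so that $\alpha\otimes T=(\alpha'\otimes T_1)\succ(\beta'\otimes \bun\Y{\emptyset}{\delta}T_2)$ for a suitable pair $(\alpha',\beta')$ with $\varphi_\rightarrow(\alpha',\beta')=(\alpha,\gamma)$ when $T_1\neq\bun$ (and symmetrically use $\prec$ when $T_1=\bun$); this is exactly the reduction used in the proof of Proposition \ref{prop19}. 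Then I would apply the $\succ$-compatibility of $\Delta$, expand the five terms, and invoke the induction hypothesis on $\alpha'\otimes T_1$ and on the (strictly smaller) trees appearing, matching the resulting admissible cuts of $T$ to admissible cuts of $T_1$, of $T_2$, and to the single new edge $\gamma$ separating them.

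The combinatorial heart of the argument is the bookkeeping of how admissible cuts of $T=T_1\Y{\gamma}{\delta}T_2$ decompose. Writing $e_0$ for the internal edge of type $\gamma$ between the root of $T$ and the root of $T_1$, every admissible cut $c$ of $T$ is of exactly one of three types: it contains $e_0$ alone (so $R^c(T)=\bun\Y{\emptyset}{\delta}T_2$ if we agree on the conventions, and the dangling piece is $T_{e_0}(\alpha)=(\varphi^{\blacktriangleright}_{\gamma})(\alpha)\otimes T_1$, matching the leading term $y\otimes x$ in the $\succ$-axiom); it is entirely an admissible cut of $T_1$; or it is a disjoint union of an admissible cut of $T_1$ and a (possibly empty on one side) admissible cut of $T_2$ together with bookkeeping of whether $e_0$ lies above a cut edge. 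The five terms of the $\succ$-compatibility axiom — $y\otimes x$, $x'\succ y\otimes x''$, $y'\otimes x\cdot y''$, $x\succ y'\otimes y''$, $x'\succ y'\otimes x''\cdot y''$ — should correspond precisely to these cases, with the iterated product $T_{e_1}(\alpha)\cdot\ldots\cdot T_{e_k}(\alpha)$ on the right-hand tensor factor being reproduced by the concatenation $\cdot$ appearing in the axiom. The crucial point is that the decorations are transported correctly: the maps $\varphi^{\blacktriangleleft}_{\alpha_i}$ and $\varphi^{\blacktriangleright}_{\alpha_i}$ precompose exactly as the recursive descent through $T$ dictates, and one must check that passing from $\alpha'\otimes T_1$ (with $\varphi_\rightarrow(\alpha',\beta')=(\alpha,\gamma)$) back to $\alpha\otimes T$ introduces precisely one extra factor $\varphi^{\blacktriangleright}_{\gamma}$ (resp. $\varphi^{\blacktriangleleft}$) in front, which is the content of the identities in Proposition \ref{prop6} relating $\varphi_\rightarrow^{-1}$ to $\curvearrowright$ and $\blacktriangleright$.

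The main obstacle I anticipate is exactly this decoration transport: one has to verify that the auxiliary pair $(\alpha',\beta')$ produced by nondegeneracy interacts with the $T_e(\alpha)$ notation in a way compatible with the induction hypothesis — i.e. that $T_{e}(\alpha)$ computed inside $T$ equals the corresponding quantity computed inside $T_1$ but with $\alpha$ replaced by $\alpha'$ and an extra $\blacktriangleright_\gamma$ prepended, and that the associative product $\cdot$ of the dangling forests is the same whether formed in $\K\Omega\otimes\K\calT_\Omega^+$ or reconstructed term-by-term. Here the nondegeneracy hypothesis is used twice: once to guarantee the existence and uniqueness of $(\alpha',\beta')$, and once, via Proposition \ref{prop6}, to rewrite $\varphi_\rightarrow^{-1}$ in closed form so the prepended map is literally $\varphi^{\blacktriangleright}_\gamma$. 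Once this lemma-level compatibility is isolated and checked, the rest is a finite case-matching against the five terms of the dendriform-bialgebra axioms, entirely parallel to the computation in \cite{GaoZhang} for the matching case, and I would present it as such rather than grinding through all five terms in full detail. A symmetric remark handles the case $T_1=\bun$ (using $\prec$ and the $\prec$-compatibility axiom), and the case $T_2=\bun$ is covered by the $\succ$-side with $T_2$ contributing no cut edges.
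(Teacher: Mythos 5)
Your overall strategy --- induction on the number of internal vertices, decomposition of $T=T_1\Y{\beta}{\gamma}T_2$ into dendriform products of smaller pieces via the inverse maps $\varphi_\rightarrow^{-1}$, $\varphi_\leftarrow^{-1}$, and term-by-term matching against the dendriform-bialgebra axioms --- is the same as the paper's, which uses the three-factor decomposition
\[\alpha\otimes T_1\Y{\beta}{\gamma}T_2=\bigl(\beta\blacktriangleright\alpha\otimes T_1\bigr)\succ\bigl((\beta\curvearrowright\alpha)\curvearrowleft\gamma\otimes\bdeux\bigr)\prec\bigl(\alpha\blacktriangleleft\gamma\otimes T_2\bigr)\]
and both compatibility axioms at once, rather than your nested two-factor version. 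The right tensor factors also come out exactly as you say: the extra $\varphi^{\blacktriangleright}_{\beta}$ (resp.\ $\varphi^{\blacktriangleleft}_{\gamma}$) prepended to the pieces cut out of $T_1$ (resp.\ $T_2$) is immediate from the definition of $T_e(\alpha_0)$ as a composition of these maps along the root path, so this half of your ``decoration transport'' is automatic and is not where the work lies.

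The genuine gap is that you have located the difficulty in the wrong tensor factor. What is not automatic --- and what the paper devotes an entire second step to --- is that the \emph{left} tensor factor equals $\alpha\otimes R^c(T)$ on the nose, i.e.\ that after deleting the cut subtrees and reassembling $R^{c_1}(T_1)\succ\bdeux\prec R^{c_2}(T_2)$ with the modified decorations $\beta\blacktriangleright\alpha$, $(\beta\curvearrowright\alpha)\curvearrowleft\gamma$, $\alpha\blacktriangleleft\gamma$, the original $\alpha$ and the original edge types $\beta$, $\gamma$ reappear. This rests on specific consequences of (\ref{eq34}), namely
\[(\beta\blacktriangleright\alpha)\rightarrow((\beta\curvearrowright\alpha)\curvearrowleft\gamma)=\alpha\curvearrowleft\gamma,\qquad(\beta\blacktriangleright\alpha)\triangleright((\beta\curvearrowright\alpha)\curvearrowleft\gamma)=\beta,\]
followed by a cancellation against $\alpha\blacktriangleleft\gamma$ using $\varphi_\leftarrow\circ\varphi_\leftarrow^{-1}=\mathrm{Id}$; none of this is supplied by the case-matching you propose to leave implicit. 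Concretely, already for $T=\bdtroisdeux(\gamma)$ one has $\alpha\otimes T=(\alpha\curvearrowleft\gamma\otimes\bdeux)\prec(\alpha\blacktriangleleft\gamma\otimes\bdeux)$, so the axioms give $\Delta(\alpha\otimes T)=(\alpha\curvearrowleft\gamma\otimes\bdeux)\otimes(\alpha\blacktriangleleft\gamma\otimes\bdeux)$, and reconciling the left factor $\alpha\curvearrowleft\gamma\otimes\bdeux$ with the claimed $\alpha\otimes R^c(T)$ is precisely the verification you are deferring --- note that it is invisible in the matching case you cite as a model, where $\curvearrowleft$ and $\curvearrowright$ are the trivial projections. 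Your write-up therefore needs to isolate and carry out this left-factor computation explicitly (as the paper does in its second step, including the boundary cases where a cut removes all of $T_1$ or all of $T_2$); without it the proof is incomplete at exactly the point where the statement has content.
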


\begin{proof} \textit{First step.}
Let us first prove that, for any admissible cut $c \in \adm(T)$, there exists a tree $\iota_c(\alpha \otimes R^c(T))$ obtained from 
$\alpha \otimes R^c(T)$ by an action on the types of the internal edges and on $\alpha$, such that:
 \[\Delta(\alpha \otimes T)=\sum_{c=\{e_1<\ldots<e_k\}\in \adm(T)} \iota_c(\alpha \otimes R^c(T))\otimes 
(T_{e_1}(\alpha)\cdot\ldots\cdot T_{e_k}(\alpha)).\]

For any trees $T_1,T_2$, for any $\alpha,\beta \in \Omega$:
\begin{align*}
\alpha \otimes \bun \Y{\emptyset}{\gamma}{T_2}
&=\beta \curvearrowleft \gamma \otimes \bdeux\prec \beta \blacktriangleleft \gamma \otimes T_2,\\
\alpha \otimes T_1\Y{\beta}{\emptyset} \bun&=\beta \blacktriangleright \alpha \otimes T_1\succ
\beta \curvearrowright \alpha\otimes \bdeux,\\
\alpha \otimes T_1\Y{\beta}{\gamma} T_2&=
\beta \blacktriangleright \alpha \otimes T_1\succ (\beta \curvearrowright \alpha)\curvearrowleft \gamma \otimes\bdeux
\prec \alpha \blacktriangleleft \gamma \otimes T_2.
\end{align*}
Remark that if $T=T_1\Y{\alpha}{\beta}T_2$, then any admissible cut $c$ of $T$ is of the form $c_1\sqcup c_2$,
where $c_i$ is either an admissible cut, or the empty cut, or the total cut (which means that $R^c(T)=\emptyset$), of $c_i$;
at least $c_1$ or $c_2$ is not empty. Then $\iota_c(\alpha \otimes R^c(T))$ is inductively defined by:
\begin{itemize}
\item If $c$ is total, then $\iota_c(\alpha \otimes R^c(T))=\emptyset \otimes \bun$.
\item If $c$ is empty, then $\iota_c(\alpha \otimes R^c(T))=\alpha \otimes T$.
\item If $c=c_1\sqcup c_2$, then:
\[\iota_c(\alpha \otimes R^c(T))=
\iota_{c_1}(\beta \blacktriangleright \alpha \otimes T_1)\succ
(\beta \curvearrowright \alpha)\curvearrowleft \gamma \otimes \bdeux\prec
\iota_{c_2}(\alpha \blacktriangleleft \gamma\otimes T_2).\]
\end{itemize}
Using the compatibilities between the dendriform products and the coproducts, we obtain the result by induction on
the number of internal vertices of $T$.  \\

\textit{Second step.} Let us prove that $\iota_c(\alpha \otimes R^c(T))=\alpha \otimes R^c(T)$ by induction on the number
$n$ of internal vertices of $T$. It is obvious if $n=1$. Otherwise, we put $\displaystyle T=T_1\Y{\beta}{\gamma} T_2$,
and $c=c_1\sqcup c_2$. Then, using the induction hypothesis on $T_1$ and $T_2$:
\begin{align*}
\iota_c(\alpha \otimes R^c(T))&=\iota_{c_1}(\beta \blacktriangleright \alpha \otimes R^{c_1}(T_1))\succ
(\beta \curvearrowright \alpha)\curvearrowleft \gamma \otimes \bdeux\prec
\iota_{c_2}(\alpha \blacktriangleleft \gamma\otimes R^{c_2}(T_2))\\
&=\beta \blacktriangleright \alpha \otimes R^{c_1}(T_1)\succ
(\beta \curvearrowright \alpha)\curvearrowleft \gamma \otimes \bdeux\prec
\alpha \blacktriangleleft \gamma\otimes R^{c_2}(T_2)\\
&=(\beta \blacktriangleright \alpha)\rightarrow ((\beta \curvearrowright \alpha)\curvearrowleft \gamma)
\otimes R^{c_1}(T_1)\succ_{(\beta \blacktriangleright \alpha)\triangleright ((\beta \curvearrowright \alpha)\curvearrowleft \gamma) }
\bdeux \prec \alpha \blacktriangleleft \gamma\otimes R^{c_2}(T_2).
\end{align*}
By (\ref{eq34}):
\[(\varphi_\rightarrow \otimes Id)\circ (Id \otimes \varphi_\leftarrow^{-1})=(\tau\otimes Id)
\circ (Id \otimes \varphi_\leftarrow^{-1}) \circ (\tau \otimes Id)\circ (\varphi_\leftarrow\otimes Id),\]
so, for any $\alpha',\beta',\gamma'\in \Omega$:
\[(\alpha'\rightarrow(\beta'\curvearrowleft \gamma'),\alpha'\triangleright(\beta' \curvearrowleft \gamma'),
\beta'\blacktriangleleft \gamma')=(
(\alpha' \rightarrow \beta')\curvearrowleft \gamma',\alpha'\triangleright \beta',
(\alpha' \rightarrow \beta')\blacktriangleleft \gamma').\]
For $\alpha'=\beta \blacktriangleright \alpha$, $\beta'=\beta \curvearrowright \alpha$ and $\gamma'=\gamma$, we obtain:
\begin{align*}
(\beta \blacktriangleright \alpha)\rightarrow ((\beta \curvearrowright \alpha)\curvearrowleft \gamma)
&=((\beta \blacktriangleright \alpha)\rightarrow (\beta \curvearrowright \alpha))\curvearrowleft \gamma
=\alpha \curvearrowleft \gamma,\\
(\beta \blacktriangleright \alpha)\triangleright((\beta \curvearrowright \alpha)\curvearrowleft \gamma)
&=(\beta \blacktriangleright \alpha)\triangleright(\beta \curvearrowright \alpha)=\beta.
\end{align*}
Hence:
\begin{align*}
\iota_c(\alpha \otimes R^c(T))&=\alpha \curvearrowleft \gamma\otimes
R^{c_1}(T_1)\succ_{\beta} \bdeux\prec \alpha \blacktriangleleft \gamma\otimes R^{c_2}(T_2)\\
&=\alpha \curvearrowleft \gamma\otimes R^{c_1}(T_1)\Y{\beta}{\emptyset} \bun\prec \alpha \blacktriangleleft \gamma\otimes R^{c_2}(T_2)\\
&=(\alpha \curvearrowleft \gamma)\leftarrow ( \alpha \blacktriangleleft \gamma)
\otimes R^{c_1}(T_1)\Y{\beta}{\emptyset} \bun \prec_{(\alpha \curvearrowleft \gamma)\triangleleft( \alpha \blacktriangleleft \gamma)}
R^{c_2}(T_2)\\
&=\alpha \otimes R^{c_1}(T_1) \Y{\beta}{\emptyset}\bun \prec_{\gamma} R^{c_2}(T_2)\\
&=\alpha \otimes R^{c_1}(T_1)\Y{\beta}{\gamma} R^{c_2}(T_2)\\
&=\alpha \otimes R^c(T).
\end{align*}
Hence, $\iota_c(\alpha \otimes R^c(T))=\alpha \otimes R^c(T)$ for any admissible cut of any tree $T$. \end{proof}

\begin{remark}
Working with $\Omega$ reduced to a single element, we obtain the dual description of the product of the Hopf algebra
$\mathcal{Y}Sym$ described in \cite{AguiarSottile}.
\end{remark}

\begin{example}\begin{enumerate}
\item Let $(\Omega,\leftarrow,\rightarrow)$ be a diassociative monoid. We assume that 
$\EDS(\Omega,\leftarrow,\rightarrow)$ is nondegenerate. In this case, for any $\alpha$, $\beta \in \Omega$,
$\varphi_\alpha^\triangleleft(\beta)=\varphi_\alpha^\triangleright(\beta)=\alpha$.
Hence, for any edge $e$ of a given tree $T$, of type $\alpha_e$, for any $\alpha_0\in \Omega$:
\[T_e(\alpha_0)=\alpha_e \otimes T_e.\]
\item Let $(\Omega,\star)$ be a group. In $\EDS^*(\Omega,\star)$, for any $\alpha$, $\beta \in \Omega$,
$\varphi_\alpha^\triangleleft(\beta)=\varphi_\alpha^\triangleright(\beta)=\beta\star\alpha$.
Hence, for any edge $e$ of a given tree $T$, of type $\alpha_e$, for any $\alpha_0\in \Omega$:
\[T_e(\alpha_0)=\alpha_0\star \ldots \star \alpha_k \otimes T_e.\]
\end{enumerate}\end{example}

\subsection{Combinatorial description of the coproducts on typed words}

\begin{prop} \label{prop43}
Let $\Omega$ be a nondegenerate commutative EDS.
We use the notations of Proposition \ref{prop6}.  In the dendriform bialgebra $\K\Omega \otimes \Sh^+_\Omega(V)$,
for any $\alpha_1,\ldots,\alpha_n\in \Omega$, for any $v_1,\ldots,v_n \in V$:
\begin{align*}
&\Delta(\alpha_1\ldots\alpha_n\otimes v_1\ldots v_n)\\
&=\sum_{i=1}^{n-1} \left(\alpha_1\ldots \alpha_i
\otimes v_1\ldots v_i\right)
\otimes\left((\alpha_1\blacktriangleleft\ldots\blacktriangleleft \alpha_{i+1})\alpha_{i+2}\ldots \alpha_n\otimes v_{i+1}\ldots v_n\right).
\end{align*}\end{prop}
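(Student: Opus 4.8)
The plan is to mimic the structure of the proof of Proposition \ref{prop42}, but in the much simpler setting of typed words, where the dendriform bialgebra $\K\Omega\otimes\Sh^+_\Omega(V)=\bigoplus_{n\geqslant 1}(\K\Omega)^{\otimes n}\otimes V^{\otimes n}$ is free as a commutative dendriform algebra on $\K\Omega\otimes V$ (Proposition \ref{prop20} and Proposition \ref{prop40}), so that the coproduct is uniquely determined by $\Delta(\alpha\otimes v)=0$ and the dendriform bialgebra compatibilities. First I would fix the notation: write $w=\alpha_1\ldots\alpha_n\otimes v_1\ldots v_n$ and $\delta(w)$ for the right-hand side of the claimed formula, and prove $\Delta(w)=\delta(w)$ by induction on $n$, the length. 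For $n=1$ both sides vanish. For $n\geqslant 2$, the key observation (exactly as in the word case of Proposition \ref{prop17}) is that $w=\alpha_1\otimes v_1\prec_{\alpha_2}(\alpha_2\ldots$ wait — more precisely, in $\K\Omega\otimes\Sh^+_\Omega(V)$ one has
\[\alpha_1\ldots\alpha_n\otimes v_1\ldots v_n
=\beta\curvearrowleft\alpha_2\otimes v_1\prec \beta\blacktriangleleft\alpha_2\otimes(\alpha_3\ldots\alpha_n\otimes v_2\ldots v_n)\]
for a suitable $\beta$, using $\varphi_\leftarrow^{-1}$; that is, using the products $\curvearrowleft,\blacktriangleleft$ of Proposition \ref{prop6} to invert $\varphi_\leftarrow$ so that $\beta\curvearrowleft\alpha_2=\alpha_1$ after applying $\varphi_\leftarrow$, which by definition of $\curvearrowleft$ and $\blacktriangleleft$ simply recovers $\alpha_1$ as the type of the edge to $v_1$. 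The cleanest route is: since $\K\Omega\otimes\Sh^+_\Omega(V)$ is generated by $\K\Omega\otimes V$ under $\prec$, every typed word is $(\mu\otimes v_1)\prec(\text{shorter typed word})$ for appropriate $\mu$, and one computes $\Delta$ of this using the $\prec$-compatibility axiom.

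The main technical step is then to expand $\Delta\bigl((\mu\otimes v_1)\prec x\bigr)$ where $x=\alpha_3\ldots\alpha_n\otimes v_2\ldots v_n$ (with $\mu$ and the second type adjusted so the product equals $w$), using
\[\Delta(a\prec b)=a\otimes b+a'\prec b\otimes a''+a'\otimes a''\cdot b+a\prec b'\otimes b''+a'\prec b'\otimes a''\cdot b'',\]
with $a=\mu\otimes v_1$, $a'\otimes a''=\Delta(a)=0$. So only two terms survive:
\[\Delta\bigl(a\prec b\bigr)=a\otimes b+a\prec b'\otimes b''.\]
Now I would apply the induction hypothesis to $\Delta(b)=b'\otimes b''=\Delta(\alpha_3\ldots\alpha_n\otimes v_2\ldots v_n)$, which gives the sum over cut positions $i$ with $2\leqslant i\leqslant n-1$; the term $a\otimes b$ accounts for the cut position $i=1$. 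The surviving work is to check that reassembling $a\prec b'$ — where $b'=\alpha_3\ldots\alpha_i\otimes v_2\ldots v_i$ (with its leftmost type possibly modified by the earlier $\blacktriangleleft$-telescoping from the induction step) — produces exactly $\alpha_1\ldots\alpha_i\otimes v_1\ldots v_i$, and that the right tensor factor $b''$ — which by induction carries a leftmost type of the form $\alpha_2\blacktriangleleft\alpha_3\blacktriangleleft\cdots\blacktriangleleft\alpha_{i+1}$ after the recursive telescoping — matches $(\alpha_1\blacktriangleleft\cdots\blacktriangleleft\alpha_{i+1})\alpha_{i+2}\ldots\alpha_n\otimes v_{i+1}\ldots v_n$ once the outermost $\blacktriangleleft$-bookkeeping from the $a\prec(-)$ step is incorporated. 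This is a matter of unwinding the defining recursion of $\prec$ in $\Sh^+_\Omega(V)$ (the formula with $\alpha_2\rightarrow\alpha$ and $\alpha_2\leftarrow\alpha$ etc.) together with the identities of Proposition \ref{prop6}, in particular associativity of $\blacktriangleleft$ and the relation $\varphi_\leftarrow\circ\varphi_\leftarrow^{-1}=Id$.

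The hard part — and the step where I expect to spend the most care — is the bookkeeping of the types on the left tensor factor and on the first slot of the right tensor factor: one must verify that the type modifications forced by repeatedly writing a word as $(\cdot)\prec(\cdot)$ cancel against the $\curvearrowleft$/$\blacktriangleleft$ operations so that the left factor $\alpha_1\ldots\alpha_i\otimes v_1\ldots v_i$ comes out \emph{unmodified}, exactly as in the second step of the proof of Proposition \ref{prop42} where $\iota_c(\alpha\otimes R^c(T))=\alpha\otimes R^c(T)$ is shown. Concretely, I anticipate needing the identity $(\alpha\curvearrowleft\beta)\leftarrow(\alpha\blacktriangleleft\beta)=\alpha$ and $(\alpha\curvearrowleft\beta)\triangleleft(\alpha\blacktriangleleft\beta)=\beta$ (immediate from $\varphi_\leftarrow\circ\varphi_\leftarrow^{-1}=Id$), plus the associativity $\alpha\blacktriangleleft(\beta\blacktriangleleft\gamma)=(\alpha\blacktriangleleft\beta)\blacktriangleleft\gamma$ from Proposition \ref{prop6}, to collapse the nested $\blacktriangleleft$'s into the single product $\alpha_1\blacktriangleleft\cdots\blacktriangleleft\alpha_{i+1}$ appearing in the statement. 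Alternatively — and this may be the shortest write-up — one can invoke the already-proved tree formula: the commutative case of $\Sh^+_\Omega(V)$ is a quotient/sub of the tree algebra via the left comb decomposition, a typed word corresponding to a left ladder, and Proposition \ref{prop42} restricted to ladders reduces (using $R^c$ and the $T_e(\alpha)$ construction along a single path, where all the $\blacksquare_i$ are $\blacktriangleright$ or $\blacktriangleleft$ according to the ladder shape) precisely to the displayed formula; I would then only need to check that the map $\K\Omega\otimes\Sh^+_\Omega(V)\to\K\Omega\otimes\K\calT^+_\Omega$ sending a typed word to its ladder is a morphism of dendriform bialgebras, which follows from freeness and uniqueness of $\Delta$ in Propositions \ref{prop39} and \ref{prop40}. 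Either way the argument is routine once the telescoping identities are in hand; I would present the direct induction as the main proof and mention the ladder reduction as a remark.
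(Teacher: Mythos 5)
Your main route --- a direct induction on the length, peeling off the first letter with $\prec$ and using the compatibility $\Delta(a\prec b)=a\otimes b+a\prec b'\otimes b''$ for $a$ primitive --- is genuinely different from the paper's proof, which instead transports the tree formula of Proposition \ref{prop42} along the dendriform bialgebra morphism $\Phi$ sending the decorated right comb $\alpha_1\otimes R_n(\alpha_2\ldots\alpha_n;v_1\ldots v_n)$ to $\alpha_1\ldots\alpha_n\otimes v_1\ldots v_n$; the ladder reduction you sketch at the end is essentially that proof. Your induction does handle the terms $i\geqslant 2$ correctly: with $a=(\alpha_1\curvearrowleft\alpha_2)\otimes v_1$ and $b=(\alpha_1\blacktriangleleft\alpha_2)\alpha_3\ldots\alpha_n\otimes v_2\ldots v_n$, the products $a\prec b'$ recombine to $\alpha_1\ldots\alpha_i\otimes v_1\ldots v_i$ exactly because $(\alpha_1\curvearrowleft\alpha_2)\leftarrow(\alpha_1\blacktriangleleft\alpha_2)=\alpha_1$, $(\alpha_1\curvearrowleft\alpha_2)\triangleleft(\alpha_1\blacktriangleleft\alpha_2)=\alpha_2$, and $\blacktriangleleft$ is associative, as you anticipate.

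The gap is the term $a\otimes b$, which you assign to the cut position $i=1$. Nothing in that term undoes the modification of the type carried by the first letter: the identity $\varphi_\leftarrow\circ\varphi_\leftarrow^{-1}=Id$ only acts when $a$ is re-glued to a piece of $b$, so what your induction actually produces is
\[\Delta(w)=\bigl((\alpha_1\curvearrowleft\alpha_2)\otimes v_1\bigr)\otimes\bigl((\alpha_1\blacktriangleleft\alpha_2)\alpha_3\ldots\alpha_n\otimes v_2\ldots v_n\bigr)+\sum_{i=2}^{n-1}(\cdots),\]
which matches the stated $i=1$ term only if $\alpha_1\curvearrowleft\alpha_2=\alpha_1$. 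That identity holds for $\EDS^*(H,\star,K,\theta)$, where $\curvearrowleft$ is the first projection, but not for every nondegenerate commutative EDS: for $\EDS(\Z/2\Z,+,+)$ (case H2 of the classification) one has $\alpha\curvearrowleft\beta=\alpha+\beta$, and already for $n=2$ the forced computation $\Delta(\alpha_1\alpha_2\otimes v_1v_2)=\Delta\bigl(((\alpha_1+\alpha_2)\otimes v_1)\prec(\alpha_2\otimes v_2)\bigr)=((\alpha_1+\alpha_2)\otimes v_1)\otimes(\alpha_2\otimes v_2)$ disagrees with the displayed formula when $\alpha_2=\overline{1}$. So the step you defer to ``routine telescoping'' --- that the left tensor factor comes out unmodified --- is precisely the step that fails, at the left end of the telescope. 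Appealing to the second step of the proof of Proposition \ref{prop42} will not close it either: that computation reinstates the type $\gamma$ via $\prec_\gamma R^{c_2}(T_2)$ and therefore silently assumes $R^{c_2}(T_2)\neq\bun$; in the boundary case where a component of the cut is total, one is left with $\alpha\curvearrowleft\gamma$ in place of $\alpha$, which is the same phenomenon. In short, your induction, carried out honestly, proves a formula in which $\curvearrowleft$ appears in the left factors; it coincides with the statement exactly when $\curvearrowleft$ is the first projection, and you would need to either add that hypothesis or correct the claimed formula before the inductive step can close.
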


\begin{proof}
We work with the $\Omega$-dendriform algebra $A$ of $\Omega$-typed plane binary trees which internal vertices are decorated
by $V$. All the results presented for nondecorated trees can be extended to this context. For any $v\in V$,
we denote by $\bdeux(v)$ the plane binary tree $\bdeux$ which unique internal vertex is decorated by $v$.
By freeness, there exists a unique $\Omega$-dendriform algebra morphism $\Phi$ from $A$ to $\Sh_\Omega^+(V)$,
sending $\bdeux(v)$ to $v$ for any $v\in V$. It naturally induces a dendriform algebra morphism from
$\K\Omega \otimes A$ to $\K\Omega \otimes \Sh_\Omega^+(V)$, also denoted by $\Phi$. 
For any $v\in V$, any $\alpha \in \Omega$, $\alpha\otimes \bdeux(v)$ is primitive in $\K\Omega\otimes A$, and 
$\alpha \otimes v$ is primitive in $\K\Omega \otimes \Sh_\Omega^+(V)$: this implies that $\Phi$ is a dendriform bialgebra
morphism. \\

Let us introduce some notations. For any $n\geqslant n$ and $\alpha_2,\ldots,\alpha_n\in \Omega$, 
$R_n(\alpha2\ldots\alpha_n)$ is inductively defined by:
\begin{align*}
R_1&=\bdeux,&
R_n(\alpha_2\ldots \alpha_n)&=\bun\Y{\emptyset}{\alpha_2} R_{n-1}(\alpha_3\ldots \alpha_n) \mbox{ if }n\geqslant 2.
\end{align*}
Note that the underlying plane binary tree of $R_n(\alpha_2\ldots \alpha_n)$ is a right comb.  For example:
\begin{align*}
R_2(\alpha_2)&=\bdtroisdeux(\alpha_2),&R_3(\alpha_2\alpha_3)&=\bdquatrequatre(\alpha_2,\alpha_3).
\end{align*}
For $v_1,\ldots,v_n \in V$ we denote by $R_n(\alpha_2\ldots\alpha_n;v_1\ldots v_n)$ 
the $\Omega$-typed plane binary tree by giving the $n$ internal vertices of $R_n(\alpha_2\ldots \alpha_n)$,
naturally ordered starting from the root, the decorations $v_1,\ldots,v_n$. \\

By definition of the products on trees:
\begin{align*}
&(\alpha \curvearrowleft \beta) \otimes \bdeux(v) \prec (\alpha \blacktriangleleft \beta) \otimes R_n(\alpha_2\ldots \alpha_n;
v_1\ldots v_n)\\
&=(\alpha \curvearrowleft \beta) \leftarrow  (\alpha \blacktriangleleft \beta)\otimes 
\bdeux(v)\prec_{(\alpha \curvearrowleft \beta) \triangleleft  (\alpha \blacktriangleleft \beta)}
R_n(\alpha_2\ldots \alpha_n;v_1\ldots v_n)\\
&=\alpha\otimes \bdeux(v)\prec_\beta R_n(\alpha_2\ldots \alpha_n;v_1\ldots v_n)\\
&=\alpha\otimes \bun \Y{\emptyset}{\beta}R_n(\alpha_2\ldots \alpha_n;v_1\ldots v_n)\\
&=\alpha\otimes  R_{n+1}(\beta\alpha_2\ldots \alpha_n;vv_1\ldots v_n).
\end{align*}
An easy induction proves that:
\[\Phi(\alpha_1 \otimes R_n(\alpha_2\ldots\alpha_n;v_1\ldots v_n)) =\alpha_1\ldots \alpha_n\otimes v_1\ldots v_n.\]
The admissible cuts of $R_n$ are the cuts of a single internal edge: hence, by Proposition \ref{prop42},
\begin{align*}
&\Delta(\alpha_1 \otimes R_n(\alpha_2\ldots\alpha_n;v_1\ldots v_n))\\
&=\sum_{i=1}^{n-1}(\alpha_1\otimes R_i(\alpha_2\ldots \alpha_i;v_1\ldots v_i))\otimes
(\varphi_{\alpha_{i+1}}^\blacktriangleleft\circ\ldots \circ \varphi_{\alpha_2}^\blacktriangleleft(\alpha_1)
\otimes R_{n-i}(\alpha_{i+2}\ldots \alpha_n;v_{i+1}\ldots v_n))\\
&=\sum_{i=1}^{n-1}(\alpha_1\otimes R_i(\alpha_2\ldots \alpha_i;v_1\ldots v_i))\otimes
(\alpha_1\blacktriangleleft\ldots \blacktriangleleft \alpha_{i+1}
\otimes R_{n-i}(\alpha_{i+2}\ldots \alpha_n;v_{i+1}\ldots v_n)).
\end{align*}
The result is obtained by application of $\Phi$.  \end{proof}

\bibliographystyle{amsplain}
\addcontentsline{toc}{section}{References}
\bibliography{biblio}

\end{document}